\documentclass[12pt]{amsart}
\usepackage{amsthm,amssymb}
\usepackage{hyperref}
\usepackage{epsfig}
\usepackage{pstricks}
\usepackage{pst-plot}
\usepackage{tabmac}
\usepackage{pdflscape}

\usepackage[margin=1.1in]{geometry}

\newtheorem*{statement}{Theorem}
\newtheorem{theorem}{Theorem}[section]
\newtheorem{thm}[theorem]{Theorem}
\newtheorem{lemma}[theorem]{Lemma}
\newtheorem{lem}[theorem]{Lemma}
\newtheorem{proposition}[theorem]{Proposition}
\newtheorem{prop}[theorem]{Proposition}
\newtheorem{corollary}[theorem]{Corollary}

\newtheorem{conjecture}[theorem]{Conjecture}
\newtheorem{problem}[theorem]{Problem}

\theoremstyle{remark}
\newtheorem{example}{Example}
\newtheorem{remark}{Remark}

\def\a{{\mathbf{a}}}
\def\b{{\mathbf{b}}}
\def\i{{\mathbf{i}}}
\def\j{{\mathbf{j}}}

\def\h{\mathfrak h}

\def\LSym{\mathrm{LSym}}

\newcommand\shift[1]{\stackrel{\circlearrowleft}{#1}}

\def\sp{\mathrm{sp}}
\def\id{\mathrm{id}}
\def\rat{\mathrm{rat}}
\def\mult{\mathrm{mult}}
\def\wt{\mathrm{wt}}
\def\ep{\varepsilon}
\def\ph{\varphi}
\def\k{\kappa}
\def\sgn{\mathrm{sgn}}
\def\tS{{\tilde S}}
\def\xing{\mathrm{xing}}
\def\trop{\mathrm{trop}}

\def\XX{{\mathcal{X}}}
\def\uqsln{{U_q'({\mathfrak {\hat {sl_n}}})}}
\def\uqslm{{U_q'({\mathfrak {\hat {sl_m}}})}}
\def\fsln{{U_q({\mathfrak {{sl_n}}})}}
\def\fslm{{U_q({\mathfrak {{sl_m}}})}}
\def\sln{\mathfrak {\hat {sl_n}}}

\def\Z{{\mathbb Z}}
\def\C{{\mathbb C}}
\def\R{{\mathbb R}}
\def\N{{\mathcal {N}}}
\def\M{{\mathcal M}}

\newcommand\ip[1]{\langle #1\rangle}

\title{Crystals and total positivity on orientable surfaces}
 \author{Thomas Lam}\address
 {Department of Mathematics\\ University of Michigan\\ Ann Arbor\\ MI 48109 USA.}
 \date{\today}
 \email{tfylam@umich.edu}
 \urladdr{http://www.math.lsa.umich.edu/\~{ }tfylam}
 \thanks{T.L. was supported by NSF grant DMS-0652641 and DMS-0901111, and by a Sloan Fellowship.}
 \author{Pavlo Pylyavskyy}\address
{Department of Mathematics\\ University of Minnesota\\ Minneapolis\\ MN 55414 USA.}
 \email{pavlo@umich.edu}
 \urladdr{http://sites.google.com/site/pylyavskyy/}
 \thanks{P.P. was supported by NSF grant DMS-0757165.}
\begin{document}
\begin{abstract}
We develop a combinatorial model of networks on orientable surfaces, and study weight and homology generating functions of paths and cycles in these networks.  Network transformations preserving these generating functions are investigated. 

We describe in terms of our model the crystal structure and $R$-matrix of the affine geometric crystal of products of symmetric and dual symmetric powers of type $A$.  Local realizations of the $R$-matrix and crystal actions are used to construct a {\it double affine geometric crystal} on a torus, generalizing the commutation result of Kajiwara-Noumi-Yamada \cite{KNY} and an observation of Berenstein-Kazhdan \cite{BK3}.  

We show that our model on a cylinder gives a decomposition and parametrization of the totally nonnegative part of the rational unipotent loop group of $GL_n$. 
\end{abstract}
\maketitle
\tableofcontents
\section{Introduction}

\subsection{Networks on orientable surfaces}
The central objects of this paper are certain simple-crossing, vertex-weighted oriented networks $\N$ on an oriented surface $S$.  These networks may have sources and sinks on the boundary of $S$, but are conservative in the interior of $S$.  An example of a network we consider is in Figure \ref{fig:wire35}.
\begin{figure}[h!]
    \begin{center}
    \input{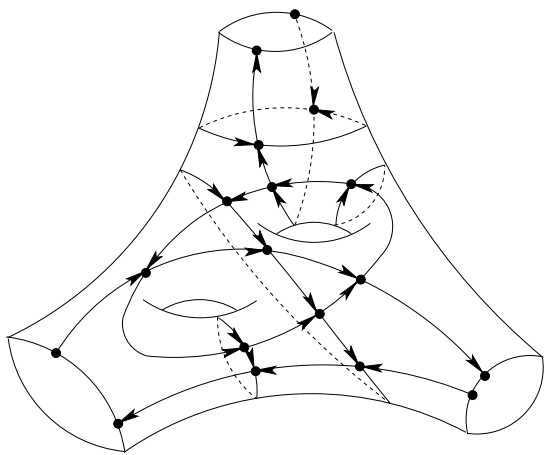}
    \end{center}
    \caption{A simple-crossing oriented network.}
    \label{fig:wire35}
\end{figure}

The first aspect of these networks we study are weight generating-functions of certain paths which we call {\it highway paths} in $\N$.  This definition relies crucially on the orientability of $S$.  We consider paths in $\N$ with endpoints on the boundary, giving us {\it boundary measurements}, imitating terminology of Postnikov \cite{Pos}, and also cycles in $\N$, giving us {\it cycle measurements}.  In both cases a crucial novelty of our work is to consider paths or cycles with specified {\it homology type}.

The second aspect that we study are {\it local transformations} or {\it local moves} of networks, that change a network $\N$ into another network $\N'$ on the same surface (see Section \ref{ssec:localmoves} for pictures of these).

These two aspects are motivated by the applications in the following table:

\begin{center}
\begin{tabular}{ |p{4cm}|p{5cm}|p{5cm}|}
\hline
&Measurements & Moves \\
\hline
Total positivity & Matrix entries, matrix coefficients & Changing factorizations or changing reduced words \\
\hline
Geometric crystals & $\ep$, $\ph$, weight function, energy function &  $R$-matrix, (crystal action) \\
\hline
Symmetric functions & elementary, homogeneous, power sum symmetric functions & $S_n$-action \\
\hline
Box-ball systems & integrals of motion & time evolution \\
\hline
\end{tabular}
\end{center}
Our general philosophy is that {\it measurements are preserved by moves}.
 
\subsection{Factorizations and parametrizations of totally positive matrices}

A matrix with real entries is {\it {totally nonnegative}} if all of
its minors are nonnegative.  There is a classical connection between totally nonnegative matrices and planar acyclic networks due to Linstr\"{o}m \cite{Li} and Brenti \cite{Br2}. Every totally nonnegative matrix $X$ can be represented  by a planar network so that the matrix entries are weight generating-functions of paths, and the minors of $X$ have an interpretation in terms of families of non-intersecting paths.  

Let $(U_n)_{\geq 0}$ denote the totally nonnegative part of the upper triangular unipotent subgroup $U_n$ of $GL_n(\mathbb R)$.  Lusztig \cite{Lus} established a decomposition $(U_n)_{\geq 0} = \sqcup_{w \in S_n} (U^w_n)_{\geq 0}$ of the totally nonnegative unipotent group into cells $(U^w_n)_{\geq 0}\simeq \R_{>0}^{\ell(w)}$.  The connection to networks, in the setting of this paper, is as follows: to each reduced word $\i$ one associates a network $N_\i$ such that boundary measurements give an element of $(U^w_n)_{\geq 0}$, and changing 
vertex weights give a parametrization of $(U^w_n)_{\geq 0}$.  Furthermore, if $\i$ and $\j$ are two reduced words for the the same $w \in S_n$, then $N_\i$ and $N_\j$ are related by local transformations.  From this point of view, our networks are closer to the wiring diagrams of \cite{BFZ, FZ} than the edge-weighted networks typically used in combinatorics.  Our most important local move is illustrated in the following example.

\begin{example} \label{ex:disk}
Figure \ref{fig:wire27} shows two networks in the disk which correspond to the two reduced words for the longest element of $S_3$.  
\begin{figure}[h!]
    \begin{center}
    \input{wire34.pstex_t} 
    \end{center} 
    \caption{}
    \label{fig:wire27}
\end{figure}
The corresponding element of the unipotent group is $$X = \left(
\begin{matrix}
1 & x+z & xy \\
0 & 1 & y \\
0 & 0 & 1 
\end{matrix}
\right) \in U_3$$
\end{example}

The Yang-Baxter move illustrated in Example \ref{ex:disk} is the network counterpart of the relation
\begin{equation} \label{eq:L}
 u_i(x) u_{i+1}(y) u_i(z) = u_{i+1}(yz/(x+z)) u_i(x+z) u_{i+1}(xy/(x+z))
\end{equation}
for the Chevalley generators of $U_n$.  This transformation is somewhat universal: Lusztig \cite{Lus} discovered that the tropicalization of this transformation, namely
\begin{equation}\label{eq:tropL}
(x,y,z) \mapsto (z+y - \min(x,z), \min(x,z), x+y-\min(x,z)),
\end{equation}
controlled the parametrizations of canonical bases of the quantum group of $U_n$.

A significant recent breakthrough was obtained by Postnikov \cite{Pos}, who studied (possibly non-acyclic) oriented networks in the disk, and developed the connections with total positivity of Grassmannians.  

In \cite{LP, LP3} we developed a theory of total positivity in the loop groups $GL_n(\R((t)))$.  Let $U_{\geq 0}$ denote the totally nonnegative part of the unipotent loop group.  The totally nonnegative part $(U_n)_{\geq 0}$ of the finite unipotent group is contained in $U_{\geq 0}$, so the following theorem generalizes the preceding discussion.

\begin{statement}[Theorems \ref{thm:whirlcurlBruhatdecomp}, \ref{thm:ratloop}, and \ref{thm:ntol}] \
\begin{enumerate}
\item[(a)]
Every $g(t) \in U_{\geq 0}$ which is rational can be represented by a network on the cylinder.
\item[(b)]
There is a decomposition 
$U^{\rat}_{\geq 0} = \sqcup_{a,b,w} U^{a,b,w}_{\geq 0}$ 
of the totally nonnegative part of the unipotent rational loop group into open-closed cells $U^{a,b,w}_{\geq 0}$ called \emph{whirl-curl-Bruhat cells}, where $a,b \in \Z_{\geq 0}$ and $w$ varies over the affine symmetric group $\tS_n$.
\item[(c)]
There is a network $N^{a,b,w}$ on the cylinder which parametrizes each $U^{a,b,w}_{\geq 0}$, up to the monodromy action of local transformations on vertex weights.
\end{enumerate}
\end{statement}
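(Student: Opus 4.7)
The strategy is to extend Lusztig's treatment of $(U_n)_{\geq 0}$ from the disk to the cylinder, with three flavors of elementary building blocks in place of one. Let $u_i(x)$ denote the $i$-th Chevalley generator of the unipotent loop group for $i \in \Z/n\Z$, and let ``whirl'' and ``curl'' denote two specific one-parameter families of rational totally nonnegative loops whose local network realizations live in the two nontrivial homology classes on the cylinder. For part (a), I would define an extraction algorithm: given $g(t) \in U^{\rat}_{\geq 0}$, at each step detect either a Chevalley generator $u_i$ that can be cancelled from the left (identified, as in the finite case, by the relative leading behavior of appropriate columns of $g$), or a whirl or curl that must be peeled off before any Chevalley factor becomes available. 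Infinite minors or spectral data of $g(t)$ should serve as the detector for whirls and curls. Rationality of $g(t)$ is exactly what forces the algorithm to terminate after finitely many factors of each type; concatenating the corresponding local pieces in the order of extraction produces a network on the cylinder representing $g(t)$.

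For part (b), I would define the cells $U^{a,b,w}_{\geq 0}$ by the condition that the extraction algorithm from (a) produces exactly $a$ whirls, $b$ curls, and a reduced word for $w \in \tS_n$. Disjointness is then automatic, and exhaustion follows from (a). The essential well-definedness problem --- that the triple $(a,b,w)$ does not depend on the nondeterministic choices in the algorithm --- is addressed by showing that every ambiguity is mediated by one of the local moves developed earlier in the paper (affine braid relations, whirl--whirl and curl--curl commutations, and the mixed whirl/curl/Chevalley moves generalizing \eqref{eq:L}), each of which preserves $(a,b,w)$ by a direct check. Demonstrating that the cells are open-closed in an appropriate stratification then reduces to characterizing each $U^{a,b,w}_{\geq 0}$ by vanishing and nonvanishing of suitable generalized minors on the loop group, the affine analogue of the minor description of the classical Bruhat cells.

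For part (c), fix a reduced word $\i = (i_1, \ldots, i_{\ell(w)})$ for $w$ and construct $N^{a,b,w}$ by concatenating around the cylinder $a$ whirl strips, a wiring diagram for $\i$, and $b$ curl strips in a chosen cyclic arrangement. Assigning positive real weights to the $a+b+\ell(w)$ vertices defines a map $\R_{>0}^{a+b+\ell(w)} \to U^{a,b,w}_{\geq 0}$; surjectivity follows from the extraction algorithm of (a), and the fiber structure is computed by tracking all sequences of local moves that return the network to itself, possibly after winding one or more times around the cylinder. Such sequences form the monodromy group acting on the parameter space, and the claim is that $U^{a,b,w}_{\geq 0}$ is precisely the quotient of $\R_{>0}^{a+b+\ell(w)}$ by this action. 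The main obstacle I anticipate lies in part (b): one must simultaneously separate the homological data $(a,b)$ from the affine Weyl group data $w$, and rule out any totally nonnegative identity that would trade a whirl or curl for a product of Chevalley factors or vice versa. The rationality hypothesis is essential here, since inside the larger group $U_{\geq 0}$ of all totally nonnegative loops, whirls and curls can typically be expanded as infinite products of Chevalley generators, and only after imposing rationality do the finite counts $(a,b,w)$ become genuine invariants of $g(t)$.
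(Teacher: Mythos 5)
Your architecture matches the paper's at a high level --- factor rational TNN loops into whirls, curls, and Chevalley generators, build cells from the factorization data $(a,b,w)$, and realize each cell by a cylindric network modulo monodromy --- but the three steps you leave as plans are exactly the hard content, and the mechanisms you sketch for them are not the ones that actually work. For exhaustion, you propose an extraction algorithm whose termination is "forced by rationality," with whirls and curls detected by unspecified "infinite minors or spectral data." The paper's actual mechanism (Theorem \ref{thm:genr}) is different and more indirect: putting the entries of $g(t)$ over a common denominator $Q(t)$ produces a linear recurrence among the entries $y_{i,j}, y_{i,j+n},\ldots$ far from the diagonal, hence a vanishing $(r+1)\times(r+1)$ minor that does not identically vanish on $U$; so $g$ is TNN but not totally positive, and the dichotomy of \cite[Theorem 5.7]{LP} then yields a \emph{finite} product of whirls, curls, and Chevalley generators. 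Without that dichotomy (or an equivalent), nothing in your proposal rules out an extraction process that never terminates, and "rationality is essential here" is an observation, not an argument.

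For disjointness and well-definedness of $(a,b,w)$, you correctly identify the danger --- a TNN identity trading a whirl or curl for Chevalley factors --- and then propose to rule it out by checking that every local move preserves $(a,b,w)$. That only shows the triple is constant along move-equivalence classes; it does not show that two genuinely different factorizations of the same matrix cannot occur. The paper instead invokes the canonical factorization of \cite[Theorem 8.3]{LP}: every element of $U^{\rat}_{\geq 0}$ has a \emph{unique} decomposition into a curl part, a Bruhat part, and a whirl part, which pins down $(a,b,w)$ intrinsically and makes disjointness immediate. Finally, for (c) you defer the fiber computation to "tracking all sequences of local moves"; the paper's Theorem \ref{thm:ntol} and Theorem \ref{thm:SaSb} identify the monodromy group concretely as $S_a\times S_b$ permuting curls and whirls, with an explicit fundamental domain $\Omega^a\times\Omega^b\times\R_{>0}^{\ell}$ obtained by sorting the radii $\prod_i \alpha^{(i)}$ of the wire cycles, and injectivity on that domain again rests on \cite[Theorem 8.3]{LP} and \cite[Proposition 8.2]{LP}. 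In short: the skeleton is right, but each of the three load-bearing steps needs the specific input you have not supplied.
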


The monodromy action of (c) appears to be new in the study of networks and total positivity: this action is trivial for the reduced wiring diagrams of \cite{BFZ} or the reduced plabic networks of \cite{Pos}.  In the above theorem, the loop parameter $t$ of $GL_n(\R((t)))$ corresponds to a non-trivial homology basis element of the cylinder.  The operation of gluing two cylinders together along a boundary corresponds to the semigroup structure of $U^{\rat}_{\geq 0}$.

We speculate that there is a general notion of total positivity on any surface (see also Theorem \ref{thm:tpd}) such that finite networks with positive real weights give rise to exactly the rational totally positive points (see Section \ref{ssec:rat} and Conjecture \ref{conj:rat}).

\subsection{Crystals and networks}
{\it Crystal graphs} were invented by Kashiwara \cite{Kas} as combinatorial skeletons of representations of quantum groups.  Berenstein and Kazhdan \cite{BK,BK2} have developed a theory of {\it geometric crystals}, where the combinatorial structures of a crystal graph are replaced by rational functions and birational transformations of algebraic varieties.

In this paper we study products of the basic affine geometric crystal $X_M$ \cite{KNO} of $\uqsln$ (the geometric crystal corresponding to symmetric powers of the standard representation), and its dual $X_N$.

\begin{statement}[Theorem \ref{thm:whirlcurlcrystal} and Propositions \ref{P:epphrat} and \ref{P:energy}]
Let $X^\tau$ be a product of the affine geometric crystals $X_M$ and $X_N$.  There is a set $\XX^\tau$ of networks on the cylinder which can be identified with $X^\tau$, so that 
\begin{enumerate}
\item[(a)]
the functions $\ep_i$, $\ph_i$ together with the weight function and energy function are boundary measurements, or rational functions of the boundary measurements;
\item[(b)]
the $R$-matrix is computed by performing local transformations;
\item[(c)]
the crystal operator $e_i^c$ is computed by adding crossings and then performing local transformations.
\end{enumerate}
\end{statement}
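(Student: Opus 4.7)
The plan is to define the correspondence $\mathcal{X}^\tau \to X^\tau$ factor-by-factor, and then reduce each of (a), (b), (c) to an explicit local check in a short segment of the cylinder.

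First I would fix elementary ``whirl'' and ``curl'' networks $W_a$ and $C_b$ (single-strand bands winding around the cylinder with the weighting adapted to $X_M$, respectively $X_N$), and exhibit an isomorphism of algebraic tori between the space of vertex weights on $W_a$ (resp.\ $C_b$) and the underlying variety of $X_M$ (resp.\ $X_N$). Given a word $\tau$ in the letters $M,N$, the network $\mathcal{X}^\tau$ is obtained by concatenating the corresponding sequence of whirls and curls along the cylinder, and the map $\mathcal{X}^\tau \to X^\tau$ is the product of the factorwise maps. Since each elementary piece already appears in the whirl-curl decomposition of $U^{\rat}_{\geq 0}$ cited above, the parametrization statement is an immediate consequence of that decomposition restricted to the relevant cell.

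For (a), I would compute the boundary measurements of $\mathcal{X}^\tau$ directly: a highway path entering at the top boundary and exiting at a specified height picks up a monomial in the vertex weights whose combinatorics matches the defining formulas for $\varepsilon_i$ and $\varphi_i$ on $X_M$ and $X_N$ written down by Kashiwara--Nakashima--Okado. Taking ratios and sums of these monomials over homology classes produces the functions $\varepsilon_i$, $\varphi_i$ on the full product. The weight function reads off from how many times each strand crosses the non-trivial cycle of the cylinder (the $t$-direction), and the energy function is extracted from the coefficient of the crossing number in the cycle measurements, matching the formulas of \cite{KNY}. Each identity is a short monomial computation inside a single cell.

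For (b), the $R$-matrix of $X_M \otimes X_M$, $X_M \otimes X_N$, $X_N \otimes X_M$, $X_N \otimes X_N$ is governed, up to a permutation of the two factors, by a tropicalizable birational transformation. I would match this transformation piece-by-piece with the local Yang--Baxter moves of Section~\ref{ssec:localmoves}; the identity~\eqref{eq:L} from Example~\ref{ex:disk} is the model, and the remaining cases reduce to it by duality and rotation. Uniqueness of the $R$-matrix, once the weight and energy are correct (already guaranteed by (a)), then forces the two maps to coincide. For (c), I would realize $e_i^c$ as the operation of inserting a small ``crossing'' network (an extra whirl or curl carrying the parameter $c$) at the near boundary, then sweeping it through $\mathcal{X}^\tau$ via the $R$-matrix local moves of (b) until it exits at the far boundary; the change of weights left behind on $\mathcal{X}^\tau$ is by construction precisely the geometric crystal action.

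The main obstacle is the $R$-matrix step (b): one must check not only that the local moves of Section~\ref{ssec:localmoves} produce a birational transformation of the correct form, but that it is the \emph{same} transformation as the one arising from the geometric crystal tensor product. The cleanest way is probably not to compare formulas directly, but to verify the characterizing properties of the $R$-matrix (preservation of weight and energy, plus the appropriate commutation with $e_i^c$) using (a) and the fact that the local moves preserve all boundary and cycle measurements, as established earlier in the paper; uniqueness of the $R$-matrix on products of $X_M$'s and $X_N$'s then closes the argument.
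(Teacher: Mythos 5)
Your overall architecture (factorwise identification of whirls/curls with $X_M$/$X_N$, concatenation for products, local verification) matches the paper's, but there are two genuine gaps.

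First, in part (a) you assert that $\ep_i$ and $\ph_i$ come out of ``a short monomial computation inside a single cell.'' That works only for pure products $X_M^m$ or $X_N^m$, where $\ep_i$ and $\ph_i$ are literally ratios of loop elementary symmetric functions, i.e.\ of boundary measurements (Proposition \ref{prop:cylep}). For a mixed product the paper explicitly warns that $\ep$ and $\ph$ are \emph{not} simple ratios of boundary measurements, and Proposition \ref{P:epphrat} requires real work: one first sorts all whirls to one side and all curls to the other using the $R$-matrix, splits the cylinder into two subnetworks, and then proves--via a bijection on semistandard fillings of the shapes $\lambda$ and $\mu$ entering the loop Schur functions of Theorem \ref{thm:schur}--that the boundary measurements of each subnetwork are recoverable as ratios $s_\mu/s_\lambda$ of minors of the whole. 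Your proposal has no substitute for this step. Your treatment of the energy function (reading it off ``the coefficient of the crossing number in the cycle measurements'') is also not viable: the networks $N^{a,b,\id}$ have no nontrivial highway cycle measurements in the relevant homology classes, and the paper instead argues that energy is an invariant of the birational $R$-matrix and invokes Theorem \ref{thm:LP4}.

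Second, in parts (b) and (c) you propose to get commutation of the local-move $R$-matrix with $e_i^c$ from ``the fact that the local moves preserve all boundary and cycle measurements.'' This is insufficient: the measurements do not determine the vertex weights (there is a nontrivial monodromy and torus action), so preservation of measurements does not let you conclude that two sequences of moves produce the same weighted network. The missing ingredient is the Frenkel--Moore relation (Proposition \ref{prop:fm}), which guarantees that the order in which two crossings are pushed through an intersection pattern of wires is immaterial at the level of the actual weights; this is the engine of Theorems \ref{thm:parallelcommute} and \ref{thm:antiparallelcommute}. Relatedly, your description of $e_i^c$ as inserting ``an extra whirl or curl carrying the parameter $c$'' and sweeping it out the far boundary is not the right mechanism: the crystal operator inserts a \emph{pair} of crossings between the adjacent horizontal wires $W_i$ and $W_{i+1}$, with weights $(c-1)\ph_i$ and $(c^{-1}-1)\ep_i$, and the point of Lemma \ref{lem:cancel} is that after one crossing is pushed around it exactly cancels the other via (XM) and (XR), so the underlying unweighted network is unchanged. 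If the inserted object exited the boundary, the operation would not define a self-map of $\XX^\tau$.
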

The fact that the $R$-matrix commutes with the crystal structure is an example of our general philosophy that local transformations should preserve boundary measurements.  The above theorem also shows again the universality of \eqref{eq:L} which plays a crucial role in (b).  The tropicalization \eqref{eq:tropL} in turn plays a role in the combinatorial $R$-matrix of Kirillov-Reshetikhin crystals.

The topology underlying the networks plays a crucial role.  In our network realization of affine geometric crystals, the product of geometric crystals corresponds to gluing cylinders along their boundary.

Our investigations also lead to the definition of a {\it double affine geometric crystal} $\XX_{n,m}$, constructed from a network on a torus.  This geometric crystal has both a $\uqsln$-crystal structure and a $\uqslm$-crystal structure.  To a certain extent these crystal structures commute (Theorem \ref{thm:doubleaffine}); indeed the Weyl group action of the $\uqsln$-crystal  acts as the $R$-matrix for the $\uqslm$-crystal, and vice versa.  This generalizes Kajiwara, Noumi, and Yamada \cite{KNY}'s commuting birational actions of the affine symmetric group, Lascoux's double crystal \cite{Las}, and observations of Berenstein and Kazhdan \cite{BK2}.

\subsection{Measurements and moves}
The study of our oriented networks on surfaces is motivated by the connections to total positivity and geometric crystals, but we also develop the general theory of such networks.  Since the networks we study are not acyclic, the weight generating functions of paths and cycles we consider are a priori formal power series.  The first fundamental result we establish is

\begin{statement}[Theorem \ref{thm:fin}]
Boundary measurements are homogeneous polynomials in vertex weights.  Cycle measurements for a non-trivial homology class are homogeneous polynomials in vertex weights.
\end{statement}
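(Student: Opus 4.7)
The plan is to prove both statements in a unified way by showing that, for fixed boundary source and sink (in the path case) or for a fixed nontrivial homology class (in the cycle case), every contributing highway path or cycle carries a monomial of the same degree in the vertex weights, and that only finitely many such paths or cycles exist. The weight of a highway path will be a product of vertex weights, one for each crossing at which the path ``turns'' rather than continuing straight, so equality of turn counts among all contributing paths (resp.\ cycles) will give homogeneity, and finiteness of the count of paths (resp.\ cycles) of a given turn count will give polynomiality.

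The first step is to set up the local model at each simple crossing. Orientability of $S$ together with the orientations of the two crossing strands selects a canonical local distinction between the two possible ``turn'' configurations, so a highway path has, at each crossing, a small finite menu of admissible local moves: continue straight without using the vertex weight, or turn in the prescribed direction and pick up the vertex weight as a factor. The degree of the monomial contributed by a highway path then equals the number of turns it makes.

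The second step, and the main content, is to show that the turn count is a topological invariant. I would package the turn/straight local contributions into a combinatorial $1$-cochain $\delta$ on the underlying graph; orientability is what lets $\delta$ be defined consistently everywhere. One then integrates $\delta$ along a highway path to recover its turn count. The key claim is that this integral depends only on the homotopy class of the underlying curve on $S$ rel boundary (for boundary measurements) or only on the homology class (for cycles): it suffices to check that $\delta$ is invariant under the elementary relations generated by deforming a path across a face of the graph, again using the highway rule and orientation. Consequently any two highway paths between the same source-sink pair, and any two highway cycles in the same homology class, have the same turn count.

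Finally, finiteness follows from a straightforward counting step: once the turn count $k$ is fixed, a highway path is determined by the ordered sequence of crossings at which it turns together with the starting strand, because between turns it is forced to continue along its current strand and at each turn the new strand is dictated by the highway rule; since the graph is finite there are only finitely many such data. I expect the main obstacle to be the second step, specifically in handling cycles of nontrivial homology class: one must argue both that the turn-count invariant is constant over representatives and that it is finite, which is precisely where nontriviality of the class is essential. For a null-homologous cycle one can in general insert arbitrarily many contractible small loops, generating a genuine infinite series, which is exactly the reason the theorem excludes the trivial class.
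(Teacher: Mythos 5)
There is a genuine gap, and it sits exactly where the real work of the theorem lies: finiteness. First, a factual slip in your local model: in this paper the weight is collected when a path passes \emph{straight} through a crossing along the highway wire (entering and exiting via the highway); turns, and the forced underway-to-highway move, contribute nothing. This matters because your finiteness step counts paths by their ordered sequence of \emph{turns} with the turn count ``fixed''. But the degree of the monomial is the number of straight highway passages, and neither the homology class nor the degree bounds the number of turns a priori. Concretely, a maximal run of consecutive turns is a segment of a snake path or snake cycle, and a highway path is free to follow a snake cycle (or more generally to strongly contain highway cycles) arbitrarily many times, accumulating turns while contributing nothing to the weight. If such a strongly contained collection were null-homologous, there would be infinitely many homologous highway paths with identical weight and the measurement would genuinely be an infinite series. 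So ``finitely many turn-data of length $k$'' does not close the argument: you must first prove that the length (equivalently the turn count) of a highway path in a fixed homology class is bounded, and nothing in your proposal does this.

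That bound is what the paper's proof spends all its effort on. Lemma \ref{lem:strongcontain} uses nonnegativity of the flows $\ip{p_i,N}$ and $\ip{p_i,q}$ to show that a boundary-to-boundary highway path, or a homologically nontrivial highway cycle, cannot strongly contain a collection of highway cycles with total homology zero; Lemma \ref{lem:cone} upgrades this to a linear functional $w$ strictly positive on the classes of all strongly contained cycles, with only finitely many such functionals needed; and Lemma \ref{lem:extremal} is the combinatorial device that converts ``long path'' into ``many strongly contained cycles, arbitrarily interleaved'' into ``$([q],w)$ is large'', which bounds the length within a fixed class. Your proposal has no substitute for this chain, and your own closing remark about inserting contractible loops identifies the danger without neutralizing it for nontrivial classes. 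By contrast, your homogeneity step is essentially right and matches the paper: the degree equals the intersection number $\ip{p,N}$ of the path with the network viewed as a conservative $1$-chain, which depends only on the homology class; your cochain $\delta$ is a repackaging of that one-line argument. But homogeneity is the easy half of the theorem; the half you are missing is the one it is really about.
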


This result relies on our notion of a ``highway path'' together with the homological\footnote{We remark that one could use homotopy instead of homology to build a somewhat different theory.} restrictions we impose.  As we have already alluded to, our measurements and transformations satisfy

\begin{statement}[Theorem \ref{thm:bm} and Proposition \ref{prop:torus}]
Boundary and cycle measurements are preserved by local transformations, and by a global action of a torus (Section \ref{ssec:torus}).
\end{statement}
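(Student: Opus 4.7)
The plan is to separate the two claims and reduce each to a purely local check. For invariance under local transformations, I would first enumerate the finite list of moves from Section \ref{ssec:localmoves} (the Yang--Baxter move of Example \ref{ex:disk}, together with the two-strand untwist/uncross moves and any degenerate moves erasing an isolated crossing). Each such move replaces a subnetwork lying in a small disk $D \subset S$ by another with identical boundary behavior, and leaves the rest of $\N$ untouched. The key structural observation is that every highway path, and every highway cycle of a fixed homology class, decomposes into an alternating sequence of segments inside $D$ and segments outside $D$; the outside segments and their weight contributions are untouched, and since $D$ is contractible the homology class of a cycle is preserved. Thus boundary and cycle measurements are polynomial (over the outside weights) in the \emph{local transfer data}, namely the weight generating functions of highway segments inside $D$ joining any prescribed set of boundary entry/exit points. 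It suffices, therefore, to verify that these local transfer data agree between $\N$ and $\N'$ for each move. This is a finite list of rational identities.

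The heart of the verification is the Yang--Baxter move, whose local identity is precisely \eqref{eq:L}; one computes the boundary measurements of the three-wire tangle on each side of Figure \ref{fig:wire27} by direct enumeration of the finite set of highway paths through it and checks they agree. This is the one genuinely non-trivial calculation, and I expect it to be the main obstacle, since the other moves reduce to easy bijections of path sets or to trivial polynomial identities. For cycle measurements the same scheme works once one notes that any highway cycle of non-trivial homology meets $D$ only finitely many times, each meeting being a local segment to which the Yang--Baxter identity applies; the finiteness input here is Theorem \ref{thm:fin}, which guarantees the local transfer data are themselves polynomials and that no summations diverge.

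For the torus action of Proposition \ref{prop:torus}, I would describe the torus as a gauge group whose $t$-parameter acts on vertex weights by rescaling the weights around each vertex (or around each wire) by the character of $t$ on that vertex, with opposite exponents on incoming and outgoing sides, and trivially at the boundary. The weight of any highway path or cycle is then a product of vertex weights in which every interior vertex is entered and exited the same number of times; the gauge factors therefore cancel monomial by monomial, and boundary and cycle measurements are invariant. The only bookkeeping step here is to separately inspect the action at the various vertex types (crossings and degenerate/boundary vertices) to confirm the cancellation, which once the action is set up so as to be trivial on the boundary is essentially formal.
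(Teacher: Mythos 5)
There is a genuine gap in the cycle-measurement half of your argument. The cycle measurement is not a plain weight generating function: it is $M^{[p]} = \sum_{q:[q]=[p]} \tfrac{1}{\mult(q)}\wt(q)$, and the multiplicity $\mult(q)$ of a highway cycle is \emph{not} stable under local moves. Your reduction to ``local transfer data'' matches up accumulated weights of segments through the modified region, which is exactly the right argument for boundary measurements, but for cycles it only controls $\sum_q \wt(q)$, not $\sum_q \wt(q)/\mult(q)$. A cycle of multiplicity $m$ on one side of a move can correspond, after the move, to several cycles of various multiplicities dividing $m$, and the weights get divided by different integers. The paper's proof spends essentially all of its effort on this point: it groups together the highway cycles agreeing with a given $p$ outside the local region, cuts them at a vertex $v$ lying outside the modified subgraph, and shows that each such cycle of multiplicity $\mult(p)$ gives rise to exactly $m/\mult(p)$ based paths from $v$ to $v$, so that $\sum_p \wt(p)/\mult(p) = \tfrac{1}{m}\sum_r \wt(r)$ over based paths $r$; only then does the boundary-measurement-style matching apply. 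Your proposal needs this (or an equivalent) bookkeeping step; ``the same scheme works'' is not justified as written. Two smaller issues in the same half: your list of moves omits the whirl-curl move (WC), whose local region is a cylinder rather than a contractible disk, so the ``$D$ contractible, hence homology preserved'' step does not literally cover it; and for (CR) the relevant observation is that the removable cycle can be entered but not exited (or vice versa), so no highway path or homologically non-trivial cycle uses it at all.

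The torus half also does not prove the statement as posed, because the action you describe is not the one defined in Section \ref{ssec:torus}. The paper's torus does not act by gauge rescalings with opposite exponents on incoming and outgoing sides of a vertex (each interior vertex carries a single weight $x_v$, and the weight of a highway path is the product of $x_v$ only over vertices traversed highway-to-highway, so ``entered and exited the same number of times'' does not give cancellation). Instead, the exponents $\langle e_v, w_i\rangle$ are determined by the graph $H_N$ of snake paths and cycles: a highway path or cycle traces out a walk $u$ in $H_N$, its weight is the product of $x_v$ over the edges $e_v$ of that walk, and the $w_i$ are chosen orthogonal to every walk vector $a_u$, which is precisely what makes $\prod_{e_v\in u}\tau_i^{\langle e_v,w_i\rangle}=1$. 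Your instinct that invariance should be a monomial-by-monomial cancellation is correct, but the correct combinatorial object recording which monomials occur is the snake-path graph, and without it you have proved invariance for a different (and generally smaller, or simply wrong) group action.
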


The general philosophy motivating our combinatorial work on networks is that measurements and transformations should satisfy: 
\begin{enumerate}
\item measurements are invariant under the transformations;
\item two networks with the same measurements can be transformed into each other using the transformations (Problem \ref{pr:1});
\item measurements generate all the invariants of the monodromy and torus actions of the transformations on vertex weights (Problem \ref{pr:2}).
\end{enumerate}
In (3), the monodromy action refers to sequences of transformations of the networks which preserve the underlying graph but change the vertex weights (see for example Section \ref{sec:whurl}, and Corollary \ref{thm:SaSb}).  Two of our more refined goals are 
\begin{enumerate}
 \item describe explicitly the range of possible measurements, usually as vertex weights range over nonnegative real numbers;
 \item give explicit formulae for recovering the weights and/or the network from the measurements.
\end{enumerate}
For example, the totally nonnegative part of the unipotent rational loop group solves (1) for networks on the cylinder where sources and sinks are on different boundary components (see Theorem \ref{thm:ratloop}).  

We now illustrate all these features with the following example.

\subsection{Symmetric functions and loop symmetric functions}
Let us take an oriented network on a cylinder, consisting of one horizontal wire joining the two boundaries, and $n$ vertical cycles which loop around the cylinder.  The case $n=2$ is shown in Figure \ref{fig:wire31}. 

\begin{figure}[h!]
    \begin{center}
    \input{wire31.pstex_t}
    \end{center}
    \caption{}
    \label{fig:wire31}
\end{figure}

In this case, the boundary measurements are the complete homogeneous symmetric functions and the cycle measurements are the (rescaled) power sum symmetric functions of the vertex weights (see Examples \ref{ex:bm} and \ref{ex:cm}).  The monodromy action is an $S_n$-action on the $n$ vertical cycles.  This action generalizes to the case of many horizontal wires (see Section \ref{sec:whurl}).  We observe that
\begin{itemize}
\item the measurements, being symmetric functions of the variables, are preserved by the symmetric group action;
\item the measurements generate the ring of invariants by a theorem of Newton;
\item calculating the vertex weights from the measurements is equivalent to finding the roots of a polynomial equation, and generically any two solutions are connected by the action of the Galois group;
\item the variables are nonnegative real numbers if and only if the boundary measurements form a {\it {totally positive sequence}} -- this is part of the {\it {Edrei-Thoma theorem}}, see \cite{Br, Edr, Th};
\item describing the range of cycle measurements is equivalent to a version of the classical {\it {problem of moments}}, see \cite{ShT}.
\end{itemize}

As we shall see in Section \ref{sec:lsym}, the case of multiple horizontal wires on a cylinder leads to {\it loop symmetric functions}, introduced in \cite{LP}.

\subsection{Comparison of examples}
In the following table, the first column is the case of symmetric functions.  The second column is the generalization to multiple horizontal wires, discussed in Sections \ref{sec:whirlcurl} and \ref{sec:lsym}.  The third column concerns the case of wiring diagrams in a disk as in Example \ref{ex:disk}, with sources concentrated on one side of the disk and sinks on the other.
The fourth column concerns Postnikov's networks in a disk; the connection between Postnikov's networks and ours is discussed in Section \ref{ssec:disk}.  In the table, TNN is short for totally nonnegative.

\begin{landscape}
\newpage
\begin{tabular}{ |p{4cm} |p{4cm} |p{4cm} | p{4cm}| p{4cm}|}
\hline
 & cylinder with single horizontal wire, $n=2$ case in Figure \ref{fig:wire4} & cylinder with multiple horizontal wires in same direction & reduced wiring diagrams of permutations in a disk & Postnikov's reduced plabic graphs in a disk \cite{Pos}\\
\hline
  boundary and cycle measurements & complete homogeneous and power sum symmetric functions & complete homogeneous and power sum loop symmetric functions & matrix entries of the corresponding product of Chevalley generators, cf. Ex. \ref{ex:disk} & Pl\"ucker coordinates of a point on Grassmannian \\
\hline
local transformations & usual $S_n$-action on polynomial ring & $S_n$ action via whurl relations of Section \ref{sec:whurl} & Yang-Baxter moves \eqref{eq:L}& transformations of plabic networks \cite[Section 12]{Pos} \\
\hline
  measurements generate monodromy invariants? & yes (Newton) & yes (see Theorem \ref{thm:LP4} and \cite{LPLSym}) & monodromy is trivial & monodromy is trivial\\
\hline
  same measurements implies connected by transformations? & yes, Galois symmetry of roots of a polynomial & yes, \cite[Theorem 8.3]{LP} and Theorem \ref{thm:ntol} & yes, \cite{Lus, BFZ} & yes, \cite{Pos}\\
\hline
  range of measurements for nonnegative vertex weights & rational TP sequences (special case of Edrei-Thoma theorem \cite{Edr, Th, Br}) & TNN elements of rational unipotent loop group (Theorem \ref{thm:ratloop}) & TNN elements of the unipotent group & TNN elements of a Grassmannian\\
\hline
  solution for vertex weights from measurements & solving polynomial equation & ASW factorization \cite{LP} & chamber ansatz \cite{BFZ} & work of Talaska \cite{Tal2}\\
\hline
stratification & stratification by numbers and multiplicities of  roots& whirl-curl-Bruhat cells (Section \ref{sec:whirlcurl})& Lusztig's cell decomposition of $(U_n)_{\geq 0}$& positroid stratification of TNN Grassmannian \cite{Pos}\\
\hline
semigroup structure& gluing cylinders corresponds to multiplying polynomials&
gluing cylinders corresponds to product of loops into $GL_n$ & gluing networks corresponds to product of matrices & ?\\
\hline
\end{tabular}
\end{landscape}

\part{Boundary measurements on oriented surfaces}
\section{Networks and measurements} \label{sec:net}
\subsection{Oriented networks on surfaces}
Let $S$ be a compact connected oriented surface, possibly with boundary.  By an oriented network $N$ embedded in $S$, we mean a finite set of vertices in $S$, possibly lying on the boundary, and a finite set of directed edges (continuous paths) in $S$, such that the endpoints of each edge are vertices, and the interior of each edge does not intersect the boundary or any other edge.  We shall allow edges to be closed oriented loops, that is, to have no endpoints.  Thus each edge may have 0, 1, or 2 distinct endpoints.  Two networks $N$ and $N'$ are considered the same if they are combinatorially equivalent; that is, if $N$ and $N'$ can be obtained from each other by a continuous deformation which does not change the combinatorial structure.

We shall consider the class of oriented networks $N$ embedded in $S$, satisfying the following {\it simple-crossing} condition:
\begin{quote}
Each non-boundary vertex of $N$ has in-degree 2 and out-degree 2, and the incoming edges (resp. outgoing edges) are adjacent. 
\begin{figure}[h!]
    \begin{center}
    \input{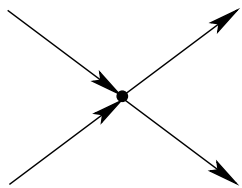}
    \end{center}
    \caption{A simple-crossing vertex.}
    \label{fig:wire1}
\end{figure}
\end{quote}
Our oriented networks will typically be vertex weighted: each interior vertex $v$ is given a  weight $x_v$. We shall often denote vertex weights just by $x,y,z, \ldots$ and write them next
to the vertices in figures. The vertex weights will in some cases take values in a specific field, such as the real numbers, and in other cases be considered indeterminates.  We shall also make the assumption that each boundary vertex has degree less than or equal to one.  Thus a boundary vertex is either a source, a sink, or an isolated vertex.

Vertex-weighted networks are usually denoted as $\N$; the underlying unweighted network of $\N$ is denoted $N(\N)$.  Many of our definitions depend only on the underlying unweighted network.  In these cases, we will say that $\N$ has a property if $N(\N)$ has that property.

\subsection{Polygon representation of oriented surfaces}\label{ssec:slice}
It is well known that oriented surfaces can be represented as polygons with certain edges identified.  Let $S$ be a compact oriented surface with $b$ boundary components and genus $g$.  Then $H_1(S,\Z) \simeq \Z^a$, where $a = 2g + \max(b-1,0)$ (\cite[Theorem 26C]{AS}).  Then $S$ can be cut into a polygon with edges identified using exactly $a$ slices $s_1,s_2,\ldots,s_a$ of $S$ (unless $S$ is a sphere).  We think of each slice $s_i$ as an oriented path on $S$, and assume that the slices intersect the edges of our network $\N$ transversally, and furthermore, do not intersect the vertices.

Let $p$ be a cycle in $\N$, or more generally a closed loop in $S$.  Suppose $p$ crosses the slice $s_i$ from right to left $r_i$ times and from left to right $l_i$ times.  Then choosing an appropriate basis for $H_1(S,\Z)  \simeq \Z^a$, we have $[p] = (r_1-l_1,r_2-l_2,\ldots,r_a-l_a) \in H_1(S,\Z)$.  In other words, the homology class of $p$ is completely determined by which slices it crosses and in which directions.

We shall always take homology with $\Z$-coefficients.

\subsection{Highway paths and cycles}
Suppose the edges incident to a simple-crossing vertex with weight $x$ are  $e_1,e_2,f_1,f_2$ in counterclockwise order, where $e_i$ are incoming and $f_i$ are outgoing.  Then we call
 the edges $e_1, f_1$ the {\it highway}, and $e_2,f_2$ the {\it underway}.  A {\it highway path} $p$ in $\N$ is a directed walk in $\N$, such that at each simple crossing, one must 
exit via the highway if one enters via the underway. In other words, in a highway path it is not allowed to exit through $f_2$ if one has entered through $e_2$, while the other three 
choices of the edges to enter and to exit are allowed. The {\it weight} $\wt(p)$ of a highway path $p$ is equal to the product of the vertex weights $x_v$ over all vertices $v$ (with 
multiplicities) traversed by $p$ such that $p$ entered and exited via the highway.
\begin{figure}[h!]
    \begin{center}
    \input{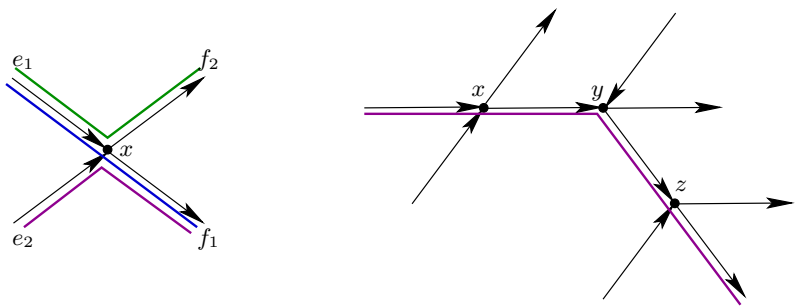}
    \end{center}
    \caption{The three ways a highway path can go through a vertex with weight $x$; a fragment of a highway path, contributing the factor $x z$ to the weight of the whole path.}
    \label{fig:wire2}
\end{figure}
Similarly, a {\it {highway cycle}} $p$ in $\N$ is a closed directed walk in $\N$, such that at each simple crossing, one must exit via the highway if one enters via the underway.
 The weight $\wt(p)$ of a highway cycle $p$ is equal to the product of the vertex weights over all vertices (with multiplicities) traversed by $p$ such that $p$ entered and exited via 
the highway. Note that in the case of a cycle to check the contribution of a specific vertex to the weight one must start walking around the cycle at another vertex.  Also note that we do not distinguish starting vertices of cycles.

\subsection{Boundary and cycle measurements} \label{sec:bac}
In the following, we assume that a network $\N$ on $S$, as in the previous subsections, has been fixed.

A cycle $p$ in the network $\N$ defines a homology class $[p] \in H_1(S):= H_1(S,\Z)$.  If $p$ and $q$ are two paths with the same starting and ending points, then we say that $p$ and 
$q$ are homologous if the cycle $p \cup q^{*}$ obtained from concatenating $p$ with the reverse of $q$ has trivial homology class.  In this case we write $p \sim q$.

Now suppose that we are given two boundary vertices $u$ and $v$, and a path $p$ from $u$ to $v$ to in $\N$.  Then we define the {\it boundary measurement} 
$$
M^{[p]} = \sum_{q: \; q \sim p} \wt(q)
$$
where the summation is over highway paths from $u$ to $v$ which are homologous to $p$.  
\begin{figure}[h!]
    \begin{center}
    \input{wire3.pstex_t}
    \end{center}
    \caption{}
    \label{fig:wire3}
\end{figure}

\begin{example}\label{ex:bm}
Figure \ref{fig:wire3} shows a network on a cylinder and a highway path $p$ between the boundary vertices. We have $\wt(p) = x$, and the whole boundary measurement is $M^{[p]} = x + y$.
 This is because there is exactly one other highway path of the same homology type, with weight $y$.
\end{example}

Suppose one has a cycle $p$ in $\N$.   The {\it multiplicity} $\mult(p)$ of $p$ is the maximum $k$ such that $p$ is obtained from repeating $k$ times some other (shorter) cycle $p'$.  We define the {\it cycle measurement} 
$$
M^{[p]} = \sum_{q: \; [q] = [p]} \frac{1}{\mult(q)}\wt(q)
$$
where the summation is over highway cycles with the same homology class $[q]$ as $[p]$.

\begin{example}\label{ex:cm}
 Figure \ref{fig:wire4} shows a network on a cylinder and a highway cycle $p$ that goes around the cylinder twice. We have $\wt(p) = \frac{1}{2} x^2$, since the multiplicity of this
 cycle is $\mult(p)=2$. The whole boundary measurement is $M^{[p]} = \frac{1}{2} x^2 + \frac{1}{2} y^2$, since there is exactly one other highway cycle of the same homology type, 
with weight $y^2$ and multiplicity $2$.
\end{example}

\begin{figure}[h!]
    \begin{center}
    \input{wire4.pstex_t}
    \end{center}
    \caption{}
    \label{fig:wire4}
\end{figure}

The summations in boundary and cycle measurements are a priori infinite sums.  We shall establish that they are in fact finite.

\subsection{Torus with one vertex}
To illustrate taking the measurements, let us consider the following simple network $\N$ on a torus, as shown in Figure \ref{fig:wire33}. 
\begin{figure}[h!]
    \begin{center}
    \input{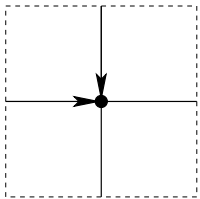}
    \end{center}
    \caption{A simple network on a torus.}
    \label{fig:wire33}
\end{figure}
It has one vertex and two closed cycle edges attached to it. Let $x$
be the weight of the vertex, and associate the horizontal and the vertical edges with the generators $(1,0)$ and $(0,1)$ (respectively) of the homology group $H_1(S,\Z) \simeq \mathbb Z^2$. Then any 
highway cycle in $\N$ consists of building blocks that have homology either $(1,1)$ or $(0,1)$. Therefore highway cycles of a fixed homology type are in bijection with {\it {necklaces}} with prescribed number of black and white beads,
corresponding to the $(1,1)$ and $(0,1)$ building blocks, respectively. Each necklace in addition has to be counted with coefficient equal to the inverse of the {\it {multiplicity}} of the necklace, 
where the latter is defined similarly to the multiplicity of a cycle. This gives us the following beginning of a table, where row and column tell us the homology type of the highway cycles we consider, and the 
entry tells us the corresponding cycle measurement.

\begin{center}
\begin{tabular}{c|c|c|c|c|c|c|c}
& $(-,0)$& $(-,1)$& $(-,2)$& $(-,3)$& $(-,4)$& $(-,5)$& $(-,6)$\\
\hline 
$(0,-)$& & $x$& $x^2/2$& $x^3/3$& $x^4/4$& $x^5/5$& $x^6/6$\\ 
\hline 
$(1,-)$& $0$& $1$& $x$& $x^2$& $x^3$& $x^4$& $x^5$\\ 
\hline 
$(2,-)$& $0$& $0$& $1/2$& $x$& $3x^2/2$& $2x^3$& $5x^4/2$\\ 
\hline 
$(3,-)$& $0$& $0$& $0$& $1/3$& $x$& $2x^2$& $10x^3/3$\\ 
\hline 
$(4,-)$& $0$& $0$& $0$& $0$& $1/4$& $x$& $5x^2/2$\\ 
\hline 
$(5,-)$& $0$& $0$& $0$& $0$& $0$& $1/5$& $x$\\ 
\hline 
$(6,-)$& $0$& $0$& $0$& $0$& $0$& $0$& $1/6$\\ 
\end{tabular}
\end{center}

For example, the entry $10x^3/3$ corresponds to four necklaces one can form from three black and three white beads. Three of those necklaces have multiplicity $1$ and contribute $x^3$ into the 
measurement, while one of them where black and white beads alternate has multiplicity $3$ and contributes $x^3/3$.  It is easy to see that the cycle measurement with homology type $(m,n)$ equals 
$\frac{1}{m} {{n-1} \choose {m-1}}$ if $n \geq m$, and $0$ otherwise.

\subsection{Flows and intersection products in homology}
The homology group $H_1(S)$ is a free abelian group which has an intersection pairing $\ip{.,.}$ defined as follows.  Given two cycles $p$ and $q$ in $S$ that intersect transversally, we define $\ip{p,q}$ to be \begin{align*}\ip{p,q}=&\#\{\text{intersections where $q$ crosses $p$ from the right}\}\\
&-\#\{\text{intersections where $q$ crosses $p$ from the left}\}
\end{align*}  We say that the {\it flow of $q$ through $p$} is equal to $\ip{p,q}$.  It is a basic fact \cite[Section 31]{AS} in the topology of surfaces that $\ip{p,q}$ depends only on the homology classes of $p$ and $q$.    More generally, one can define the flow of $q$ through $p$, where $q$ is a $1$-chain with boundary lying in the boundary $\partial S$, and $p$ is a cycle in $S$.  Again this flow does not change if $p$ (resp. $q$) are replaced by homologous cycles (resp. chains).  

Let us now describe these flows combinatorially for subnetworks of $N$.  Let $p$ be a cycle in $N$, and let $q$ be a subnetwork of $N$ (possibly with multiplicity) which is conservative in the interior of $S$.  In other words, for every vertex $v \in q$ in the interior of $S$, the number of incoming edges is equal to the number of outgoing edges.  Thus $q$ could be a cycle, or a path starting and ending on the boundary, or $N$ itself.  Then the {\it flow of $q$ through $p$} is defined as follows.  Let $p = (e_1,v_1,e_2,v_2,\ldots, e_r,v_r)$ be the sequence of directed edges and vertices in $p$.  For each triple $(e_i,v_i, e_{i+1})$ of adjacent edges, define 
\begin{align*}
a_i& =  \#\{\text{edges of $q$ entering $v_i$ to the right of $e_i$ and $e_{i+1}$}\}\\
&-\#\{\text{edges of $q$ exiting $v_i$ to the right of $e_i$ and $e_{i+1}$}\}\\
&-\#\{\text{edges of $q$ entering $v_i$ to the left of $e_i$ and $e_{i+1}$}\}\\
&+\#\{\text{edges of $q$ exiting $v_i$ to the left of $e_i$ and $e_{i+1}$}\}.
\end{align*}
Note that edges of $q$ which are equal to $e_i$ or $e_{i+1}$ are considered neither to the left, nor to the right, and thus not counted.  Then the flow $\ip{p,q}$ of $q$ through $p$ is equal to $\sum_i a_i/2$.  
\begin{figure}[h!]
    \begin{center}
    \input{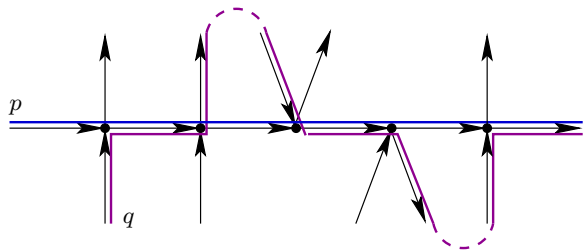}
    \end{center}
    \caption{The contribution of the shown fragments of $q$ and $p$ to $\ip{p,q}$ is $(1+1-1-1+1)/2 = 1/2$.}
    \label{fig:wire5}
\end{figure}

The main fact connecting homology and flows we will need is the following Lemma, which follows easily from the definitions.
\begin{lemma}\label{L:cons}
Suppose that $p$ is a cycle in $N$ and $q$ is a subnetwork of $N$ conservative in the interior of $S$.  Then $\ip{p,q} = 0$ if $[p]=0 \in H_1(S)$.
\end{lemma}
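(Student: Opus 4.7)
The plan is to reduce Lemma \ref{L:cons} to the standard topological fact that the intersection pairing between $H_1(S)$ and $H_1(S,\partial S)$ descends to homology, by identifying the combinatorial flow $\ip{p,q}$ with a topological intersection number.

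First, I would check that $q$, viewed as a 1-chain in $S$ with integer coefficients (possibly with the stated multiplicities), represents a class in $H_1(S,\partial S)$. The conservativity assumption at each interior vertex says that in-degree equals out-degree, which is precisely the condition that $\partial q$ is a 0-chain supported on $\partial S$. Thus $[q] \in H_1(S, \partial S)$ is well-defined.

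Second, I would compare the combinatorial formula for $\ip{p,q}$ with a topological intersection number. Perturb $p$ to a smooth immersed curve $\tilde{p}$ homologous to $p$, avoiding all vertices of $N$ and meeting the edges of $q$ transversally. Near each vertex $v_i$ of $p$, the pair of edges $e_i, e_{i+1}$ is replaced by a short arc passing through the corner cut out by $e_i$ and $e_{i+1}$. By a local case analysis, the signed count of crossings of $\tilde{p}$ with edges of $q$ incident to $v_i$ is exactly $a_i/2$: edges of $q$ entering or exiting $v_i$ on one side of the corner contribute a crossing with one sign, those on the other side with the opposite sign, while edges of $q$ equal to $e_i$ or $e_{i+1}$ are not crossed at $v_i$ at all. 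The $1/2$ accounts for the fact that the choice of side of perturbation at $v_i$ can be averaged, and equivalently that edges of $q$ coinciding with edges of $p$ are shared between two vertex contributions. Summing over $i$ gives $\ip{p,q} = \tilde{p} \cdot q$ as a topological intersection number.

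Third, I would invoke \cite[Section 31]{AS}: the intersection pairing $H_1(S) \otimes H_1(S, \partial S) \to \Z$ is well-defined on homology classes. Since $[p] = 0 \in H_1(S)$, this gives $\tilde{p} \cdot q = 0$, hence $\ip{p,q} = 0$.

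The main obstacle I expect is the local case analysis in the second step: correctly interpreting the perturbation at each vertex $v_i$ and verifying that the $1/2$ factor in $a_i/2$ matches the topological count, especially for edges of $q$ that coincide with $e_i$ or $e_{i+1}$ (which are explicitly excluded from the ``left/right'' counts but must still be accounted for consistently at both endpoints). Once this bookkeeping is settled, homological invariance of the intersection pairing finishes the argument.
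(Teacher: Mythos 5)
Your proposal is correct and follows exactly the route the paper intends: the paper gives no explicit proof, stating only that the lemma ``follows easily from the definitions'' after having already recorded (citing \cite[Section 31]{AS}) that the flow of a conservative $1$-chain through a cycle depends only on the homology classes involved. Your second step, identifying the combinatorial $\sum_i a_i/2$ with the topological intersection number via a perturbation of $p$ off the vertices, is precisely the bookkeeping the paper leaves implicit, and once that identification is made the vanishing for $[p]=0$ is immediate.
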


We say that a highway path or cycle $q$ {\it strongly contains} a highway cycle $$p = (e_1,v_1,e_2,v_2,\ldots, e_r,v_r),$$  if $q$ contains a subwalk of the form $v_i,e_{i+1},v_{i+1},\ldots,e_r,v_r,e_1,\ldots,v_{i-1},e_i,v_i,e_{i+1}$.  We say that $q$ strongly contains a collection of highway cycles if it strongly contains each of the cycles.

\begin{lemma}\label{lem:strongcontain}
Suppose $q$ is either a highway path in $N$ starting and ending on the boundary, or a highway cycle with non-trivial homology class.
Then $q$ does not strongly contain a collection of highway cycles $p_1,p_2,\ldots,p_r$ in $N$ such that $[p_1]+[p_2]+\cdots+[p_r] = 0 \in H_1(S)$.  
\end{lemma}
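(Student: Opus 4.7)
The plan is to assume for contradiction that $q$ strongly contains $p_1,\dots,p_r$ with $\sum_i [p_i]=0$, and to combine Lemma~\ref{L:cons} with the highway condition to force $q$ into an infinite backward chain of vertex visits.

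Since $P := p_1+\cdots+p_r$ is null-homologous, Lemma~\ref{L:cons} and the bilinearity of the flow pairing yield $\sum_i \ip{p_i,q} = \ip{P,q} = 0$. I would next evaluate each $\ip{p_i,q}$ as a sum over the vertices of $p_i$: at a simple-crossing vertex the three allowed pair-types are $\mathrm{HH}$, $\mathrm{HU}$, and $\mathrm{UH}$ (the fourth, $\mathrm{UU}$, being forbidden by the highway condition), and a case analysis of the combinatorial flow formula shows that the contribution at a vertex $v_j$ where $p_i$ uses pair-type $\tau$ is $M_{\mathrm{HU}}(v_j)+M_{\mathrm{UH}}(v_j)$ if $\tau=\mathrm{HH}$ and $-M_{\mathrm{HH}}(v_j)$ if $\tau\in\{\mathrm{HU},\mathrm{UH}\}$, where $M_\tau(v)$ counts the visits of $q$ at $v$ of pair-type $\tau$. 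Summing over $i$ and $j$ then gives the balance identity
\[
\sum_v P_{\mathrm{HH}}(v)\bigl(M_{\mathrm{HU}}(v)+M_{\mathrm{UH}}(v)\bigr) \;=\; \sum_v \bigl(P_{\mathrm{HU}}(v)+P_{\mathrm{UH}}(v)\bigr)\,M_{\mathrm{HH}}(v),
\]
where $P_\tau(v)$ denotes the number of $p_i$'s passing through $v$ with pair-type $\tau$.

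I would combine this identity with the strong-containment inequalities $M_\tau(v)\ge P_\tau(v)$, the exclusion of $\mathrm{UU}$ at every visit of $q$, and a local counting at each vertex to force the following rigidity: every visit of $q$ at a vertex $v$ of $\bigcup_i p_i$ uses a pair-type exhibited at $v$ by some $p_i$, and in particular its in-edge lies on $\bigcup_i p_i$. Tracing $q$ backward from any such visit, the in-edge at each step points back to an earlier visit at another vertex of $\bigcup_i p_i$; iterating produces an infinite backward chain of prior visits. This contradicts the finiteness of $q$: in the path case, $q$ begins at a boundary vertex, which is not on $\bigcup_i p_i$ since vertices of cycles are interior; in the closed-cycle case, the walk of $q$ is forced to be supported in $\bigcup_i p_i$, whence $[q]$ lies in the $\Z$-span of $\{[p_i]\}$, and a finer analysis using $\sum_i[p_i]=0$ contradicts $[q]\neq 0$.

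The main obstacle is the rigidity step: extracting sufficient pointwise constraints from the single global balance identity, especially at vertices where $p_i$'s of both $\mathrm{HH}$ and turn pair-types meet. I expect this can be handled by induction on the length of $q$: removing the subwalk of $q$ realizing the strong containment of some $p_i$ yields a shorter highway walk with the same endpoints and homology class, and a minimal-counterexample setup forces the required rigidity.
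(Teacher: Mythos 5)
Your setup is sound and reproduces half of the paper's argument: the identity $\sum_i \ip{p_i,q}=0$ via Lemma~\ref{L:cons}, and the local contribution table (your formulas are correct up to a uniform factor of $1/2$: an $\mathrm{HH}$ passage of $p_i$ at $v$ picks up $+\tfrac12(M_{\mathrm{HU}}+M_{\mathrm{UH}})$ from $q$, a turn picks up $-\tfrac12 M_{\mathrm{HH}}$, using that $q$ has no $\mathrm{UU}$ visits). But the obstacle you flag at the end is a genuine gap, and the paper's way around it is an idea absent from your proposal: apply Lemma~\ref{L:cons} a \emph{second} time, with the conservative subnetwork taken to be the whole network $N$ rather than $q$. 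At a vertex, an $\mathrm{HH}$ passage of $p_i$ contributes $+1$ to $\ip{p_i,N}$ and a turn contributes $0$, so $\ip{p_i,N}\ge 0$; since $\sum_i\ip{p_i,N}=0$, every passage of every $p_i$ is a turn, i.e.\ $P_{\mathrm{HH}}\equiv 0$. Only then does your balance identity become a sum of terms of one sign: its left side vanishes, and the right side $\sum_v(P_{\mathrm{HU}}+P_{\mathrm{UH}})M_{\mathrm{HH}}=0$ forces $M_{\mathrm{HH}}(v)=0$ at every vertex visited by some $p_i$. Without this preliminary step the identity mixes positive and negative terms (write $M_\tau=P_\tau+E_\tau$ and you get $\sum_v P_{\mathrm{HH}}(E_{\mathrm{HU}}+E_{\mathrm{UH}})=\sum_v(P_{\mathrm{HU}}+P_{\mathrm{UH}})E_{\mathrm{HH}}$, which pins down nothing pointwise), and I do not see how your proposed induction rescues it: deleting the closed subwalk realizing the containment of $p_i$ changes the homology class of $q$ by $[p_i]$ (not ``the same homology class'') and can destroy the strong containment of the other $p_j$.

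Two further points would still need repair even granting the rigidity. First, your rigidity statement is too strong and in fact false: at a vertex where every $p_j$ makes, say, only the left turn $(e_1,f_2)$, a visit of $q$ of type $\mathrm{UH}$ merely \emph{bounces off}, and its in-edge $e_2$ need not lie on $\bigcup_i p_i$; so the backward chain can exit $\bigcup_i p_i$. The usable conclusion is forward-looking: once $q$ enters a vertex via the \emph{incoming} edge of $p_i$ there, it must continue along $p_i$ (at a left turn because $\mathrm{HH}$ is now excluded, at a right turn because $\mathrm{UU}$ is excluded by the highway condition). Strong containment guarantees $q$ does this at some point, so $q$ is trapped on $p_i$ forever --- impossible for a path ending on the boundary, and forcing $q$ to be a repetition of $p_i$ in the cycle case. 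Second, your endgame for the cycle case is not a contradiction as stated: $[q]$ lying in the $\Z$-span of the $[p_i]$ with $\sum_i[p_i]=0$ is compatible with $[q]\neq 0$. What the trapping argument actually gives is that all the $p_j$ coincide up to multiplicity with a single primitive cycle $p'$, so $0=\sum_j[p_j]$ is a positive multiple of $[p']$ in the free group $H_1(S)$, whence $[p']=0$ and $[q]=0$.
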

\begin{proof}
Suppose such a collection $p_1,\ldots,p_r$ exists.  Let us consider the flow of $N$ through $p_i$.  Since $p_i$ is a highway cycle, at each vertex the contribution to the flow is either $0$ or $1$.  Thus the flow $\ip{p_i,N}$ is nonnegative.  But $\sum_i \ip{p_i,N} = 0$ by Lemma \ref{L:cons}, since by assumption $N$ is conservative in the interior of $S$. It follows that at each vertex $v$ on each $p_i$ one has the following local situation: the edges belonging to $p_i$ are adjacent when viewed in the cyclic order around $v$.  Thus each vertex $v$ on $p_i$ either turns left or turns right.  

Now let us consider the flow $\ip{p_i,q}$ of $q$ through $p_i$.  Let $(e,v,e')$ be two consecutive edges of $p_i$ where $p_i$ turns left at $v$.  Each time $q$ arrives at $v$, one of the following happens:  (1) $q$ bounces off $p_i$ (that is, the four edges used are all distinct, and locally the two paths are tangent), or (2) $q$ follows $p_i$ through $v$ using both $e$ and $e'$, or (3) $q$ enters $v$ through $e$, and exits through another edge.  In case (3), one has a flow of $-1/2$.  If $p_i$ turns right at $v$, we have the cases: (1) $q$ bounces off $p_i$, or (2) $q$ follows $p_i$ through $v$ using both $e$ and $e'$, or (3) $q$ exits $v$ through $e'$, but enters through an edge not equal to $e$.  In case (3), one has a flow of $-1/2$.  Thus $\ip{p_i,q}$ is nonpositive.  But $\sum_i  \ip{p_i,q} = 0$, since $q$ is conservative.  It follows that case (3) can never occur.

Since it is assumed that $q$ strongly contains each of the $p_i$, it follows that $q$ must be a highway cycle which consists of traversing $p_i$ multiple times.  But then the $p_i$ must coincide as well up to multiplicity.  Since $[p_1]+[p_2]+\cdots+[p_r] = 0 \in H_1(S)$ it follows that $[q] = 0$, contradicting the assumption that $q$ has non-trivial homology.
\end{proof}

\begin{lemma}\label{lem:cone}
Let $N$ be a finite oriented simple-crossing network in $S$.  For each highway path $q$ in $N$, either starting and ending on the boundary, or a highway cycle with non-trivial homology class, we define a (possibly empty) semigroup $\Gamma_q \subset H_1(S) \simeq \Z^a$ by taking as generators the classes $[p_1],[p_2],\ldots,[p_r]$ of highway cycles strongly contained in $q$.  Then
\begin{enumerate}
\item
The $0$-vector does not lie in $\Gamma_q$.  Thus there exists $w_q \in \Z^a$ so that $(w_q,p_i) > 0$ for each $i$.  Here $(.,.)$ denotes the usual pairing on $\Z^a$.
\item
Only finitely many semigroups $\Gamma_q$ can be obtained in this way, as we let $q$ vary.
\end{enumerate}
\end{lemma}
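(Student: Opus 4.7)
The plan is to derive (1) as an application of Lemma \ref{lem:strongcontain} together with standard convex geometry, and to derive (2) by showing that $\Gamma_q$ is always generated by strongly contained highway cycles of bounded length.

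For (1), suppose for contradiction that $0 \in \Gamma_q$. Then there exist nonnegative integers $c_1,\ldots,c_r$, not all zero, with $\sum_i c_i [p_i] = 0 \in H_1(S)$. The multiset consisting of $c_i$ copies of $p_i$ for each $i$ is a collection of highway cycles strongly contained in $q$ whose total homology class vanishes, which directly contradicts Lemma \ref{lem:strongcontain}, since by hypothesis $q$ is either a highway path with endpoints on the boundary or a highway cycle with non-trivial homology. Hence $0 \notin \Gamma_q$, so the finitely generated rational polyhedral cone $\R_{\geq 0}\Gamma_q \subset \R^a$ is pointed. Hyperplane separation produces a linear functional strictly positive on all generators, and a rationality argument lets us choose this functional as an integer vector $w_q \in \Z^a$ with $(w_q,[p_i])>0$ for every $i$.

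For (2), introduce the auxiliary directed graph $G$ whose vertices are pairs (vertex of $N$, incoming edge at that vertex) and whose arcs correspond to highway transitions; highway walks in $N$ biject with walks in $G$, and every vertex of $G$ has out-degree at most $2$, since an underway incoming edge admits only the one highway outgoing transition. Set $L := 2|V(G)|$. The key claim is that $\Gamma_q$ is generated by the homology classes of strongly contained highway cycles of length $\leq L$. Granting this claim, only finitely many subsets of the finite set of highway cycles in $N$ of length $\leq L$ can arise as generators, so only finitely many semigroups $\Gamma_q$ occur.

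To prove the claim, I plan to induct on cycle length. For a strongly contained cycle $p$ with $|p|>L$, pigeonhole produces a $G$-vertex $x$ visited by $p$ at least three times; the out-degree bound then forces two of these visits (at $p$-positions $t_1<t_2$) to share the same outgoing arc, so that $e_{t_1+1}=e_{t_2+1}$ as edges of $p$. Splitting $p$ at $x$ into an inner cycle $p_1$ (positions $t_1$ to $t_2$) and an outer cycle $p_2$ (positions $t_2$ through $t_1$ the other way round) gives $[p]=[p_1]+[p_2]$ with both $p_i$ strictly shorter than $p$. To conclude, one shows that both $p_1$ and $p_2$ are themselves strongly contained in $q$: taking a subwalk $Q$ of $q$ witnessing SC of $p$ and rotating it to begin at the state $x$, the inner cycle's required subwalk appears as $Q$'s initial segment of length $t_2-t_1+1$, while the outer cycle's required subwalk appears as $Q$'s terminal segment, with the identity $e_{t_1+1}=e_{t_2+1}$ supplying the matching ``extra edge'' in both cases. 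The main obstacle is justifying the re-rotation of $Q$ at $x$: when $q$ is itself a cycle this follows directly from cyclicity of $q$, but in the boundary-to-boundary path case $q$ lacks cyclic symmetry, and one must either locate separate SC witnesses for $p_1$ and $p_2$ inside $q$, or replace the splitting argument with an Eulerian-decomposition of the multiset of $G$-arcs traversed by $p$.
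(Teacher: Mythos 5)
Your part (1) is correct and is essentially the paper's own argument: a vanishing nonnegative integer combination of the $[p_i]$ yields a multiset of strongly contained highway cycles with total homology class zero, contradicting Lemma \ref{lem:strongcontain}, and the integral separating functional $w_q$ then comes from standard pointed-cone/rationality considerations.

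Part (2) contains a genuine gap, and it sits exactly where you suspected. Your induction needs that \emph{both} pieces $p_1$ and $p_2$ of the splitting are strongly contained in $q$, and this is false in general; moreover neither of your proposed repairs closes it. A witness $Q$ for the strong containment of $p$ in $q$ is a specific subwalk of $q$ that enters $p$ at some vertex $v_i$, traverses $p$ once around, and repeats one more edge; outside of $Q$ the walk $q$ need not follow $p$ at all, so $Q$ cannot be ``rotated'' to start at $x$ --- and this obstruction is just as present when $q$ is a cycle as when it is a boundary-to-boundary path, so your claim that cyclicity of $q$ handles the cycle case is incorrect. Concretely, if $v_i$ lies strictly inside the inner segment $[t_1,t_2]$, then the edges of $p_1$ followed by the repeated edge $e_{t_1+1}$ do not occur consecutively in $Q$ (they are interrupted by the entire outer segment), and if $q$ meets $p$ only along $Q$ then $p_1$ is simply not strongly contained in $q$. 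A short bookkeeping computation with positions shows that in general exactly one of $p_1,p_2$ inherits a consecutive witness from $Q$ (both do only when $v_i\in\{t_1,t_2\}$), so the identity $[p]=[p_1]+[p_2]$ does not express $[p]$ as a nonnegative combination of classes of shorter \emph{strongly contained} cycles, and the induction stalls. The Eulerian-decomposition alternative you mention has the same defect: the simple constituents need not be strongly contained in $q$ either.

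For comparison, the paper dispenses with the length-bound machinery: it calls a highway cycle \emph{irreducible} if it strongly contains no shorter highway cycle, shows that irreducibility is equivalent to having no repeated edge (so the irreducible highway cycles of $N$ form a finite set, which is what delivers the finiteness of the possible $\Gamma_q$), and performs the reduction by splitting $p$ between two occurrences of a repeated edge \emph{inside the witness $Q$}, so that the inner piece $p'$ is witnessed by a literal consecutive subwalk of $Q$ and is therefore automatically strongly contained in $q$. If you want to salvage your route, you should at minimum arrange your splitting the same way, so that the piece you keep track of is a consecutive subwalk of $Q$; the complementary piece $p\setminus p'$ remains the delicate point in either approach and needs an explicit argument (for instance, showing that $\Gamma_q$ is contained in a pointed cone spanned by finitely many irreducible cycles determined by $q$, which is all that the application in Theorem \ref{thm:fin} actually requires).
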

\begin{proof}
(1) follows immediately from Lemma \ref{lem:strongcontain} (note that the $p_i$ in Lemma \ref{lem:strongcontain} can repeat).  To establish (2), let us say that a highway cycle $p$ is {\it irreducible} if it does not strongly contain a shorter highway cycle.  If a highway cycle $p$ strongly contains another highway cycle $p'$ then $p \setminus p'$ is still a highway cycle, and we have $[p] = [p']+[p \setminus p']$.  It follows that $\Gamma_q$ is generated by the classes of the irreducible highway cycles strongly contained in $q$.  But a highway cycle $p$ is irreducible if and only if no edge is repeated.  It follows that there are only finitely many irreducible highway cycles in $N$, and thus only finitely many possible $\Gamma_q$.
\end{proof}

\subsection{Polynomiality}
Let $\N$ be a vertex-weighted simple-crossing oriented network in $S$.
\begin{theorem}\label{thm:fin}
The boundary measurements $M^{[p]}$ are always homogeneous polynomials in the vertex weights.  The cycle measurements $M^{[p]}$ are homogeneous polynomials in the vertex weights except possibly when $[p] = 0 \in H_1(S)$.
\end{theorem}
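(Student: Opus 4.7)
The plan is to address polynomiality (the defining sum of $M^{[p]}$ is finite) and homogeneity (all surviving monomials have the same total degree) separately.

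\emph{Polynomiality.} I would bound the edge-length of any highway walk $q$ contributing to $M^{[p]}$ via a ``core plus cycles'' decomposition. Whenever $q$ repeats an edge, excise a maximal highway sub-cycle at that repetition, leaving a shorter highway walk with the same endpoints. Iterating produces
\[
q = q_0 + p_1 + p_2 + \cdots + p_r,
\]
with $q_0$ edge-simple and each $p_i$ a highway cycle strongly contained in $q$. Since $q_0$ uses no edge twice in the finite network $N$, there are only finitely many possibilities for it, so $[q_0]$ and hence $\sum_i [p_i] = [p] - [q_0]$ lie in finite subsets of $H_1(S)$. By Lemma \ref{lem:cone}(1), for each $q$ there exists $w \in \Z^a$ with $(w, [p_i]) > 0$ for every generator of the semigroup $\Gamma_q$, yielding an upper bound on $r$ in terms of $(w, [p] - [q_0])$ and the minimum positive value of $(w, [p_i])$. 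Lemma \ref{lem:cone}(2) makes this bound uniform over the finitely many semigroups $\Gamma_q$ that arise. Thus the edge-length of $q$ is bounded, only finitely many $q$ contribute, and $M^{[p]}$ is a polynomial.

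\emph{Homogeneity.} I would show that the degree of $\wt(q)$, which equals the number $N_{HH}(q)$ of highway-to-highway transitions in $q$, depends only on $[q]$ (and on the endpoints, in the boundary case). Define a $1$-cochain $c$ on edges of $N$ by setting $c(e) = +1$ if $e$ is highway-out at its source and highway-in at its target, $c(e) = -1$ if $e$ is underway at both endpoints, and $c(e) = 0$ otherwise. At each interior vertex $v$, letting $a_v, b_v, c_v$ count the H-H, H-U, U-H transitions of $q$ at $v$ and $t_e(q)$ denote the number of times $q$ traverses $e$, one has $t_{e_1^v}(q) = a_v + b_v$ and $t_{f_2^v}(q) = b_v$, so
\[
N_{HH}(q) = \sum_v a_v = \sum_v \bigl( t_{e_1^v}(q) - t_{f_2^v}(q) \bigr) = \langle c, q \rangle.
\]
Extending $N$ to a cell decomposition of $S$ (with $N$ as part of the $1$-skeleton), a face-by-face local verification --- exploiting the ``ins adjacent, outs adjacent'' constraint at every simple-crossing vertex --- shows that $c$ is a $1$-cocycle. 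Hence $\langle c, \cdot \rangle$ descends to $H_1(S)$, giving the required homogeneity.

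The polynomiality step is essentially formal given Lemma \ref{lem:cone}; the main technical obstacle is the cocycle verification for $c$, which requires careful tracking of orientation signs around each $2$-cell in the induced cell decomposition of $S$.
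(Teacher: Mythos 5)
Your polynomiality argument has a genuine gap at the assertion that \emph{each} $p_i$ produced by iterated excision is strongly contained in $q$. Only the first excised cycle is. Concretely, write $q = A_1 f A_2\, e\, B\, e\, C_1 f C_2$ and excise $p_1 = eB$ between the two occurrences of $e$; the next excision, say $p_2 = f A_2 e C_1$ between the two occurrences of $f$ in $q' = A_1 f A_2 e C_1 f C_2$, is strongly contained in $q'$ but not in $q$, because in $q$ the loop $eB$ interrupts it --- $q$ contains $f A_2 e B e C_1 f$, not $f A_2 e C_1 f$, as a contiguous subwalk. Its class is a \emph{difference} $[fA_2eBeC_1]-[p_1]$ of elements of $\Gamma_q$ and need not lie in the semigroup $\Gamma_q$, so Lemma \ref{lem:cone}(1) gives you no vector $w$ with $(w,[p_i])>0$ for all $i$, and the telescoping bound on $r$ collapses if even one $[p_i]$ pairs nonpositively with $w$. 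This is exactly the difficulty the paper's proof is engineered around: it passes to the word $u(q)$ of slice crossings and proves the purely combinatorial Lemma \ref{lem:extremal} by induction on the alphabet size, invoking positivity only for contiguous subwords between equal letters --- these do correspond to cycles genuinely strongly contained in $q$, so Lemma \ref{lem:cone}(1) legitimately applies. To rescue your route you would need to extend Lemma \ref{lem:strongcontain} from strong containment to the weaker ``shares an edge with $q$'' containment that iterated excision actually provides (the flow argument in its proof largely survives, since steps using $\ip{p_i,N}\ge 0$ and $\ip{p_i,q}\le 0$ do not use strong containment), but as written the proof does not close.

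Your homogeneity argument is essentially the paper's, in dual form: the identity $N_{HH}(q)=\langle c,q\rangle$ is correct, and the paper gets the same conclusion more directly by observing that the number of highway crossings equals the intersection flow $\ip{q,N}$ of the conservative chain $N$ through $q$, which is a homological invariant by the cited fact from \cite{AS}. You could bypass the cocycle verification you flag as the main obstacle by simply checking (a one-vertex computation in each of the three transition cases) that $\langle c,q\rangle=\ip{q,N}$ for highway walks $q$, and then quoting homological invariance of the intersection pairing rather than proving $\delta c=0$ face by face.
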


\begin{proof}
Let $N = N(\N)$.  We must establish that there are finitely many highway paths (resp. cycles) $q$ homologous to $p$.  Suppose otherwise.  Since the network $N$ is finite, this implies that 
there are arbitrarily long $q$ homologous to $p$.  

First we claim that if $N'$ is a finite simple-crossing oriented network in a disk, then any sufficiently long highway path $q'$ in $N'$ strongly contains a highway cycle.  If $q'$ is very long then we can find an edge $e \in N'$ which occurs at least twice in $q'$.  The part of $q'$ between these two occurrences is the desired highway cycle.

Let us cut $S$ with a number of slices as in Section \ref{ssec:slice}, so that after slicing one obtains a disk from $S$.  Let the sliced edges in $N$ be $e_1,e_2,\ldots,e_k$, and set $v_i \in H_1(S)$ to be the homology class of a cycle which crosses the slices in the same way as $e_i$.  If $r$ is a path in $N$, we let the 
word of $r$ be $u(r) = e_{i_1} e_{i_2} \cdots e_{i_\ell}$ if $r$ encounters the sliced edges $e_{i_1}, e_{i_2},\ldots$ in that order.  If $r$ is a cycle then
 $[r] = v_{i_1} + \cdots + v_{i_\ell} \in H_1(S)$.  We shall also define $[r]$ by the same formula when $r$ is a walk.

By the earlier claim, if $q$ is a long highway path, then the word $u(q)$ of $q$ must also be arbitrarily long (otherwise it would contain a contractible highway cycle, contradicting
 Lemma \ref{lem:strongcontain}).  We show that this is impossible. Let $w = w_q$ be the vector of Lemma \ref{lem:cone}(1).  We now apply Lemma \ref{lem:extremal} below. 
The constant $C$ in Lemma \ref{lem:extremal} can be chosen to not depend on $q$, since by Lemma \ref{lem:cone}(2) there are only finitely many choices for the semigroups $\Gamma_q$, 
and thus finitely many choices for $w_q$.  Lemma \ref{lem:extremal} shows that if the length of $u(q)$ is longer than $b f(k,C)$ for some integer $b \geq 1$, then $([q],w) \geq b$. 
 But by Lemma \ref{lem:cone}(2) the maximum $\max_q([p],w_q)$ is finite (as one can pick finitely many distinct $w_q$), so the length of $u(q)$ is bounded which in turn implies the 
length of $q$ is bounded.

It follows that the claimed measurements are polynomials.  Next we observe that the
number of highway crossings in a highway path (or cycle) $p$ is equal to $\ip{p,N}$.  Since the latter
depends only on the homology class of $p$, we obtain that $M^{[p]}$ is homogeneous of degree $\ip{p,N}$.
\end{proof}

Recall that $(.,.)$ denotes the usual pairing $\Z^a \times \Z^a \to \Z$.

\begin{lemma}\label{lem:extremal}
For each integer $k \geq 1$ and $C \in \R_{>0}$ there exists a constant $f(k,C)$ such that the following holds.
Suppose $u = e_{i_1} e_{i_2} \cdots e_{i_\ell}$ is a word in the alphabet $e_{1},e_{2},\ldots,e_{k}$ and $v_1,v_2,\ldots,v_k \in \Z^a$ are any non-zero vectors, 
and $w \in \Z^a$ satisfies the conditions
\begin{enumerate}
\item
$(v_{i_s} + v_{i_{s+1}}+  \cdots + v_{i_{t - 1}},w) >0$ whenever $i_s = i_t$;  
\item
$|(v_{i},w)| < C$ for each $i \in \{1,2,\ldots,k\}$.
\end{enumerate}
Then either $\ell < f(k,C)$ or $(w,v_{i_1} + \cdots +v_{i_\ell}) > 0$.
\end{lemma}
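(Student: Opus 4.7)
I plan to prove the lemma by induction on $k$, strengthening the conclusion to the existence of positive constants $G_k, F_k$ (depending on $C$) such that \emph{every} word $u = e_{i_1}\cdots e_{i_\ell}$ satisfying the hypotheses obeys
\[
T_\ell \;:=\; (w,\,v_{i_1}+\cdots+v_{i_\ell}) \;\geq\; \frac{\ell}{G_k} - F_k.
\]
The desired conclusion of the lemma then follows immediately by taking $f(k,C) := G_k F_k + 1$, since this forces $T_\ell \geq 1 > 0$ whenever $\ell \geq f(k,C)$. The base case $k = 1$ is direct: condition (1) applied with $s = 1, t = 2$ forces $(w, v_1) \geq 1$, so $T_\ell \geq \ell$ for $\ell \geq 2$, while the trivial bound $|T_\ell| \leq C-1$ together with $T_0 = 0$ handles $\ell \leq 1$; one checks that $G_1 = 1$ and $F_1 = C$ work.

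For the inductive step, fix any letter $e_\mu$ and decompose $u = w_0\,e_\mu\,w_1\,e_\mu\cdots e_\mu\,w_m$, where each $w_j$ is a word in the remaining $k-1$ letters and inherits conditions (1) and (2). Let $\sigma(w_j)$ denote the corresponding partial sum and $a_\mu = (w, v_\mu)$. The inductive hypothesis applied to each $w_j$ gives $\sigma(w_j) \geq |w_j|/G_{k-1} - F_{k-1}$, and summing together with the $m$ contributions of $e_\mu$ yields the lower bound
\[
T_\ell \;\geq\; \frac{\ell - m}{G_{k-1}} - (m+1)\,F_{k-1} + m\,a_\mu. \qquad (\ast)
\]
Independently, when $m \geq 2$, condition (1) applied to the $m$ occurrences of $e_\mu$ forces the partial sums $Q_1 < Q_2 < \cdots < Q_m$ just before each $e_\mu$ to form a strictly increasing sequence of integers, so $Q_m \geq Q_1 + m - 1$; using $Q_1 = \sigma(w_0)$ and $Q_m = T_\ell - a_\mu - \sigma(w_m)$, combined with the estimates $\sigma(w_0), \sigma(w_m) \geq -F_{k-1}$ from the inductive hypothesis and $-a_\mu \leq C - 1$, one obtains the upper bound $m \leq C + T_\ell + 2F_{k-1}$.

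The crux of the argument is to substitute this upper bound on $m$ into $(\ast)$ and collect the resulting $T_\ell$-terms on the left. The coefficient of $-m$ in $(\ast)$, namely $\alpha := 1/G_{k-1} + F_{k-1} - a_\mu$, is strictly positive under the inductive invariant $F_{k-1} \geq C$, so the substitution produces
\[
T_\ell(1 + \alpha) \;\geq\; \frac{\ell}{G_{k-1}} - \alpha(C + 2F_{k-1}) - F_{k-1},
\]
and dividing through by $1 + \alpha > 0$ gives the sought inductive bound with $G_k := G_{k-1}(1 + \alpha) \leq G_{k-1}(1 + C + F_{k-1})$ and an explicit rational expression for $F_k$ in terms of $F_{k-1}$ and $C$. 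The main subtle point I anticipate is that the bounds on $T_\ell$ and on $m$ each involve the other, which looks circular, but this is resolved because $T_\ell$ enters linearly with positive leading coefficient $1 + \alpha$ after rearrangement. The edge cases $m = 0$ (in which $u$ is already a $(k-1)$-letter word and induction applies directly) and $m = 1$ (in which condition (1) imposes no constraint on the single $e_\mu$, but $(\ast)$ with $m=1$ alone suffices) give bounds dominated by those of the main $m \geq 2$ case, so the inductive constants can be chosen uniformly.
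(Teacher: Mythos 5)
Your argument is correct in substance, but it takes a genuinely different route from the paper's. Both proofs induct on the alphabet size $k$, and both exploit the same consequence of condition (1): the partial sums recorded just before successive occurrences of a fixed letter form a strictly increasing sequence of integers. The paper uses this as a \emph{lower} bound on the middle of the word: after a pigeonhole step guaranteeing some letter occurs at least $2f(k-1,C)C+C+1$ times, it writes $u = x\,e_k\,x'\,e_k\,x''$, bounds only the prefix $x$ and suffix $x''$ by induction (each exceeds $-f(k-1,C)C$, since a $(k-1)$-letter word with nonpositive sum must be short), and lets the block from the first to the last occurrence contribute at least the number of occurrences minus one; the dichotomy form of the conclusion is self-improving, so no strengthening of the inductive hypothesis is needed. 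You instead strengthen the inductive statement to a linear-in-$\ell$ lower bound $T_\ell \geq \ell/G_k - F_k$, apply induction to \emph{every} inter-occurrence gap $w_j$, and use the increasing-partial-sum fact in the opposite direction, as an \emph{upper} bound on the number of occurrences $m$ in terms of $T_\ell$; the resulting self-referential inequality closes because $T_\ell$ enters with positive coefficient $1+\alpha$ after rearrangement, exactly as you say. Your version buys a quantitatively stronger conclusion (linear growth of the partial sums, which is what the application in Theorem \ref{thm:fin} ultimately needs and which the paper recovers by chopping a long word into blocks of length $f(k,C)$) and dispenses with the pigeonhole, since any letter can serve as $e_\mu$; the cost is heavier constant bookkeeping. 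The only slips are in that bookkeeping: $|(v_i,w)|<C$ yields $|T_1|\leq C-1$ and $-a_\mu\leq C-1$ only when $C$ is an integer, whereas the lemma allows $C\in\R_{>0}$, and $G_{k-1}(1+\alpha)$ exceeds $G_{k-1}(1+C+F_{k-1})$ by the additive $1$ hiding in $1/G_{k-1}$ — all harmless and repaired by relaxing the constants slightly (e.g.\ taking $F_1=C+1$).
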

\begin{proof}
If $x = e_{j_1}\cdots e_{j_r}$ is a word, we set $(x,w) = (v_{j_1}+\cdots + v_{j_r},w)$.

The claim is clearly true for $k = 1$.  We proceed by induction, assuming that $f(k-1,C)$ is known.  Suppose $\ell > k(2f(k-1,C)C+C+1)$.  Let $u$ be a word of length $\ell$ satisfying the conditions of the lemma.  By relabeling we may assume that $e_{k}$ occurs at least $(2f(k-1,C)C+C+1)$ times in $u$.  Suppose $u = x e_k x' e_k x''$ where $x$ and $x''$ are words not containing $e_k$.  Then by the inductive hypothesis, $(x,w) > -f(k-1,C)C$ and $(x'',w) > -f(k-1,C)C$.  But then $(u,w) > -2f(k-1,C)C + (2f(k-1,C)C+C) -C = 0$, using condition (1) applied to pairs of $e_k$ with no occurrences of $e_k$ in between.  Thus $f(k,C) = k(2f(k-1,C)C+C+1)+1$ works.
\end{proof}

\subsection{Rationality} \label{ssec:rat}
Let $\N$ be a vertex-weighted simple-crossing oriented network in $S$.  Suppose a basis for $H_1(S,\Z) \simeq \Z^a$ has been chosen.  If $$r = (r_1,r_2, \ldots, r_a) \in H_1(S,\Z),$$ we write 
$t^r$ for the monomial $t_1^{r_1}t_2^{r_2}\cdots t_a^{r_a}$.  In the sequel, we shall consider generating functions in the $t^r$ which will be compatible with change of basis of 
$H_1(S,\Z)$.

Let $p$ be a path between boundary vertices $u$ to $v$.  We consider the following generating function
$$
M_{u,v}(t) = \sum_q \wt(q) t^{[q \cup p^*]} = \sum M^{[q]} t^{[q \cup p^*]}
$$
where the first sum is over all highway paths $q$ from $u$ to $v$, and the second sum is over all homology classes of paths from $u$ to $v$.  The generating function depends on the
 ``basepath'' $p$; but changing $p$ to $p'$ will only change $M_{u,v}(t)$ to $t^{[p \cup (p')^*]}M_{u,v}(t)$. 

\begin{prop}\label{prop:rational}
$M_{u,v}(t)$ is a rational function in the vertex weights and in the homology variables $t_1^{\pm1},\ldots,t_a^{\pm1}$.
\end{prop}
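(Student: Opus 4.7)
The plan is to realize $M_{u,v}(t)$ as a matrix entry of $(I-T)^{-1}$ for a finite transfer matrix $T$, from which rationality will follow immediately from Cramer's rule.

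First I would let $E$ be the (finite) set of oriented edges of $N(\N)$ which are incident to vertices, and define an $E \times E$ matrix $T$ with entries in $R := \Z[x_v : v \text{ interior}][t_1^{\pm 1},\ldots,t_a^{\pm 1}]$. For $e, e' \in E$, set $T_{e,e'} = 0$ unless $e$ terminates at some interior vertex $v$ where $e'$ originates and $(e,e')$ is a legal highway transition at $v$ (i.e.\ not the forbidden underway-to-underway crossing of Figure~\ref{fig:wire2}); in the legal case set $T_{e,e'} = c(e,e')\, t^{[e']}$, where $c(e,e') = x_v$ when both $e$ and $e'$ are the highway edges at $v$ and $c(e,e') = 1$ otherwise, and $t^{[e']} = t_1^{r_1(e')} \cdots t_a^{r_a(e')}$ records the net slice-crossings of $e'$ in the basis of $H_1(S,\Z)$ fixed in Section~\ref{ssec:slice}.

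By construction the $(e_0, e_n)$-entry of $T^n$ equals the generating function $\sum_q \wt(q)\, t^{[q]-[e_0]}$ over highway walks $q = e_0 e_1 \cdots e_n$ that begin with $e_0$ and end with $e_n$. Assembling boundary data, there exist $t$-monomial vectors $\lambda, \mu$ supported on edges incident to $u$ and $v$ (and chosen to absorb the homology shift by the fixed basepath $p$) such that the formal identity
$$
M_{u,v}(t) \;=\; \sum_{e, e'} \lambda_e\, \mu_{e'} \sum_{n \geq 0} (T^n)_{e, e'}
$$
holds coefficient-by-coefficient in the ring of formal Laurent series in $t$ with polynomial coefficients in the $x_v$. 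Well-definedness of the coefficient of each $t^r$ is guaranteed by Theorem~\ref{thm:fin}, which bounds the number of highway paths from $u$ to $v$ in each fixed homology class.

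Writing $A = \sum_{n \geq 0} T^n$ for the matrix of formal sums, the telescoping identity $(I - T)\, A = A - TA = I$ holds in this Laurent series ring. Since $\det(I - T) \in R$ is non-zero (it specializes to $1$ when every $x_v = 0$), the matrix $I - T$ is invertible over the field of fractions of $R$; hence $A = (I - T)^{-1}$, and Cramer's rule exhibits each $A_{e,e'}$, and therefore $M_{u,v}(t)$, as a rational function in the vertex weights and in $t_1,\ldots,t_a$. The main subtlety in this approach is that the state space must remember the highway/underway role of each entering edge at its terminal vertex; this is precisely why we use directed edges of $N$ rather than vertices as the states of the transfer matrix, which also keeps $T$ finite-dimensional.
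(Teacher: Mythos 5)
Your argument is essentially the paper's: the paper builds the auxiliary edge-weighted graph $G(\N)$ of Figure \ref{fig:wire7}, whose ordinary paths encode highway paths, and reads $M_{u,v}(t)$ off a matrix entry of $(I-A)^{-1}=I+A+A^2+\cdots$ for the homology-enriched adjacency matrix $A$; your transfer matrix on directed-edge states is the same device, merely skipping the intermediate gadget graph. The one slip is your parenthetical justification that $\det(I-T)\neq 0$: the turning transitions carry coefficient $1$ rather than $x_v$, so $T$ does \emph{not} vanish when every $x_v=0$ (snake cycles survive that specialization, and a homologically trivial one contributes a $-1$ to the constant term of $\det(I-T)$); the paper silently assumes invertibility of $I-A$ as well, so this does not separate the two arguments, but as written that sentence is false and the nonvanishing of the determinant needs a different (or at least more careful) justification.
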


\begin{proof}
There exists an edge-weighted network $G(\N)$ on $S$, with the same boundary vertices as $\N$, such that the measurements $M_{u,v}$ count arbitrary paths from $u$ to $v$ (with no 
highway condition) in $G(\N)$, where paths are now weighted by total edge weight.  The network $G(\N)$ is obtained by changing the network locally at every interior vertex as 
shown in Figure \ref{fig:wire7}.  If $z$ is the weight assigned to the original vertex, the edges of $G(\N)$ are assigned weights $z$ and $1$ as shown.  
\begin{figure}[h!]
    \begin{center}
    \input{wire7.pstex_t}
    \end{center}
    \caption{}
    \label{fig:wire7}
\end{figure}

To keep track of the homology classes of these paths, one slices $S$ into a disk as in Section \ref{ssec:slice}, and assigns a homology weight to each sliced edge according to which 
cuts the edge crosses, and in which direction.    Then up to basepath normalization, the homology class of a path $p$ in $G(\N)$ is equal to the product of homology weights of the 
slices it traverses.

Thus the generating function $M_{u,v}(t)$ is equal to a matrix entry in $(I-A)^{-1} = I + A + A^2 +\cdots$ where $A$ is the homology enriched adjacency matrix of $G$: edge weights 
have been multiplied with the corresponding homology weights.  Since all the entries of $A$ are polynomials in the vertex weights of $\N$, and the homology variables 
$t_1^{\pm1},\ldots,t_a^{\pm1}$, the result follows.
\end{proof}

\begin{example}
Consider the cylindric network in Figure \ref{fig:wire4}. Make a cut along the top of the cylinder and let the homology variable $t$ record each crossing of this cut.  
The edge-weighted network $G(\N)$ is in Figure \ref{fig:wire9}. 
\begin{figure}[h!]
    \begin{center}
    \input{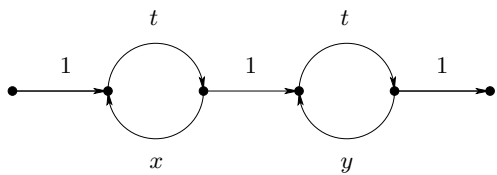}
    \end{center}
    \caption{The graph $G(\N)$ obtained from the network $\N$ in Figure \ref{fig:wire4}.}
    \label{fig:wire9}
\end{figure}

Then one computes the adjacency matrix and the generating function
$$A = \left(
\begin{matrix}
0 & 1 & 0 & 0 & 0 & 0\\
0 & 0 & t & 0 & 0 & 0\\
0 & x & 0 & 1 & 0 & 0\\
0 & 0 & 0 & 0 & t & 0\\
0 & 0 & 0 & y & 0 & 1\\
0 & 0 & 0 & 0 & 0 & 0
\end{matrix}
\right); \;\;\;\;\; M_{u,v}(t) = \frac{t^2}{(1-tx)(1-ty)}.
$$
\end{example}

Let us also define the generating function for cycle measurements
$$
M(t) = \sum_q \frac{1}{\mult(q)}\wt(q) t^{[q]}
$$
where the sum is over all highway cycles not homologous to 0.

\begin{prop}
There exists a rational function $f$ in the vertex weights, homology variables $t_1^{\pm1},\ldots,t_a^{\pm1}$, and an additional variable $\zeta$, such that $M(t)$ is obtained from the specialization
 at $\zeta=1$ of the integral of $f$ with respect to $\zeta$ by removing the constant term. 
\end{prop}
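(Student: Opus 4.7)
The plan is to adapt the classical identity that converts cycle generating functions into a logarithmic determinant. Let $G(\N)$ and its homology-enriched adjacency matrix $A=A(t)$ be as constructed in the proof of Proposition~\ref{prop:rational}. The gadgets of Figure~\ref{fig:wire7} were designed so that directed paths in $G(\N)$ are in weight- and homology-preserving bijection with highway paths in $\N$; by the same local analysis, closed directed walks in $G(\N)$ are in weight- and homology-preserving bijection with rooted highway cycles in $\N$, with the combinatorial multiplicity $\mult(c)$ matching the rotational period of the corresponding closed walk.

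Define
$$
f(\zeta, t, \text{weights}) \;=\; \mathrm{tr}\bigl(A(t)\,(I-\zeta A(t))^{-1}\bigr),
$$
which, by Cramer's rule, is rational in $\zeta$, the vertex weights, and $t_1^{\pm 1},\ldots,t_a^{\pm 1}$. A $\zeta$-antiderivative of $f$ is
$$
F(\zeta) \;=\; -\log\det\bigl(I-\zeta A(t)\bigr) \;=\; \sum_{k\ge 1}\frac{\zeta^k}{k}\,\mathrm{tr}\bigl(A(t)^k\bigr).
$$
Expanding each $\mathrm{tr}(A^k)$ as a sum over rooted closed walks of length $k$ and grouping by rotation equivalence, the bijection above gives
$$
F(1) \;=\; \sum_{c}\frac{1}{\mult(c)}\,\wt(c)\, t^{[c]},
$$
with $c$ ranging over \emph{all} highway cycles in $\N$, including those with $[c]=0\in H_1(S)$. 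The cycles with $[c]=0$ contribute monomials carrying no $t$-factor, i.e.\ they constitute exactly the constant term of $F(1)$ in the variables $t_1,\ldots,t_a$; all other cycles contribute a nonzero monomial in $t$. Hence $M(t)$ is recovered by setting $\zeta=1$ in the antiderivative of $f$ and removing the $t$-constant term, as claimed.

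The main thing that requires care is the combinatorial bijection between closed walks in $G(\N)$ and highway cycles in $\N$, together with the matching of the factor $1/\mult(c)$ with the rotational multiplicity of the closed walk: one must verify that the gadget of Figure~\ref{fig:wire7} introduces no ``spurious'' closed walks supported in a single gadget, and that rotation classes of closed walks of length $k=m\ell$ based on a primitive cycle of length $\ell$ yield exactly $\ell$ distinct rooted representatives, producing the factor $1/m=1/\mult(c)$ after division by $k$. A minor secondary point is formal: the identity at $\zeta=1$ should be read as an equality of formal power series in the vertex weights whose coefficients are Laurent polynomials in $t_1^{\pm1},\ldots,t_a^{\pm1}$ (each of which is finite by a polynomial-degree bound in $\zeta$), so that the constant-term-in-$t$ operation is unambiguous even if the sum of cycles with trivial homology need not converge as a number.
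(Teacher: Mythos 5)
Your proof is correct and follows essentially the same route as the paper: pass to the edge-weighted graph $G(\N)$ with homology-enriched adjacency matrix $A$, take a trace of the resolvent so that rooted closed walks are enumerated with $\zeta$ tracking length, and integrate in $\zeta$ to convert the rooted count into the $1/\mult$-weighted unrooted count. The only (cosmetic, and in fact slightly cleaner) difference is your choice $f=\mathrm{tr}\bigl(A(I-\zeta A)^{-1}\bigr)$, whose antiderivative is exactly $-\log\det(I-\zeta A)=\sum_{k\ge1}\frac{\zeta^k}{k}\mathrm{tr}(A^k)$ and hence gives the factor $1/\mult$ on the nose, whereas the paper writes $\mathrm{tr}\bigl((I-\zeta A)^{-1}\bigr)$ and leaves the normalization of the integral implicit.
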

\begin{proof}
The removal of the constant term is simply to match with our convention to omit highway cycles which are homologically trivial.

Let $G(\N)$ be the graph in the proof of Proposition \ref{prop:rational}, and $A$ the homology enriched adjacency matrix.  If we just took ${\rm trace}((I-A)^{-1})$ we would enumerate 
cycles with marked starting vertices in $G(\N)$.  Our cycle measurements count cycles where no starting vertex is specified.  

Consider the expression $f(\zeta) = {\rm trace}((I-\zeta A)^{-1})$ which enumerates cycles in $G(\N)$ with marked starting vertices, and such that the power of $\zeta$ keeps track of 
the length of a cycle.  If $p$ is a cycle of multiplicity $k$, and which is a $k$-th power of a cycle $p'$ which is of length $r$, then $p$ will be counted $r$ times in $f(\zeta)$.  
Thus the cycle $p$ will have contribution $1/k$ in $\int f(\zeta) d\zeta$.  So $\int f(\zeta) d\zeta|_{\zeta=1}$ is equal to $M(t)$.
\end{proof}

\begin{example}
 In the previous example, we have $${\rm trace}((I-\zeta A)^{-1}) = 6 + 2tx\zeta^2+ 2ty\zeta^2+2t^2x^2\zeta^4+2t^2y^2\zeta^4+2t^3x^3\zeta^6+2t^3y^3\zeta^6+\ldots.$$ After integrating and 
setting $\zeta=1$, and removing the constant term, we get $$M(t) = tx+ ty+\frac{1}{2}t^2x^2+\frac{1}{2}t^2y^2+\frac{1}{3}t^3x^3+\frac{1}{3}t^3y^3+\ldots.$$
\end{example}

\subsection{Snake paths} \label{ssec:snake}
A {\it snake path} in $N$ is a path which starts and ends on the boundary, and turns at every interior vertex.  A {\it {snake cycle}} in $N$ is a cycle which turns at every interior vertex.  Every edge of $N$ belongs to a unique snake
path or snake cycle.  Note that snake paths or cycles can revisit vertices, and so may contain all four edges incident to some vertex.
\begin{figure}[h!]
    \begin{center}
    \input{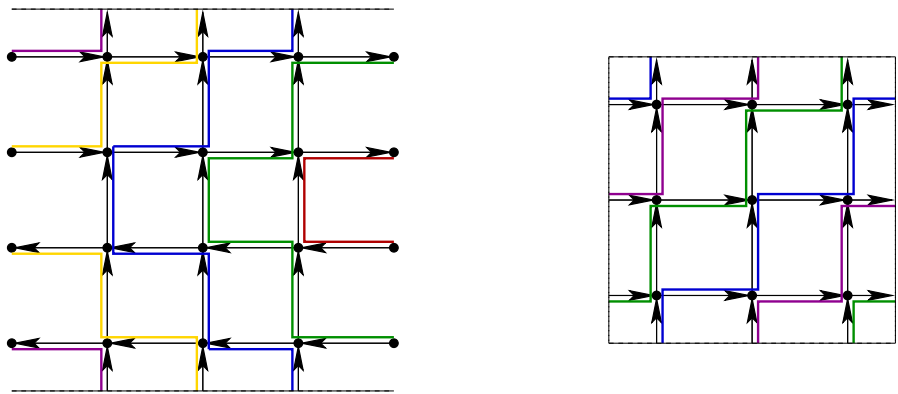}
    \end{center}
    \caption{Two networks, one on a cylinder and one on a torus, with snake paths/cycles shown.}
    \label{fig:wire6}
\end{figure}

\begin{lemma}\label{lem:snake}
Let $p$ be either
a highway path in $N$, starting and ending on the boundary, or a highway cycle in $N$.  Then
$\ip{p,q} \geq 0$ for every snake path or cycle $q$ in $N$. Furthermore, if $\ip{p,q} = 0$ for 
every snake path or cycle $q$, then $p$ is itself a snake path or cycle.
\end{lemma}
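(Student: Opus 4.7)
The plan is to analyze the local contribution to $\ip{p,q}$ vertex by vertex along $p$: I will show that each contribution is non-negative, and that it is strictly positive whenever $p$ passes through an interior vertex via the highway and $q$ turns there. Since $\ip{p,q}=\sum_i a_i/2$ is a sum over the triples $(e_i,v_i,e_{i+1})$ of $p$, the first claim of the lemma will follow from $a_i \geq 0$ for every triple.

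First I would fix an interior vertex $v$ of $N$ and list its four incident edges as $e_1,e_2,f_1,f_2$ in counterclockwise order, with $e_1,f_1$ the highway and $e_2,f_2$ the underway. A highway passage of $p$ through $v$ is of exactly one of three types: the straight passage $e_1 \to f_1$, the turn $e_1\to f_2$, or the turn $e_2\to f_1$. A passage of the snake $q$ at $v$, if any, is either a single turn ($e_1\to f_2$ or $e_2\to f_1$) or a double passage using all four edges. Using the CCW order I would determine, for each passage of $p$, which of the two remaining edges at $v$ lies on the right and which on the left of $p$.

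Next I would tabulate $a_i$ in each combination. The key computations are: (i) if $p$ passes straight, then $e_2$ lies on the right (incoming) and $f_2$ on the left (outgoing), so every turn of $q$ at $v$ contributes $+1$ to $a_i$ (via either $+\#\text{enter right}$ or $+\#\text{exit left}$, with the edge shared with $p$ excluded); (ii) if $p$ makes the turn $e_1\to f_2$, then the two remaining edges $e_2,f_1$ both lie on the right of $p$, and any snake passage contributes $+1$ (enter right) $-1$ (exit right) $=0$; (iii) symmetrically, the turn $e_2\to f_1$ places $e_1,f_2$ both on the left of $p$ and the contribution is again $0$. In every case $a_i \geq 0$, so $\ip{p,q}\geq 0$.

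For the second statement I would argue the contrapositive. If $p$ is not a snake, it passes through some interior vertex $v$ via the highway; since every edge of $N$ lies on a unique snake, let $q$ be the snake through the incoming underway edge $e_2$ at $v$. As a snake, $q$ turns at $v$, and by item (i) this turn contributes $+1/2$ to $\ip{p,q}$; since every other local contribution is non-negative by (i)--(iii), $\ip{p,q}\geq 1/2 > 0$, contradicting the hypothesis. Hence vanishing of $\ip{p,q}$ for every snake $q$ forces $p$ to turn at every interior vertex, which is by definition the snake condition. The only delicate point is the case-by-case bookkeeping of the left/right sectors, in particular the convention that edges of $q$ coinciding with $e_i$ or $e_{i+1}$ are excluded from the count; once that table is in place, the rest of the argument is essentially immediate.
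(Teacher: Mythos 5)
Your proof is correct and follows essentially the same route as the paper's: a vertex-by-vertex analysis of the local contributions to $\ip{p,q}$, showing that a turn of $p$ contributes $0$ while a straight (highway) passage contributes $+1/2$ to the flow through each of the two snakes it switches between. The paper states this more tersely ("$p$ either follows the snake it is on, or leaves it, contributing $1/2$ to both"), whereas you make the left/right sector bookkeeping explicit, but the argument is the same.
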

\begin{proof}
At each vertex $v$ of $p$, the path either follows the snake path or cycle $q$ it is currently on, or leaves $q$ to go on a different snake path $q'$.  In the latter case, one obtains a contribution of $1/2$ to both $\ip{p,q}$ and $\ip{p,q'}$.  If $\ip{p,q} =0$ for every snake path or cycle $q$, then $p$ can never leave a snake path, and thus must be equal to a snake path or cycle.
\end{proof}

\section{Local transformations of topological networks}\label{sec:local}
\subsection{Local moves}\label{ssec:localmoves}
We consider the following local transformations on simple-crossing, vertex-weighted oriented networks $\N$:
\begin{enumerate}
 \item (YB) Yang-Baxter move (Figure \ref{fig:wire8}): this is the usual ``braid move'' for wiring diagrams.  The rational transformations of vertex weights occur for example in the theory of total positivity \cite{Lus}.
\begin{figure}[h!]
    \begin{center}
    \input{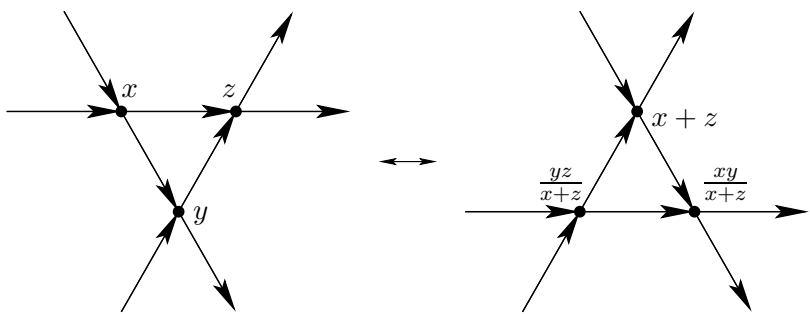}
    \end{center}
    \caption{Yang-Baxter move with transformation of vertex weights shown.}
    \label{fig:wire8}
\end{figure}

\item (CR) Cycle removal (Figure \ref{fig:wire10}): an oriented cycle bounding a disk, and not containing any part of $\N$ in its interior, can be removed.  Note that the vertices of the cycle are removed, and some edges of the rest of the network are glued together.  It is thus possible to create edges with no endpoints in this way.
\begin{figure}[h!]
    \begin{center}
    \input{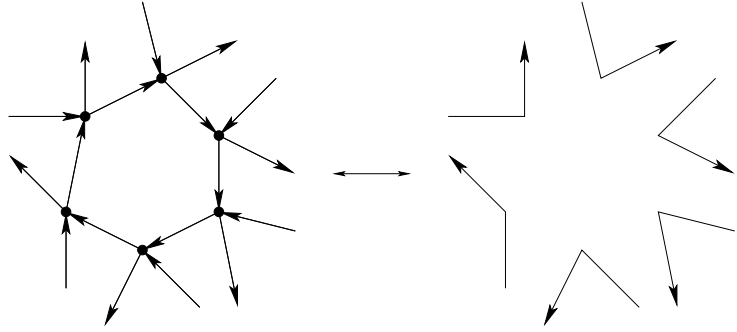}
    \end{center}
    \caption{Cycle removal move.}
    \label{fig:wire10}
\end{figure}

\item (XM) Crossing merging (Figure \ref{fig:wire11}): in a local configuration which looks like two wires (oriented in the same direction) crossing twice, the crossings can be merged to form a single crossing.

 \item (XR) Crossing removal (Figure \ref{fig:wire11}): a vertex with a 0 weight may be removed.
\begin{figure}[h!]
    \begin{center}
    \input{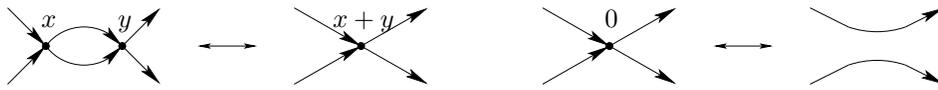}
    \end{center}
    \caption{Crossing merging and crossing removal moves with vertex weights shown.}
    \label{fig:wire11}
\end{figure}

\item (WC) Whirl-curl relation (Figure \ref{fig:wire12}): this move can be performed on a local part  of the surface that looks like a cylinder with two vertical wires of opposite orientation wrapping around the cylinder and $n$ horizontal wires crossing the vertical wires.  The move swaps the two vertical wires.  The associated rational transformation of the vertex weights is given by:  
\begin{equation}\label{eq:whirlcurl}
x'^{(i)} = \frac{y^{(i)}(x^{(i-1)}+y^{(i-1)})}{x^{(i)}+y^{(i)}}, \;\;\;\; y'^{(i)} = \frac{x^{(i)}(x^{(i-1)}+y^{(i-1)})}{x^{(i)}+y^{(i)}}
\end{equation}
if in the initial ($x,y$) configuration the horizontal wire first encounters a vertical wire crossing from right to left (and then encounters one crossing from left to right).  In the opposite direction, the rule is 
\begin{equation}\label{eq:whirlcurl2}
x^{(i)} = \frac{y'^{(i)}(x'^{(i+1)}+y'^{(i+1)})}{x'^{(i)}+y'^{(i)}}, \;\;\;\; y^{(i)} = \frac{x'^{(i)}(x'^{(i+1)}+y'^{(i+1)})}{x'^{(i)}+y'^{(i)}}.
\end{equation}
Here indices are taken modulo $n$, the number of horizontal wires in the picture.  These transformations appeared in \cite[Section 6]{LP} (see also \ref{ssec:whirlcurl} where this move is discussed in more detail).
\begin{figure}[h!]
    \begin{center}
    \input{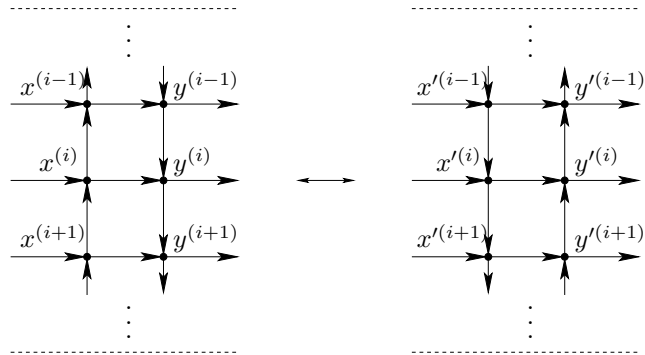}
    \end{center}
    \caption{The whirl-curl move on a cylinder.  The upper and lower dashed edges of the picture are identified.}
    \label{fig:wire12}
\end{figure}
\end{enumerate}

\begin{lemma}
Local transformations preserve the class of simple-crossing, vertex weighted oriented networks.
\end{lemma}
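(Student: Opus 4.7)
The plan is straightforward case analysis: for each of the five local moves (YB), (CR), (XM), (XR), (WC), I will verify that after performing the move, every interior vertex still has in-degree $2$ and out-degree $2$ with the incoming (resp. outgoing) pair adjacent in the cyclic order, and that no boundary vertex has had its degree increased. Since every move is performed inside a small disk (or, in the case of (WC), a small annular neighborhood) disjoint from the boundary, the boundary-vertex condition is automatic, so the real task is the simple-crossing condition at interior vertices.

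For (YB) and (XM), the configurations on both sides of the move consist entirely of simple crossings drawn in the picture, so one just reads off from the figure that each new interior vertex has the correct local picture. For (XR) no new vertex is created, and after deleting the weight-zero vertex one simply concatenates an ingoing and outgoing pair of edges into a single edge on each of the two strands passing through, so no vertex in the rest of the network is altered. The case (CR) is the only one where one must be slightly careful: when the disk-bounding oriented cycle is collapsed, each vertex along the cycle that had exactly one outside ingoing and one outside outgoing edge becomes ``absorbed,'' and the two outside half-edges are glued into a single edge through the old location of the vertex. Thus no new interior vertex is created; the surrounding vertices (which touched the collapsed cycle only through edges, not at vertices) retain their simple-crossing structure.

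The only move requiring more than a one-line check is (WC). Here one must verify that after swapping the two oppositely oriented vertical wires, each of the $n$ new crossings between a horizontal wire and a vertical wire is again a simple crossing, and that no unintended new incidences appear elsewhere. Since the move is carried out in a cylindrical strip containing only the $n$ horizontal wires and the two vertical wires, and since on both sides of the move the local picture consists solely of transverse crossings between these wires, each interior vertex involved is again a $2$-in-$2$-out simple crossing. The new vertex weights $(x'^{(i)}, y'^{(i)})$ produced by \eqref{eq:whirlcurl} are still weights attached to interior vertices, so the vertex-weighted structure is preserved as well.

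The main (and really only) obstacle I anticipate is being careful about the edge-gluing bookkeeping in (CR), where one must check that the paired half-edges on the boundary of the collapsed disk match up correctly and do not accidentally produce an interior vertex of wrong valence; once one draws the local picture near a single vertex of the collapsed cycle this is immediate. All other cases are purely pictorial verifications.
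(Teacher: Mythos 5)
Your case-by-case verification is correct; the paper itself dismisses this lemma with the one-word proof ``Trivial,'' and your writeup simply spells out the routine pictorial checks (including the only mildly delicate point, the edge-gluing in (CR)) that the authors take for granted. Nothing is missing and nothing diverges from the intended argument.
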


\begin{proof}
Trivial.
\end{proof}

\begin{thm} \label{thm:bm}
Local transformations preserve boundary and cycle measurements.
\end{thm}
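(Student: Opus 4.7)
The approach is to verify each of the five local moves (YB), (CR), (XM), (XR), (WC) separately. Each move changes $\N$ only inside a small region $D \subset S$. A highway path or highway cycle in $\N$ decomposes into maximal subwalks lying in the closure of $S \setminus D$ (external) and maximal subwalks lying in the closure of $D$ (internal), glued along $\partial D$; the external pieces carry over to $\N'$ unchanged. Hence it suffices to check, for each move and for each choice of boundary data on $\partial D$ (entry/exit points together with the homology class of the subwalk relative to $D$), that the generating function of internal highway subwalks with that boundary data is the same in $\N$ and $\N'$. Because gluing internal pieces to external pieces preserves the total homology class in $H_1(S)$, this local identity propagates to $M^{[p]}_\N = M^{[p]}_{\N'}$ for every boundary and cycle measurement.

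The trivial moves are disposed of first. For (XR), a vertex $v$ of weight $0$ contributes only through subwalks that enter and exit via the highway at $v$; these subwalks have weight $0$ and may be discarded, leaving the three remaining local configurations at $v$, which match the subwalks of the network with $v$ deleted. For (XM), splitting on which of the two adjacent crossings $x$ and $y$ is entered via the underway by a local subwalk yields the sum $x + y$, matching the single merged crossing of weight $x + y$. For (CR), the removed cycle is contractible and hence does not enter the cycle measurement; local highway subwalks either avoid the cycle entirely, or hop onto it and then off, and a direct case analysis at each vertex on the cycle (splitting by whether the cycle uses the highway or the underway pair there) shows that after deletion and gluing of the severed through-edges, the weighted count of internal subwalks is preserved.

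For (YB) the local region is a disk, so the internal relative homology is trivial; the identity to verify is the birational relation \eqref{eq:L} read at the level of local generating functions between each ordered pair of boundary endpoints of the three wires. This is a small finite case check, whose essential content is the unitriangular matrix identity displayed in Example \ref{ex:disk}.

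The main obstacle is (WC). Here the local region is an annulus, so internal relative homology is $\Z$ and local generating functions are naturally infinite sums indexed by a winding number. For each fixed choice of boundary entry/exit and winding number, the highway condition and the compactness of the region bound the length of internal subwalks (by a local analogue of Theorem \ref{thm:fin}), so the identity to prove reduces to a polynomial identity in the $x^{(i)}, y^{(i)}, x'^{(i)}, y'^{(i)}$; summed over all winding numbers it becomes a rational identity obtained by geometric series in the winding variable. Verifying this family of identities is exactly the combinatorial content of the whirl--curl relation, and my plan is to quote and recast the calculation of \cite[Section~6]{LP} in the highway-path language developed here. Combining the five local verifications with the external/internal decomposition then completes the proof.
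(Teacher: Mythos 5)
Your strategy for boundary measurements — decompose a highway path into external pieces (unchanged by the move) and internal pieces, and verify for each move that the weight generating function of internal subwalks with fixed entry/exit half-edges and fixed local homology data agrees on both sides — is essentially the paper's argument, and your treatment of (XR), (XM), (CR), (YB), (WC) individually is fine as far as it goes.

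However, there is a genuine gap in the step where you claim the local identity ``propagates to $M^{[p]}_\N = M^{[p]}_{\N'}$ for every boundary and cycle measurement.'' For cycle measurements this propagation is not automatic, because the measurement is $\sum_q \frac{1}{\mult(q)}\wt(q)$ and the multiplicity $\mult(q)$ of a highway cycle can change under a local move: a cycle that passes through the local region $D$ several times may be a $k$-fold repetition of a shorter cycle before the move, while the cycles agreeing with it outside $D$ after the move may have their internal pieces chosen independently on the several passes and hence be primitive (or have a different multiplicity). The weighted sum with the $1/\mult$ factors therefore does not factor through your internal/external gluing. The paper handles exactly this point: fixing the behaviour of a cycle outside $D$, it lets $m$ be the maximal multiplicity occurring, cuts all such cycles at a vertex $v$ outside $D$ to obtain a set $T(\N)$ of highway \emph{paths} from $v$ to $v$, observes that a cycle of multiplicity $k$ gives rise to exactly $m/k$ distinct cut paths of the same weight, so that $\sum_{p} \frac{1}{\mult(p)}\wt(p) = \frac{1}{m}\sum_{r \in T(\N)}\wt(r)$, and only then applies the boundary-measurement-style local identity to the path sums $\sum_{r\in T(\N)}\wt(r) = \sum_{r\in T(\N')}\wt(r)$. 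You need some such bookkeeping (or an equivalent device) to close the argument; without it the cycle-measurement half of the theorem is unproved.
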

\begin{proof}
We first discuss boundary measurements.  In all moves one checks directly that for any fixed pair of an enetering half-edge and an exiting half-edge, the accumulated weight over highway paths from through these half-edges is the same on both sides of the move.  For the (YB) move, this calculation is essentially \eqref{eq:L}, as in Example \ref{ex:disk}.  In the case of the (CR) move note that depending on orientation of the cycle, one either cannot enter it or cannot exit it. Thus the removal does not change any boundary or cycle measurements (recall that we do not consider cycle measurements with trivial homology). 

For cycle measurements, the proof is more complicated since the multiplicity of a highway cycle changes when one performs local transformations.

Let $G$ and $G'$ be the two subgraphs of a local transformation.  Thus a local transformation changes $\N$ to $\N'$ by replacing $G$ with $G'$.  It is easy to check that highway cycles completely contained in $G$ or $G'$ have the same contribution to cycle measurements.  Let $p$ be a highway cycle in $\N$.  Then $p$ may enter and exit $G$ multiple times.  Let us consider the set $S(\N)$ (resp. $S(\N')$) of highway cycles in $\N$ (resp. $\N'$) which agree with $p$ except for the parts of $p$ contained inside $G$ (resp. $G'$).  Note that any cycle $p' \in S(\N)$ will enter and leave $G$ via the same edges as $p$.  Let $m$ be the maximal multiplicity of a path $q$ in $S(\N)$.  Then it is easy to see the multiplicity of any path in $S(\N)$ or $S(\N')$ divides $m$.  Suppose $q$ is the $m$-multiple of a highway cycle $q'$.  Let $v \in q$ be a vertex in $\N - G = \N' - G'$.  Let $T(\N)$ (resp. $T(\N')$) be the set of highway paths $r$ in $\N$ (resp. $\N'$) which begin and end at $v$, and apart from changes within $G$ (resp. $G'$), are the same as traversing $q'$ $m$ times.  Thus each $r \in T(\N)$ is obtained from some highway cycle $p(r)$ in $S(\N)$ by ``cutting'' at $v$.  Each $p \in S(\N)$ corresponds to $m/\mult(p)$ paths in $T(\N)$.  Thus 
$$
 \sum_{p \in S(\N)} \frac{1}{\mult(p)}\wt(p) = \frac{1}{m} \sum_{r \in T(\N)} \wt(r).
$$
But it is clear from the calculation for boundary measurements that $\sum_{r \in T(\N)} \wt(r) = \sum_{r \in T(\N')} \wt(r)$, so that the contribution of $S(\N)$ and $S(\N')$ to cycle measurements are identical.
\end{proof}

\subsection{Wires} \label{ssec:wires}
A {\it wire} in $N$ is a path (not necessarily a highway path) in $N$ which goes straight through the crossing at every interior vertex.  Each wire is either a cycle, or starts and ends on the boundary.  Every edge of $N$ belongs to a unique wire.

A {\it whirl} is a wire cycle in $N$, such that it enters every vertex via the underway.  Alternatively, a whirl is a wire cycle for which other wires cross from the left.  A {\it curl} is a wire cycle in $N$ such that it enters every vertex via the highway.  Alternatively, a curl is a wire cycle for which other wires cross from the right.  A {\it whurl} (mix of a whirl and a curl) is a wire cycle with no self-intersections.

The Yang-Baxter move and the whirl-curl move are all compatible with wires in the following sense: each wire before the move corresponds to a wire after the move with the same endpoints (if any).  The crossing merging and crossing removal moves do not preserve wires in this manner, though the composition of a crossing merging and crossing removal (at the same crossing!) will achieve this.

\begin{figure}[h!]
    \begin{center}
    \input{wire28.pstex_t} 
    \end{center} 
    \caption{}
    \label{fig:wire28}
\end{figure}
We call an oriented network $N$ {\it reduced}, if wires do not self-intersect, and every pair of wires has the minimal number of intersections given their homology classes.  It is not always possible to change a weighted oriented network $\N$ to a reduced one using local moves. For example, it can be shown that a self-crossing wire cycle on a cylinder as shown in Figure \ref{fig:wire28} cannot be changed into a reduced network if the vertex weight $x \not = 0$.

\subsection{Torus action} \label{ssec:torus}
In addition to the five local transformations of Section \ref{ssec:localmoves}, there is a global transformation that preserves the boundary and cycle measurements.  This is an action of a torus $T = (K^*)^k$, where $K$ is a field which acts on vertex weights and $K^*$ is the multiplicative group of $K$.  For example if the vertex weights lie in the rational functions $\C(x_1,x_2,\ldots)$ in some number of indeterminates, we could choose $K = \C$. 

Let $\N$ be a vertex-weighted simple-crossing oriented network in $S$, and $N=N(\N)$ be the underlying unweighted oriented network.  Define a graph $H_N$ as follows.  The vertex set of $H_N$ is the set $\sp(N)$ of all snake paths and cycles in the network $N$ (see Section \ref{ssec:snake}).  At each interior vertex $v$ of $N$ the highway path goes from one snake path or cycle $p$ to another $q$. It is possible that $p=q$.  For each such vertex $v$, we add a directed edge $e_v$ from $p$ to $q$ in $H_N$.

The graph $H_N$ records which snake paths or cycles are traversed when one performs a highway walk in $N$.  Now let $V_N$ be the free $\Z$-module spanned by the edges of $H_N$, or equivalently by interior vertices of $N$. We introduce an inner product on $V_N$ by making the edges of $H_N$ orthonormal.  Call vertices of $H_N$ that correspond to snake paths {\it {terminal}}.  Define
$$
U = \{\mbox{closed walks in $H_N$}\} \cup \{\mbox{paths from terminal vertices to terminal vertices in $H_N$}\}
$$
and for $u \in U$, we let $a_u \in V_N$ denote the sum of all edges involved in $u$ with multiplicities.
Let $W_N \subset V_N$ be the submodule of $V_N$ given by 
$$
W_N = \{w \in V_N \mid \ip{w,a_u} = 0 \text{\;\; for all  } u \in U\}.
$$
Let $k$ be the dimension of $W_N$, and let $w_1, \ldots, w_k$ be a basis of $W_N$ over $\mathbb Z$.

Define the action of $T = (K^*)^k$ on $\N$ as follows.  For a vertex $v \in \N$, we let $x_v$ denote the vertex weight in $\N$.  Then for $\tau =(\tau_1, \ldots, \tau_k) \in (K^*)^k$, the 
assignment
\begin{equation}\label{eq:torus}
y_v = \prod_i \tau_i^{\langle e_v, w_i \rangle} x_v
\end{equation}
defines vertex weights of a network denoted $\tau \cdot \N$.

\begin{proposition}\label{prop:torus}
The definition \eqref{eq:torus} gives an action of $T$ that preserves all cycle and boundary measurements in $\N$.
\end{proposition}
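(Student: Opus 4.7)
The plan is to show that each individual term $\wt(q)$ (for $q$ a highway path or cycle) is already invariant under the action \eqref{eq:torus}, from which the preservation of the full sums $M^{[p]}$ is immediate (the multiplicity $\mult(q)$ of a cycle depends only on $N(\mathcal{N})$, not on vertex weights). To do this, I will associate to every $q$ a walk $u(q)$ in the auxiliary graph $H_N$, verify $u(q) \in U$, and show that $\wt(q) = \prod_v x_v^{\langle e_v,\, a_{u(q)} \rangle}$. The defining condition of $W_N$ will then give invariance for free.

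The geometric heart of the argument is the following local observation at each interior vertex $v$. There are three allowed transits at a highway crossing: $e_1 \to f_1$ (straight along the highway), $e_1 \to f_2$ (turn), and $e_2 \to f_1$ (turn). A direct inspection of what a snake path does at $v$ (it uses the pairs $e_1, f_2$ and $e_2, f_1$) shows that in the two turning cases $q$ continues along the very same snake path or cycle that carried its incoming edge, whereas in the straight case $q$ transitions from the snake containing $e_1$ to the snake containing $f_1$ — and this transition is, by construction, precisely the edge $e_v$ of $H_N$. Moreover, the weight contribution $x_v$ is collected exactly in the third case. Thus $q$, read as a sequence of snake segments, corresponds to a walk $u(q)$ in $H_N$ whose edges are exactly the vertices of $N$ at which $q$ picks up weight, and $\wt(q) = \prod_v x_v^{\langle e_v,\, a_{u(q)} \rangle}$.

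Next I will check that $u(q) \in U$. If $q$ is a highway cycle, then $u(q)$ is a closed walk in $H_N$, which lies in $U$. If $q$ is a highway path between boundary vertices, then its first and last snake segments each contain an edge incident to a boundary source/sink; since boundary vertices have degree $\le 1$, those snake segments must actually end at the boundary, i.e.\ they are terminal vertices of $H_N$. Hence $u(q)$ is a path between terminal vertices, again in $U$.

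Under \eqref{eq:torus}, replacing $x_v$ by $y_v = \prod_i \tau_i^{\langle e_v, w_i \rangle} x_v$ and using orthonormality of $\{e_v\}$ to collapse the double sum, one computes
\[
\wt_{\tau \cdot \mathcal{N}}(q) \;=\; \prod_v \Bigl(\prod_i \tau_i^{\langle e_v, w_i\rangle}\, x_v\Bigr)^{\langle e_v,\, a_{u(q)} \rangle} \;=\; \wt(q)\cdot \prod_i \tau_i^{\langle w_i,\, a_{u(q)} \rangle}.
\]
Since $w_i \in W_N$ and $u(q) \in U$, each exponent $\langle w_i, a_{u(q)}\rangle$ is zero, so $\wt_{\tau\cdot \mathcal{N}}(q) = \wt(q)$. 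Summing over the appropriate homology class of highway paths or cycles (which is a finite sum by Theorem \ref{thm:fin}) yields the preservation of every boundary and cycle measurement, and also shows that the assignment $\tau \mapsto \tau \cdot \mathcal{N}$ is a group action since it is so on each vertex weight individually. The only non-mechanical step is the local check identifying turns with ``stay on the same snake'' and highway-straights with the edges $e_v$; the rest is bookkeeping.
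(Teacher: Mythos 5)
Your proof is correct and follows essentially the same route as the paper's: identify the highway (straight) crossings of $q$ with the edges $e_v$ of $H_N$ traversed by an associated walk $u(q)\in U$, observe $\wt(q)=\prod_v x_v^{\langle e_v,a_{u(q)}\rangle}$, and conclude invariance from $\langle w_i,a_{u(q)}\rangle=0$. The only difference is that you spell out the local vertex analysis (turns stay on a snake, straights realize $e_v$) that the paper leaves implicit.
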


\begin{proof}
The first part is clear.  Suppose $p$ is a highway path in $\N$.  Each highway crossing on $p$ corresponds to an edge in $H_N$.  It follows that one obtains a walk $u_p \in U$ from $p$,
 and in addition the weight $\wt(p)$ is equal to $\prod_{e_v \in u_p} x_v$.  But then 
$$
\prod_{e_v \in u_p} y_v = \prod_{e_v \in u_p}\prod_i \tau_i^{\langle e_v, w_i \rangle} x_v 
= \prod_i \tau_i^{\langle a_u,w_i \rangle} \wt(p) = \wt(p)
$$
Thus each highway path has the same weight in $\N$ as in $\tau \cdot \N$.  The same holds for highway cycles.
\end{proof}

\begin{example} \label{ex:tor}
Consider the network on a torus from Figure \ref{fig:wire6}, repeated in Figure \ref{fig:wire14}. Let $x_{11}$, $x_{12}$, $x_{13}$, $x_{21}$, $x_{22}$, $x_{23}$ and $x_{31}$, 
$x_{32}$, $x_{33}$ be the vertex weights. There are three snake cycles, the blue one, the green one and the magenta one. 
\begin{figure}[h!]
    \begin{center}
    \input{wire14.pstex_t}
    \end{center}
    \caption{}
    \label{fig:wire14}
\end{figure}
There are three edges $e_1$, $e_2$ and $e_3$ in $H_N$ forming a cycle of length three. The vector $e_1 + e_2+e_3 \in V_N$ spans ${\rm span}(a_u)_{u \in U}$.  The vectors 
$e_1 - e_2$, $e_2 - e_3$ span $W_N$ over $\mathbb Z$. The torus action in this case is given by $$(\tau_1,\tau_2) \colon (x_{11}, x_{12}, x_{13}, x_{21}, x_{22}, x_{23}, x_{31}, x_{32}, 
x_{33}) \mapsto$$ $$(\tau_2^{-1} x_{11}, \tau_1 x_{12}, \tau_1^{-1} \tau_2 x_{13}, \tau_1 x_{21}, \tau_1^{-1} \tau_2 x_{22}, \tau_2^{-1} x_{23}, \tau_1^{-1} \tau_2 x_{31}, \tau_2^{-1} 
x_{32}, \tau_1 x_{33}).$$ 
\end{example}

\begin{figure}[h!]
    \begin{center}
    \input{wire13.pstex_t}
    \end{center}
    \caption{}
    \label{fig:wire13}
\end{figure}
\begin{example}
Consider the network on a cylinder from Figure \ref{fig:wire6}, repeated in Figure \ref{fig:wire13}. Then the graph $H_N$ has four edges that we denote $e_1$, $e_2$, $e_3$ and $e_4$. 
There are no cycles in $H_N$, and the paths from terminal vertices to terminal vertices give us vectors $e_1+e_2+e_3+e_4$, $e_2+e_3+e_4$, $e_1+e_2+e_3$, $e_2+e_3$. The orthogonal 
complement is generated by $e_2-e_3$, which gives us action $$\tau \colon (x_{11}, x_{12}, x_{13}, x_{21}, x_{22}, x_{23}, x_{31}, x_{32}, x_{33}, x_{41}, x_{42}, x_{43}) \mapsto$$ 
$$(x_{11}, \tau x_{12}, \tau^{-1} x_{13}, \tau x_{21}, \tau^{-1} x_{22}, x_{23}, \tau x_{31}, \tau^{-1} x_{32}, x_{33}, x_{41}, \tau x_{42}, \tau^{-1} x_{43}).$$
\end{example}

\subsection{Monodromy action and the Frenkel-Moore relation}
Let $\N$ be a simple-crossing oriented network.  Sometimes a sequence of local transformations (the torus action of Section \ref{ssec:torus} is excluded) will change $\N$ into a network $\N'$ such that the underlying unweighted networks $N(\N)$ and $N(\N')$ are the same, but the vertex weights may not be.  We call this the {\it monodromy action} of the transformations on the vertex weights of $\N$.  This monodromy action essentially depends only on $N(\N)$.

For an unweighted oriented network $N$, we let $\C(N)$ denote the field of rational functions in indeterminates $x_v$, as $v$ varies over the vertices of $N$.  The local transformations give rise to a (possibly infinite) {\it monodromy group} $\M(N)$ acting on $\C(N)$ as birational transformations.  Some conjectures concerning $\M(N)$ are made in Section \ref{sec:conjectures}, and a special case is studied in Section \ref{ssec:monodromy}.

The {\it Frenkel-Moore relation} is an example of a sequence of local transformations which give a trivial monodromy action (but not obviously).

\begin{figure}[h!]
    \begin{center}
    \input{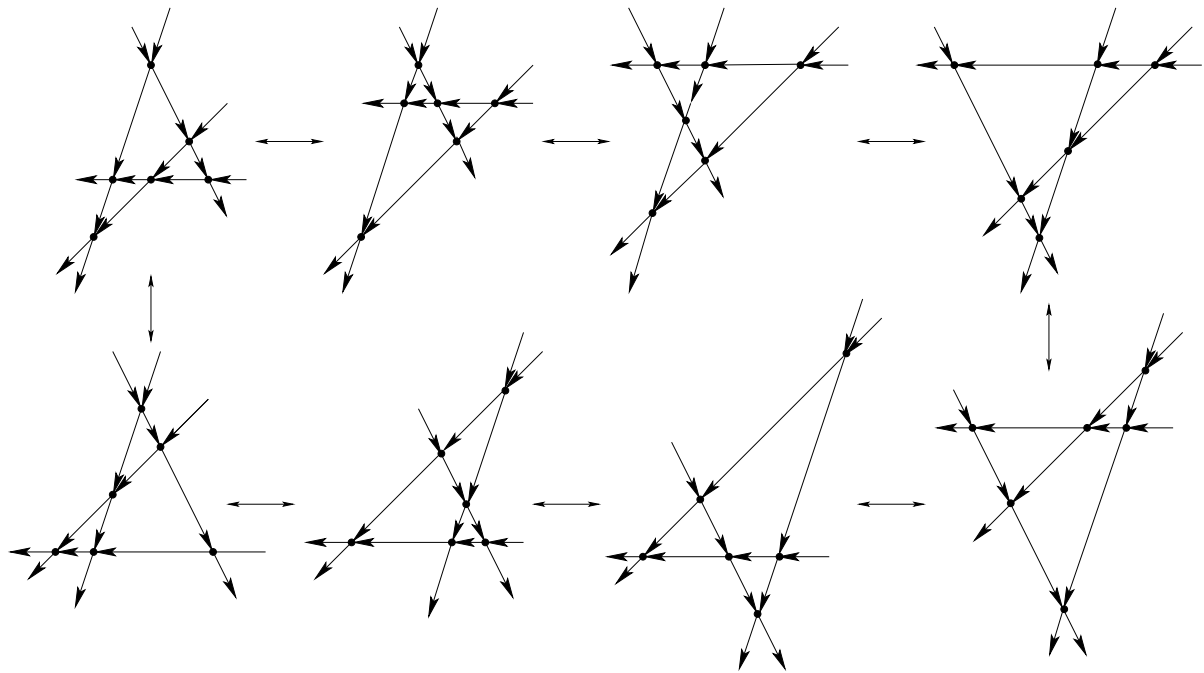}
    \end{center}
    \caption{The eight-cycle of (YB) moves; the monodromy action of this sequence of moves on the vertex weights is trivial, and is called the Frenkel-Moore relation.}
    \label{fig:wire15}
\end{figure}
Given a local part of a network that looks like four wires crossing, one can apply a sequence of Yang-Baxter moves as shown in Figure \ref{fig:wire15}, and come back to the original network.  From a direct computation, one obtains

\begin{prop} \label{prop:fm}
Performing the eight Yang-Baxter moves in the Frenkel-Moore relation does not change the vertex weights.
\end{prop}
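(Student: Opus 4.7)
The plan is to combine Theorem \ref{thm:bm} with the injectivity of the Lusztig parametrization of the totally nonnegative unipotent cell.

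First, observe that the eight (YB) moves in Figure \ref{fig:wire15} form a cycle: the underlying unweighted network at the start of the sequence is identical to that at the end. By Theorem \ref{thm:bm}, each individual (YB) move preserves all boundary measurements, so the boundary measurements of the initial and final weighted networks agree as rational functions in the six initial vertex weights.

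Second, the underlying configuration is a reduced wiring diagram on a disk with four strands, all sources on one side and all sinks on the other. This diagram realizes a reduced word $\mathbf{i}$ for an element $w\in S_4$. By the extension of Example \ref{ex:disk} to four strands, the boundary measurements of such a network assemble into the entries of the matrix
\[
X_{\mathbf{i}}(x_1,\ldots,x_\ell) \;=\; u_{i_1}(x_1)\, u_{i_2}(x_2)\, \cdots\, u_{i_\ell}(x_\ell) \;\in\; U_4,
\]
where $x_1,\ldots,x_\ell$ are the vertex weights (in the order dictated by $\mathbf{i}$). By Lusztig's factorization theorem \cite{Lus}, for a reduced word $\mathbf{i}$ the map $(x_1,\ldots,x_\ell)\mapsto X_{\mathbf{i}}$ is a birational isomorphism onto its image; in particular, the vertex weights are uniquely determined, as rational functions, by the entries of $X_{\mathbf{i}}$.

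Combining the two steps: the initial and final weighted networks give rise to the same matrix $X\in U_4$, hence by injectivity their vertex weights coincide. The main obstacle in this approach is verifying that the wiring diagram under consideration is indeed reduced (no two wires cross more than once) so that Lusztig's injectivity applies; this is evident from inspection of Figure \ref{fig:wire15} but should be noted explicitly. A completely elementary alternative, bypassing total positivity, is to iterate the (YB) substitution \eqref{eq:L} eight times starting from formal indeterminates, simplify, and verify that the resulting birational composition is the identity; this is a routine but lengthy computation, easily carried out by hand or on a computer.
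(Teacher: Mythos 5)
Your proposal is correct, but it takes a genuinely different route from the paper, which simply asserts that the identity follows ``from a direct computation,'' i.e.\ by iterating the substitution \eqref{eq:L} eight times and checking that the composite is the identity --- exactly the ``completely elementary alternative'' you mention at the end. Your argument is the conceptual one: since the weight transformations in a (YB) move depend only on the three local weights, you may extract the four crossing wires as a standalone reduced wiring diagram of $w_0\in S_4$ in a disk; Theorem~\ref{thm:bm} then says the boundary measurements (equivalently, the entries of $u_{i_1}(x_1)\cdots u_{i_6}(x_6)\in U_4$) are unchanged around the eight-cycle, and injectivity of the factorization map for a reduced word (Lusztig, or Proposition~\ref{prop:Bruhatcell} in this paper) forces the weights to return to their original values --- first for positive real weights, hence identically as rational functions, since each (YB) move preserves positivity. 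This is exactly the mechanism behind the entry ``monodromy is trivial'' for reduced wiring diagrams in the paper's comparison table, so your proof explains \emph{why} the relation holds and generalizes to any closed cycle of braid moves on a reduced diagram, whereas the paper's computation is self-contained but opaque. Two small points worth making explicit: there is no circularity, because Theorem~\ref{thm:bm} for the (YB) move is verified directly from \eqref{eq:L} without appeal to Frenkel--Moore; and one should note that all four wires in Figure~\ref{fig:wire15} are oriented coherently (sources on one side, sinks on the other), which is what makes the identification of measurements with a product of Chevalley generators, and the applicability of every (YB) move in the cycle, legitimate.
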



We shall use the Frenkel-Moore relation in the following manner.  One can go from any diagram in the above cycle to any other diagram in two different ways.  Proposition \ref{prop:fm} tells us that the vertex weights in the result do not depend on which of the two ways is chosen.

\subsection{Underway paths and measurements}
Swapping left and right, one obtains the notion of {\it {underway paths}}, in which highway crossings are prohibited and the weight is computed using underway crossings. Thus one has {\it {underway boundary measurements}} and {\it {underway cycle measurements}}. By analogy with the highway paths case, one has

\begin{proposition} \label{prop:underbm}
Underway boundary and cycle measurements are invariant under local transformations.
\end{proposition}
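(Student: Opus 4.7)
The plan is to deduce this from Theorem \ref{thm:bm} by a symmetry argument that exchanges highway and underway data. Given a simple-crossing network $\N$ on $S$, let $\bar\N$ denote the same combinatorial data regarded as living on the surface $\bar S$ with the reversed orientation. My first step is to observe that reversing the ambient orientation reverses the cyclic order of edges at each interior vertex, so a pair labeled $(e_1,e_2,f_1,f_2)$ in counterclockwise order in $S$ is relabeled as $(e_2,e_1,f_2,f_1)$ in $\bar S$. Consequently the highway pair at each vertex in $\bar\N$ is exactly the underway pair in $\N$. It then follows directly from the definitions that underway paths (resp.\ cycles) in $\N$ are the same objects as highway paths (resp.\ cycles) in $\bar\N$, and that the underway weight in $\N$ equals the highway weight in $\bar\N$. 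Homology classes are identified via the orientation-reversing homeomorphism (the intersection pairing picks up an overall sign, but the classification of paths and cycles by homology class is unchanged), so the sums defining measurements correspond term-by-term.

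My second step is to verify that each of the five local moves of Section \ref{ssec:localmoves} is carried to a local move of the same type on $\bar\N$, together with its birational rule. The moves (CR), (XM), and (XR) are manifestly symmetric under a reflection. For (YB), the Yang-Baxter triangle reflects to another Yang-Baxter triangle; under the relabeling $(x,y,z)\leftrightarrow(z,y,x)$ forced by the swap of cyclic order, the rule \eqref{eq:L} is mapped to itself (it is symmetric under that swap). For (WC), reflection exchanges the roles of the two vertical wires and of clockwise/counterclockwise wrapping of the cylinder; the two rules \eqref{eq:whirlcurl} and \eqref{eq:whirlcurl2} are already set up as each other's inverses, and the reflection turns one into the other after reindexing the horizontal wires cyclically.

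Combining these two observations, the underway boundary and cycle measurements of $\N$ coincide with the highway boundary and cycle measurements of $\bar\N$, and a local transformation of $\N$ corresponds to a local transformation of $\bar\N$ preserving the identification of weights. Applying Theorem \ref{thm:bm} to $\bar\N$ therefore yields the proposition.

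The one step requiring actual care is the (WC) verification, since the rule depends on a choice of cyclic ordering of the horizontal wires and one must track the indexing modulo $n$ through the reflection; I expect this to be the main, though still routine, obstacle. Everything else reduces to the fact that the hypotheses and conclusions of Theorem \ref{thm:bm} are manifestly equivariant under orientation reversal of $S$, so no independent combinatorial argument paralleling the proof of that theorem is needed.
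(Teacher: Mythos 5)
Your argument is correct, and it is worth noting that the paper itself offers no proof of this proposition beyond the phrase ``by analogy with the highway paths case'' --- the intended argument is to re-run the proof of Theorem \ref{thm:bm} with left and right swapped throughout. What you do instead is package that left/right swap as an honest reduction: orientation reversal of $S$ exchanges the highway and underway pair at every simple crossing (your cyclic-order computation is right), identifies underway paths and cycles of $\N$ with highway paths and cycles of the mirrored network, preserves homology classes and multiplicities, and carries each local move to a local move of the same type with the same birational rule, so Theorem \ref{thm:bm} applied to the mirror gives the result. This is essentially the same duality the authors have in mind --- indeed the very next (also unproved) proposition in the paper records exactly your first step, that the measurements of $(S^*,\N^*)$ are the underway measurements of $(S,\N)$ --- but your version buys something concrete: it avoids redoing the multiplicity bookkeeping in the cycle-measurement half of Theorem \ref{thm:bm}, at the price of the equivariance checks in your second step. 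Those checks are all sound: \eqref{eq:L} is invariant under $(x,y,z)\mapsto(z,y,x)$ together with reversing the product order, (CR), (XM), (XR) are manifestly reflection-symmetric, and for (WC) the reflection interchanges whirls and curls and hence interchanges the two mutually inverse rules \eqref{eq:whirlcurl} and \eqref{eq:whirlcurl2}; you correctly single this last item out as the only place where the indexing modulo $n$ must actually be tracked.
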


Let $S^*$ be the mirror image of the surface $S$, and let $\N^*$ be the corresponding mirror image of the network $\N$.
\begin{prop}
 The boundary and cycle measurements for the network $(S^*, \N^*)$ coincide with the underway boundary and cycle measurements of the network $(S, \N)$.
\end{prop}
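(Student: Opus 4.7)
The plan is to check that mirror reflection induces a natural bijection between the highway paths and cycles of $(S^{*},\N^{*})$ and the underway paths and cycles of $(S,\N)$, preserving weights and homology classes. The argument is local at interior vertices plus a global identification of $H_1$.

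First I would analyze a single simple-crossing vertex $v$. In $S$, label the incident edges $e_1, e_2, f_1, f_2$ in counterclockwise order (with $e_i$ incoming and $f_i$ outgoing), so that the highway is $\{e_1,f_1\}$ and the underway is $\{e_2,f_2\}$. Mirror reflection reverses the cyclic order at $v$, so in $S^{*}$ the same edges appear counterclockwise as $(e_2, e_1, f_2, f_1)$. Applying the highway/underway convention to this new cyclic order at $v$ in $\N^{*}$ identifies $\{e_2, f_2\}$ as the highway and $\{e_1, f_1\}$ as the underway. Thus mirror reflection swaps highway and underway at every interior vertex.

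Next I would compare the admissibility conditions. A path in $\N^{*}$ is a highway path iff it never enters via the underway (in $S^{*}$) and exits via the underway (in $S^{*}$); translated back via the previous paragraph, this is precisely the condition that the same sequence of edges, viewed in $\N$, never enters via the highway of $\N$ and exits via the highway of $\N$, i.e., it is an underway path in $\N$. The weight rule matches in the same way: the contribution of $v$ to a highway path in $\N^{*}$ is $x_v$ exactly when the path enters and exits via the $S^{*}$-highway $\{e_2,f_2\}$, which is the condition for the underway weight contribution in $\N$. So highway walks in $\N^{*}$ correspond bijectively to underway walks in $\N$ with equal $\wt(\cdot)$.

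Finally I would handle the homology bookkeeping. Mirror reflection is a homeomorphism of the underlying topological space, so it induces a canonical isomorphism $H_1(S^{*},\Z)\cong H_1(S,\Z)$ (the intersection form flips sign, but this plays no role here since measurements only record homology classes, not pairings). Under this identification the class $[q]$ of a cycle, the homology relation $q\sim p$ used in boundary measurements, and the combinatorial multiplicity $\mult(q)$ used in cycle measurements are all preserved. Combining these ingredients, each summand of a boundary or cycle measurement for $(S^{*},\N^{*})$ matches a unique summand of the corresponding underway boundary or cycle measurement for $(S,\N)$, and the sums are equal. The only subtle point, and hence the main thing to be careful about, is fixing the labeling convention consistently so that the highway/underway swap comes out unambiguously; once this is done the theorem is essentially immediate.
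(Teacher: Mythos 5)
Your proof is correct, and it fills in precisely the verification the paper leaves implicit: the paper states this proposition without proof, treating it as immediate from the fact that reflection reverses the cyclic order at each simple crossing and hence swaps the highway/underway labels. Your local analysis at a vertex, the translation of the admissibility and weight rules, and the homology bookkeeping are all accurate and constitute the intended argument.
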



\section{Conjectures}\label{sec:conjectures}
A polyhedron in $\R^n$ is a subset specified by some affine linear inequalities or equivalently as the intersection of halfspaces.  A polyhedron is {\it pointed} if it does not 
contain a line.  We call a subset of $\Z^n$ a polyhedron if it is the intersection of $\Z^n$ with a polyhedron in $\R^n$.  

Let $H(u,v)$ denote the set
$\{[q \cup p^*]\} \subset H_1(S,\Z)$ of homology classes, where $p$ is a basepath from $u$ to $v$ and $q$ varies over all the highway paths $p$ between boundary vertices $u$ and $v$.  
Note that changing the basepath $p$ amounts to translating the set $H(u,v)$ by a fixed vector.   Thus Lemma \ref{lem:snake} says that $H(u,v)$ is contained in the polyhedron specified 
by the affine inequalities $\ip{q,r} \geq \ip{p,r}$ for each snake path or cycle $r$.  This is strengthened by

\begin{conjecture}
$H(u,v)$ is a pointed polyhedron.
\end{conjecture}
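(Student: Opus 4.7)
The plan is to prove $H(u,v) = \Z^a \cap P$ for some pointed polyhedron $P \subseteq H_1(S)\otimes\R$.  Fix the basepath $p_0$ as in the definition of $H(u,v)$, and take as a first candidate for $P$ the polyhedron cut out by the snake-flow inequalities from Lemma \ref{lem:snake}:
\[
P = \{x \in H_1(S)\otimes\R : \ip{x+[p_0], [r]} \geq 0 \text{ for every snake path or cycle } r \text{ in } N\}.
\]
The containment $H(u,v) \subseteq \Z^a \cap P$ is immediate from Lemma \ref{lem:snake}, leaving two tasks: (i) verify $P$ is pointed (enlarging the defining system if necessary), and (ii) show every lattice point of $P$ is realized by a highway path.

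For (i), pointedness of $P$ reduces to showing that the intersection-pairing map $x \mapsto (\ip{x, [r]})_r$, with $r$ ranging over snake paths/cycles, is injective.  I would attack this via a strengthening of Lemma \ref{lem:cone}: the cone in $H_1(S)\otimes\R$ generated by the homology classes of all irreducible highway cycles in $N$ should itself be pointed, in which case a separating hyperplane supplies the required vector.  Pointedness of the cycle cone would follow by iterating the flow-with-$N$ argument in Lemma \ref{lem:strongcontain}: a nontrivial relation $\sum c_i [\gamma_i] = 0$ with $c_i > 0$ and $\gamma_i$ irreducible highway cycles forces every $\gamma_i$ to be a snake cycle, at which point a further topological or orientation argument on the surface should rule out cancellations among snake cycle classes.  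If the snake inequalities alone are insufficient to make $P$ pointed, one can enlarge the defining system of $P$ by inequalities indexed by irreducible highway cycles.

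The main obstacle is (ii): producing, for every lattice point $x \in P$, an explicit highway path $q$ with $[q \cup p_0^*] = x$.  My plan is to exploit the finite-state structure of highway walks via the automaton $H_N$ from Section \ref{ssec:torus}: highway paths from $u$ to $v$ correspond to walks in $H_N$ between specific terminal vertices, and Parikh's theorem then yields a semilinear description of $H(u,v)$.  The crux is promoting this semilinear set to a single polyhedron.  I would use (i) to decompose $x - [p_0] = \sum n_i [\gamma_i]$ as a nonnegative integer combination of irreducible highway cycle classes, and then attempt to realize the decomposition geometrically by successively inserting the $\gamma_i$ into the walk for $p_0$.  The obstruction is that irreducible highway cycles can only be attached at compatible vertices of an existing highway walk, so one must engineer the insertions carefully; this geometric realizability is the main difficulty, and likely the reason the statement is posed here as a conjecture rather than a theorem.
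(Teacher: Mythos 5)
The paper offers no proof of this statement: it is posed as a conjecture, and the only thing the surrounding text establishes is the containment of $H(u,v)$ in the polyhedron cut out by the snake inequalities of Lemma \ref{lem:snake} --- which is precisely your first step. So there is no argument of the authors' to compare yours against; what you have written is a research plan rather than a proof, and you correctly sense that it does not close.

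The gaps are genuine. For (i), reducing pointedness of $P$ to injectivity of $x\mapsto(\ip{x,[r]})_r$ over snake paths and cycles $r$ is correct, since the recession cone of $P$ is exactly the common nonnegativity locus of those functionals. But your proposed attack --- pointedness of the cone generated by irreducible highway cycle classes --- is a different statement: it concerns the directions actually realized by highway cycles, not the kernel of the snake pairing, and neither implies the other without further work. (The flow-of-$N$ argument from the proof of Lemma \ref{lem:strongcontain} does show, as you say, that a vanishing nonnegative combination of highway cycle classes forces each cycle to turn at every vertex, i.e.\ to be a snake cycle; that half is sound. But snake cycles can carry cancelling homology classes --- for instance two disjoint closed-loop edges of opposite orientation --- and the ``further topological or orientation argument'' you defer is exactly the missing content.) Your fallback of enlarging the defining system of $P$ by highway-cycle inequalities shrinks $P$, so you would then have to re-verify $H(u,v)\subseteq P$, which you do not address. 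For (ii), Parikh's theorem applied to walks in $H_N$ yields only a semilinear set, a finite union of translated cones, and promoting this to the full set of lattice points of a single polyhedron is precisely where the difficulty lies: inserting an irreducible cycle $\gamma_i$ into the walk for $p_0$ requires $\gamma_i$ to share a vertex with the current walk in a compatible way, and nothing in your plan guarantees this. As written, neither (i) nor (ii) is established, and the statement remains open.
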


The following conjecture sharpens Proposition \ref{prop:rational}.

\begin{conjecture}\label{conj:rat}
For an appropriate choice of basis of $H_1(S)$, the generating function $M_{u,v}$ of Section \ref{ssec:rat} is the product of a (Laurent) monomial and a rational function in the $t_i$.
\end{conjecture}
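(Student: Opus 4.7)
The plan is to bootstrap Proposition \ref{prop:rational} by controlling the support of the formal Laurent expansion of $M_{u,v}(t)$. Concretely, I would aim to show that the set $H(u,v)\subset H_1(S,\Z)\simeq \Z^a$ of homology classes indexing the sum in $M_{u,v}(t)$ is contained in a translate of the nonnegative orthant $\Z^a_{\ge 0}$ for an appropriately chosen basis of $H_1(S,\Z)$. Once this is in hand, a Laurent rational function whose formal expansion has support in a translate of $\Z^a_{\ge 0}$ automatically factors as a Laurent monomial times a rational function in the $t_i$ that is regular at $t=0$, giving the factorization claimed by the conjecture.

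First I would use Lemma \ref{lem:snake} to bound $H(u,v)$: each snake cycle $r$ in $N$ contributes the inequality $\ip{q,r}\ge 0$ for any highway path $q$ from $u$ to $v$, which, after subtracting the basepath contribution $\ip{p,r}$, becomes an affine linear inequality on the class $[q\cup p^*]\in H_1(S,\Z)$. Intersecting these halfspaces over all snake cycles yields an affine polyhedron $P$ containing $H(u,v)$. To make $P$ pointed, one needs the set of classes $[r]$ of snake cycles to span $H_1(S,\mathbb Q)$. If $N$ does not already carry enough snake cycles, I would enrich $N$ via local transformations (preserving measurements by Theorem \ref{thm:bm}) to introduce snake cycles realizing a prescribed basis of $H_1(S,\Z)$; alternatively, one could choose the slicing of Section \ref{ssec:slice} to be carried by edges belonging to snake cycles, thereby forcing their spans to include the slicing duals.

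Given pointedness, select a basis $e_1,\ldots,e_a$ of $H_1(S,\Z)$ and a translation vector $\alpha \in \Z^a$ such that $H(u,v)\subseteq \alpha+\Z^a_{\ge 0}$. Then $f(t):=t^{-\alpha}M_{u,v}(t)=\sum_{r\in\Z^a_{\ge 0}}c_r t^r$ is a formal power series in $\mathbb C(x_v)[[t_1,\ldots,t_a]]$, with coefficients polynomial in the vertex weights by Theorem \ref{thm:fin}. By Proposition \ref{prop:rational}, $f(t)$ is simultaneously a rational function. A formal power series at the origin that coincides with a rational function must be the Taylor expansion of a rational function whose denominator has nonzero constant term in the $t_i$; hence $f$ is a rational function in the $t_i$ in the sense required, and $M_{u,v}(t)=t^\alpha f(t)$ is the desired factorization.

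The hard part is the pointedness step, which is essentially the content of the first conjecture in Section \ref{sec:conjectures}. The delicate point is that snake cycles of an arbitrary $N$ need not span $H_1(S,\mathbb Q)$ on their own, so the main work is either to enrich $N$ (without changing $M_{u,v}$) so that snake cycles do span, or to extract further inequalities from snake \emph{paths} by interpreting boundary-to-boundary chains in relative homology and pairing them with absolute classes via Poincar\'e--Lefschetz duality. Either route reduces Conjecture \ref{conj:rat} to a purely combinatorial-topological span statement about the snake decomposition of $N$.
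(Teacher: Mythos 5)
The paper does not prove this statement: it is stated as Conjecture \ref{conj:rat} and left open, so there is no proof to compare yours against. Your write-up is a reduction strategy rather than a proof, and by your own admission the load-bearing step --- that $H(u,v)$ sits inside a translate of a unimodular orthant --- is not established. That step is essentially the paper's other open conjecture (that $H(u,v)$ is a pointed polyhedron), so the argument as written proves nothing new; it trades one conjecture for another.

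Beyond that, the two devices you offer for securing pointedness do not work as stated. The local transformations of Section \ref{ssec:localmoves} (YB, CR, XM, XR, WC) rearrange or delete crossings; none of them creates a new snake cycle realizing a prescribed nontrivial class of $H_1(S,\Z)$, so ``enriching $N$ via local transformations'' to make the snake classes span is not available. Choosing the slices of Section \ref{ssec:slice} to run along snake cycles also does not help: the slices are cuts of the surface, and Lemma \ref{lem:snake} constrains $\ip{p,q}$ only for snake paths and cycles $q$ that actually occur in $N$, which can be homologically degenerate (e.g.\ all snake cycles nullhomotopic). Even granting spanning, ``the inequalities cut out a polyhedron whose recession cone is pointed'' requires an argument, not just that the functionals span. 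The remaining steps are essentially sound but glossed: from a pointed rational recession cone one does get containment in a translate of a unimodular orthant (the dual cone is full-dimensional and contains a lattice basis), and a formal power series in $\C(x_v)[[t_1,\ldots,t_a]]$ that agrees with a rational function (in the fraction field of the power series ring, which is where the identity $(I-A)\sum_k A^k = I$ from Proposition \ref{prop:rational} lives) does have a denominator with nonzero constant term, since coprimality in $\C(x_v)[t]$ persists in $\C(x_v)[[t]]$. So the architecture is reasonable, but the conjecture remains open after your argument; what you have is a clean statement of what would suffice, namely the pointedness conjecture together with a spanning statement for the snake decomposition.
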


We will provide some evidence for the following conjecture in Theorem \ref{thm:SaSb}.

\begin{conjecture}
When an oriented network $N$ is {\it reduced}, the monodromy action on $N$ is the action of a discrete group.  
\end{conjecture}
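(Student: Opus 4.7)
The plan is to realize the monodromy group $\M(N)$ as a subgroup of the automorphism group of a generically finite field extension, and thereby force it to be discrete.  By Theorem \ref{thm:bm}, any sequence of local moves that returns the underlying unweighted graph back to $N$ acts on the weight field $\C(N)$ while fixing the subfield $\C(M) \subseteq \C(N)$ generated by all boundary and cycle measurements.  Consequently $\M(N)$ embeds into $\mathrm{Aut}(\C(N)/\C(M))$, so it suffices to show that this automorphism group is discrete; this holds as soon as the extension $\C(N)/\C(M)$ is finite (algebraic), in which case the Galois closure has a finite automorphism group and every subgroup of it is discrete.

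The central step is therefore to prove that, for a reduced $N$, the measurement map $\Phi \colon \mathbb{A}^{|V(N)|} \to \mathrm{Spec}\,\C(M)$ is generically finite onto its image.  Three pieces of existing evidence point the way.  First, the symmetric-function example, where the measurements are the elementary (or power-sum) symmetric polynomials in the $x_v$ and the residual ambiguity is the Galois/permutation action of $S_n$.  Second, Theorem \ref{thm:SaSb} in the whirl setting, which realizes a finite $S_n$-action as monodromy.  Third, the reduced wiring diagram case in a disk, where the chamber ansatz of \cite{BFZ} recovers weights uniquely, giving trivial (hence discrete) monodromy.  To generalize, I would slice $S$ into a disk as in Section \ref{ssec:slice}; the boundary measurements on $S$ restrict to data on the disk where reducedness together with a chamber-ansatz-style argument should determine weights up to finite ambiguity, while the cycle measurements are used to pin down the extra parameters introduced when regluing along the slices.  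An induction on the number of wires, combined with the fact that a reduced $N$ carries a well-defined wire permutation of bounded length, should then close the count of invariants.

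The main obstacle will be showing that the cycle measurements supply exactly the right invariants to offset the new parameters contributed by $H_1(S,\Z)$, without leaving behind a residual continuous family of weights with identical measurements.  This is essentially the content of Problem \ref{pr:2} from the introduction (that measurements generate the ring of monodromy and torus invariants), specialized to the reduced setting, and it is where the reducedness hypothesis must do the real work: without it, configurations such as the self-crossing cycle of Figure \ref{fig:wire28} readily produce positive-dimensional weight families invariant under a continuous symmetry, obstructing discreteness.  Proving that reducedness forces $\mathrm{trdeg}_{\C(M)}\C(N) = 0$ thus appears to require a new structural input relating the minimal-intersection property of wires to the transcendence theory of measurements, and will be the most delicate part of the argument.
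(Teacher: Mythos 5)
The statement you are proving is stated in the paper as an open \emph{conjecture}: the authors offer no proof, only the evidence of Theorem \ref{thm:SaSb}, which computes the monodromy group of the whirl-curl-Bruhat networks $N^{a,b,w}$ on the cylinder to be the finite group $S_a \times S_b$. So there is no argument in the paper to compare yours against, and your proposal itself is a strategy with an acknowledged missing step rather than a proof; it does not settle the conjecture.

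Beyond the gap you acknowledge, the central reduction you propose is aimed at a false statement. You want to embed $\M(N)$ into $\mathrm{Aut}(\C(N)/\C(M))$ and conclude discreteness from finiteness of the extension $\C(N)/\C(M)$, i.e.\ from $\mathrm{trdeg}_{\C(M)}\C(N)=0$. But the torus action of Section \ref{ssec:torus} is a \emph{continuous} family of weight assignments with identical boundary and cycle measurements (Proposition \ref{prop:torus}), and it acts nontrivially on reduced networks: for the $3\times 3$ grid on the torus of Example \ref{ex:tor} (which is reduced) the torus $T=(K^*)^2$ acts with $2$-dimensional generic orbits, so $\mathrm{trdeg}_{\C(M)}\C(N)\geq 2$ and $\mathrm{Aut}(\C(N)/\C(M))$ is not discrete. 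The paper's Remark following Problem \ref{pr:2} makes the same point from the invariant-theory side: there are monodromy invariants (e.g.\ the sum of all weights in Example \ref{ex:tor}) that are not rational functions of the measurements, so $\C(M)$ is strictly smaller than the field $\C(N)^{\M(N)}$ that your Galois-theoretic argument would need. To salvage the approach you would have to replace $\C(M)$ by the full field of monodromy invariants (or by the invariants of the combined monodromy and torus actions, together with a separate argument controlling the torus directions), and then prove that $\C(N)$ is algebraic over \emph{that} field --- which is essentially the conjecture restated, not a reduction of it. Note also that even algebraicity of $\C(N)$ over the invariant field would only give finite orbits of individual elements, not by itself discreteness of the group in the topology the authors intend.
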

Here, discrete-ness should be interpreted in terms of the topology of the field containing the vertex weights.  This topology should be such that the local transformations are continuous.

We believe the following problems have an affirmative answer for the cylinder and the torus, but are not sure in the case of higher genus surfaces.

\begin{problem}\label{pr:1}
 Determine for which surfaces the following is true. Any two assignments of weights to vertices of an unweighted network that yield the same boundary and cycle measurements, can be connected by monodromy and torus actions.
\end{problem}

\begin{problem} \label{pr:2}
  Determine for which surfaces the following is true. For any fixed underlying unweighted network, the boundary and cycle measurements generate the field of rational invariants of monodromy and torus actions.
\end{problem}

For certain networks on the cylinder, Problems \ref{pr:1} and \ref{pr:2} are solved in Theorems \ref{thm:ntol} and \ref{thm:LP4}.

\begin{remark}
The torus action is necessary for Problems \ref{pr:1} and \ref{pr:2}: the statements would be false with only the monodromy action considered.  For example, it can be shown that in Example \ref{ex:tor} all measurements have degree divisible by three.  On the other hand, the local transformations preserve the sum of all weights, which has degree one. Thus there are invariants of the monodromy action that are not rational functions in the measurements. 
\end{remark}

\section{The sphere and the disk}\label{sec:spheredisk}
\subsection{The sphere}\label{ssec:sphere}
Suppose $S$ is a sphere.  Then there are no boundary measurements, and the only cycle measurement is that of the trivial homology class, for which many of our earlier results do not apply.

\begin{proposition}
Let $\N$ be an oriented simple-crossing network on the sphere.  Then $\N$ can be transformed to the empty network using the local transformations of Section \ref{ssec:localmoves}.
\end{proposition}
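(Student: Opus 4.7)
The plan is to induct on the number of vertices $n=|V(\N)|$, exploiting the topology of $S^2$ to always produce a locally reducible face. The simple-crossing condition makes $\N$ a $4$-valent graph on $S^2$, so Euler's formula gives $V-E+F=2$ with $E=2n$ and $F=n+2$. Since the sum of face-perimeters equals $2E=4n$, the average face-perimeter is $4n/(n+2)<4$, and hence some face has perimeter at most $3$. Choosing such a face $F_0$ that is \emph{innermost} (its interior contains no other part of $\N$) is the repeated tool of the induction.

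The base case $n=0$ is immediate: $\N$ is a disjoint union of closed loop edges, each a simple closed curve on $S^2$. Picking one whose bounded disk is empty of other loops and applying (CR) strictly reduces the number of loops, and iterating yields the empty network.

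For the inductive step I case on the perimeter of $F_0$. If $\partial F_0$ has length $1$ (a monogon), the boundary is automatically an oriented cycle, so (CR) removes the loop and its vertex. If length $2$ (a bigon), a direct sector analysis at each of the two vertices shows there are only two possibilities: either the bigon is ``anti-parallel''---its boundary is an oriented cycle and (CR) applies---or ``parallel''---exactly the local configuration required by (XM), which then merges the two crossings into one and reduces $n$ by $1$. If length $3$ (a triangle) and the boundary is an oriented cycle, apply (CR); otherwise the triangle is either a (YB) configuration (three distinct wires crossing pairwise, with each vertex occupying a ``diverge'' sector of the triangle) or a ``mixed'' triangle in which some vertices lie in turn sectors and others in diverge sectors.

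The main obstacle will be the length-$3$ case, since (YB) does not itself reduce $n$ and mixed triangles are not directly reducible. To handle this I would refine the inductive complexity measure lexicographically---first by $n$, then by the number of length-$3$ faces whose boundary is not an oriented cycle---and show that a carefully chosen application of (YB) (in a (YB)-eligible triangle adjacent to a mixed one) strictly decreases this refined measure, after which the smaller length-$1$ or length-$2$ cases take over. Alternatively, I would argue that any innermost mixed triangle must share an edge with a strictly smaller innermost face (monogon or bigon), reducing to the earlier cases. The essential topological input is that on $S^2$ every closed curve is null-homotopic, so the homological obstruction illustrated by Figure \ref{fig:wire28} cannot arise, and the induction terminates at the empty network.
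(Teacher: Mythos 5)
Your base case and the monogon/bigon cases are fine, but the triangle case is a genuine gap, and it is not an avoidable one: there exist $4$-valent graphs on the sphere all of whose faces are triangles (the octahedron, realized as three pairwise-crossing closed wires), so your Euler-formula count of $4n/(n+2)<4$ only ever guarantees a face of perimeter $3$, and you cannot in general fall back on monogons or bigons. For a triangular face whose boundary is not an oriented cycle, (YB) does not reduce the vertex count, and neither of your two proposed repairs is substantiated. The lexicographic measure ``number of non-cyclic triangular faces'' is not obviously decreased by a (YB) move --- a (YB) move destroys one triangular face but can create new ones, and the Frenkel--Moore relation (Figure \ref{fig:wire15}) shows explicitly that one can perform (YB) moves indefinitely without ever reducing the network --- while the alternative claim that an innermost mixed triangle must border a monogon or bigon is simply asserted and is false in the octahedron example. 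So as written the induction does not terminate.

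The paper's proof takes a different and much shorter route that sidesteps faces entirely: it uses only the (CR) move. Since every interior vertex has out-degree $2$ and the network is finite, one can always find a simple \emph{oriented} cycle $C$ (follow outgoing edges until a vertex repeats, or take a closed loop edge); $C$ bounds a disk $D$, and if any part of $\N$ meets the interior of $D$ one finds a strictly smaller simple oriented cycle (either a cycle wholly inside $D$, or an oriented path through $D$ between two points of $C$ concatenated with an arc of $C$). Iterating produces an innermost simple oriented cycle bounding an empty disk, which (CR) deletes, and induction on the size of $\N$ finishes the argument. The lesson is that the right object to make innermost is an \emph{oriented cycle} (which always exists by the degree condition) rather than a small face (whose boundary need not be coherently oriented). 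If you want to salvage your approach, you would need to prove that from any all-triangle configuration a bounded sequence of (YB) moves produces an oriented triangle or a bigon; that is precisely the hard combinatorial content your sketch leaves open.
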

\begin{proof}
Suppose $\N$ is non-empty.  Find a simple oriented cycle $C$ in $\N$, separating $S$ into two components which are disks.  Take one of these disks $D$.  If there is a part of $\N$ in the interior of $D$, then we can either find a smaller simple oriented cycle strictly inside $D$, or we can find an oriented path inside $D$ which joins two vertices on $C$.  In either case, we can find a smaller simple oriented cycle.  Repeating, we eventually obtain a simple oriented cycle $C'$ which bounds a disk $D'$ not containing any part of $\N$.  We can now remove $C'$ using the cycle removal (CR) move.  But $\N$ is finite, so after repeating this finitely many times, we are left with the empty network.
\end{proof}

\subsection{Postnikov's plabic graphs and the disk}\label{ssec:disk}
Suppose $S$ is a disk and $\N$ is an oriented simple-crossing network on $S$.  The network $G(\N)$ in the proof of Proposition \ref{prop:rational} is a trivalent edge-weighted network, such that every interior vertex either has indegree one, or outdegree one. 

In \cite{Pos}, Postnikov studied a class of oriented networks in a disk.  The network $G(\N)$ is, in the terminology of \cite{Pos} a ``trivalent perfect network''.  It gives rise to a plabic network $P(\N)$ in the sense of Postnikov as follows: each outdegree one vertex is colored black and each indegree one vertex is colored white, and the orientations are removed.  Following Postnikov \cite[Section 11]{Pos} the edge weights of $G(\N)$ give rise to face weights of $P(\N)$. 

\begin{prop}
Our boundary measurements $M^{[p]}$ of $\N$ agree with Postnikov's boundary measurements for $G(\N)$ (or $P(\N)$).
\end{prop}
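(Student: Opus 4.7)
The plan is to factor the comparison through the intermediate edge-weighted trivalent network $G(\N)$ which already appeared in the proof of Proposition \ref{prop:rational}. That proof establishes in passing something slightly stronger than rationality: it exhibits a bijection between highway paths in $\N$ from $u$ to $v$ and ordinary directed walks in $G(\N)$ from $u$ to $v$, and this bijection preserves weights. Concretely, the local replacement in Figure \ref{fig:wire7} is set up precisely so that, at a simple-crossing vertex with weight $z$, the three allowed highway transitions (highway-in/highway-out contributing $z$, and the two mixed transitions each contributing $1$) correspond to the three directed paths through the trivalent gadget, carrying the edge weights $z$, $1$, $1$ respectively. The forbidden underway-to-underway transition is absent in the gadget, so there is nothing spurious on the $G(\N)$ side. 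Thus my first step is to record that $M^{[p]} = \sum_{q} \wt_G(q)$, where $q$ ranges over directed walks in $G(\N)$ from $u$ to $v$ and $\wt_G$ denotes the product of edge weights.

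Next I would compare $G(\N)$ to the plabic network $P(\N)$. By definition, $P(\N)$ has the same underlying graph as $G(\N)$; the only differences are that orientations are erased and interior vertices are colored black or white according to whether they have outdegree or indegree equal to one. The coloring carries exactly the data needed to recover the orientation on each edge, so directed paths in $P(\N)$ from a boundary source $u$ to a boundary sink $v$, in the sense of \cite[Section 4]{Pos}, biject (weight-preservingly) with directed paths in $G(\N)$ from $u$ to $v$. Postnikov's boundary measurement $M_{u,v}^{\mathrm{Pos}}$ for an edge-weighted trivalent perfect network is defined as the generating function $\sum_q \wt(q)$ over precisely these directed paths, so the identification $M_{u,v}^{\mathrm{Pos}} = \sum_q \wt_G(q)$ is immediate from the definition.

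Finally, I would observe that the passage between edge weights on $G(\N)$ and face weights on $P(\N)$ used in the statement of the proposition is the one from \cite[Section 11]{Pos}: Postnikov constructs face weights from edge weights and shows that the boundary measurements are invariants of the edge weights modulo gauge, so they coincide whether computed from edge weights or from the induced face weights. Combining the two steps above therefore yields $M^{[p]} = M_{u,v}^{\mathrm{Pos}}$, which is the claim. The only point that requires any thought is matching conventions (source/sink, black/white, and the gauge choice used to assign face weights), but none of these introduce a new calculation; so I do not expect any genuine obstacle beyond careful bookkeeping between \cite{Pos} and Section \ref{sec:net}.
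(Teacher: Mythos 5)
Your first step is fine and is exactly what the paper relies on implicitly: the gadget of Figure \ref{fig:wire7} gives a weight-preserving bijection between highway paths in $\N$ and directed walks in $G(\N)$, and the edge-weight/face-weight translation via \cite[Section 11]{Pos} is unproblematic. The gap is in your second step, where you assert that Postnikov's boundary measurement is \emph{defined} as the plain generating function $\sum_q \wt(q)$ over directed paths. It is not: for a general perfect network in the disk (which may contain oriented cycles), Postnikov's measurement is $\sum_q (-1)^{\mathrm{wind}(q)}\wt(q)$, with a sign governed by the winding index of the path, and the sum is a priori an infinite formal power series that he shows sums to a rational (and ultimately subtraction-free) expression. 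Your claim that the identification is ``immediate from the definition'' therefore skips exactly the point at issue: one must rule out both the signs and the infinite sums before the two notions of boundary measurement can be compared term by term.

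The paper closes this gap in one move that is absent from your proposal: since the disk has trivial homology, $M^{[p]}$ depends only on the endpoints, and Theorem \ref{thm:fin} then says there are only \emph{finitely many} highway paths from each source to each sink. Via your own bijection this means $G(\N)$ contains no oriented cycle traversable by a source-to-sink walk, so every relevant path has winding index zero and the sum is finite; only then do both measurements collapse to the naive weight generating function. You should add this finiteness/acyclicity argument (or an equivalent one); without it the proof does not go through for networks $\N$ whose associated $G(\N)$ has oriented cycles, which is precisely the generality the proposition is stated in.
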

\begin{proof}
Since the homology of the disk is trivial, such measurements $M^{[p]}$ only depend on the source and sink vertices.  By Theorem \ref{thm:fin}, there are only finitely many paths from each source to each sink.  In particular, $G(\N)$ does not contain any oriented cycles which can be traversed by such paths.  In this case, both ours and Postnikov's boundary measurements agree with the usual weight generating functions of paths in edge-weighted graphs.  (One of the main innovations of \cite{Pos} is to tackle the infinite sums which occur when oriented cycles are present.)
\end{proof}

In \cite[Section 12]{Pos}, Postnikov describes a number of local transformations on plabic networks.  Only four of our local transformations: Yang-Baxter move, cycle removal, crossing merging, and crossing removal can occur for networks in a disk.  Since Postnikov's weights are variables (and in particular cannot be 0), there is no analogue of our crossing removal.

We assume familiarity with Postnikov's transformations of plabic graphs.
\begin{prop}
Under the transformation $\N \mapsto P(\N)$
\begin{enumerate}
\item
Our Yang-Baxter move becomes Postnikov's square move composed with a number of edge contraction/uncontractions.
\item
Our cycle removal becomes a composition of edge contractions, loop removal, leaf reductions, and middle vertex removals.
\item
Our crossing removal becomes a composition of an edge contraction, an edge uncontraction, and a parallel edge reduction.
\end{enumerate}
\end{prop}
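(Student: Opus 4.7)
The plan is to verify each of the three claims by drawing the local picture of $P(\N)$ on each side of the move and exhibiting an explicit sequence of Postnikov's moves that takes one to the other. In each case the recipe from Figure~\ref{fig:wire7} replaces every simple crossing of $\N$ by a black-white pair of trivalent vertices (the outdegree-$1$ vertex coloured black, the indegree-$1$ vertex coloured white) joined by a short edge carrying the original vertex weight $z$. So the task is a purely local combinatorial match of two pictures built from such pairs, together with a weight check using Postnikov's edge-to-face weight dictionary \cite[Section 11]{Pos}.

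For (1), I would start by drawing $P(\N)$ for the left side of the Yang-Baxter move. The three simple crossings contribute three black-white pairs arranged around a small triangular region. Two of these pairs sit on wires that enter the region on the same side; contracting the two relevant short edges (one black-black contraction, one white-white contraction) merges these into two trivalent vertices of opposite colour, producing an alternating $4$-cycle with four external legs. Applying Postnikov's square move swaps the colours, and uncontracting in the mirror pattern produces the picture of $P(\N')$ for the right side of the Yang-Baxter move. The weight verification amounts to matching the square-move transformation of face weights against the rule $(x,y,z)\mapsto (yz/(x+z),\,x+z,\,xy/(x+z))$; this is precisely the content of the computation in Example~\ref{ex:disk} translated through the dictionary of \cite[Section 11]{Pos}.

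For (2), I would examine a small oriented cycle of length $k$ bounding a disk. Each simple crossing on the cycle contributes a black-white pair, so $P(\N)$ contains a $2k$-gon of alternating colours with $k$ ``leaf'' legs sticking outward (one from each vertex of the cycle). I would then peel the cycle off by repeatedly contracting one leaf into its neighbour (a leaf reduction or a middle vertex removal, depending on the local degrees), then contracting the resulting two-valent same-colour vertices (edge contraction), then eliminating the shrunk loop (loop removal). Iterating reduces the configuration to the empty picture inside the disk, matching the effect of (CR) on $\N$.

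For (3), a vertex of $\N$ with weight $z=0$ becomes, in $P(\N)$, a black-white pair joined by an edge whose face weight degenerates; this is precisely the configuration eliminated in Postnikov by contracting the short edge (producing a four-valent vertex carrying a white and a black label), uncontracting in the opposite direction to expose a pair of parallel edges between a black and a white vertex, and then applying a parallel edge reduction. I would verify the weight compatibility by checking that the face weight contributed by a $z=0$ vertex matches the effect of the parallel edge reduction.

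The main obstacle will be (1): the correspondence between our Yang-Baxter move (three simple crossings) and Postnikov's square move (a four-cycle) is not one-to-one on vertices, so bookkeeping of the auxiliary edge contractions and uncontractions requires care, and the weight transformation must be read off correctly through the passage from vertex weights in $\N$ to edge weights in $G(\N)$ and then to face weights in $P(\N)$. Once this case is settled, (2) and (3) are straightforward checks, essentially reading off Postnikov's moves from the pictures.
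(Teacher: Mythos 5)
Your proposal is correct and follows essentially the same route as the paper, whose entire proof is ``Direct computation using the construction of $G(\N)$ (Figure \ref{fig:wire7})'' together with a figure exhibiting the sequence of Postnikov moves realizing the Yang-Baxter case. Your case-by-case local verification (replace each crossing by its black--white trivalent pair, contract/uncontract to reach the square-move configuration, and check weights through the edge-to-face dictionary) is precisely that computation spelled out.
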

\begin{proof}
Direct computation using the construction of $G(\N)$ (Figure \ref{fig:wire7}). 
\begin{figure}[h!]
    \begin{center}
    \input{wire29.pstex_t} 
    \end{center} 
    \caption{}
    \label{fig:wire29}
\end{figure}
Figure \ref{fig:wire29} shows a sequence of transformations realizing the Yang-Baxter move by Postnikov's square move.
\end{proof}

\part{Cylindrical networks and total positivity in loop groups}

\section{Whurl relations via local moves} \label{sec:whurl}
Recall the definition of whirl, curl, and whurl cycles from Section \ref{ssec:wires}.
\subsection{The whurl relations}\label{ssec:whurl}
Let $\N$ be an oriented simple-crossing network on a surface $S$.  We assume that on a local portion $S'$ of $S$ that looks like the cylinder, $\N' = \N \cap S'$ contains two homologically non-trivial wire cycles $C$, $C'$ which are whurls.  Furthermore we will assume that the other wires pass through $S'$ intersecting only $C$ and $C'$.  We shall call these other wires the horizontal wires, as pictured in Figure \ref{fig:wire18}.  We shall generally view the vertex weights of $\N$ as indeterminates, allowing us to ignore issues with denominators being 0 in the (YB) move.

We will assume that $\N'$ does not contain oriented cycles bounding faces.  Thus we have either 
\begin{enumerate}
\item
$C$ and $C'$ are oriented in the same direction, or
\item
one of $C$ and $C'$ is a whirl, and the other is a curl.
\end{enumerate}

In such a situation our local transformations allow us to swap the wires $C$ and $C'$.  In situation (1), we call this relation the {\it whurl-relation}, which specializes to the {\it whirl relation} or {\it curl relation} if $C$ and $C'$ are both whirls or both curls.  In situation (2), this is the whirl-curl relation (WC) of Section \ref{ssec:localmoves}.

For the remainder of this subsection, we assume that we are in situation (1).  We shall further assume that the network is rotated so that the whurls $C$ and $C'$ are both oriented up as in Figure \ref{fig:wire18}, and that $C$ is on the left of $C'$.  Number the horizontal wires cyclically from $1$ to $n$.  Assume the $i$-th wire intersects $C$ and $C'$ with weights $x^{(i)}$ and $y^{(i)}$ respectively, for $i = 1, \ldots, n$.  

To describe the transformation, introduce the variables $z = (z^{(1)}, \ldots, z^{(n)})$ and $t = (t^{(1)}, \ldots, t^{(n)})$ as follows:
$$
z^{(i)} = \begin{cases} 
x^{(i)} & \mbox{if the $i$-th horizontal wire is oriented to the right,}\\
y^{(i)}& \mbox{if the $i$-th horizontal wire is oriented to the left;}
\end{cases}
$$
$$
t^{(i)} = \begin{cases} 
y^{(i-1)} & \mbox{if the $(i-1)$-st horizontal wire is oriented to the right,}\\
x^{(i-1)}& \mbox{if the $(i-1)$-st horizontal wire is oriented to the left.}
\end{cases}
$$
where we take indices modulo $n$.  Let 
$$\epsilon_i = \begin{cases} 
1 & \mbox{if the $i$-th horizontal wire is oriented to the right,}\\
0 & \mbox{if the $i$-th horizontal wire is oriented to the left.}
\end{cases}
$$ 
Finally, define 
\begin{equation}\label{eq:kappa}
\k_{r}(x,y) = \sum_{s=0}^{n-1} (\prod_{t=1}^s y^{(r+t)} \prod_{t=s+1}^{n-1} x^{(r+t)}).
\end{equation}

Then the new values of $x$-s and $y$-s after the whurl transformation are defined to be 
\begin{equation} \label{eq:whurl}
x'^{(i)} = \frac{y^{(i)}  \k_{i+\epsilon}(z,t)}{\k_{i+1-\epsilon}(z,t)} \text{\;\;\;\; and \;\;\;\;} y'^{(i)} = \frac{x^{(i)} \k_{i+1-\epsilon}(z,t)}{\k_{i+\epsilon}(z,t)}.
\end{equation}

\begin{figure}[h!]
    \begin{center}
    \input{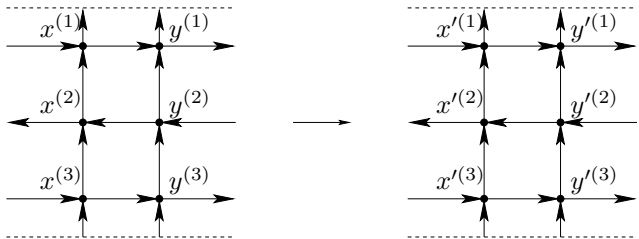}
    \end{center}
    \caption{An example of whurl relation.}
    \label{fig:wire18}
\end{figure}
The whurl relation for Figure \ref{fig:wire18} is given by
$$x'^{(1)}=y^{(1)} \frac{x^{(1)}x^{(2)}+x^{(1)}x^{(3)}+x^{(2)}y^{(3)}}{y^{(2)}x^{(3)}+y^{(1)}x^{(3)}+y^{(1)}x^{(2)}} \;\;\; x'^{(2)}=y^{(2)} \frac{x^{(1)}x^{(2)}+x^{(1)}x^{(3)}+x^{(2)}y^{(3)}}{x^{(1)}y^{(2)}+y^{(1)}y^{(3)}+y^{(2)}y^{(3)}}$$ $$ x'^{(3)}=y^{(3)} \frac{y^{(1)}x^{(3)}+y^{(1)}x^{(2)}+y^{(2)}x^{(3)}}{x^{(1)}y^{(2)}+y^{(1)}y^{(3)}+y^{(2)}y^{(3)}}$$
$$y'^{(1)}=x^{(1)} \frac{y^{(2)}x^{(3)}+y^{(1)}x^{(3)}+y^{(1)}x^{(2)}}{x^{(1)}x^{(2)}+x^{(1)}x^{(3)}+x^{(2)}y^{(3)}} \;\;\; y'^{(2)}=x^{(2)} \frac{x^{(1)}y^{(2)}+y^{(1)}y^{(3)}+y^{(2)}y^{(3)}}{x^{(1)}x^{(2)}+x^{(1)}x^{(3)}+x^{(2)}y^{(3)}}$$
$$  y'^{(3)}=x^{(3)} \frac{x^{(1)}y^{(2)}+y^{(1)}y^{(3)}+y^{(2)}y^{(3)}}{y^{(1)}x^{(3)}+y^{(1)}x^{(2)}+y^{(2)}x^{(3)}}$$
 In this example, one has $z=(x^{(1)}, y^{(2)}, x^{(3)})$, $t=(y^{(3)}, y^{(1)}, x^{(2)})$ so that $\k_1(z,t) = y^{(2)} x^{(3)} + y^{(1)} x^{(3)} + y^{(1)} x^{(2)}$, and so on.

\subsection{Whirl-curl relation}\label{ssec:whirlcurl}
\begin{figure}[h!]
    \begin{center}
    \input{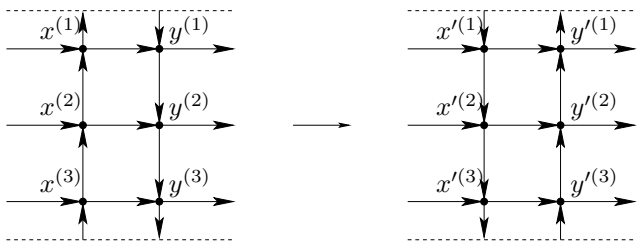}
    \end{center}
    \caption{An example of the whirl-curl relation.}
    \label{fig:wire19}
\end{figure}
The whirl-curl relation for Figure \ref{fig:wire19}, stated in Section \ref{ssec:localmoves}, is given by
$$x'^{(1)} = y^{(1)} \frac{x^{(3)}+y^{(3)}}{x^{(1)}+y^{(1)}} \;\;\; x'^{(2)} = y^{(2)} \frac{x^{(1)}+y^{(1)}}{x^{(2)}+y^{(2)}} \;\;\; x'^{(3)} = y^{(3)} \frac{x^{(2)}+y^{(2)}}{x^{(3)}+y^{(3)}}$$
$$y'^{(1)} = x^{(1)} \frac{x^{(3)}+y^{(3)}}{x^{(1)}+y^{(1)}} \;\;\; y'^{(2)} = x^{(2)} \frac{x^{(1)}+y^{(1)}}{x^{(2)}+y^{(2)}} \;\;\; y'^{(3)} = x^{(3)} \frac{x^{(2)}+y^{(2)}}{x^{(3)}+y^{(3)}}$$
%

\subsection{Realization by Yang-Baxter and crossing moves} \label{sec:R}
Now we describe a sequence of local operations that globally produces a whurl or a whirl-curl (WC) move.  In the whurl case, we begin by applying (XR) and (XM) moves to create a ``ripple'' with weights $p$ and $-p$ as shown in Figure \ref{fig:wire20}.  In the whirl-curl case, we also create such a ripple, though this does not follow from (XR) and (XM), since the wires are oriented in opposite directions. (The creation of this ripple in the whirl-curl case {\it does} change the boundary and cycle measurements.)

Once the ripple has been created, one can start pushing one of the crossings of the ripple through the horizontal wires using the (YB) move.  One repeats this until the crossing revolves once around the cylinder.  Note that in the whirl-curl situation, only one of the two crossings can be moved, since moving the other would require performing a (YB) move on an oriented three-cycle, which is not defined. 
\begin{figure}[h!]
    \begin{center}
    \input{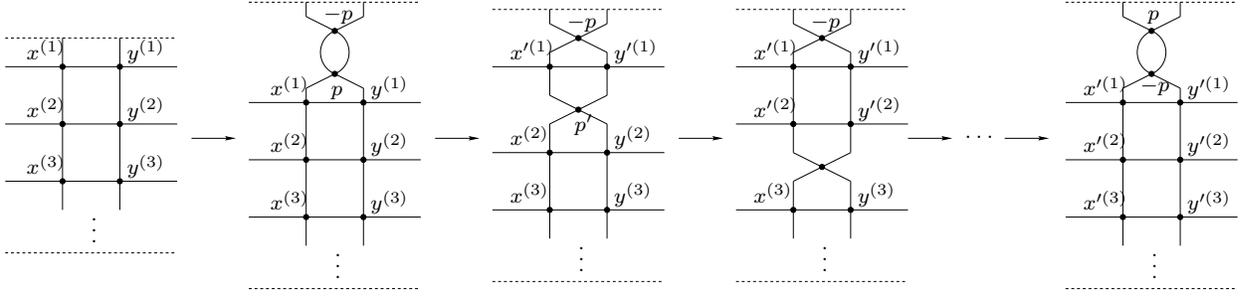}
    \end{center}
    \caption{For a unique choice of the weight $p$, the weight that comes out on the other side after passing through all horizontal wires is also $p$; the resulting transformation of $x$ and $y$ is exactly the whurl move, or the whirl-curl move.}
    \label{fig:wire20}
\end{figure}


\begin{lemma} \label{lem:unp}
 There is at most one value of $p$ for which the weight of the crossing once it has been pushed through all wires is equal to the original value $p$.
\end{lemma}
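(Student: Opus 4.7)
The plan is to follow the moving crossing weight $p$ through each Yang--Baxter move in turn, realize the update as a M\"obius transformation with constant coefficients, and observe that the total composition fixes $p = 0$; since a M\"obius transformation has at most two fixed points, this leaves at most one nontrivial value of $p$.

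Let $p_0 = p$ and let $p_k$ denote the weight of the moving crossing after the $k$-th (YB) move. The three vertices involved at step $k$ have weights $p_{k-1}$, $x^{(k)}$, $y^{(k)}$, where $x^{(k)}, y^{(k)}$ are the weights of the crossings of the $k$-th horizontal wire with the two whurls. The key point is that the earlier (YB) moves altered only the weights on wires $1,\ldots,k-1$ (together with the moving crossing itself), so at step $k$ the values of $x^{(k)}, y^{(k)}$ are still their original inputs --- constants independent of $p$. Unpacking the (YB) rule of Figure~\ref{fig:wire8} in this configuration, where the moving crossing sits as one of the outer weights of the braid triple, gives $p_k = f_k(p_{k-1})$ with $f_k(p) = x^{(k)} p /(p + y^{(k)})$ (with the analogous adjustments in the whirl-curl case or when a horizontal wire is reversed). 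The $2 \times 2$ matrix $\bigl(\begin{smallmatrix} x^{(k)} & 0 \\ 1 & y^{(k)} \end{smallmatrix}\bigr)$ is lower triangular, and in particular $f_k(0) = 0$; conceptually, a weight-zero crossing can be deleted via (XR), so the (YB) move leaves nothing to push.

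Composing, $p_n = (f_n \circ \cdots \circ f_1)(p)$ is a M\"obius transformation whose matrix is the product of the lower triangular $M_k$, hence itself lower triangular: $p_n = A p /(C p + D)$. The equation $p_n = p$ reduces to $p(Cp + D - A) = 0$, with the trivial root $p = 0$ corresponding to a degenerate ripple that is not really present. This leaves at most one genuine solution $p = (A - D)/C$, as claimed. The main obstacle is the case analysis establishing the precise form of $f_k$ in all the relevant configurations (whurl vs.\ whirl-curl, reversed wire orientations); in every case, the moving crossing remains an outer weight in the (YB) triple, so the lower-triangular structure and the fact that $f_k(0)=0$ persist, and the conclusion follows from the at-most-quadratic nature of the fixed-point equation.
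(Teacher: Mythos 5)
Your treatment of the whurl case is correct and is essentially the paper's own argument: the paper proves by induction that the travelling weight always has the form $\frac{Ap}{Bp+C}$ with $A,B,C$ independent of $p$, which is exactly your composition of lower-triangular M\"obius maps (compare the update $p' = \frac{t^{(j+1)}p}{z^{(j)}+p}$ computed in Theorem \ref{thm:whloc}), and the fixed-point equation then being linear after discarding the spurious root $p=0$ is the same conclusion. Your observation that the coefficients at step $k$ are untouched by the earlier moves is also sound, since each (YB) move only alters the two weights on the wire just crossed.

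The gap is in your final sentence: it is not true that ``in every case, the moving crossing remains an outer weight in the (YB) triple.'' In the whirl-curl case the two vertical wires are antiparallel, the moving crossing plays the role of the middle weight of the braid triple, and the update is $p \mapsto x^{(j)}+y^{(j)}$ --- a constant map, independent of $p$ (this is exactly Theorem \ref{thm:wcloc}, and it is why the paper's proof says the weight coming out the other side is \emph{always} $x^{(n)}+y^{(n)}$). A constant map is neither lower triangular nor does it fix $0$, so your uniform structural argument breaks down there. The conclusion of the lemma still holds in that case --- a constant map has exactly one fixed point, namely $p = x^{(n)}+y^{(n)}$ --- but for a reason opposite to the one you give. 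You should split the proof into the two cases, as the paper does: the M\"obius/lower-triangular argument for the whurl move, and the direct observation that the output is a constant for the whirl-curl move.
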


\begin{proof}
 The proof for the whurl move and for the whirl-curl move is different. 

In the whurl case, it is easy to argue by induction from the Yang-Baxter relation that the weight of the crossing as it travels through the wires always has the form $\frac{Ap}{Bp+C}$, where $A, B, C$ do not depend on $p$. Thus at the end one obtains a linear equation in $p$, which has a unique solution or no solutions at all. 

In the whirl-curl case, we observe directly that the weight to come out on the other side is always $x^{(n)} + y^{(n)}$. Thus, $p=x^{(n)} + y^{(n)}$ is obviously the unique solution.
\end{proof}

In the following we assume the wires have been numbered as in Figure \ref{fig:wire20}.  Namely, the ripple is created above the $1$-st wire.

\begin{theorem} \label{thm:whloc}
Suppose we are in the whurl move case.  If we set $p = \frac{\prod_i t^{(i)} - \prod_i z^{(i)}}{\k_1(z,t)}$ and perform the sequence of (YB) moves in Figure \ref{fig:wire20}, then at the moment when the crossing is between the $j$-th and $(j+1)$-st wires, the value of the crossing weight is equal to $\frac{\prod_i t^{(i)} - \prod_i z^{(i)}}{\k_{j+1}(z,t)}$, and the resulting values of $x'^{(i)}$ and $y'^{(i)}$ are as in the whurl transformation \eqref{eq:whurl}.
\end{theorem}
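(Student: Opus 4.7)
The plan is to prove Theorem \ref{thm:whloc} by induction on $j$, using the Yang--Baxter transformation \eqref{eq:L} applied one wire at a time, together with a simple recursion for the polynomials $\k_r(z,t)$. At stage $j$ the local picture consists of the already--updated weights $x'^{(i)}, y'^{(i)}$ for $i\le j$, the original weights $x^{(i)},y^{(i)}$ for $i>j$, and a free crossing of weight $p_j$ between wires $j$ and $j{+}1$. The inductive hypothesis I would maintain is that $p_j=(\prod_i t^{(i)}-\prod_i z^{(i)})/\k_{j+1}(z,t)$ and that the weights $x'^{(i)},y'^{(i)}$ for $i\le j$ already agree with the whurl formula \eqref{eq:whurl}.

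The base case $j=0$ is just the definition of $p$. For the inductive step, pushing the crossing past wire $j{+}1$ is a single (YB) move on the three crossings of weights $p_j$, $x^{(j+1)}$, $y^{(j+1)}$ (ordered according to $\epsilon_{j+1}$). Applying \eqref{eq:L} directly, the new weights of the two vertex crossings on wire $j{+}1$ and the new weight of the pushed crossing are explicit rational functions of $p_j,x^{(j+1)},y^{(j+1)}$. I would then verify by substitution that these match, respectively, the values $x'^{(j+1)},y'^{(j+1)}$ given by \eqref{eq:whurl} and $p_{j+1}=(\prod t^{(i)}-\prod z^{(i)})/\k_{j+2}(z,t)$. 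After $n$ steps the crossing has revolved once, and the cyclicity $\k_{n+1}=\k_1$ gives $p_n=p$, so the ripple closes back into its original two--crossing form, which can be eliminated by (XM) followed by (XR); uniqueness of this closing value is guaranteed by Lemma \ref{lem:unp}.

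The verification of the inductive step reduces, after clearing denominators in \eqref{eq:L}, to a single polynomial identity among the $\k_r$'s; the identity I expect to use is the telescoping recursion
\[
z^{(r+1)}\,\k_{r+1}(z,t)\;=\;\k_r(z,t)\;-\;\tfrac{\prod_u z^{(u)}-\prod_u t^{(u)}}{\text{boundary term}},
\]
obtained by splitting the sum in \eqref{eq:kappa} according to whether the index $s$ is zero; a parallel identity holds with the roles of $z$ and $t$ swapped. These two recursions, combined, convert $p_j+z^{(j+1)}$ (the denominator appearing in \eqref{eq:L}) into an expression proportional to $\k_{j+1}/\k_{j+2}$, which is exactly what the whurl formula \eqref{eq:whurl} requires.

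The main obstacle is not the algebra itself but the bookkeeping generated by the $\epsilon_i$'s: depending on the orientation of wire $j{+}1$, the variables $x^{(j+1)}$ and $y^{(j+1)}$ enter the (YB) move \eqref{eq:L} in opposite roles, and the surviving crossing on wire $j{+}1$ appears on either the left or the right of the ripple. The substitutions $z,t$ and the shift by $\epsilon$ in \eqref{eq:whurl} are designed exactly to absorb this discrepancy, so once the $\k$--recursion is established and one matches the (YB) output to the whurl formula in a single orientation, the other case follows by the same computation with $x\leftrightarrow y$ and a shift of indices by one. The whirl--curl case is then checked directly using the fact, noted in the proof of Lemma \ref{lem:unp}, that the ripple weight that comes back around is always $x^{(n)}+y^{(n)}$, so substituting this into \eqref{eq:L} step by step gives formulas \eqref{eq:whirlcurl}--\eqref{eq:whirlcurl2}.
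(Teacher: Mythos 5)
Your proposal is correct and follows essentially the same route as the paper: the paper's proof likewise reduces to a single (YB) move per wire (the inductive step), observes that the edge is always oriented from the vertex weighted $z^{(j)}$ to the one weighted $t^{(j+1)}$ so that the $z,t$ relabeling absorbs all the $\epsilon_i$ case analysis exactly as you anticipate, and verifies the step by the identity $t^{(j+1)}\k_{j+1}(z,t) = z^{(j)}\k_{j}(z,t) + \prod_i t^{(i)} - \prod_i z^{(i)}$, obtained by splitting off the $s=0$ term of \eqref{eq:kappa} as you describe (note this is the correct form of your slightly garbled ``telescoping recursion''). The closing of the ripple via Lemma \ref{lem:unp} and the direct $x^{(n)}+y^{(n)}$ computation for the whirl--curl case also match the paper (the latter being Theorem \ref{thm:wcloc}).
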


\begin{proof}
It is enough to consider one (YB) move. Note that the edge is always oriented from the vertex weighted $z^{(j)}$ to the vertex weighted $t^{(j+1)}$. One obtains 
\begin{align*}z'^{(j)} &= z^{(j)} + \frac{\prod_i t^{(i)} - \prod_i z^{(i)}}{\k_{j}(z,t)}\\ 
&= \frac{\prod_i t^{(i)} - \prod_i z^{(i)} + z^{(j)} \sum_{s=0}^{n-1} (\prod_{r=1}^s t^{(j+r)} \prod_{r=s+1}^{n-1} z^{(j+r)})}{\k_{j}(z,t)}\\
&=\frac{\prod_i t^{(i)} + \sum_{s=1}^{n-1} (\prod_{r=1}^s t^{(j+r)} \prod_{r=s+1}^{n} z^{(j+r)})}{\k_{j}(z,t)}\\ &= t^{(j+1)} \frac{\sum_{s=0}^{n-1} (\prod_{r=1}^s t^{(j+1+r)} \prod_{r=s+1}^{n-1} z^{(j+1+r)})}{\k_{j}(z,t)} = t^{(j+1)} \frac{\k_{j+1}(z,t)}{\k_{j}(z,t)},
\end{align*}
\begin{align*}
t'^{(j+1)} &= \frac{t^{(j+1)}z^{(j)}}{z^{(j)} + \frac{\prod_i t^{(i)} - \prod_i z^{(i)}}{\k_{j}(z,t)}} = \frac{t^{(j+1)}z^{(j)}}{t^{(j+1)} \frac{\k_{j+1}(z,t)}{\k_{j}(z,t)}} = z^{(j)} \frac{\k_{j}(z,t)}{\k_{j+1}(z,t)},
\end{align*}
\begin{align*}p' &= \frac{t^{(j+1)} \frac{\prod_i t^{(i)} - \prod_i z^{(i)}} {\k_{j}(z,t)}} {z^{(j)} + \frac{\prod_i t^{(i)} - \prod_i z^{(i)}}{\k_{j}(z,t)}} = \frac{\prod_i t^{(i)} - \prod_i z^{(i)}}{\k_{j+1}(z,t)}.
\end{align*}
It is easily seen that these formulae for $z'^{(j)}$ and $t'^{(j+1)}$ are equivalent to the formulae for $x'^{(j)}$ and $y'^{(j)}$ in \eqref{eq:whurl}.
\end{proof}

\begin{theorem} \label{thm:wcloc}
Suppose we are in the whirl-curl move case, and that the whirl wire is to the left of the curl wire as in the left diagram of Figure \ref{fig:wire19}.  If we set $p = x^{(n)}+y^{(n)}$ and perform the operation in Figure \ref{fig:wire20}, then at the moment when the crossing is between $j$-th and $(j+1)$-st wires, the value of the crossing weight is equal to $x^{(j)}+y^{(j)}$, and the resulting values of $x'^{(i)}$ and $y'^{(i)}$ are as in the whirl-curl transformation \eqref{eq:whirlcurl}. 

A similar statement holds when the whirl wire is to the right of the curl wire as in the right diagram of Figure \ref{fig:wire19}.  In this case the crossing is pushed upwards (opposite to Figure \ref{fig:wire20}) and one obtains \eqref{eq:whirlcurl2}.
\end{theorem}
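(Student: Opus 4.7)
The plan is to mimic the inductive structure of the proof of Theorem \ref{thm:whloc}. I would proceed by induction on $j$, with inductive hypothesis that just before the traveling crossing is pushed through horizontal wire $j$, its weight equals $x^{(j-1)}+y^{(j-1)}$, where indices are taken modulo $n$. The base case $j=1$ records the starting choice $p = x^{(n)}+y^{(n)}$, which is forced by the observation already used in the proof of Lemma \ref{lem:unp}: the weight that emerges after the crossing has traversed all $n$ horizontal wires is $x^{(n)}+y^{(n)}$, so this is the unique value of $p$ for which the ripple closes up.

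For the inductive step I would apply the Yang-Baxter formula $(x,y,z)\mapsto \bigl(yz/(x+z),\, x+z,\, xy/(x+z)\bigr)$ from Figure \ref{fig:wire8} to the local three-vertex configuration at wire $j$: the traveling crossing sits between the whirl vertex of weight $x^{(j)}$ and the curl vertex of weight $y^{(j)}$, and by induction its weight is $x^{(j-1)}+y^{(j-1)}$. Taking $x=x^{(j)}$, $z=y^{(j)}$, and $y=x^{(j-1)}+y^{(j-1)}$ in the YB formula, the new middle (traveling) weight is $x+z = x^{(j)}+y^{(j)}$, which advances the inductive statement to $j+1$; the two outer weights become $y^{(j)}(x^{(j-1)}+y^{(j-1)})/(x^{(j)}+y^{(j)})$ and $x^{(j)}(x^{(j-1)}+y^{(j-1)})/(x^{(j)}+y^{(j)})$, which match exactly the formulae for $x'^{(j)}$ and $y'^{(j)}$ in \eqref{eq:whirlcurl}. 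After $n$ such moves the traveling crossing has returned to its initial position with weight $x^{(n)}+y^{(n)} = p$, so the ripple closes globally and every wire-$j$ vertex has been updated to the value prescribed by \eqref{eq:whirlcurl}.

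The mirror case, with the whirl to the right of the curl, is handled by the symmetric argument: the traveling crossing is pushed upward instead of downward, and the analogous YB computation in the reflected configuration yields \eqref{eq:whirlcurl2}.

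The one point that I would write out with care — and the only potential source of confusion — is matching each of the three vertices at a given YB step with the roles $x$, $y$, $z$ in Figure \ref{fig:wire8}, since the whirl and curl carry opposite orientations and only one of the two ripple crossings admits a valid oriented YB move (the other would require a YB on an oriented three-cycle, which is excluded, as noted in Section \ref{sec:R}). Once this identification is pinned down at the first step, translational symmetry around the cylinder makes it automatic at each of the remaining $n-1$ steps, and the theorem reduces to the single algebraic verification displayed above.
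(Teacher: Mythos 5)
Your proof is correct and takes essentially the same approach as the paper: the published proof of Theorem \ref{thm:wcloc} is the single sentence ``The statement follows by direct computation,'' and your induction via one Yang-Baxter step per horizontal wire is precisely that computation, correctly carried out. Your role assignment in the YB move is the right one (the traveling whirl-curl crossing plays the role whose new weight is $x+z$), and your observation that the new traveling weight $x^{(j)}+y^{(j)}$ is independent of the incoming weight also explains why the emerging weight in Lemma \ref{lem:unp} is forced to be $x^{(n)}+y^{(n)}$.
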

\begin{proof}
The statement follows by direct computation.
\end{proof}

\subsection{Invariance of measurements, and inverses}
As was commented in Section \ref{ssec:localmoves}, the fact that the (WC) move preserves the boundary and cycle measurements follows from a simple direct calculation.  The same is true for whurl moves.  This is less obvious, but follows from Theorem \ref{thm:whloc} and Theorem \ref{thm:bm}. 

\begin{corollary}\label{cor:whurlbm}
The whurl transformation preserves the boundary and cycle measurements.
\end{corollary}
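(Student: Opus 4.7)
The plan is to deduce the corollary by exhibiting the whurl transformation as an explicit composition of the local moves of Section~\ref{ssec:localmoves}, each of which preserves boundary and cycle measurements by Theorem~\ref{thm:bm}. The scaffolding is already in place: Section~\ref{sec:R} and Theorem~\ref{thm:whloc} describe precisely such a realization, and the present corollary amounts to packaging that realization together with Theorem~\ref{thm:bm}.

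First, I would use the inverses of (XR) and (XM) to insert a ``ripple'' on the edge immediately above the topmost horizontal wire, between the two whurl cycles $C$ and $C'$. The inverse of (XR) introduces a single crossing of weight $0$ at the chosen location, and the inverse of (XM) splits this $0$-crossing into an adjacent pair of crossings with weights $p$ and $-p$, where I choose $p = (\prod_i t^{(i)} - \prod_i z^{(i)})/\k_1(z,t)$ as prescribed in Theorem~\ref{thm:whloc}. Because (XR) and (XM) preserve every boundary and cycle measurement, so do their inverses, and this insertion step changes nothing.

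Next, I would apply the sequence of (YB) moves indicated in Figure~\ref{fig:wire20}, pushing one of the two ripple crossings once around the cylinder through all $n$ horizontal wires. Each individual (YB) move preserves measurements by Theorem~\ref{thm:bm}. The content of Theorem~\ref{thm:whloc} is that, for the specific choice of $p$ above, the moving crossing returns to its starting location carrying weight exactly $p$ again, its partner still carries weight $-p$, and the horizontal-wire weights have been transformed precisely according to the whurl formula~\eqref{eq:whurl}. I would then collapse the returned ripple by applying (XM) to merge the $p,-p$ pair into a single $0$-crossing and (XR) to delete it, both of which again preserve measurements. The composition of all these steps is the whurl transformation, and the chain of invariances yields the corollary.

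The main subtlety I anticipate is not combinatorial but analytic: ensuring that all intermediate moves are legally defined, so that the above composition makes sense. The (YB) moves carry denominators of the form $\k_j(z,t)$, and creating the ripple uses a specific rational value of $p$; both are well-defined when the vertex weights are treated as indeterminates, which is the standing convention of Section~\ref{ssec:whurl}. Since boundary and cycle measurements are (homogeneous) polynomials in the vertex weights by Theorem~\ref{thm:fin}, and the whurl map~\eqref{eq:whurl} is rational, the identity of measurements established generically propagates by continuity to the full domain of definition of the whurl transformation.
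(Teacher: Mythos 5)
Your proposal is correct and follows essentially the same route as the paper: the text preceding the corollary states that the result ``follows from Theorem \ref{thm:whloc} and Theorem \ref{thm:bm},'' i.e., the whurl move is realized as a ripple creation via (XR)/(XM) followed by a circuit of (YB) moves, each of which preserves the measurements. Your write-up simply makes the composition and the final ripple cancellation explicit, which is exactly the intended argument.
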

%

\begin{theorem} \label{thm:inv}
The whurl transformation is an involution. The two whirl-curl transformations \eqref{eq:whirlcurl} and \eqref{eq:whirlcurl2} are inverses of each other. 
\end{theorem}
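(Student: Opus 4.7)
The statement has two independent parts, and I would attack them separately.

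\textbf{Whirl--curl inverses (direct computation).} I would verify by substitution that \eqref{eq:whirlcurl} and \eqref{eq:whirlcurl2} are mutually inverse. The crucial telescoping identity is
\[
x'^{(i)} + y'^{(i)} \;=\; \bigl(y^{(i)} + x^{(i)}\bigr)\cdot\frac{x^{(i-1)}+y^{(i-1)}}{x^{(i)}+y^{(i)}} \;=\; x^{(i-1)}+y^{(i-1)},
\]
obtained by factoring $x^{(i)}+y^{(i)}$ in the numerator of \eqref{eq:whirlcurl}. Plugging this and the formulas for $x'^{(i)},y'^{(i)}$ into \eqref{eq:whirlcurl2} produces $\dfrac{y'^{(i)}(x^{(i)}+y^{(i)})}{x^{(i-1)}+y^{(i-1)}}$, and all factors cancel to return $x^{(i)}$; the computation for $y^{(i)}$ is the mirror image. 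So \eqref{eq:whirlcurl} followed by \eqref{eq:whirlcurl2} is the identity, and the reverse composition works identically.

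\textbf{Whurl involution via realization.} For the whurl move I would avoid direct algebraic bookkeeping with the polynomials $\k_r(z,t)$ and instead use the realization in Theorem \ref{thm:whloc}. One application of the whurl move is obtained by creating a ripple of weight $p = \bigl(\prod_i t^{(i)} - \prod_i z^{(i)}\bigr)/\k_1(z,t)$ above the $1$st wire and pushing the resulting crossing around the cylinder by a sequence of (YB), (XM), (XR) moves, ending with the whurls swapped and weights $(x',y')$. Every elementary move used is individually invertible, so the entire sequence admits an inverse sequence of elementary moves which, played on the swapped configuration $(x',y')$, recovers $(x,y)$.

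I would then observe that this inverse sequence is itself a realization of the whurl move as in Figure \ref{fig:wire20} applied to the intermediate network: it creates a ripple, pushes a single crossing once around the cylinder through all horizontal wires using (YB) moves, and removes a ripple at the end. Thus some whurl transformation sends $(x',y')$ back to $(x,y)$. By Lemma \ref{lem:unp} there is at most one value of the ripple weight for which such a push-around operation closes up, so this whurl transformation is unique; in particular it must agree with the transformation produced by the formula \eqref{eq:whurl} applied to $(x',y')$. Hence applying \eqref{eq:whurl} twice yields $(x,y)$.

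\textbf{Main obstacle.} The delicate point is justifying that the ``reversed'' sequence of elementary moves described above is genuinely a realization of a single whurl move in the sense of Theorem \ref{thm:whloc}, rather than some more complicated local transformation. One has to check that the reversed sequence again takes the canonical shape of Figure \ref{fig:wire20} (a ripple created above one wire, a single crossing traversing every horizontal wire once, and a ripple annihilated at the end), possibly after rotating the cylinder so that the ripple appears above the ``$1$st'' wire in the swapped labeling. Once that identification is made, Lemma \ref{lem:unp} closes the argument without any explicit manipulation of the $\k_r$.
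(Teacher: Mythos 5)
Your treatment of the whirl--curl half is correct: the telescoping identity $x'^{(i)}+y'^{(i)}=x^{(i-1)}+y^{(i-1)}$ does make the substitution check immediate in both orders. This is a different (and more self-contained) route than the paper, which instead deduces the statement from the invertibility of the (YB) move together with the fact that \eqref{eq:whirlcurl} is realized by pushing a crossing downwards and \eqref{eq:whirlcurl2} by pushing one upwards; your computation buys independence from the realization machinery, at the cost of not explaining \emph{why} the two maps are inverse.

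For the whurl half, however, the obstacle you flag at the end is not a technicality but a genuine gap, and the identification you hope for cannot be made in the form you describe. Reversing the sequence of moves of Theorem \ref{thm:whloc} produces a crossing that travels around the cylinder in the \emph{opposite} direction (upward through wires $n, n-1,\ldots,1$ rather than downward through $1,2,\ldots,n$). Lemma \ref{lem:unp} gives uniqueness of the closing weight for a push-around in a \emph{fixed} direction, so it can only identify your reversed sequence with other upward pushes; it says nothing about whether an upward push and the downward push of \eqref{eq:whurl} induce the same transformation on $(x',y')$. No rotation of the oriented cylinder converts one into the other while preserving the left-to-right order of the whurls, the orientations of the horizontal wires, and the highway/underway structure. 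What you have actually proved is only that the whurl transformation is birational with an explicit inverse, not that it is an involution.

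The paper closes this gap by a different device: both crossings of the ripple are pushed through \emph{in the same (downward) direction}. The key local observation is that pushing two crossings with weights $q$ and $-q$ successively past a horizontal wire has no net effect on the weights along that wire, and leaves the two pushed weights negatives of each other. Consequently, after pushing $p$ around (performing the whurl move and producing $(x',y')$), the weight $-p$ still closes up under a downward push-around; by Lemma \ref{lem:unp} and Theorem \ref{thm:whloc} this second downward push \emph{is} the whurl move applied to $(x',y')$, and by the no-net-effect observation it restores $(x,y)$. If you want to salvage your argument you would need to prove separately that the upward realization computes the same rational map as \eqref{eq:whurl}, which is essentially equivalent to the involutivity you are trying to establish; adopting the paper's same-direction trick is the cleaner fix.
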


\begin{proof}
In the case of the whurl relation one argues as follows.  If one applies the (YB) move to successively push two crossings, with weights $q$ and $-q$ negative of each other, through a wire $W$, then there is no net effect on the weights of the two crossings along $W$.  Also, the weights on the two crossings pushed through remain negatives of each other. Thus if one pushes through $p$ to perform a whurl move, and then a $-p$, the latter will perform a whurl move which will undo the first one. 

In the case of the whirl-curl relations, this follows from invertibility of the (YB) move, and the fact that \eqref{eq:whirlcurl} is obtained by pushing a crossing downwards, while \eqref{eq:whirlcurl2} is obtained by pushing a crossing upwards.
\end{proof}

\subsection{Braid relation for the whurl move, and whirl-curl move}
For this subsection, we assume that we have a local part $\N' = \N \cap S'$ of a network on a part $S'$ of the surface which is a cylinder, such that $\N'$ consists of three whurls crossed by a family of horizontal wires.  As before, we assume that the horizontal wires do not cross each other and that the are no oriented cycles bounding a face. 

Let us denote by $R$ an application of the whurl move, or whirl-curl move, to a pair of adjacent whurls.  We write $R_{12}$ or $R_{23}$ depending on the position of the whurls from the left.  For example, if the first and second whurls are oriented in opposite directions, then $R_{12}$ will apply a whirl-curl (WC) transformation.  The following theorem is a generalization of Theorem \cite[Theorem 6.3]{LP}.

\begin{theorem}\label{thm:braid}
 The transformation $R$ satisfies the following braid relation: 
\begin{equation}\label{eq:braid}
(R_{12} \otimes 1) \circ (1 \otimes R_{23}) \circ (R_{12} \otimes 1) = (1 \otimes R_{23}) \circ (R_{12} \otimes 1) \circ (1 \otimes R_{23}).
\end{equation}
\end{theorem}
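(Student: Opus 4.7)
My plan is to realize both sides of \eqref{eq:braid} as sequences of elementary (YB), (XR), and (XM) moves using the local realization of Section \ref{sec:R}, and then reduce the braid identity to repeated applications of the Frenkel--Moore relation (Proposition \ref{prop:fm}) at each horizontal wire.

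First I would expand each of the six factors $R_{12}$, $R_{23}$ appearing on the two sides of \eqref{eq:braid} using Theorems \ref{thm:whloc} and \ref{thm:wcloc}. Each such $R_{ij}$ is realized by inserting a ripple consisting of two vertices with opposite-signed weights on one of the whurls being swapped, and then pushing one of the ripple crossings once around the cylinder by a single (YB) move at each horizontal wire (together with one further (YB) move where the ripple passes through the other whurl). By Lemma \ref{lem:unp} the initial ripple weight $p$ is uniquely determined, and the formulas of Theorems \ref{thm:whloc}--\ref{thm:wcloc} express all subsequent intermediate weights explicitly. This converts \eqref{eq:braid} into the claim that two long but explicit sequences of elementary local moves produce the same final network.

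Next I would localize at a single horizontal wire $W$. On the LHS of \eqref{eq:braid} a ripple crossing traverses $W$ three times, with crossing pattern $(C_1,C_2)$, $(C_2,C_3)$, $(C_1,C_2)$; on the RHS the pattern is $(C_2,C_3)$, $(C_1,C_2)$, $(C_2,C_3)$. Viewing the four wires $W,C_1,C_2,C_3$ locally at $W$, the two resulting sequences of (YB) moves are exactly the two halves of the Frenkel--Moore eight-cycle of Proposition \ref{prop:fm}, so that Proposition asserts the local vertex weights near $W$ coincide after both sequences. The analogous four-strand analysis applies at the three vertices where pairs of whurls interact (with the third whurl playing the role of the horizontal wire). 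Combined with the uniqueness of the ripple weights from Lemma \ref{lem:unp}, invariance of boundary and cycle measurements under local moves (Theorem \ref{thm:bm}, Corollary \ref{cor:whurlbm}), and the rigidity of the YB move on fixed strand data, this shows the two sides of \eqref{eq:braid} produce identical final networks.

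The main obstacle will be the case analysis according to the orientations of $(C_1,C_2,C_3)$: depending on which adjacent pairs are parallel or antiparallel, each $R_{ij}$ is either a whurl move or a whirl--curl move, the ripple is pushed upward or downward, and in the whirl--curl case the ripple creation is not itself a (XR)/(XM) move (it actually changes measurements) so one must instead track the ripple through the entire sequence. There are essentially four orientation patterns to treat, and in each one must verify that the ripple introduced at the end of one $R$-move is precisely the ripple needed to initiate the next, which ultimately follows from the uniqueness statement of Lemma \ref{lem:unp}. This bookkeeping, rather than any single calculation, is the delicate point; the underlying identity on vertex weights at each crossing is in every case a direct consequence of Frenkel--Moore.
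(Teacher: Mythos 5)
Your overall strategy (realize each $R$ by a ripple pushed around the cylinder, then invoke Frenkel--Moore) is the right one, and it is the paper's, but the central reduction is not correct as stated. At a fixed horizontal wire $W$ the two sides of \eqref{eq:braid} do \emph{not} give the two halves of the Frenkel--Moore eight-cycle. Labelling the whurls by the actual wire cycles rather than by positions, the three ripple crossings needed for $(R_{12}\otimes 1)\circ(1\otimes R_{23})\circ(R_{12}\otimes 1)$ lie on the pairs $(C_1,C_2),(C_1,C_3),(C_2,C_3)$, so each side contributes only three (YB) moves at $W$, at the triple points $\{W C_1C_2\},\{W C_1C_3\},\{W C_2C_3\}$; the fourth triple point $\{C_1C_2C_3\}$ of the octagon --- the (YB) move \emph{among the three ripple crossings themselves} --- never occurs at $W$ and is absent from your argument. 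That missing move is exactly what relates the ripple weights $(p,q,r)$ realizing the left side to the weights $(p',q',r')$ realizing the right side, and without it the two local sequences at $W$ have different inputs and different outputs, so there is nothing for Proposition \ref{prop:fm} to equate. The correct use of Frenkel--Moore is: create all three ripple crossings \emph{simultaneously} in the gap between the last and first horizontal wires (Figure \ref{fig:wire21}), and use the octagon to commute the gap-braid $\{C_1C_2C_3\}$ past one horizontal wire at a time; i.e. ``braid then push past $W$'' equals ``push past $W$ then braid.'' Telescoping over all wires shows that pushing $(p',q',r')$ around returns them to themselves, whence by Lemma \ref{lem:unp} and Theorems \ref{thm:whloc}, \ref{thm:wcloc} they realize the right-hand side, and that the induced transformation of the whurl weights is the same for both sides. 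Your wire-by-wire comparison also cannot be made independently at each $W$, since the weight carried by a crossing when it reaches $W$ depends on all the wires it has already passed; the telescoping structure is what handles this coupling.

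A second, smaller gap: your case analysis by orientations does not isolate the actual obstruction. When the three whurls are oriented up--down--up or down--up--down, the three ripple crossings cannot all be pushed through in a common direction (some would require a (YB) move on an oriented three-cycle), so the triple-ripple argument fails outright in those two patterns. The paper disposes of them separately, using that every edge of the hexagon of $R$-moves (Figure \ref{fig:wire22}) is an involution (Theorem \ref{thm:inv}), so the braid relation at a $WCW$-type vertex follows from the already-established relation at the adjacent $WWC$ and $CWW$ vertices. Your remark that ``the ripple introduced at the end of one $R$-move is precisely the ripple needed to initiate the next'' does not address this, and is in any case not how the composition works: each $R$-move's ripple is created and annihilated within that move, and the point to be proved is how the three initial ripple weights on one side of \eqref{eq:braid} are related to the three on the other.
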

 
\begin{proof}
Assume to begin with that our three whurls are not oriented down-up-down, or up-down-up.  In the space between the last and the first horizontal wires , create a triple ripple involving all three whurls, as shown in the left diagram of Figure \ref{fig:wire21}.
\begin{figure}[h!]
    \begin{center}
    \input{wire21.pstex_t}
    \end{center}
    \caption{}
    \label{fig:wire21}
\end{figure}
Assign weights $p$, $q$, $r$ so that when we push these crossings through downwards, they come out on the other side unchanged as in Lemma \ref{lem:unp}.  We assume here that they can be pushed through, since if they cannot, one can push their negatives through in the other (upwards) direction; here we are using the assumption that the orientation of whurls is not up-down-up or down-up-down. According to Lemma \ref{lem:unp} and Theorems \ref{thm:whloc} and \ref{thm:wcloc} the weights exist, are unique and are exactly the weights that would perform the transformation $(R_{12} \otimes 1) \circ (1 \otimes R_{23}) \circ (R_{12} \otimes 1)$.  Now, apply a sequence of Yang-Baxter moves as shown in Figure \ref{fig:wire21} to obtain weights $p'$, $q'$ and $r'$, and their negatives on the other side. We claim that if these new weights are pushed around the cylinder, they come out the same at the other end. This follows from the Frenkel-Moore relation (Proposition \ref{prop:fm}), which says that applying a Yang-Baxter move before or after pushing through a line produces the same result. Now if we push the weights $p$, $q$ and $r$ through and apply the Yang-Baxter moves, we obviously get again the weights $p'$, $q'$ and $r'$. Thus by Lemma \ref{lem:unp} and Theorems \ref{thm:whloc} and \ref{thm:wcloc}, while pushing $p'$, $q'$ and $r'$ through we are applying $(1 \otimes R_{23}) \circ (R_{12} \otimes 1) \circ (1 \otimes R_{23})$. Since, by the Frenkel-Moore relation (Proposition \ref{prop:fm}) the two results are the same, the claim of the theorem follows.

Now we claim that the remaining up-down-up or down-up-down cases follow.  Indeed, suppose that not all three whurls have the same orientation; it follows that all horizontal wires go in the same direction.  We may assume that we have two whirls and one curl.  To keep track of the orientations of the wire cycles, we for example write $WCW$ to mean that the first wire cycle is a whirl, the second is a curl and the last is a whirl again. When we perform the moves of \eqref{eq:braid}, we are taking two different paths to the opposite vertex in the hexagon of Figure \ref{fig:wire22}. 
\begin{figure}[h!]
    \begin{center}
    \input{wire22.pstex_t}
    \end{center}
    \caption{}
    \label{fig:wire22}
\end{figure}
We have already shown that if we start at a vertex $WWC$ or $CWW$, the two paths to the opposite vertex give the same result.  Since all the edges in the hexagon are involutions it follows that the same is true even if we started at the $WCW$ vertices of the hexagon.
\end{proof}

\section{Total positivity in cylindric networks}\label{sec:TP}
Let $\N$ be a simple-crossing, vertex-weighted, oriented network on the cylinder. Let $\tilde \N$ denote the universal cover of $\N$ inside a planar strip. Thus $\tilde \N$ is an infinite periodic network (see Figure \ref{fig:wire17}).

\begin{proposition}
Using the cycle-removal (CR) move, we can reduce $\N$ to a network $\N'$ which does not contain any contractible oriented cycles.
\end{proposition}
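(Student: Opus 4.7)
The plan is to mimic the sphere argument from the preceding subsection, adapted to deal with the fact that the cylinder has non-trivial homology: we cannot remove every oriented cycle, but we can remove every \emph{contractible} one, because a contractible cycle on the cylinder separates off a disk and we can work inside that disk just as in the sphere case.

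First I would set up the monovariant. Given a contractible oriented cycle $C$ in $\N$, it bounds a (unique, up to the choice of side) embedded closed disk $D_C$ in the cylinder. Let $n(C)$ denote the number of vertices and edges of $\N$ lying in the interior of $D_C$; this is a nonnegative integer. The strategy is to show that if $\N$ contains any contractible oriented cycle at all, then after a finite sequence of (CR) moves we can reduce the total number of contractible oriented cycles (and each (CR) move strictly decreases, say, the number of vertices of the network).

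The core step is the following innermost-cycle argument. Assume $\N$ has a contractible oriented cycle, and among all such cycles choose one $C$ with $n(C)$ minimal. I claim that $n(C)=0$. Suppose for contradiction that $n(C)>0$, so there is some part of $\N$ strictly inside $D_C$. Since the simple-crossing condition ensures that every interior vertex has both outgoing edges, starting from any piece of $\N$ inside $D_C$ and following an outgoing edge we either (i) return to a previously visited interior vertex, producing a shorter simple oriented cycle $C'$ strictly inside $D_C$, or (ii) eventually hit the boundary $C$, giving an oriented path $\gamma$ from a vertex of $C$ to another (possibly the same) vertex of $C$ lying entirely inside $D_C$. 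In case (i), $C'$ is automatically contractible (being inside a disk) and $n(C')<n(C)$, contradicting minimality. In case (ii), concatenating $\gamma$ with the appropriate arc of $C$ yields a new simple contractible oriented cycle $C''$ whose bounded disk is a proper subset of $D_C$; hence $n(C'')<n(C)$, again a contradiction. (This is exactly the dichotomy used in the sphere proposition; the only new ingredient is verifying that the two resulting cycles remain contractible, which is automatic since they lie inside the disk $D_C$.)

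Once $n(C)=0$, the interior of $D_C$ contains no part of $\N$, so the (CR) move applies directly and removes $C$ (and its vertices). After the move, the resulting network has strictly fewer vertices than $\N$. Since $\N$ is finite, iterating this procedure terminates in a network $\N'$ with no contractible oriented cycles.

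The main obstacle I expect is purely bookkeeping: making rigorous the claim in case (ii) that walking outward from an interior vertex along outgoing edges must eventually reach $C$ if it does not close up — this is where one uses finiteness of $\N$ and the fact that oriented edges inside $D_C$ either loop back (giving case (i)) or exit through $C$ (giving case (ii), since the boundary of $D_C$ is $C$ itself and no edge crosses $C$ in its interior). Everything else follows the sphere proof verbatim.
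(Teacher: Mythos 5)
Your innermost-disk induction is essentially the same argument the paper runs (and the same as the sphere case), but there is a genuine gap at the very first step: you assert that a contractible oriented cycle $C$ in $\N$ ``bounds an embedded closed disk $D_C$.'' That is only true when $C$ is simple, and nothing guarantees that the contractible oriented cycles you start from are simple --- an oriented closed walk in a simple-crossing network can revisit vertices and cross itself. For a self-crossing cycle your quantity $n(C)$ is not even defined, so the minimality setup collapses. Worse, the obvious repair --- cut a self-crossing contractible cycle at a crossing point into shorter closed pieces --- does not work on the cylinder: the pieces have homology classes summing to zero but need not individually be contractible (e.g.\ a figure-eight made of a $+1$ loop and a $-1$ loop is null-homologous, hence contractible on the cylinder, yet neither lobe is). This is precisely why the paper passes to the universal cover $\tilde\N$: there the surface is simply connected, so \emph{every} closed subwalk of the lifted cycle is automatically contractible, the self-crossings can be stripped off to produce a simple contractible cycle, and only at the end does one check that the minimal simple cycle upstairs projects to a simple, empty-interior cycle downstairs to which (CR) applies. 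You need either this detour or some other argument producing a \emph{simple} contractible oriented cycle from an arbitrary one before your induction can begin.

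Two smaller points. In your case (ii) you only walk \emph{forward} along outgoing edges from an interior piece, which produces a path ending on $C$ but not one starting on $C$; you must also walk backward along incoming edges (using in-degree $2$) to obtain the claimed path $\gamma$ from $C$ to $C$, or else close up into a cycle interior to $D_C$. And your terminating monovariant ``number of vertices'' can stall when the removed cycle is a closed loop edge with no vertices; induct on the total number of vertices and edges instead.
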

\begin{proof}
Assume there is a contractible oriented cycle in the network. Any such cycle is still a contractible cycle when lifted to the universal cover $\tilde \N$. If it is self-crossing, one can clearly take a part of it 
that also forms a contractible cycle on the universal cover, but has less self-crossings. Thus we can assume that there exists a simple contractible cycle on the universal cover. 
Order such cycles by containment, and take a minimal one $\tilde p$. Let us consider the disk it bounds in the universal cover.  If there is any part of $\tilde \N$ inside this disk, split it into wires using the usual rule 
(always go through at a crossing). The wires we obtain either are closed wires inside the disk, or go from its boundary to its boundary. Either way we get a smaller contractible 
oriented cycle -- contradiction. Thus the cycle $\tilde p$ has no part of $\tilde \N$ inside. This means that at every vertex it turns, and furthermore it either always turns left, or always 
turns right.  This in turn implies that the image $p$ of $\tilde p$ in $\N$ does not self-intersect and has no part of $\N$ inside, 
and therefore it can be removed using the (CR) move. Since the the network cannot get smaller forever, the statement follows.
\end{proof}
We shall henceforth assume that $\N$ contains no contractible oriented cycles.

We wish to establish an analogue of the Linstr\"om lemma \cite{Li} in such a situation. In \cite{LP} we established such an analogue for a general class of edge-weighted acyclic networks, but with the requirement that all sources are on one component of the boundary, while all sinks are on the other component of the boundary.

\begin{figure}[h!]
    \begin{center}
    \input{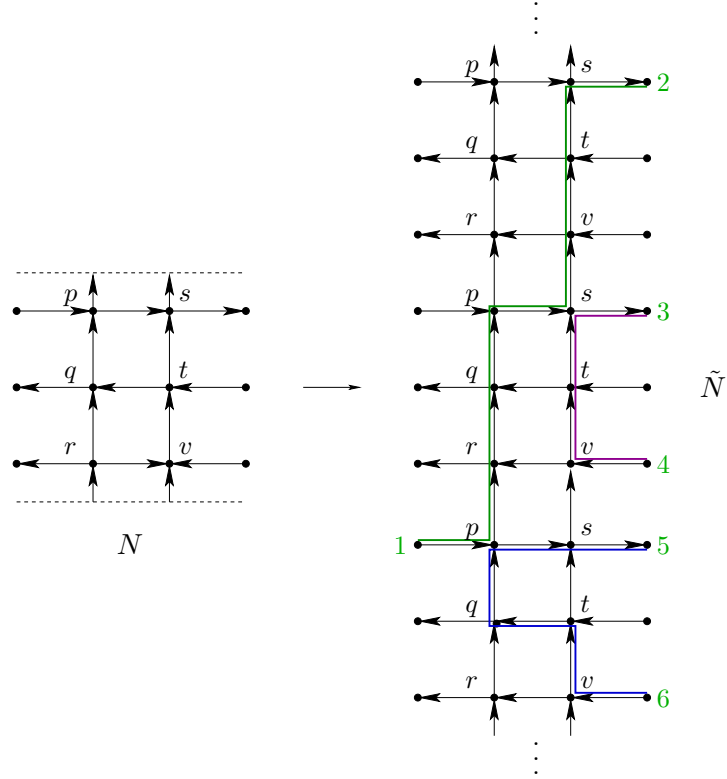}
    \end{center}
    \caption{A network on a cylinder and its universal cover; a non-crossing family of paths on it.}
    \label{fig:wire17}
\end{figure}

\begin{lemma}
If $\N$ has no contractible cycles then $\tilde \N$ is acyclic.
\end{lemma}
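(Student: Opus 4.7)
The plan is to argue by contrapositive: I will show that any oriented cycle in $\tilde \N$ projects to a contractible oriented cycle in $\N$, so if $\N$ has no contractible oriented cycles then $\tilde \N$ cannot contain any oriented cycle.

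Let $\pi\colon \tilde S \to S$ denote the universal cover of the cylinder $S$, where $\tilde S$ is an infinite planar strip, and let $\tilde \N \subset \tilde S$ be the corresponding lift of $\N$. Suppose for contradiction that $\tilde \N$ contains an oriented cycle $\tilde p$, i.e., a closed directed walk in $\tilde \N$. Since $\pi$ is a covering map, the image $p = \pi(\tilde p)$ is a closed directed walk in $\N$, hence an oriented cycle in $\N$. Moreover, because $\tilde p$ is already a closed loop in $\tilde S$ which is simply connected, $\tilde p$ is null-homotopic in $\tilde S$, and therefore $p = \pi \circ \tilde p$ is null-homotopic in $S$. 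Equivalently, $p$ is a contractible oriented cycle in $\N$. This directly contradicts the standing assumption, so no such $\tilde p$ exists.

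The argument reduces to the standard fact from covering space theory that a loop in the total space is null-homotopic in the base if and only if it lifts to a closed loop upstairs, so there is no real obstacle: the only thing to check carefully is that a closed directed walk in $\tilde \N$ really does descend to a closed directed walk in $\N$ (rather than, say, a path that wraps nontrivially around the cylinder without closing). But this is immediate since $\tilde p$ traverses a finite sequence of edges that close up, and each edge of $\tilde \N$ projects bijectively to an edge of $\N$ preserving orientation.
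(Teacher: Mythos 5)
Your proof is correct and is exactly the paper's argument, just spelled out: the paper's entire proof is the one-line observation that any cycle in $\tilde \N$ would project to a contractible cycle in $\N$. Your expansion via the standard covering-space fact (a loop is null-homotopic in the base iff it lifts to a closed loop) fills in precisely the details the authors left implicit.
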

\begin{proof}
Any cycle in $\tilde \N$ would give a contractible cycle in $\N$.
\end{proof}

Let $\{v_i\}_i$ and $\{w_j\}_j$ be the sources and the sinks of the original cylindric network, and let $v_i^{k}$ and $w_j^{k}$ be their preimages in $\tilde \N$, $k \in \mathbb Z$.  Let $q(v_i^k, w_j^{l+k})$ denote a path from $v_i^k$ to $w_j^{l+k}$.  All such paths are homologous.

\begin{lemma} \label{lem:cov}
For a source $v_i$, a sink $w_j$, and a a highway path $p$ from $v_i$ to $w_j$, the boundary measurement $M^{[p]}(\N)$ coincides with the measurement $M^{[q(v_i^k, w_j^{l+k})]}(\tilde \N)$ for a particular value of $l$ and for any $k$.  Conversely, any boundary measurement of $\tilde \N$ is equal to some boundary measurement of $\N$.
\end{lemma}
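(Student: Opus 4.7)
The plan is to exhibit a weight-preserving bijection between highway paths on $\N$ in homology class $[p]$ and highway paths on $\tilde{\N}$ from a chosen lift of $v_i$ to a specific lift of $w_j$, using the universal covering map $\pi: \tilde{\N} \to \N$.

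First I would set up notation. The covering map $\pi$ is a local homeomorphism which carries simple crossings to simple crossings and respects the cyclic order of edges at each vertex; moreover each vertex $\tilde v \in \tilde\N$ inherits the weight of $\pi(\tilde v)$. Recall that on a cylinder $H_1(S,\Z) \simeq \Z$, generated by the loop going once around. Choose a basepath $p$ from $v_i$ to $w_j$, and let $l$ be the integer such that lifting $p$ to start at $v_i^{0}$ ends at $w_j^{l}$. (Concretely, $l$ records the winding of $p$ around the cylinder, which is exactly $[p]$ in our chosen basis.) By the deck transformation action, starting the lift at $v_i^{k}$ instead ends at $w_j^{l+k}$.

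Now I would define the bijection. Given a highway path $q$ from $v_i$ to $w_j$ on $\N$ which is homologous to $p$, lift $q$ starting at $v_i^{k}$: uniqueness of path-lifting gives a unique lift $\tilde q$, and since $[q]=[p]$ its endpoint is $w_j^{l+k}$. Because $\pi$ preserves the local combinatorial structure at every vertex, $\tilde q$ satisfies the highway condition whenever $q$ does. Conversely, if $\tilde q$ is a highway path in $\tilde\N$ from $v_i^{k}$ to $w_j^{l+k}$, then $\pi \circ \tilde q$ is a highway path in $\N$ from $v_i$ to $w_j$, and its homology class equals $[p]$ since its lift from $v_i^{k}$ reaches $w_j^{l+k}$. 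These two constructions are mutually inverse. Finally, $\wt(q)=\wt(\tilde q)$ because at each highway crossing of $\tilde q$ the vertex weight of $\tilde v$ equals $x_{\pi(\tilde v)}$.

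Summing this bijection yields $M^{[p]}(\N)=M^{[q(v_i^k,w_j^{l+k})]}(\tilde\N)$. There is a small subtlety that these sums are a priori formal, but Theorem \ref{thm:fin} guarantees both sides are polynomials (finite sums), so the identity is an honest equality in the vertex-weight polynomial ring. For the converse statement, given any source--sink pair $(v_i^{k}, w_j^{l+k})$ in $\tilde\N$, the same bijection identifies $M^{[q(v_i^k,w_j^{l+k})]}(\tilde\N)$ with $M^{[p]}(\N)$ where $[p]$ is the unique homology class whose lift-shift is $l$. There is no major obstacle here; the only point requiring care is confirming that the highway condition, being determined entirely by the cyclic order of the four edges at each interior vertex, is preserved in both directions by the orientation-preserving covering $\pi$.
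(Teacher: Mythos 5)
Your proof is correct and follows essentially the same route as the paper: the paper's (much terser) argument also lifts paths to the universal cover and observes that the homology class of a path from $v_i$ to $w_j$ is determined precisely by the index shift $l$ of the endpoints of its lift, so that highway paths in $\N$ homologous to $p$ correspond bijectively, preserving weights, to highway paths in $\tilde\N$ from $v_i^k$ to $w_j^{l+k}$. Your write-up simply makes the path-lifting bijection and the preservation of the highway condition explicit.
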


\begin{proof}
Lift $p$ to the universal cover $\tilde \N$. Assume such a lift $\tilde p$ starts at $v_i^k$ and ends at $w_j^{k+l}$. It is easy to see that the homology type of $\tilde p$ depends only on $l$, giving the first statement.  The second statement follows easily from this.
\end{proof}

Let $I$ and $J$ be finite sets of sources and sinks in $\tilde \N$ of equal cardinality $|I|=k=|J|$. Let us create the corresponding {\it {boundary measurement matrix}} as follows, following the construction in \cite{Pos}. Number the elements of $I \cup J$ clockwise with $1,2,\ldots,|I|+|J|$, starting at any point. Let $i_1 < \ldots < i_k$ be the numbers assigned to sources, and $j_1 < \ldots < j_k$ be the numbers assigned to sinks. Define a $k \times k$ matrix $A_{I,J} = (a_{tr})_{t,r=1}^k$ by $$a_{tr} = (-1)^{s(i_t, j_r)} M^{[q(i_t, j_r)]},$$ where $s(i_t,j_r)$  denotes the number of elements of $I$ strictly between $i_t$ and $j_r$ in the clockwise order. 

\begin{example}\label{ex:matchings}
 Take the three sources and the three sinks numbered $1$ through $6$ in Figure \ref{fig:wire17}. Thus $I=\{1,4,6\}$ and $J = \{2,3,5\}$. One computes the boundary measurement matrix in this case to be  
$$A_{I,J} = \left(
\begin{matrix}
(pv+pq+rq)(vt+vs+qs) & pvt+pvs+pqs+rqs & -ps \\
vt+vs+qs & t+s & 0 \\
-(v+q)(vt+vs+qs) & -(vt+vs+qs) & t+s 
\end{matrix}
\right).
$$

\end{example}

We call a family of highway paths {\it {non-crossing}} if no two paths share a common edge. Denote by $F(I,J)$ all families of $k$ non-crossing highway paths from sources in $I$ to sinks in $J$ such that all elements of $I$ and $J$ are used. For a collection of paths $P=(p_1, \ldots, p_k)$ let $\wt(P) = \prod_{j=1}^k \wt(p_j)$.

\begin{theorem} \label{thm:tpd}
Let $\N$ be a network on the cylinder, and let $I$ and $J$ be finite sets of sources and sinks of the universal cover $\tilde \N$ that have equal cardinality.  Then $\det(A_{I,J}) = \sum_{P \in F(I,J)} \wt(P)$. Hence $\det(A_{I,J}) \geq 0$ if all vertex weights of $\N$ are nonnegative real numbers. 
\end{theorem}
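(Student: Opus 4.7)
The plan is to lift to the universal cover $\tilde\N$ (acyclic by the preceding lemma) and then run a Lindström--Gessel--Viennot style sign-reversing involution tailored to highway paths. By Lemma \ref{lem:cov} and Theorem \ref{thm:fin}, each entry of $A_{I,J}$ is a polynomial whose monomials are weights of highway paths in $\tilde\N$, so the Leibniz expansion
\[
\det(A_{I,J}) = \sum_{\sigma\in S_k}\sgn(\sigma)\prod_{t=1}^{k}(-1)^{s(i_t,j_{\sigma(t)})}\,M^{[q(i_t,j_{\sigma(t)})]}
\]
rewrites $\det(A_{I,J})$ as a signed weighted sum over pairs $(\sigma,\mathbf{P})$, where $\mathbf{P}=(P_1,\ldots,P_k)$ is a $k$-tuple of highway paths with $P_t$ running from $i_t$ to $j_{\sigma(t)}$.

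Next I would fix an arbitrary linear order on the edges of $\tilde N$ and define an involution $\iota$ on the set of \emph{crossing} tuples (those with at least one shared edge) as follows: locate the smallest shared edge $e$, pick the pair $t<u$ for which both $P_t$ and $P_u$ traverse $e$, and swap the tails of $P_t$ and $P_u$ past $e$. The new permutation is $\sigma\circ(t,u)$, so $\sgn(\sigma)$ flips, while $\prod_t \wt(P_t)$ is preserved because the multiset of traversed vertices with highway decorations is unchanged. Crucially, the highway rule is a local condition determined only by the pair (incoming edge, outgoing edge) at each vertex; since $P_t$ and $P_u$ enter $e$ in exactly the same way, their continuations past $e$ may be freely exchanged, and the resulting pair are again highway paths.

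The main obstacle is the parity identity needed to make $\iota$ sign-reversing, namely
\[
s(i_t,j_{\sigma(t)})+s(i_u,j_{\sigma(u)}) \;\equiv\; s(i_t,j_{\sigma(u)})+s(i_u,j_{\sigma(t)}) \pmod{2}.
\]
This is the combinatorial content of Postnikov's sign convention. I would verify it by a short case analysis on the cyclic positions of the four boundary points $i_t,i_u,j_{\sigma(t)},j_{\sigma(u)}$ on the boundary of $\tilde S$, keeping track of which of the four points lie in $I$: in each cyclic arrangement, swapping the chord endpoints changes each of the two terms $s(\cdot,\cdot)$ by the number of elements of $I$ in one of the arcs between the original and swapped sink, and these two changes have equal parity.

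With the parity identity in hand, $\iota$ cancels all crossing tuples in pairs, leaving the sum over non-crossing families $\mathbf{P}\in F(I,J)$. For a non-crossing $\mathbf{P}$ the induced matching $\sigma_{\mathbf{P}}$ is forced by planarity in the strip, and a standard non-crossing-chords argument on the boundary circle shows $\sgn(\sigma_{\mathbf{P}})\prod_t(-1)^{s(i_t,j_{\sigma_{\mathbf{P}}(t)})}=+1$. Therefore $\det(A_{I,J})=\sum_{\mathbf{P}\in F(I,J)}\wt(\mathbf{P})$, which is manifestly nonnegative when all vertex weights of $\N$ are nonnegative real numbers.
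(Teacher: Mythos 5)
Your overall strategy---expand the determinant, cancel families with a shared edge by a tail-swapping involution at a distinguished shared edge, and check that the surviving edge-disjoint families all carry sign $+1$---is exactly the route the paper takes. (The paper breaks ties by smallest path index and earliest intersection edge rather than a global edge order, and packages the sign bookkeeping as the identity $\sgn(\sigma)\prod(-1)^{s(i,j)}=(-1)^{\xing(\sigma)}$ with a reference to Talaska, but these are cosmetic differences. Your observation that the highway condition is local and therefore preserved under tail-swapping, and that acyclicity of $\tilde\N$ keeps the involution well defined, are correct and worth making explicit.)

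However, the step you yourself flag as ``the main obstacle'' is resolved incorrectly. The parity identity $s(i_t,j_{\sigma(t)})+s(i_u,j_{\sigma(u)})\equiv s(i_t,j_{\sigma(u)})+s(i_u,j_{\sigma(t)})\pmod 2$ does \emph{not} hold for every cyclic arrangement of the four boundary points. Writing $A_{\bar t}$ (resp.\ $A_{\bar u}$) for the arc between the two sinks not containing $i_t$ (resp.\ $i_u$), one computes $s(i_t,j_{\sigma(t)})-s(i_t,j_{\sigma(u)})\equiv |I\cap A_{\bar t}|$ and $s(i_u,j_{\sigma(u)})-s(i_u,j_{\sigma(t)})\equiv |I\cap A_{\bar u}|$ modulo $2$. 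If $i_t$ and $i_u$ lie in the same arc determined by the two sinks these agree; but if the four points interleave (source, sink, source, sink in cyclic order) then $A_{\bar t}$ and $A_{\bar u}$ are complementary, so $|I\cap A_{\bar t}|+|I\cap A_{\bar u}|=k$, and your identity fails whenever $k$ is odd. On such pairs your involution is sign-\emph{preserving} and the cancellation collapses. What rescues the argument is a genuinely geometric input you do not supply: two directed paths in the planar strip that share a directed edge cannot have interleaved endpoint pairs. In the basic case where the overlap is a single segment and there are no other intersections, the union of the two paths is an embedded ``H''-shaped tree, and a planar tree with leaves on the boundary circle forces the two sources to be cyclically adjacent among the four endpoints; the general case (several overlaps, additional transversal crossings) needs a further argument. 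Only after ruling out the interleaved configuration is the parity identity, and hence the sign-reversal, valid. A cognate point arises at the very end: edge-disjoint paths could a priori still cross transversally at a simple-crossing vertex, which would spoil the claim that the surviving matchings have total sign $+1$; here you must invoke the highway condition (the path entering on the underway is forced onto the highway, so the two local arcs through the vertex are nested rather than crossing) to conclude that surviving families are genuinely non-crossing.
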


This theorem is an analogue of \cite[Theorem 1.1]{Tal}, where it is proved for a large class of edge-weighted networks. One can deduce Theorem \ref{thm:tpd} from \cite[Theorem 1.1]{Tal} by constructing an edge-weighted network that would have the same measurements as $\tilde N$.  We give a direct proof here, since for the class of networks we consider the argument is not hard. 

\begin{proof}
Call two pairs $(i,j)$ and $(i',j')$ {\it {crossing}} if $(i-i')(i-j')(j-i')(j-j')<0$. For a matching $\sigma$ between elements of $I$ and $J$ let $\xing(\sigma)$ be the number of crossing pairs in $\sigma$. For a family $P$ of highway paths from elements of $I$ to elements of $J$ denote $\sigma(P)$ the matching between $I$ and $J$ paths in $P$ form. We first claim that $$\det(A_{I,J}) = \sum_P (-1)^{\xing(\sigma(P))} \wt(P).$$ Indeed, it suffices to check that the sign $\sgn(\sigma)$ the matching $\sigma$ gets in the determinant $\det(A_{I,J})$ is equal to $\xing(\sigma) \prod_{(i,j)\in \sigma} (-1)^{s(i,j)}$. This is easily verified, see \cite[Proposition 2.12]{Tal}.

Now we present a sign-reversing weight-preserving involution on crossing families of highway paths from $I$ to $J$. Assume that $r$ is smallest such that the path $p_r: i_r \to j_t$ intersects one of the other paths in the family. Among all such edges of intersection on $p_r$ choose the one, denoted $e$, that comes earliest. Among all paths that intersect $p_r$ at $e$ choose $p_s: i_s \to j_q$ with smallest possible $s$. Create $p_r'$ and $p_s'$ from $p_r$ and $p_s$ by changing their parts that come after $e$. Then it is easy to see that the new family of paths has the same weight but opposite sign. It is also easy to see this operation is an involution. Thus we can cancel out all terms in $\sum_P (-1)^{\xing(\sigma(P))} \wt(P)$ that have crossings. The remaining terms all have $\xing(\sigma(P)) = 0$, and the statement of the theorem follows.
\end{proof}

\begin{example}
 The determinant of the matrix in Example \ref{ex:matchings} is $$pqvt^3+pq^2st^2+rqvt^3+rq^2st^2+rq^2s^2t+pvqst^2+2rqvt^2s+rqvts^2.$$ A non-crossing family of highway paths that contributes the term $rqvt^2s$ of the determinant is shown in Figure \ref{fig:wire17}.
\end{example}

\begin{conjecture}\label{conj:tpd}
Theorem \ref{thm:tpd} characterizes the measurements of cylindric networks with nonnegative real weights.  In other words, if one is given a collection of nonnegative boundary measurements satisfying the inequalities of Theorem \ref{thm:tpd}, then these measurements can be represented by a network $\N$ on the cylinder with nonnegative real weights.
\end{conjecture}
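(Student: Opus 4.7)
The plan is to reduce the conjecture to Theorem \ref{thm:ratloop}, which provides the analogous parametrization in the special case where all sources lie on one boundary component of the cylinder and all sinks on the other. First I would take the data of the conjecture --- a collection of nonnegative boundary measurements satisfying the determinantal inequalities of Theorem \ref{thm:tpd} --- and extract from it the sub-collection of measurements between sources on one boundary component and sinks on the other. I would package this into a matrix-valued series $X(t) \in GL_n(\R((t)))$ and argue, using the determinantal inequalities together with the Lindstr\"om-type proof of Theorem \ref{thm:tpd}, that $X(t)$ lies in $U^{\rat}_{\geq 0}$. Theorem \ref{thm:ratloop} would then produce a cylindric network $\N_0$ realizing this part of the data, specifying both the cell $U^{a,b,w}_{\geq 0}$ containing $X(t)$ and the corresponding nonnegative vertex weights.

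To handle the general case, where sources and sinks may be interleaved on both boundary components, I would attach auxiliary strips to the boundary of the cylinder that reroute any source or sink on the ``wrong'' component to the other one, using the local moves of Section \ref{ssec:localmoves} to bring the enlarged network into a standard form compatible with the special case. Applying the special case to the extended data and then detaching the auxiliary strips should produce a network on the original cylinder with nonnegative weights realizing the prescribed measurements. The torus action of Section \ref{ssec:torus} and the whurl relations of Section \ref{sec:whurl} would be the tools for ensuring that this detachment is well-defined up to equivalence, with the determinantal inequalities on the original cylinder propagating to the auxiliary configuration via the sign rules in the definition of $A_{I,J}$.

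The main obstacle is rationality. In the single-horizontal-wire case, the Edrei-Thoma theorem shows that the set of nonnegative sequences with all Toeplitz minors nonnegative strictly contains the set of sequences coming from finite networks (which correspond only to rational generating functions of a very restricted form). Hence the hypothesis of Conjecture \ref{conj:tpd} as written cannot by itself suffice: one must also assume that the generating function $M_{u,v}(t)$ of Proposition \ref{prop:rational} is a rational function (or add an equivalent finiteness hypothesis). Granting this, the remaining technical task is to show that the determinantal inequalities plus rationality force the rank of finite truncations of the boundary measurement matrix to stabilize and to identify the correct cell $U^{a,b,w}_{\geq 0}$; the desired network is then $N^{a,b,w}$ with weights specified by the ASW factorization referenced in the comparison table. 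I expect this Hankel-type rank-and-stabilization argument to be the most delicate step, since it must simultaneously pin down the discrete cell data $(a,b,w)$ and the continuous weight data from the given measurements.
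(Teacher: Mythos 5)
The first thing to note is that the paper does not prove this statement: it is stated as Conjecture \ref{conj:tpd} and left open, so there is no argument of the authors to compare yours against, and your proposal must be judged as a self-contained attempt. As such it is a strategy outline rather than a proof. Your observation about rationality is well taken and is a genuine issue with the literal statement: already for the single-horizontal-wire network the realizable measurement collections are the \emph{rational} totally positive sequences, a proper subset of all sequences satisfying the minor inequalities (the Edrei--Thoma classification also allows factors such as $e^{\gamma t}$ and infinite products), so some finiteness or rationality hypothesis must be added for the conjecture to have a chance. That is a useful critique of the statement itself.

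However, the core of your argument --- the reduction of the general interleaved-source-and-sink case to Theorem \ref{thm:ratloop} by ``attaching auxiliary strips'' --- is not a construction but a wish. Rerouting a boundary vertex from one component of the cylinder to the other changes the homology classes (winding numbers) indexing the measurements, and you give no recipe for what the extended measurement data should be, why it would again satisfy the determinantal inequalities of Theorem \ref{thm:tpd}, or how detaching the strips at the end recovers the originally prescribed measurements rather than some modification of them. This is precisely where the difficulty of the conjecture lives: in the special case the data assembles into an element of the unipotent loop group and the machinery of whirl-curl-Bruhat cells applies, whereas in the general case there is no such group-theoretic home for the data. Even within the special case, your final step --- showing that the inequalities together with rationality pin down the cell $U^{a,b,w}_{\geq 0}$ and the vertex weights via a Hankel-type rank-stabilization argument --- is explicitly deferred rather than carried out. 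So the proposal identifies the right landmarks but does not close any of the gaps that make this a conjecture rather than a theorem.
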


\section{Whirl-curl-Bruhat cells and total positivity}\label{sec:whirlcurl}
Our background reference for this section is \cite{LP}.
\subsection{Unipotent loop group}
Let $\tS_n$ denote the affine symmetric group, the Coxeter group generated by involutions $\{s_i \mid i \in \Z/n\Z\}$.  For $n > 2$ the relations are $s_i s_{i+1} s_i = s_{i+1} s_i s_{i+1}$ and $s_i s_j = s_j s_i$ for $|i-j| \geq 2$.  For $n = 2$, there are no relations.

Let $G= GL_n(\R((t)))$ denote the (real) formal loop group, consisting of non-singular $n\times n$ matrices with real formal Laurent series coefficients.  If $g =(g_{ij})_{i,j=1}^n \in G$ where $g_{ij}= \sum_k g_{ij}^k t^k$, we let $Y = Y(g)= (y_{rs})_{r,s\in \Z}$ denote the infinite periodic matrix defined by $y_{i+kn,j+k'n} = g_{ij}^{k'-k}$ for $1 \leq i,j\leq n$.  For the purposes of this paper we shall nearly always think of formal loop group elements as infinite periodic matrices.  The unipotent loop group $U \subset G$ is defined as those $Y$ which are upper triangular, with 1's along the main diagonal.   

For $k \in \Z/n\Z$ and $a \in \R$, define the Chevalley generator $u_k(a) \in U$ by
$$
(u_k(a))_{ij} = \begin{cases} 
1& \mbox{if $j=i$}\\
a & \mbox{if $j=i+1=k+1$} \\
0& \mbox{otherwise.}
\end{cases}
$$
These are the standard one-parameter subgroups of $U$ corresponding to the simple roots.  They satisfy the relation
\begin{equation}\label{E:chevrelation}
u_k(a)\, u_{k+1}(b)\, u_k(c) = u_{k+1}(bc/(a+c))\, u_k(a+c)\, u_{k+1}(ab/(a+c)).
\end{equation}
For a reduced word $\i = i_1 i_2 \cdots i_\ell$ of $w \in \tS_n$ and a sequence $\a = (a_1,\ldots,a_\ell)$ of parameters, we define
$u_\i(\a) = u_{i_1}(a_1)\cdots u_{i_\ell}(a_\ell)$.  

\begin{prop}\label{prop:Bruhatcell} \cite[Theorem 3.4, Lemma 3.1]{LP3}
The map $u_\i$ is injective when restricted to positive parameters $\a \in \R_{>0}^{\ell}$ .  
Furthermore, for two reduced words $\i$ and $\j$ of the same $w \in \tS_n$, we have $U_{\geq 0}^w:=u_\i(\R_{>0}^\ell) = u_\j(\R_{>0}^\ell)$.
\end{prop}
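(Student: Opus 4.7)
The plan is to handle the two assertions separately, and in both cases to exploit the correspondence between products $u_\i(\a)$ and the wiring-diagram network $N_\i$ (on an infinite strip or, equivalently, on the universal cover of the cylinder) in which the crossings are labeled in the order of $\i$ and carry the weights $a_1,\ldots,a_\ell$.

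For the equality $u_\i(\R_{>0}^\ell)=u_\j(\R_{>0}^\ell)$, I will invoke Matsumoto's theorem: any two reduced words $\i,\j$ for the same $w\in\tS_n$ are connected by a sequence of braid moves $s_k s_{k+1} s_k\leftrightarrow s_{k+1} s_k s_{k+1}$ and commutation moves $s_k s_\ell\leftrightarrow s_\ell s_k$ for $|k-\ell|\geq 2$ (indices mod $n$). Each braid move at the group level is exactly \eqref{E:chevrelation}, and the rational change of parameters $(a,b,c)\mapsto(bc/(a+c),a+c,ab/(a+c))$ preserves $\R_{>0}^3$, since positivity of $a,c$ keeps the denominator away from zero and all three outputs positive. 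The commutation moves correspond to the trivial identity $u_k(a)u_\ell(b)=u_\ell(b)u_k(a)$ for $|k-\ell|\geq 2$, which also preserves positivity. Starting from $g=u_\i(\a)$ with $\a\in\R_{>0}^\ell$ and applying the corresponding chain of Chevalley substitutions produces $\a'\in\R_{>0}^\ell$ with $g=u_\j(\a')$; the reverse inclusion is symmetric. This is simply the network-theoretic content of the Yang-Baxter move (YB) from Section \ref{ssec:localmoves}, applied inside a wiring diagram.

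For injectivity, the aim is to produce an explicit inverse map from the matrix entries of $u_\i(\a)$ (equivalently, from the entries of the periodic matrix $Y(u_\i(\a))$) back to the parameters $a_1,\ldots,a_\ell$. The strategy is to peel off one Chevalley factor at a time and induct on $\ell(w)$. Translated into networks, $u_\i(\a)$ acts on infinite periodic row/column vectors, and its matrix entries (and minors) are weight generating functions of highway paths (and non-crossing families of highway paths) in $\tilde N_\i$, by Theorem \ref{thm:tpd}. The plan is to choose a distinguished pair of indices $(r,s)$ so that the only highway path (or non-crossing family) in $\tilde N_\i$ from $r$ to $s$ with the required homology type is forced to traverse the first crossing and nothing more; then the corresponding matrix entry, or an appropriate ratio of entries/minors, equals $a_1$ times factors already known inductively. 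Having recovered $a_1$, one multiplies by $u_{i_1}(-a_1)$ on the left (interpreted via the same power-series formalism) to obtain $u_{i_2}(a_2)\cdots u_{i_\ell}(a_\ell)$ and proceed, keeping $\i'=i_2\cdots i_\ell$ reduced.

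The main obstacle is the last step: in the affine setting, matrix entries can receive contributions from paths that wrap around the cylinder arbitrarily many times, so a naive ``chamber ansatz'' expression that works for the finite group $U_n$ need not isolate $a_1$ cleanly. The fix is to pick the reduced word so that the first letter sits at an extreme position (for example, a ``left-most'' crossing under a suitable height function on $\tilde N_\i$), and to choose $(r,s)$ close to that crossing in the universal cover, so that competing long-wrap paths are excluded by the homology/homotopy type prescribed by Lemma \ref{lem:extremal} and the pointedness/finiteness arguments already used in the proof of Theorem \ref{thm:fin}. Alternatively, one may identify a leading coefficient in the $t$-expansion of a matrix entry of $u_\i(\a)$, where the leading term picks out a unique short path through the first crossing; this reduces the affine problem to a finite Lindström-type identification, after which the inductive peeling proceeds as in the classical Lusztig/Berenstein-Fomin-Zelevinsky argument.
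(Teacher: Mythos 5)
First, a point of comparison: the paper does not prove this proposition at all — it is imported verbatim from \cite{LP3} (Theorem 3.4 and Lemma 3.1 there) — so there is no internal argument to measure you against. Your second paragraph is correct and is the standard argument for $U_{\geq 0}^w:=u_\i(\R_{>0}^\ell)=u_\j(\R_{>0}^\ell)$: Matsumoto's theorem, plus the observation that the substitution $(a,b,c)\mapsto(bc/(a+c),\,a+c,\,ab/(a+c))$ of \eqref{E:chevrelation} is a positivity-preserving involution of $\R_{>0}^3$ and that commutation moves are harmless. (For $n=2$ reduced words are unique, so that case is vacuous.)

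The injectivity half, however, has a genuine gap. Everything rests on the assertion that some matrix entry, or ratio of entries/minors, of $Y=Y(u_\i(\a))$ isolates $a_1$, and this is exactly the step you never carry out. Concretely, writing $Y=u_{i_1}(a_1)Y'$ with $Y'=u_{i_2}(a_2)\cdots u_{i_\ell}(a_\ell)$, one has $Y_{i_1,c}=Y'_{i_1,c}+a_1Y_{i_1+1,c}$ for every column $c$, so recovering $a_1$ from $Y$ requires exhibiting a column $c$ with $Y'_{i_1,c}=0$ and $Y_{i_1+1,c}\neq 0$, and proving that such a $c$ exists and is detectable from $Y$ alone uses the reducedness of $\i$ in an essential way; this vanishing/support computation is the actual content of the affine chamber ansatz and is where \cite{LP3} does its work. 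I would also point out that your stated ``main obstacle'' is misdiagnosed: for a pure product of Chevalley generators the underlying network is an $N^{0,0,w}$-network with no whirls or curls, each entry $Y_{r,s}$ is already a finite polynomial (a sum over monotone lattice paths, vanishing once $s-r>\ell$), and Lemma \ref{lem:extremal} and the finiteness arguments of Theorem \ref{thm:fin} contribute nothing here — the difficulty is not convergence or long wrapping paths but the combinatorial identification of which entries vanish. Finally, since your plan proves injectivity only for a specially chosen reduced word, you should state explicitly that the braid substitutions are bijections of $\R_{>0}^\ell$ (they are involutions), so that injectivity transfers from one reduced word of $w$ to all of them; as written this reduction is only implicit.
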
  

We call $U_{\geq 0}^w$ a TNN {\it Bruhat cell}.  It follows from the results of \cite{LP3} that for each $\i$, the map $u_\i: \R_{>0}^\ell \to U_{\geq 0}^w$ is a bijection.

\subsection{Whirl and curl matrices}
For $(x^{(1)},x^{(2)},\ldots,x^{(n)}) \in \R^n$, define the {\it {whirl} (matrix)} $M(x) = M(x^{(1)},x^{(2)},\ldots,x^{(n)})$ to be the infinite periodic matrix
$$
M(x)_{i,j} = 
\begin{cases}
1 & \text{if $j=i$;}\\
x^{(i)} & \text{if $j=i+1$;}\\
0 & \text{otherwise.}
\end{cases}
$$
and the {\it {curl} (matrix)} $N(x) =N(x^{(1)},x^{(2)},\ldots,x^{(n)})$  \cite{LP} to be the infinite periodic matrix with 
$$
N(x)_{i,j} = 
\begin{cases}
1 & \text{if $j=i$;}\\
x^{(i)} x^{(i+1)} \cdots x^{(j-1)} & \text{if $j>i$;}\\
0 & \text{otherwise.}
\end{cases}
$$
Here and elsewhere the upper indices are to be taken modulo $n$.  Whirl and curl matrices commute as follows \cite[Section 6]{LP}.  Recall the definition of $\k_{r}(x,y)$ from \eqref{eq:kappa}.
Define an algebra morphism $s: \C(x,y) \to \C(x,y)$ by
\begin{align*}
s(x^{(r)}) = \frac{y^{(r+1)}  \k_{r+1}(x,y)}{\k_{r}(x,y)} \text{\;\;\;\; and \;\;\;\;} s(y^{(r)}) = \frac{x^{(r-1)} \k_{r-1}(x,y)}{\k_{r}(x,y)}.
\end{align*}

Then we have $M(x)M(y) = M(s(x))M(s(y))$, and $N(y) N(x) = N(s(y)) N(s(x))$.  Furthermore, we have $M(x) N(y) = N(x')M(y')$, where
$$
y'_i = \frac{x_{i+1}(x_i+y_i)}{x_{i+1}+y_{i+1}} \qquad x'_i=\frac{y_{i+1}(x_i+y_i)}{x_{i+1}+y_{i+1}}.
$$

These relations are related to the transformations of whirl wires and curl wires as follows: for an $n$-tuple $x=(x^{(1)},\ldots,x^{(n)})$, let $\shift{x} = (x^{(n)},x^{(1)},\ldots,x^{(n-1)})$ denote its rotation. Then comparing with \eqref{eq:whurl} and \eqref{eq:whirlcurl}-\eqref{eq:whirlcurl2}, one obtains:

\begin{proposition}\label{prop:shift} In the following we assume we have two wire cycles $C$, $C'$ and that horizontal wires go from left to right (meeting $C$ first, then $C'$).
\begin{enumerate}
\item
Suppose two whirl wire cycles $C$, $C'$ have weights $x$ and $y$.  Then the whurl relation for $C$ and $C'$ (giving weights $x'$ and $y'$) corresponds to the commutation relation $M(x)M(\shift{y}) = M(x')M(\shift{y'})$.
\item
Suppose two curl wire cycles $C$, $C'$ have weights $x$ and $y$.  Then the whurl relation for $C$ and $C'$ (giving weights $x'$ and $y'$) corresponds to the commutation relation $N(\shift{x})N(y) = N(\shift{x'})N(y')$.
\item
Suppose we have a whirl wire cycle $C$ and a curl wire cycle $C'$ have weights $x$ and $y$.  Then the whirl-curl relation for $C$ and $C'$ (giving weights $x'$ and $y'$) corresponds to $M(\shift{x})N(\shift{y})=N(x')M(y')$. 
\item
Suppose we have a curl wire cycle $C$ and a whirl wire cycle $C'$ have weights $x$ and $y$.  Then the whirl-curl relation for $C$ and $C'$ (giving weights $x'$ and $y'$) corresponds to $N(x)M(y)=M(\shift{x'})N(\shift{y'})$. 
\end{enumerate}
\end{proposition}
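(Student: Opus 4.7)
The plan is to verify each of the four identities by direct comparison of explicit rational formulas. The ingredients on the network side are the whurl transformation \eqref{eq:whurl} and the whirl-curl transformations \eqref{eq:whirlcurl}--\eqref{eq:whirlcurl2}; the ingredients on the matrix side are the algebra morphism $s$ and the formula for $M(x)N(y) = N(x')M(y')$ stated just above the proposition. So the work is purely to check that these two sets of formulas agree once the stated shifts are applied.

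The shift $\shift{}$ enters for the following reason. The auxiliary variables $z^{(i)}$, $t^{(i)}$ in \eqref{eq:whurl} are defined from $x^{(i)}$, $y^{(i)}$ via the orientation $\epsilon_i$ of the $i$-th horizontal wire. In the setup of the proposition the horizontal wires all run left to right, so $\epsilon_i = 1$, giving $z^{(i)} = x^{(i)}$ and $t^{(i)} = y^{(i-1)}$. The reindexing $t^{(i)} = y^{(i-1)}$ is exactly the identity $t = \shift{y}$, and this is the origin of every $\shift{}$ appearing in the proposition.

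For part (1) (two whirls, both oriented upward), I would substitute $z = x$, $t = \shift{y}$ into \eqref{eq:whurl} to obtain
\begin{equation*}
x'^{(i)} \;=\; \frac{y^{(i)}\,\kappa_{i+1}(x, \shift{y})}{\kappa_{i}(x, \shift{y})}, \qquad y'^{(i)} \;=\; \frac{x^{(i)}\,\kappa_{i}(x,\shift{y})}{\kappa_{i+1}(x, \shift{y})},
\end{equation*}
and then compare term-by-term with the explicit formulas for $s(a)^{(r)}$ and $s(b)^{(r)}$ evaluated at $(a,b) = (x, \shift{y})$. One reads off directly that $x' = s(x)$ and $\shift{y'} = s(\shift{y})$, which is the assertion $M(x) M(\shift{y}) = M(x') M(\shift{y'})$. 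Part (2), with two downward-oriented curls, is parallel: the roles of the ``lower'' and ``upper'' weights are swapped, so the substitution becomes $z = y$, $t = \shift{x}$, and the analogous calculation translates \eqref{eq:whurl} into $N(\shift{x}) N(y) = N(\shift{x'}) N(y')$ using the curl commutation $N(y) N(x) = N(s(y)) N(s(x))$. For parts (3) and (4), one substitutes into \eqref{eq:whirlcurl} or \eqref{eq:whirlcurl2} and compares with the explicit formulas for $M(a) N(b) = N(a') M(b')$; the direction in which the crossing is pushed (downwards as in \eqref{eq:whirlcurl}, upwards as in \eqref{eq:whirlcurl2}) decides whether the shift falls on $x$ or on $y$, producing the different forms of (3) and (4).

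The main obstacle is purely bookkeeping: the cyclic index inside $\kappa_r$ of \eqref{eq:kappa} is delicate, and it is easy to drop a shift by one in either direction. A practical safeguard is to verify the $n=1$ and $n=2$ cases directly on the leading principal block of the periodic matrix, which already pins down the shifts uniquely. Once the index conventions have been fixed, no further topological or combinatorial input is needed; the four identities reduce to finite algebraic manipulations of the type already carried out in \cite[Section 6]{LP}.
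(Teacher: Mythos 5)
Your proposal is correct and follows the same route the paper intends: the proposition is stated there as an immediate consequence of "comparing" the whurl and whirl-curl formulas \eqref{eq:whurl}, \eqref{eq:whirlcurl}--\eqref{eq:whirlcurl2} with the matrix commutation relations, and your identification $z=x$, $t=\shift{y}$ (with $\epsilon_i=1$) as the source of every $\shift{}$ is exactly the right bookkeeping, as your worked-out case (1) confirms. No further comment needed.
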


\subsection{Total positivity in the rational loop group}
An element  $Y \in U$ is totally nonnegative (TNN) if every minor of $Y$ is nonnegative.  We denote by $U_{\geq 0}$ the semigroup of
totally nonnegative elements (necessarily real).  Let $U^{\rat} \subset U$ denote the subgroup of $U$ consisting of rational loops, that is, those $n \times n$
matrices $g = (g_{ij})$ with coefficients which are rational functions $P(t)/Q(t)$, and which lie in $U$.  Let $U^{\rat}_{\geq 0} = U_{\geq 0} \cap U^{\rat}$.

\begin{theorem} \label{thm:genr}
The semigroup $U^{\rat}_{\geq 0}$ is generated by whirls matrices, curl matrices, and Chevalley generators with nonnegative real parameters.
\end{theorem}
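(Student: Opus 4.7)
The easy direction---that whirls, curls, and Chevalley generators all belong to $U^{\rat}_{\geq 0}$---can be dispatched by direct verification. Whirls $M(x)$ and Chevalley generators $u_i(a)$ are polynomial loops (of degree $1$ and $0$ in $t$ respectively) whose total nonnegativity is witnessed by elementary cylindric networks via Theorem \ref{thm:tpd}. A curl $N(x)$ has matrix entries that form geometric progressions in $\prod_j x^{(j)}$, hence is rational, and its total nonnegativity is witnessed by the network consisting of a single curl wire crossed by $n$ horizontal wires.

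For the converse, I would build the factorization by induction on a suitable complexity measure of $g \in U^{\rat}_{\geq 0}$, extracting one generator at a time. The argument naturally splits into two phases. In the \emph{curl extraction phase}, if $g$ is not polynomial in $t$, then some entry of $g$ has a pole, which total positivity should force to originate from a curl factor on the left. Choosing parameters $x$ according to the asymptotic behavior of $g$ near the pole, I would split off a curl $N(x)$, using Proposition \ref{prop:shift} to track commutations and an ASW-type factorization result from \cite{LP} to verify that $N(x)^{-1} g$ remains in $U^{\rat}_{\geq 0}$ while having strictly fewer poles. Iterating this step eventually reduces $g$ to a polynomial loop $h \in U^{\rat}_{\geq 0}$.

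In the \emph{polynomial reduction phase}, I would induct on the $t$-degree of $h$. A degree-zero $h$ lies in the finite unipotent group $(U_n)_{\geq 0}$, so it factors into Chevalley generators by Proposition \ref{prop:Bruhatcell}. A positive-degree $h$ should admit a whirl factor $M(x)$ read off from its leading $t$-coefficient; splitting it off decreases the $t$-degree and again relies on an ASW-type verification that $M(x)^{-1} h$ is still TNN. The commutation relations of Proposition \ref{prop:shift} serve to rearrange the extracted whirls and Chevalley pieces into a canonical order if desired.

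The main obstacle at each phase is showing that the proposed extraction produces a totally nonnegative remainder; this is essentially the content of the affine Aissen--Schoenberg--Whitney theorem as developed in \cite{LP}, and is where all the real work lies. Rationality of $g$ bounds both the number of poles to be extracted and the polynomial degree of the remaining piece, so the induction terminates in finitely many steps, yielding the desired expression of $g$ as a product of whirl matrices, curl matrices, and Chevalley generators with nonnegative real parameters.
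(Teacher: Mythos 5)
Your proposal takes a genuinely different route from the paper, and it contains a concrete gap. The paper's proof is a short reduction: writing the entries of $g(t)$ over a common denominator $Q(t)$ of degree $r$ forces the entries $y_{i,j}, y_{i,j+n},\ldots,y_{i,j+rn}$ of the periodic matrix to satisfy a fixed linear recurrence for $j-i\gg 0$, which produces a vanishing $(r+1)\times(r+1)$ minor that does not vanish identically on $U$; hence $Y$ is totally nonnegative but not totally positive, and the entire factorization is then obtained in one stroke by citing \cite[Theorem 5.7]{LP}. You never make this ``rational $\Rightarrow$ not totally positive'' observation; instead you attempt to rebuild the extraction algorithm that underlies that cited theorem, deferring each hard step to ``ASW-type'' results. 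That is a legitimate alternative in principle, but it re-proves machinery the paper deliberately outsources, and the burden of making each extraction step rigorous is then yours.

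The concrete flaw is in your polynomial reduction phase. It is false that a polynomial TNN loop of positive $t$-degree admits a whirl factor ``read off from its leading $t$-coefficient'': the affine Chevalley generator $u_0(a)$ has $t$-degree $1$ (its entry $g_{n,1}(t)=at$) yet admits no nondegenerate whirl factor, since comparing superdiagonal entries of $M(x)h'=u_0(a)$ forces $x^{(i)}=0$ for all $i\not\equiv 0$. More generally, in the whirl-curl-Bruhat decomposition (Theorem \ref{thm:whirlcurlBruhatdecomp}) the middle Bruhat factor $u_{i_1}(c_1)\cdots u_{i_\ell}(c_\ell)$ carries positive $t$-degree whenever $w\in\tS_n$ is genuinely affine, so your dichotomy ``degree zero $\Rightarrow$ finite Chevalley product; positive degree $\Rightarrow$ whirl factor'' breaks down and the induction on $t$-degree does not close. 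The correct structure is curls on one side, whirls on the other, and an affine Bruhat cell in the middle, with the whirl part identified by a separate ASW-type limit (not by the leading coefficient). A smaller but real issue in the curl phase: you assert each extraction strictly decreases the number of poles, but distinct curl factors can share a radius and hence a pole location, so termination needs an argument (e.g.\ counting pole multiplicities, or invoking the vanishing-minor bound that the paper uses).
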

\begin{proof}
It is clear that $U^{\rat}_{\geq 0}$ is a semigroup.  Since each of the matrices $M(x^{(1)},\ldots,x^{(n)})$, $N(x^{(1)},\ldots,x^{(n)})$, and $e_i(x)$ lie in $U^{\rat}_{\geq 0}$, one inclusion is clear.  For the other inclusion, suppose that
$g \in U^{\rat}_{\geq 0}$, with corresponding infinite periodic matrix $Y$.  Put all the matrix coefficients of $g(t)$ over a common denominator $Q(t)$.  Then for each $i,j$ satisfying $j-i \gg 0$ we have that $y_{i,j}, y_{i,j+n}, y_{i,j+2n},\ldots,y_{i,j+rn}$ satisfy
a linear relation given by the coefficients of $Q(t)$, where $r$ is equal to the degree of $Q(t)$.  For a sufficiently large $j$, this would be true for $i \in \{i_0, i_0+1,\ldots, i_0 + r\}$, giving a $(r+1) \times (r+1)$ minor of $Y$ which vanishes, but which does not identically vanish on $U$.  Thus, in the terminology of \cite{LP}, $Y$ is a totally nonnegative element which is not totally positive.  Thus by \cite[Theorem 5.7]{LP}, $Y$ is a product of whirls, curls, and Chevalley generators.
\end{proof}

\begin{remark}
This notion of rationality is compatible with the one of Conjecture \ref{conj:rat}.
\end{remark}

\subsection{Whirl-curl-Bruhat cells}
Fix, $w \in \tS_n$, and two nonnegative integers $a, b \in \Z_{\geq 0}$.  Define the {\it whirl-curl-Bruhat cell} $U_{\geq 0}^{a,b,w}$ to be the image of $(\R_{>0}^n)^a \times (\R_{>0}^n)^b \times \R_{>0}^\ell$
under the map
\begin{equation}\label{E:wcbcell}
\Phi^{a,b,\i}: ((\a_1,\ldots,\a_a),(\b_1,\ldots,\b_b),(c_1,\ldots,c_\ell)) \longmapsto \end{equation} \begin{equation*}
 N(\a_1)N(\a_2)\cdots N(\a_a)  u_{i_1}(c_1)\cdots u_{i_\ell}(c_\ell)  M(\b_1)M(\b_2)\cdots M(\b_b) 
\end{equation*}
where $\i$ is a reduced word for $w$.  By Proposition \ref{prop:Bruhatcell}, $U_{\geq 0}^{a,b,w}$ depends only on $a,b,w$ and not on $\i$.  However, $\Phi^{a,b,\i}$ is not injective.  Let us define $\Omega^{a} \subset (\R_{>0}^n)^a$ to be the tuples $(\a_1,\ldots,\a_a)$ satisfying $\prod_{i \in \Z/n\Z} \a_1^{(i)} \geq \prod_{i \in \Z/n\Z} \a_2^{(i)} \geq \cdots \geq \prod_{i \in \Z/n\Z} \a_a^{(i)}$.  Denoting the restriction of $\Phi^{a,b,\i}$ to $\Omega^a \times \Omega^b \times \R_{>0}^\ell$ by $\Phi^{a,b,\i}$ as well, we have

\begin{theorem}\label{thm:Phiwhirlcurl}
The map 
$$\Phi^{a,b,\i}:\Omega^a \times \Omega^b \times \R_{>0}^\ell \longrightarrow U^{a,b,w}_{\geq 0}$$ is a bijection for every reduced word $\i$ of $w$.
\end{theorem}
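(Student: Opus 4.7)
The plan is to prove surjectivity onto $U^{a,b,w}_{\geq 0}$ by a sorting argument using the commutation relations of Proposition \ref{prop:shift}, and to prove injectivity by recovering the curls, then the whirls, then the Chevalley parameters in succession, using the asymptotic behavior of matrix entries together with results from \cite{LP} and Proposition \ref{prop:Bruhatcell}.

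For surjectivity I would start with an arbitrary element $Y \in U^{a,b,w}_{\geq 0}$; by the very definition of this cell one may write
\[
Y = \Phi^{a,b,\i}\bigl((\a_1,\ldots,\a_a),(\b_1,\ldots,\b_b),(c_1,\ldots,c_\ell)\bigr)
\]
for some positive parameters, with no restriction imposed on the curl or whirl tuples. The whurl relation on two adjacent curls (Proposition \ref{prop:shift}(2)) replaces $N(\a_i)N(\a_{i+1})$ by $N(\a_i')N(\a_{i+1}')$; taking the cyclic product $\prod_{j \in \Z/n\Z}$ of the formulas \eqref{eq:whurl} shows that the two whirl-products $\prod_j \a_i^{(j)}$ and $\prod_j \a_{i+1}^{(j)}$ are interchanged under this move, since the ratios $\k_{j+\ep}/\k_{j+1-\ep}$ telescope cyclically to $1$. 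A bubble sort on adjacent pairs therefore rearranges the curl tuple into $\Omega^a$ without changing $Y$ and without touching the middle factor $u_\i(c)$; the same argument with Proposition \ref{prop:shift}(1) places the whirl tuple into $\Omega^b$.

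For injectivity, suppose two points of $\Omega^a \times \Omega^b \times \R_{>0}^\ell$ give the same element $Y$ under $\Phi^{a,b,\i}$. The plan is to peel off the factors from the outside in. The Chevalley and whirl factors on the right contribute nonzero matrix entries on only boundedly many superdiagonals, whereas the curl product $N(\a_1)\cdots N(\a_a)$ contributes on all superdiagonals; thus the asymptotic behavior of $Y_{i,j}$ as $j-i \to \infty$ is controlled purely by the curl factor. This isolates $N(\a_1)\cdots N(\a_a)$ inside $Y$, and its weakly-decreasing factorization is unique by the uniqueness of ``pure curl'' factorizations of totally nonnegative rational loop group elements in \cite[Theorem 8.3]{LP}. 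Hence $(\a_1,\ldots,\a_a) = (\a_1',\ldots,\a_a')$. Cancelling $N(\a_1)\cdots N(\a_a)$ on the left leaves
\[
u_\i(c)M(\b_1)\cdots M(\b_b) = u_\i(c')M(\b_1')\cdots M(\b_b'),
\]
and a symmetric asymptotic analysis on the other side (working with the transposed element) recovers the whirl tuple; the residual equality $u_\i(c) = u_\i(c')$ then forces $c = c'$ by Proposition \ref{prop:Bruhatcell}.

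The main obstacle is making the asymptotic extraction of the curl factor rigorous: one must show that the spectral-type data attached to the curl product is recoverable from $Y$ in a manner insensitive to the Bruhat and whirl contributions on the right. This is the content, in the special case with no Chevalley middle factor, of the uniqueness side of the factorization theorems of \cite{LP}; extending them to tolerate a bounded-range Chevalley block sandwiched between the curls and whirls is the key extra work. Once this curl-uniqueness step is in place, the remaining reductions to \cite{LP} and to Proposition \ref{prop:Bruhatcell} are routine.
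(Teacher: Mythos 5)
Your proposal is correct and follows essentially the same route as the paper: the paper's proof likewise rests on \cite[Theorem 8.3]{LP} for the unique recovery of the three factors and on \cite[Proposition 8.2]{LP} (equivalently, the radius-swapping property of the whurl relation, cf.\ Lemma \ref{lem:whurlradius}) for normalizing the curl and whirl tuples into $\Omega^a \times \Omega^b$, with Proposition \ref{prop:Bruhatcell} handling the middle factor. The ``key extra work'' you defer at the end --- extending the uniqueness of the factorization to tolerate a bounded-range Chevalley block sandwiched between the curls and whirls --- is exactly what \cite[Theorem 8.3]{LP} already supplies, so no further argument is needed there.
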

\begin{proof}
According to \cite[Theorem 8.3]{LP} each of the three factors $N(\a_1)N(\a_2)\cdots N(\a_a)$, $u_{i_1}(c_1)\cdots u_{i_\ell}(c_\ell)$, and $M(\b_1)M(\b_2)\cdots M(\b_b)$ can be recovered uniquely, where the parameters of the factors $N(\a_1)N(\a_2)\cdots N(\a_a)$ and $M(\b_1)M(\b_2)\cdots M(\b_b)$ lie in $(\R_{>0}^n)^a \times (\R_{>0}^n)^b$. Then by \cite[Proposition 8.2]{LP} among those there is a unique presentation with parameters belonging to $\Omega^a \times \Omega^b$. As for the middle factor $u_{i_1}(c_1)\cdots u_{i_\ell}(c_\ell)$, it is injective according to Proposition \ref{prop:Bruhatcell}.
\end{proof}

\begin{remark}
Each $\Omega^a$ is parametrized by a semi-algebraic open-closed cell $\R_{\geq 0}^{a-1} \times \R_{>0}^{a(n-1)+1}$ given by the coordinates $r_1-r_2,r_2-r_2,\ldots,r_{a-1}-r_a$ and $r_a,\a_1^{(1)},\ldots,\a_1^{(n-1)},\ldots,\a_a^{(1)},\ldots,\a_a^{(n-1)}$, where $r_j = \prod_{i \in \Z/n\Z} \a_j^{(i)}$.  Thus it makes sense to call $U_{\geq 0}^{a,b,w}$ a ``cell'', though it is not an open cell in the usual context of cell decompositions.
\end{remark}

\begin{theorem}\label{thm:whirlcurlBruhatdecomp}
We have a disjoint union $U^{\rat}_{\geq 0} = \sqcup_{a,b,w} U^{a,b,w}_{\geq 0}$ where $a,b \in \Z_{\geq 0}$ and $w \in \tS_n$.
\end{theorem}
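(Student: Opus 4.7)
The plan is to prove covering (every $g \in U^{\rat}_{\geq 0}$ lies in some $U^{a,b,w}_{\geq 0}$) and disjointness (for distinct triples the cells are disjoint) separately.

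For covering, I will invoke Theorem~\ref{thm:genr} to express an arbitrary $g \in U^{\rat}_{\geq 0}$ as a finite product of whirls, curls, and Chevalley generators with nonnegative real parameters. My strategy is to rewrite this product into the canonical shape
\[
N(\a_1)\cdots N(\a_a)\, u_{i_1}(c_1)\cdots u_{i_\ell}(c_\ell)\, M(\b_1)\cdots M(\b_b)
\]
using three ingredients. First, the commutation $M(x)N(y) = N(x')M(y')$ of Proposition~\ref{prop:shift}(3) migrates every curl to the left of every whirl. Second, to pull the Chevalley generators out from between curl/curl or whirl/whirl blocks, I commute each $u_i(c)$ past an adjacent $M$ or $N$; since $M$ and $N$ are themselves products of Chevalley generators, this amounts to iterated applications of the braid relation~\eqref{E:chevrelation} (equivalently, to the ``push a crossing around the cylinder'' construction of Theorems~\ref{thm:whloc} and~\ref{thm:wcloc}). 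Third, once all Chevalley generators are gathered into a single middle block, \eqref{E:chevrelation} together with $u_i(a)u_i(b) = u_i(a+b)$ reduce the block to $u_\i$ for some reduced word $\i$ of some $w \in \tS_n$. Finally, applying whurl moves (Proposition~\ref{prop:shift}(1)(2)) together with \cite[Proposition~8.2]{LP} normalizes the curl and whirl tuples to lie in $\Omega^a \times \Omega^b$, putting $g$ into $U^{a,b,w}_{\geq 0}$.

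For disjointness, suppose $g \in U^{a,b,w}_{\geq 0}\cap U^{a',b',w'}_{\geq 0}$, so $g$ admits two canonical factorizations. By \cite[Theorem~8.3]{LP}, as invoked in the proof of Theorem~\ref{thm:Phiwhirlcurl}, the three pieces (curl product, middle factor, whirl product) are each uniquely determined by $g$; in particular $a = a'$ and $b = b'$, and the two middle factors coincide as elements of $U$. Applying Proposition~\ref{prop:Bruhatcell} to this common middle factor then forces $w = w'$.

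The main obstacle is the second ingredient of covering: pushing a Chevalley generator across a whirl or curl is not a single algebraic identity but a chain of Yang-Baxter moves, and one must verify that each such push produces only a modified Chevalley generator (possibly collapsing to the identity) on the far side, without creating spurious whirl or curl factors. Theorems~\ref{thm:whloc} and~\ref{thm:wcloc} are tailor-made for precisely this analysis, showing that the number of whirl/curl factors is preserved by each commutation and that only their parameters change. Combined with the finiteness of any reduced expression in $\tS_n$, this guarantees that the rewriting process terminates in finitely many steps.
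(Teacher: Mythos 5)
Your overall architecture matches the paper's: covering comes from Theorem~\ref{thm:genr}, and disjointness comes from \cite[Theorem 8.3]{LP}, exactly as in the paper's (two-sentence) proof, which defers the reordering into canonical form entirely to the results of \cite{LP}. Your disjointness argument is fine. The problem is in your expansion of the covering step.

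The claim that ``$M$ and $N$ are themselves products of Chevalley generators'' is false, and it is the load-bearing justification for your second ingredient. A whirl $M(x)$ with all $x^{(i)} \neq 0$ is an infinite periodic bidiagonal matrix with nonzero entries in \emph{every} residue class of the superdiagonal; any finite product of Chevalley generators realizing all $n$ superdiagonal residues would necessarily produce a nonzero entry two steps above the diagonal somewhere (whichever residue ``wraps around'' last), whereas $M(x)_{i,i+2}=0$. Indeed, the entire point of \cite{LP} is that whirls and curls are totally nonnegative elements lying in no Bruhat cell $U^w_{\geq 0}$, i.e.\ precisely \emph{not} finite products of Chevalley generators --- otherwise Theorem~\ref{thm:genr} would collapse to the statement that Chevalley generators alone generate $U^{\rat}_{\geq 0}$, and the $a,b$ indices in the decomposition would be vacuous. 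Your argument is repairable: the commutation $u_i(c)\,M(x) = M(x')\,u_i(c')$ (and its curl analogue) does hold with explicit positive rational formulas; in the network picture it is a \emph{single} Yang--Baxter move at the triangle formed by horizontal wires $i$, $i+1$ and the vertical whirl cycle, and algebraically it is one of the local relations established in \cite[Section 6]{LP}. But it is not an instance of the braid relation~\eqref{E:chevrelation} among Chevalley generators, and it is also not what Theorems~\ref{thm:whloc} and~\ref{thm:wcloc} prove --- those concern pushing a crossing all the way around the cylinder to swap two \emph{vertical} wire cycles (the whurl and whirl--curl relations), not moving a horizontal crossing past one vertical cycle. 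A secondary point you and the paper both gloss over: Theorem~\ref{thm:genr} yields \emph{nonnegative} parameters while the cells are defined with \emph{positive} ones, so degenerate whirls/curls (some $x^{(i)}=0$) must be absorbed into the Chevalley block; this too is handled by the canonical form theorem of \cite{LP} that the paper is implicitly leaning on.
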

\begin{proof}
By Theorem \ref{thm:genr} we know that the cells $U^{a,b,w}_{\geq 0}$ cover the whole $U^{\rat}_{\geq 0}$. On the other hand, according to \cite[Theorem 8.3]{LP} those cells must be disjoint, since one can determine $a$, $b$ and $w$ uniquely from an element of $U^{a,b,w}_{\geq 0}$.
\end{proof}

\subsection{Boundary measurements as a map to the unipotent loop group} \label{ssec:tounipotent}
Let $\N$ be a simple-crossing oriented network on the cylinder with $n$ horizontal wires, all of which go from left to right.  In other words, $\N$ has $n$ boundary sources which we can cyclically label $1,2,\ldots,n$. Consider the snake path $p$ in $\N$ that starts at vertex $1$, and let $1'$ be the endpoint of $p$.  Let $1',2',\ldots,n'$ cyclically label the boundary sinks of $\N$.  Fix a continuous path $\h$ on the cylinder going from between $n$ and $1$ to between $n'$ and $1'$ staying closely above the snake path $p$ from $1$ to $1'$. Suppose a path $q$ in $\N$ crosses $\h$ from top to bottom $d$ times, and from bottom to top $d'$ times.  Then we say that the winding number $w(q)$ is equal to $d - d'$.

\begin{figure}[h!]
    \begin{center}
    \input{wire16.pstex_t}
    \end{center}
    \caption{A network on a cylinder with the snake path from $1$ to $1'$ shown.}
    \label{fig:wire16}
\end{figure}

We now define a boundary measurement matrix $M(\N) = (M(\N)_{i,j})_{i,j \in \Z}$ in the unipotent loop group from $\N$, by
$$
M(\N)_{i,j} = M^{[q(i,j',r)]}_{i,j'}(\N) \qquad \mbox{if $j-i=s+rn$, where $r \in \Z$ and $s\in \{1,2,\ldots,n\}$}
$$
where $q(i,j',r)$ is a path from $i$ to $j'$ with winding number $r$.  

\begin{lemma}
The boundary measurement $M(\N)$ lies in $U$.
\end{lemma}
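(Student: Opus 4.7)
The plan is to verify the three properties defining $U$: periodicity, ones on the diagonal, and strict upper triangularity. Finiteness of each entry is already guaranteed by Theorem \ref{thm:fin}.

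Periodicity $M(\N)_{i+n,j+n}=M(\N)_{i,j}$ is immediate from the construction: passing to the universal cover $\tilde\N$, a deck transformation by one period identifies the collections of highway paths contributing to $M(\N)_{i,j}$ and $M(\N)_{i+n,j+n}$, preserving winding numbers.

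For the diagonal, I would identify $M(\N)_{i,i}$ with the weight generating function over highway paths homologous to the snake path $p_i$ starting at source $i$. The reference path $\h$ and the cyclic labeling of sinks (with $1'$ defined as the endpoint of the snake path from $1$) are arranged precisely so that for each $i$ the snake path from source $i$ ends at sink $i'$ and has the winding number prescribed by the unique decomposition $0=s+rn$ with $s\in\{1,\ldots,n\}$. The snake path itself contributes $\wt(p_i)=1$ since it has no highway crossings. To rule out other contributions, I would apply Lemma \ref{lem:snake}: since $p_i$ is itself a snake path, $\ip{p_i,c}=0$ for every snake path or cycle $c$. A competing highway path $q$ homologous to $p_i$ then satisfies $\ip{q,c}=\ip{p_i,c}=0$ for all such $c$, so the second assertion of Lemma \ref{lem:snake} forces $q$ to be a snake path. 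Since $q$ begins at source $i$, $q=p_i$.

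For upper triangularity, by periodicity I may assume $i\in\{1,\ldots,n\}$; then $j<i$ combined with $j-i=s+rn$ and $s\in\{1,\ldots,n\}$ forces $r\le -1$, i.e., strictly negative winding. Any contributing highway path $q$ would have $\ip{q,c}\ge 0$ for every snake path or cycle $c$ by Lemma \ref{lem:snake}. Applied to a snake cycle $c$ encircling the cylinder in the appropriate direction, the value $\ip{q,c}$ computes (up to a positive normalization) the winding number of $q$; hence $r\ge 0$, contradicting $r\le -1$. In the degenerate case where no snake cycle encircles the cylinder, no highway path has nonzero winding, so the contradiction is immediate; equivalently, one can argue via the acyclicity of $\tilde\N$ after (CR) moves (Section \ref{sec:TP}), using that all horizontal wires are oriented left-to-right, to force the lifted endpoint of $q$ to lie on a sheet no earlier than that of its source. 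The main obstacle I expect is convention-matching: pinning down how the choice of $\h$, the cyclic labeling of sinks, and the decomposition $j-i=s+rn$ interact so that (i) the snake path $p_i$ genuinely contributes to $M(\N)_{i,i}$ and (ii) the inequality $j<i$ really corresponds to negative winding. Once these sign conventions are aligned, both substantive claims reduce to applications of Lemma \ref{lem:snake}.
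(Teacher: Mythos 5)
Your periodicity and diagonal arguments are sound, and the diagonal argument is a genuinely different route from the paper's. The paper proves both nontrivial claims at once by observing that for these networks the graph $H_N$ of Section \ref{ssec:torus} is a directed path on the $n$ snake paths $q_1,\ldots,q_n$ issuing from the sources: at every highway crossing the snake-path index drops by exactly one, so $\h$ can only be crossed from top to bottom and the only winding-zero path from $i$ to $i'$ is $q_i$ itself. You instead derive the diagonal from Lemma \ref{lem:snake}, which is clean and arguably more conceptual. One small point needs saying out loud: the identity $\ip{q,c}=\ip{p_i,c}$ for $q\sim p_i$ is not automatic when $c$ is a snake \emph{path} (intersection of two relative cycles is not a homology invariant in general), but it does follow from additivity of the flow over the concatenation $q\cup p_i^{*}$ together with Lemma \ref{L:cons}, since snake paths are conservative in the interior. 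With that patch the diagonal argument is complete.

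The genuine gap is in upper triangularity. Your main argument requires a snake \emph{cycle} winding around the cylinder in the appropriate direction, and for the networks of Section \ref{ssec:tounipotent} such a cycle typically does not exist: the $n$ snake paths starting at the sources exhaust every edge already for a single whirl, and for any product of whirls and curls, so there are no snake cycles at all in the cases that matter. Your fallback claim --- that in the absence of such a cycle no highway path has nonzero winding --- is false: for a product of two whirls with $n=2$ there are two highway paths from source $1$ to the same sink (the snake path, of weight $1$, and the path running straight along wire $1$ through both crossings) whose winding numbers differ by one; they contribute to $M(\N)_{1,1}$ and $M(\N)_{1,3}$ respectively, yet there are no snake cycles. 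The second fallback via acyclicity of $\tilde\N$ also does not close the gap, because acyclicity controls finiteness of paths but says nothing about the sign of the winding number. What is actually needed, and what is missing from your proposal, is the monotonicity statement underlying the paper's proof: since all horizontal wires are oriented left to right, every highway transition moves from snake path $q_i$ to $q_{i-1}$, so $\h$ is crossed only from top to bottom, forcing $d'=0$ and a one-sided winding number. Your worry about conventions is legitimate but is shared with the paper itself (as literally written, $j=i$ gives $s=n$, $r=-1$, placing the diagonal at winding $-1$); it does not affect the substance once the intended normalization is fixed.
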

\begin{proof}
We need to show that $M(\N)_{i,j} = 0$ if $i > j$ and $M(\N)_{i,i} = 1$.

Draw a snake path $q_i$ starting at every source $i=1, \ldots, n$.  Then in any highway path if one records through which of the $q_i$-s one goes, the $i$-s can only decrease by one or stay unchanged. In other words, the graph $H_N$ of Section \ref{ssec:torus} for this network is a directed path on $n$ vertices. This implies that the only way a highway path can cross $\h$ is from top to bottom. It also implies that the only path from $i$ to $i'$ that has zero winding number is $q_i$ itself. It remains to observe that each of the $q_i$-s has unit weight.
\end{proof}

\begin{example}
The network in Figure \ref{fig:wire16} produces the following element of the unipotent loop group.  Here $p,q,\ldots$ are all vertex weights.
 $$
M(N) =  \left(\begin{array}{ccccccc} \ddots & \vdots & \vdots & \vdots & \vdots &\vdots \\
\cdots& 1& p+t+w & (p+t+w)u & (p+t+w)uv & (p+t+w)uvw& \cdots \\ 
\cdots& 0& 1& q+u & qs+qv+uv & (qs+qv+uv)w & \cdots \\
 \cdots& 0&0 & 1& r+s+v& rt+rw+sw+vw&\cdots \\
 \cdots& 0&0&0&1& p+t+w& \cdots \\
  \cdots& 0&0&0&0&1&  \cdots \\
 & \vdots & \vdots & \vdots & \vdots &\vdots &\ddots
\end{array} \right)
$$
\end{example}

\begin{lemma}\label{lem:compose}
Let $\N$ and $\N'$ be two networks on the cylinder as above.  Let $\N \circ \N'$ be the cylindric network obtained by gluing two cylinders together along a boundary component, identifying the $i$-th sink of $\N$ with the $i$-th source of $\N'$.  Then $M(\N \circ \N') = M(\N)M(\N')$.
\end{lemma}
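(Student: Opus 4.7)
The plan is to prove the identity entry-by-entry by exhibiting a weight-preserving, winding-number-additive bijection between highway paths in $\N \circ \N'$ and concatenable pairs of highway paths, one in $\N$ and one in $\N'$, and then summing over intermediate gluing vertices.

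First I would argue that any highway path $q$ from source $i$ to sink $j'$ in $\N \circ \N'$ crosses the glued boundary component exactly once. It crosses at least once because the source and sink lie on opposite pieces, and at most once because edges in $\N$ and in $\N'$ are each oriented toward the respective right boundary, so once $q$ enters $\N'$ it cannot return. At the crossing, $q$ passes through a vertex which under the gluing is simultaneously sink $k'$ of $\N$ and source $k$ of $\N'$ for some $k\in\Z$, and so $q$ splits as $q = q_1 \cdot q_2$. Since the glued vertices have degree two and impose no highway/underway constraint, $q$ is a highway path iff both $q_1$ and $q_2$ are highway paths in their respective factors, giving the desired bijection.

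Next, weight multiplicativity $\wt(q) = \wt(q_1)\wt(q_2)$ is immediate because every interior vertex of $\N \circ \N'$ lies in exactly one of $\N$ or $\N'$. For the labelling compatible with winding numbers, I would take the reference path $\h$ for $\N \circ \N'$ to be the concatenation of $\h_{\N}$ and $\h_{\N'}$: this is internally consistent since the snake from source $1$ of $\N$ terminates at sink $1'$ of $\N$, which under the gluing is source $1$ of $\N'$, whose snake then terminates at sink $1'$ of $\N'$. With this choice, signed crossings of $q$ with $\h$ decompose additively as $w(q) = w(q_1) + w(q_2)$, and the three integer indexings match.

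Combining the three observations gives
\[
M(\N \circ \N')_{i,j} \;=\; \sum_{q:\, i \to j'} \wt(q) \;=\; \sum_{k \in \Z} M(\N)_{i,k}\, M(\N')_{k,j} \;=\; \bigl(M(\N)\, M(\N')\bigr)_{i,j},
\]
where the sum over $k$ is finite for each $i,j$ because by the preceding lemma both factors are upper-triangular unipotent. The main obstacle is the bookkeeping in the previous paragraph: aligning the three $\Z$-indexings of sources, sinks, and gluing vertices with the choices of reference paths so that winding numbers add on the nose and reproduce the matrix product indexing. Once this alignment is pinned down, the remainder is a direct bijective argument.
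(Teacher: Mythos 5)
The paper states Lemma \ref{lem:compose} without proof, so there is nothing to compare against; your argument is the standard path-concatenation one the authors evidently had in mind, and it is correct. In particular, you correctly identify the two points that actually need checking --- that a highway path in $\N\circ\N'$ passes through exactly one gluing vertex (since each identified vertex has its unique in-edge in $\N$ and its unique out-edge in $\N'$, so no return is possible) and that choosing $\h_{\N\circ\N'}$ as the concatenation of $\h_{\N}$ and $\h_{\N'}$ (legitimate because the snake path from source $1$ of $\N$ ends at $1'$, which is glued to source $1$ of $\N'$) makes winding numbers additive so that the $\Z$-indexings line up with the matrix product.
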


\begin{prop}
Suppose all vertex weights of $\N$ are nonnegative real numbers.  Then $M(\N) \in U_{\geq 0}$.  
\end{prop}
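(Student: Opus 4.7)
The plan is to reduce the claim to the fact that $U_{\geq 0}$ is a semigroup under matrix multiplication, by decomposing $\N$ into elementary slices. Using Lemma~\ref{lem:compose}, slice the cylinder into thin vertical strips so that $\N = \N_1 \circ \N_2 \circ \cdots \circ \N_m$, where each $\N_i$ contains at most one ``feature'': either a single simple crossing or a single wire cycle (a whirl or a curl). Such a decomposition is possible since $\N$ has only finitely many interior vertices and only finitely many wire cycles, and we may vertically separate these features by ambient isotopy without changing any boundary measurements.

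Next, I would compute $M(\N_i)$ for each elementary slice. A slice with no feature yields the identity matrix. A slice containing a single simple crossing of wires $k$ and $k+1$ with nonnegative weight $x$ gives the Chevalley generator $u_k(x)$, which lies in $U_{\geq 0}$ because its only nonzero off-diagonal entry is $x \geq 0$ and it has $1$'s on the diagonal. A slice containing a single whirl with weights $(x^{(1)},\ldots,x^{(n)})$ gives a whirl matrix $M(x^{(1)},\ldots,x^{(n)})$, and a slice containing a single curl gives a curl matrix $N(x^{(1)},\ldots,x^{(n)})$; both were shown in \cite{LP} to lie in $U_{\geq 0}$ when the weights are nonnegative.

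Applying Lemma~\ref{lem:compose} iteratively gives $M(\N) = M(\N_1)M(\N_2)\cdots M(\N_m)$. Since $U_{\geq 0}$ is closed under multiplication --- each minor of a product of infinite periodic upper triangular matrices is a finite sum of products of minors of the factors, by the Cauchy--Binet identity applied to the submatrix with column range restricted between $\min(I)$ and $\max(J)$ --- each such product lies in $U_{\geq 0}$, and in particular $M(\N) \in U_{\geq 0}$.

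The main obstacle is the first step: verifying cleanly that every cylindric network admits such a decomposition into elementary slices, especially since wire cycles wrap non-trivially around the cylinder. An alternative, which avoids this issue entirely, is to identify each minor of $M(\N)$ directly with a determinant $\det(A_{I,J})$ in the universal cover $\tilde \N$ and appeal to Theorem~\ref{thm:tpd}: because the sources and sinks in $\tilde \N$ lie on opposite sides of the strip, the clockwise ordering of $I \cup J$ places all sinks first and all sources afterward in reverse, and a direct computation shows that the sign factors $(-1)^{s(i_t, j_r)}$ combine with the sign of the permutation reversing the source indices to give exactly $\det(A_{I,J})$ equal to the naive minor of $M(\N)$; then the nonnegativity follows from the non-crossing path expansion in Theorem~\ref{thm:tpd}.
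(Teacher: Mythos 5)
Your fallback argument is exactly the paper's proof: the paper disposes of this proposition in one line by observing that the minors of $M(\N)$ are precisely the determinants $\det(A_{I,J})$ of Theorem~\ref{thm:tpd} (computed in the universal cover $\tilde\N$, where all boundary measurements of $\N$ appear by Lemma~\ref{lem:cov}), and these are nonnegative by the non-crossing path expansion. Your sign bookkeeping for the clockwise labelling of $I\cup J$ is the right thing to check and works out as you say. So the second half of your proposal is correct and complete; you should lead with it.

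Your primary route, by contrast, has the gap you yourself suspected, and it is a genuine one. The proposition is stated for an arbitrary simple-crossing network on the cylinder with all horizontal wires going left to right; such a network need not be reduced. A wire cycle can self-intersect (see the network of Figure~\ref{fig:wire28}) or wind around the cylinder more than once, and in either case it cannot be isolated in a vertical strip whose boundary measurement matrix is a single Chevalley generator, whirl, or curl --- the self-crossings travel with the cycle. To force the network into the form you want one must first apply local transformations (Yang--Baxter, whirl--curl, cycle removal) to reach a canonical form, then invoke Corollary~\ref{cor:MN} to transfer the boundary measurements back; and the paper shows this reduction is not even always possible (Figure~\ref{fig:wire28} with $x\neq 0$). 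That is essentially the machinery of Theorems~\ref{thm:ratloop} and~\ref{thm:ntol}, which come \emph{after} this proposition and partly rely on it. The slicing idea is the right picture for reduced whirl-curl-Bruhat networks, but as a proof of the general statement it is circular in spirit and incomplete in substance; the direct appeal to Theorem~\ref{thm:tpd} avoids all of this.
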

\begin{proof}
Follows immediately from Theorem \ref{thm:tpd}: the determinants $\det(A_{I,J})$ are exactly the minors of $M(\N)$.
\end{proof}

Theorem \ref{thm:bm} implies 
\begin{corollary}\label{cor:MN}
The map $\N \mapsto M(\N)$ to the unipotent loop group is compatible with local transformations.  That is, $M(\N) = M(\N')$ if $\N$ and $\N'$ are related by local transformations.
\end{corollary}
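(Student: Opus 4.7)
The plan is to reduce the corollary to Theorem \ref{thm:bm} applied entry-by-entry. By construction, every entry $M(\N)_{i,j}$ is a single boundary measurement $M^{[q(i,j',r)]}_{i,j'}(\N)$, with $j-i = s+rn$, $s \in \{1,\ldots,n\}$, so the matrix depends on $\N$ only through boundary measurements for paths of prescribed endpoints and prescribed homology class (encoded by the winding number $r$). Theorem \ref{thm:bm} already guarantees that each such measurement is invariant under all five local transformations. Hence the corollary will follow once one verifies that the indexing data on both sides, i.e.\ the labels of the sources and sinks and the winding number assigned to a given matrix position, are canonically identified between $\N$ and $\N'$.

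To handle the indexing, I would first observe that each local transformation is supported in a topologically small region $R \subset S$: a disk for the (YB), (CR), (XM), and (XR) moves, and a local cylindrical neighborhood for the (WC) move. Outside $R$, the networks $\N$ and $\N'$ agree, which immediately identifies the boundary sources $1,\ldots,n$ and the underlying set of sinks. Next, I would choose the auxiliary path $\h$ used to define the winding number so that it lies entirely outside $R$; then $\h$ is the same curve in both networks, and the counts $d,d'$ of top-to-bottom and bottom-to-top crossings of a highway path against $\h$ depend only on the portion of the path shared by $\N$ and $\N'$. Thus the winding numbers of highway paths before and after the move agree.

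The only remaining bookkeeping is that the sink labelled $1'$ is defined internally as the endpoint of the snake path from source $1$, so a move could in principle relabel the sinks. To rule this out, I would check case by case that each move preserves the snake-path pairing of the external half-edges of $R$: for instance in the (YB) move of Figure \ref{fig:wire8}, tracing snake paths in each of the two local pictures yields the same three pairings of the six external half-edges; the (CR), (XM), (XR), and (WC) cases are handled by analogous direct inspections. Once this is confirmed, the sink labels $1',\ldots,n'$ are invariant under the move, and combining this with Theorem \ref{thm:bm} gives $M(\N)_{i,j} = M(\N')_{i,j}$ for every $(i,j)$, proving the corollary.

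The main obstacle is not conceptual but rather the routine combinatorial verification that each of the five local moves preserves the snake-path pairing of external half-edges of its support region; this is a small case analysis which follows directly from the explicit pictures.
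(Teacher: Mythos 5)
Your argument is correct and takes essentially the same route as the paper, whose entire proof is the observation that Theorem \ref{thm:bm} applies entry-by-entry: each matrix entry is a boundary measurement indexed by a source, a sink, and a homology class, all of which are untouched by moves supported in the interior. The extra verification that the snake-path pairing of external half-edges (and hence the labeling of the sinks and the winding-number convention) is preserved by each move is sound and is a reasonable point to pin down, but the paper leaves it implicit.
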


We shall now establish, in Section \ref{ssec:whirlcurlnetworks}, the following result.
\begin{theorem}\label{thm:ratloop}
Suppose all vertex weights of $\N$ are nonnegative real numbers.  Then $M(\N) \in U^\rat_{\geq 0}$.    Conversely, every $Y \in U^\rat_{\geq 0}$ is represented $Y = M(\N)$ by some oriented network $\N$ with nonnegative real vertex weights.
\end{theorem}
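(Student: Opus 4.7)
The nonnegativity $M(\N) \in U_{\geq 0}$ for networks $\N$ with nonnegative real vertex weights has already been recorded in the proposition preceding the theorem. What remains for the forward direction is to show $M(\N) \in U^{\rat}$, i.e.\ that each entry $g_{ij}(t) = \sum_k y_{i,j+kn}\, t^k$ of the corresponding loop group element is a rational function of $t$. Since $S$ is a cylinder, $H_1(S,\Z) \simeq \Z$ and the single homology variable $t$ tracks precisely the winding number of highway paths around the cylinder (this is the data encoded by the auxiliary path $\h$ used to define $M(\N)$). Therefore each $g_{ij}(t)$ agrees, up to a Laurent-monomial basepath shift, with the generating function $M_{u,v}(t)$ of Proposition~\ref{prop:rational}, which is a rational function in $t^{\pm 1}$ and the vertex weights. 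This gives $M(\N) \in U^{\rat}_{\geq 0}$.

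For the converse, the strategy is to combine two tools: Theorem~\ref{thm:genr}, which expresses any $Y \in U^{\rat}_{\geq 0}$ as a product of whirls $M(x)$, curls $N(x)$, and Chevalley generators $u_k(a)$ with nonnegative real parameters, and Lemma~\ref{lem:compose}, which says that gluing two cylindrical networks along a boundary circle corresponds to multiplying their boundary measurement matrices in the loop group. Given such a factorization of $Y$, I will glue together elementary networks realizing each factor and obtain a single network $\N$ on the cylinder with $M(\N) = Y$. Since the parameters are nonnegative, the resulting vertex weights are as well.

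It therefore suffices to exhibit three families of elementary networks, one for each type of generator, whose boundary measurement matrices are the corresponding generator. For $u_k(a)$ I use a cylinder with $n$ horizontal wires and a single interior vertex of weight $a$ that produces one simple crossing between wires $k$ and $k{+}1$; the boundary measurement matrix is read off immediately. For the whirl $M(x^{(1)},\ldots,x^{(n)})$ I use a cylinder with $n$ horizontal wires together with one vertical whirl wire cycle wrapping once around the cylinder, meeting the $i$-th horizontal wire at a vertex of weight $x^{(i)}$; for the curl $N(x^{(1)},\ldots,x^{(n)})$ I use the analogous cylinder with a curl wire cycle in place of the whirl. The values of the matrix entries are then determined by enumerating highway paths of each winding number.

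The main obstacle is the combinatorial check that these last two families actually produce $M(x)$ and $N(x)$ in the unipotent loop group under the conventions defining $M(\N)$ in Section~\ref{ssec:tounipotent}, in particular the correct alignment of the winding number $r$ with the column shift $j - i = s + rn$ relative to the auxiliary path $\h$. This is a direct enumeration once one fixes a consistent choice of $\h$: a highway path from $i$ to $j'$ that goes straight across contributes the rightmost/curl expansion, and each additional loop around the cylinder multiplies by the appropriate cyclic product of weights traversed on the underway. The resulting matrix entries match the defining formulas of $M(x)$ (at most one highway crossing per vertex, giving the two-diagonal shape) and of $N(x)$ (arbitrarily many highway crossings on the one curl, giving the infinite upper triangular product expansion). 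Combining this verification with Theorem~\ref{thm:genr} and Lemma~\ref{lem:compose} yields the converse.
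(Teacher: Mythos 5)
Your proof is correct, and the two halves compare differently with the paper's argument. For the converse you take essentially the paper's route: factor $Y$ via Theorem~\ref{thm:genr} into whirls, curls, and Chevalley generators with nonnegative parameters, realize each factor by an elementary cylindric network, and glue using Lemma~\ref{lem:compose}; the paper packages the same idea through the whirl-curl-Bruhat machinery (Theorems~\ref{thm:Phiwhirlcurl}, \ref{thm:whirlcurlBruhatdecomp} and \ref{thm:ntol}, with Proposition~\ref{prop:shift} supplying exactly the alignment-of-winding-number check you flag as the "main obstacle"). For the forward direction, however, you take a genuinely different and in one respect more robust path. The paper proves $M(\N)\in U^{a,b,w}_{\geq 0}\subset U^{\rat}_{\geq 0}$ by moving $\N$ into canonical form with local transformations (Theorem~\ref{thm:ntol}), which requires $\N$ to be reduced as a wiring diagram --- and Section~\ref{ssec:wires} explicitly notes that not every network can be reduced by local moves (Figure~\ref{fig:wire28}). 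Your argument instead deduces rationality of each entry $g_{ij}(t)$ directly from the transfer-matrix computation of Proposition~\ref{prop:rational}, and nonnegativity from the Lindstr\"om-type Theorem~\ref{thm:tpd}; this needs no reducedness hypothesis and no canonical form. What you lose relative to the paper is the finer information that $M(\N)$ lands in a specific cell $U^{a,b,w}_{\geq 0}$ and that the weights are recoverable up to monodromy, but that information is not needed for the statement of Theorem~\ref{thm:ratloop} itself. The one point worth spelling out a little more in your write-up is the identification of the generating function $\sum_r M(\N)_{i,j+rn}t^r$ with $M_{i,j'}(t)$ up to a monomial: this is exactly the compatibility between the winding number defined via the path $\h$ and the homology variable of Section~\ref{ssec:rat}, which is routine but is the only place where a sign or shift error could creep in.
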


\subsection{Whirl-curl Bruhat networks}\label{ssec:whirlcurlnetworks}
Let $\N$ be an oriented simple-crossing network as in Section \ref{ssec:tounipotent}.  We shall further assume that as a wiring diagram $\N$ is reduced, as in Section \ref{ssec:wires}.  This implies that every wire cycle in $\N$ loops around the cylinder once (in one of the two directions).  Let us suppose now that $\N$ has $b$ whirl wire cycles, and $a$ curl wire cycles.  

Given a wire cycle $C$ with weights $(x^{(1)}, \cdots, x^{(n)})$ we call $\prod_i x^{(i)}$ the {\it radius} of $C$.  The following observation is immediate.

\begin{lemma}\label{lem:whurlradius}\
\begin{enumerate}
\item
When the whurl transformation is applied to two whirl cycles (resp. curl cycles) $C$ and $C'$, the radii of the two whirl cycles (resp. curl cycles) are preserved, but swapped.
\item
When a Yang-Baxter move is applied to a whirl (resp. curl) and another crossing, the radius of the whirl (resp. curl) is preserved.
\end{enumerate}
\end{lemma}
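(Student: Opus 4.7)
The plan is to verify both parts by direct substitution into the explicit transformation rules, after a brief geometric preliminary for part (2).

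For part (1) in the whirl-whirl or curl-curl case, I substitute into \eqref{eq:whurl}. The key identity is
\[
\prod_{i \in \Z/n\Z} x'^{(i)} \;=\; \Bigl(\prod_{i} y^{(i)}\Bigr) \cdot \prod_i \frac{\k_{i+\epsilon}(z,t)}{\k_{i+1-\epsilon}(z,t)},
\]
and the second factor equals $1$, since as $i$ runs over $\Z/n\Z$ both $\prod_i \k_{i+\epsilon}$ and $\prod_i \k_{i+1-\epsilon}$ equal the common cyclic product $\prod_j \k_j(z,t)$. Hence $\prod_i x'^{(i)} = \prod_i y^{(i)}$, and symmetrically $\prod_i y'^{(i)} = \prod_i x^{(i)}$, so the radii of the two whirls (or two curls) are interchanged. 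The whirl-curl case is completely analogous: \eqref{eq:whirlcurl} takes the shape $x'^{(i)} = y^{(i)}\,\alpha_i$ and $y'^{(i)} = x^{(i)}\,\alpha_i^{-1}$, where $\alpha_i = (x^{(i-1)} + y^{(i-1)})/(x^{(i)} + y^{(i)})$ telescopes cyclically to give $\prod_i \alpha_i = 1$; the same identity handles \eqref{eq:whirlcurl2}.

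For part (2), if the Yang-Baxter move is disjoint from the whirl or curl, its weights are unchanged and there is nothing to prove. Otherwise the whirl (resp.\ curl) is one of the three wires $A, B, C$ of the Yang-Baxter triangle. A one-line check against $(x,y,z)\mapsto(yz/(x+z),\,x+z,\,xy/(x+z))$ of Figure \ref{fig:wire8} shows that the products of the two weights encountered along wires $A$ and $C$ are $xy$ and $yz$ both before and after the move, while the pair on the middle wire $B$ changes from $(x,z)$ to $(yz/(x+z),\,xy/(x+z))$, whose product differs. I then observe that $B$ is precisely the wire crossed once from each side in the triangle, whereas $A$ and $C$ are each crossed from a single consistent side. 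By the definitions in Section \ref{ssec:wires}, a whirl has every other wire crossing it from the left (and a curl from the right), so it can never play the role of $B$; hence the whirl or curl must be $A$ or $C$, and the product of its two incident weights is preserved, and so is the radius.

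The main obstacle is the orientation bookkeeping in part (2), verifying that a whirl or curl cannot play the middle role $B$ in a Yang-Baxter triangle. Once that incidence analysis is in place, both parts reduce to essentially one-line identities in the transformation formulas.
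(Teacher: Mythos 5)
Your proof is correct. The paper itself gives no argument for this lemma (it is introduced with ``The following observation is immediate''), so your write-up simply supplies the verification the authors omitted, and both halves check out: in part (1) the cyclic reindexing of $\prod_i \k_{i+\epsilon}$ and in part (2) the identification of the exceptional wire in the Yang--Baxter triangle are exactly the right points to isolate. Two small remarks. First, the telescoping in part (1) uses that $\epsilon_i$ is the same for all $i$; this is automatic here because both cycles are whirls (or both curls), so every horizontal wire crosses them from the same side, but the identity $\prod_i \k_{i+\epsilon_i}/\k_{i+1-\epsilon_i}=1$ would not hold for a general whurl with mixed wire orientations --- worth saying explicitly since your phrasing treats $\epsilon$ as a constant without comment. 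Second, in your (extraneous, since part (1) does not cover it) whirl--curl aside, \eqref{eq:whirlcurl} gives $y'^{(i)}=x^{(i)}\alpha_i$ with the \emph{same} factor $\alpha_i$, not $\alpha_i^{-1}$; the conclusion $\prod_i y'^{(i)}=\prod_i x^{(i)}$ survives because $\prod_i\alpha_i=1$ either way. The incidence analysis in part (2) --- that the wire whose weight-product is not preserved is precisely the one crossed once from each side, hence never a whirl or curl --- is the genuine content of the lemma and you have it right.
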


\begin{theorem} \label{thm:ntol}
Suppose that all vertex weights of $\N$ are positive real numbers.  Then $M(\N) \in U^{a,b,w}_{\geq 0}$ where $w \in \tS_n$ is given by $w(i) = j+rn+a-b$ if the wire starting at $i \in \{1,2,\ldots,n\}$ ends at $j'\in \{1',2',\ldots,n'\}$, and has winding number $r$.  Furthermore, $M(\N)$ completely determines the vertex weights of $N$ up to local transformations.
\end{theorem}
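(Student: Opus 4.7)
The plan is to prove this theorem in three stages. First I would use local transformations to bring $\N$ to a canonical form $\N_0$ consisting of (from left to right): $a$ curl cycles arranged with decreasing radii, a reduced wiring diagram in the middle realizing some affine permutation in $\tS_n$, and $b$ whirl cycles arranged with decreasing radii. To achieve this, I would iteratively apply the (WC) move to push any whirl cycle appearing to the left of a curl cycle past it, using (YB) moves first to bring the two wire cycles into adjacent position with no intervening horizontal-wire crossings. Once all curls lie to the left of all whirls, I would apply whurl moves to pairs of adjacent curls (respectively whirls) to sort them by decreasing radius, invoking Lemma~\ref{lem:whurlradius} which states that the whurl transformation preserves the multiset of radii within each type.

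Next, I would combine Corollary~\ref{cor:MN} with Lemma~\ref{lem:compose} to deduce that $M(\N)=M(\N_0)$ factors as
\[M(\N) = N(\a_1)\cdots N(\a_a)\, u_{i_1}(c_1)\cdots u_{i_\ell}(c_\ell)\, M(\b_1)\cdots M(\b_b),\]
where each factor is the boundary measurement matrix of one constituent piece of $\N_0$: a single curl cycle with weights $\a$ contributes $N(\a)$ (by summing the geometric series of highway paths winding through it), a single whirl contributes $M(\b)$, and a single simple crossing $s_k$ of weight $c$ contributes a Chevalley generator $u_k(c)$. This immediately places $M(\N)$ in the cell $U^{a,b,w}_{\geq 0}$ for some $w \in \tS_n$. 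The identification of $w$ with the formula in the theorem would come from inspecting the middle wiring diagram, which sends wire $i$ to position $j$ with some winding $r$, and accounting for the shift of $+1$ contributed to the affine permutation by each curl cycle and $-1$ by each whirl cycle, yielding the correction $+a - b$.

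For the second assertion (recovering vertex weights up to local transformations), I would argue as follows. By Theorem~\ref{thm:whirlcurlBruhatdecomp}, the cell $U^{a,b,w}_{\geq 0}$ containing $M(\N)$ uniquely determines the triple $(a,b,w)$. By Theorem~\ref{thm:Phiwhirlcurl}, the map $\Phi^{a,b,\i}$ restricted to $\Omega^a \times \Omega^b \times \R_{>0}^\ell$ is a bijection onto $U^{a,b,w}_{\geq 0}$, so the canonical parameters $(\a_1,\ldots,\a_a,\b_1,\ldots,\b_b,c_1,\ldots,c_\ell)$ are uniquely recovered from $M(\N)$. Hence any two networks $\N$, $\N'$ with the same boundary measurement matrix can both be reduced by local moves to the same canonical network $\N_0$, and therefore to each other.

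The main obstacle is the combinatorial/topological step in the first stage: verifying that the repeated application of (WC), (YB), and (XM) moves eventually separates all curls from all whirls and produces the canonical form. I would establish a monovariant, such as the number of horizontal-wire crossings lying between the outermost whurl and the boundary together with the number of \emph{inversions} between whirls and curls in the cyclic order of wire cycles, that strictly decreases under each applied move, ensuring termination. Care must be taken to preserve reducedness and the absence of contractible oriented cycles throughout the process. A careful bookkeeping of how individual wire winding numbers combine with the cyclic-shift contributions of curls and whirls is also needed to verify that the resulting affine permutation is exactly $w(i)=j+rn+a-b$.
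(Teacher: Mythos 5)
Your proposal follows essentially the same route as the paper: reduce $\N$ to a canonical form (curls left, whirls right, sorted by decreasing radii via whurl moves and Lemma~\ref{lem:whurlradius}), factor $M(\N)$ through Lemma~\ref{lem:compose} into the image of $\Phi^{a,b,\i}$, and deduce uniqueness from the bijectivity in Theorem~\ref{thm:Phiwhirlcurl}. The only differences are matters of explicitness — your termination monovariant and the bookkeeping of the $+a-b$ shift are details the paper compresses into a citation of Proposition~\ref{prop:shift} — so the argument is correct and matches the paper's proof.
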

In the situation of Theorem \ref{thm:ntol}, we say that the underlying network $N(\N)$ is a $N^{a,b,w}$-network, or occassionally use $N^{a,b,w}$ to denote such a network.  We call these {\it whirl-curl-Bruhat} networks.

\begin{proof}
Let us rotate the cylinder so that sources are on the left, and sinks are on the right.  Using local transformations, in particular the Yang-Baxter move, and the whirl-curl transformation, let us move all curl cycles to the left, all whirl cycles to the right, and leave all the intersections of horizontal wires in the middle.  In addition, let us use the whurl transformations (see Lemma \ref{lem:whurlradius}) to order the curls (resp. whirls) from left to right in order of decreasing radii.  We call this the {\it canonical form} of $\N$.  

Using Lemma \ref{lem:compose}, and taking into account the shifting of Proposition \ref{prop:shift}, we see that $M(\N)$ can be obtained by substituting the vertex weights of $\N$ into the map $\Phi^{a,b,\i}$ of Theorem \ref{thm:Phiwhirlcurl}, where $\i$ is the reduced word of $w$ obtained by linearly ordering the intersections of horizontal wires.  This implies that $M(\N) \in U^{a,b,w}_{\geq 0}$, and the bijectivity of Theorem \ref{thm:Phiwhirlcurl} implies the last statement.
\end{proof}

\begin{example}
 The boundary measurement matrix $M(\N)$ of the network in Figure \ref{fig:wire16} belongs to $U^{1,1,s_0s_2}_{\geq 0}$.
\end{example}

The following results follow from Theorems \ref{thm:Phiwhirlcurl} and \ref{thm:ntol} and Corollary \ref{cor:MN}.
\begin{corollary}\label{cor:Ncanonical}
The canonical form of $\N$ is unique.  In other words, any two networks in canonical form, both of which can be obtained from $\N$ via local transformations, have the same vertex weights.
\end{corollary}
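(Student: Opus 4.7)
The plan is to reduce the uniqueness statement to the bijectivity of the parametrization $\Phi^{a,b,\i}$ from Theorem \ref{thm:Phiwhirlcurl}, using Corollary \ref{cor:MN} to propagate information through the local moves. Let $\N_1$ and $\N_2$ be two networks in canonical form, each obtainable from $\N$ by a sequence of local transformations. By Corollary \ref{cor:MN} we have $M(\N_1) = M(\N) = M(\N_2)$; call this common element $Y$. The last assertion of Theorem \ref{thm:ntol} implies that $Y$ lies in a single whirl-curl-Bruhat cell $U^{a,b,w}_{\geq 0}$, so the two canonical forms must share the same $(a,b,w)$ (in particular the same number of curls, whirls, and the same permutation realized by the horizontal wires).

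Next I would translate the canonical form directly into the map $\Phi^{a,b,\i}$ of \eqref{E:wcbcell}. Writing $\N_k$ as a concatenation along boundaries of its $a$ curl pieces, a middle wiring diagram, and its $b$ whirl pieces, Lemma \ref{lem:compose} gives $M(\N_k)$ as the product of the boundary measurement matrices of each piece. Proposition \ref{prop:shift} identifies the curl and whirl pieces with $N$- and $M$-matrices (with the appropriate rotation $\shift{(\cdot)}$), while the middle wiring diagram is a product of Chevalley generators, one factor $u_i(c)$ per interior crossing, as in Example \ref{ex:disk}. Thus $M(\N_k) = \Phi^{a,b,\i_k}(\vec{\a}_k,\vec{\b}_k,\vec{c}_k)$ where $\i_k$ is the reduced word of $w$ read off from $\N_k$, and $(\vec{\a}_k,\vec{\b}_k,\vec{c}_k) \in \Omega^a \times \Omega^b \times \R_{>0}^{\ell(w)}$; the $\Omega^a, \Omega^b$ memberships are exactly the decreasing-radius conditions imposed by the canonical form together with Lemma \ref{lem:whurlradius}. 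When $\i_1 = \i_2$, Theorem \ref{thm:Phiwhirlcurl} applied to the common word $\i_1 = \i_2$ gives equality of the parameter tuples, hence of vertex weights.

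To dispose of the case $\i_1 \neq \i_2$, I would use that any two reduced words of $w \in \tS_n$ are connected by a sequence of braid and commutation moves, each of which can be implemented by a (YB) move in the middle portion of the network (without disturbing the curl/whirl parts or their radii). By Theorem \ref{thm:bm} these moves preserve $M(\N)$ and keep the network in canonical form, so we may reduce to the case $\i_1 = \i_2$ above. The main obstacle I foresee is the bookkeeping in the second paragraph: one must match the conventions for orientations, the cyclic shift $\shift{(\cdot)}$, and the numbering of horizontal wires consistently enough that the factorization of $M(\N_k)$ genuinely equals $\Phi^{a,b,\i_k}(\vec{\a}_k,\vec{\b}_k,\vec{c}_k)$, including the precise labeling of the Chevalley index at each middle crossing; everything else is a direct appeal to earlier results.
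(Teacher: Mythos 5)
Your proof is correct and takes essentially the same route as the paper, which obtains the corollary directly from Corollary \ref{cor:MN} (invariance of $M(\N)$ under local moves), the factorization $M(\N)=\Phi^{a,b,\i}(\cdot)$ worked out in the proof of Theorem \ref{thm:ntol} via Lemma \ref{lem:compose} and Proposition \ref{prop:shift}, and the bijectivity of $\Phi^{a,b,\i}$ from Theorem \ref{thm:Phiwhirlcurl}. The bookkeeping you flag in your second and third paragraphs is precisely what the paper delegates to the proof of Theorem \ref{thm:ntol}, so no new idea is needed beyond what you have written.
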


\begin{corollary}\label{cor:param}
Let $N$ be a $N^{a,b,w}$-network with all curls to the left and all whirls to the right.  The set of weighted networks $\N$ with positive real weights such that $N = N(\N)$ that are in canonical form 
give a parametrization of the whirl-curl-Bruhat cell $U^{a,b,w}$ via the map $\N \mapsto M(\N)$.
\end{corollary}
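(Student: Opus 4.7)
The plan is to reduce the corollary directly to Theorem \ref{thm:Phiwhirlcurl} by identifying the map $\N \mapsto M(\N)$ (restricted to canonical-form weightings of a fixed $N^{a,b,w}$-network) with the parametrization map $\Phi^{a,b,\i}$ for an appropriate reduced word $\i$ of $w$.

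First I would read off from $N$ the reduced word $\i=i_1i_2\cdots i_\ell$ recorded by the simple crossings between horizontal wires in the middle strip of the canonical form. The left portion of $N$ consists of $a$ disjoint curl wire cycles, the middle portion is a reduced wiring diagram for $w$, and the right portion consists of $b$ disjoint whirl wire cycles. I would slice the cylinder into $a+1+b$ sub-cylinders so that each sub-cylinder contains either a single curl, a single whirl, or the entire middle wiring diagram. Applying Lemma \ref{lem:compose} to these slices, $M(\N)$ becomes the ordered product of the boundary-measurement matrices of the individual slices.

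Next I would compute each of these slice matrices directly: a single curl wire cycle with weight tuple $\a$ gives $N(\a)$ (up to a cyclic shift, handled by Proposition \ref{prop:shift}), a single whirl gives $M(\b)$, and each horizontal crossing with weight $c_k$ gives the Chevalley generator $u_{i_k}(c_k)$ by the local computation of Example \ref{ex:disk}. Absorbing the cyclic shifts of Proposition \ref{prop:shift} into a relabeling of the parameters (which is a bijection of $\R_{>0}^n$ onto itself), I would conclude that $M(\N)=\Phi^{a,b,\i}(\a_1,\ldots,\a_a;\b_1,\ldots,\b_b;c_1,\ldots,c_\ell)$ where $(\a_\bullet,\b_\bullet,c_\bullet)$ are the (relabeled) vertex weights of $\N$.

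The canonical form imposes exactly the monotonicity conditions defining $\Omega^a$ and $\Omega^b$: curls are ordered left-to-right by decreasing radius $r_j=\prod_i \a_j^{(i)}$, and similarly for whirls. Therefore, as $\N$ ranges over canonical-form positive real weightings of $N$, the corresponding parameter tuples range over $\Omega^a\times\Omega^b\times\R_{>0}^\ell$. By Theorem \ref{thm:Phiwhirlcurl} this domain maps bijectively onto $U_{\geq 0}^{a,b,w}$, so $\N\mapsto M(\N)$ is the desired parametrization. The main step requiring care is the bookkeeping in Proposition \ref{prop:shift}: showing that the cyclic-shift maps used to convert boundary-measurement contributions into the standard $M(\cdot)$ and $N(\cdot)$ matrices are invertible and preserve $\Omega^a$, $\Omega^b$, so that the identification with $\Phi^{a,b,\i}$ is a bijection on the nose and not merely up to a choice of representatives.
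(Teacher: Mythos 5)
Your proposal is correct and follows essentially the same route as the paper: the paper derives this corollary from Theorem \ref{thm:Phiwhirlcurl} together with the proof of Theorem \ref{thm:ntol}, which likewise decomposes the cylinder via Lemma \ref{lem:compose}, identifies the slice matrices with curls, whirls, and Chevalley generators using Proposition \ref{prop:shift}, and matches the canonical-form ordering with $\Omega^a\times\Omega^b$ before invoking the bijectivity of $\Phi^{a,b,\i}$. Your extra remark that the cyclic shifts preserve the radii (and hence $\Omega^a$, $\Omega^b$) is a sound elaboration of a point the paper leaves implicit.
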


In Corollary \ref{cor:param}, the condition that all curls are to the left and all whirls to the right is not important.  What is important is that the radii of the curls and whirls have been arranged in a specified order.

\begin{remark}
A $N^{0,0,w}$-network is essentially a wiring diagram for a reduced word of $\tS_n$.  If $w \in S_n$ is a usual permutation, then these are essentially the wiring diagrams of \cite{BFZ,FZ}.  In this case, the network can be drawn on a plane rather than a cylinder.
\end{remark}

\subsection{Monodromy group of whirl-curl-Bruhat networks}\label{ssec:monodromy}

\begin{thm}\label{thm:SaSb}
Let $N$ be a $N^{a,b,w}$-network.  The monodromy group of $N$ is exactly $S_a \times S_b$.
\end{thm}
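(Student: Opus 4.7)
The plan is to establish two inclusions, $S_a \times S_b \subseteq \M(N)$ and $\M(N) \subseteq S_a \times S_b$.

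For the first inclusion, I would begin by bringing $N$ into a canonical form $N^{\mathrm{can}}$ (all curls on the left, all whirls on the right) via a fixed sequence of local transformations, and observe that within $N^{\mathrm{can}}$ any two adjacent whirls can be swapped by a whurl transformation, yielding by Lemma \ref{lem:whurlradius}(1) an exchange of their radii while preserving the underlying unweighted network; sandwiching these swaps between the chosen sequence and its inverse yields elements of $\M(N)$. By Theorem \ref{thm:inv} these are involutions, by Theorem \ref{thm:braid} three consecutive adjacent swaps satisfy the braid relation, and swaps acting on disjoint pairs (including one on whirls and one on curls) commute trivially, so the corresponding generators realize a homomorphism $S_a \times S_b \to \M(N)$. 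Injectivity follows because the radii are algebraically independent elements of $\C(N)$, so any non-identity permutation acts non-trivially on them, giving a non-trivial birational transformation.

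For the second inclusion, the approach is to use the uniqueness of canonical form from Theorem \ref{thm:ntol} and Corollary \ref{cor:Ncanonical} to bound the size of a generic $\M(N)$-orbit. For a generic $M \in U^{a,b,w}_{\geq 0}$, let $\mathcal{F}_M$ be the set of weighted networks on $N$ with boundary measurement equal to $M$. By Corollary \ref{cor:MN}, $\M(N)$ preserves $M(\N)$, so each $\M(N)$-orbit is contained in some $\mathcal{F}_M$; conversely, by Theorem \ref{thm:ntol}, the entire $\mathcal{F}_M$ is a single orbit. The key observation is then that reconstructing an arbitrary $\N \in \mathcal{F}_M$ from the canonical form (unique by Theorem \ref{thm:Phiwhirlcurl}) amounts to choosing (i) a bijection between the $a$ canonical curl cycles and the $a$ curl positions of $N$, (ii) a similar bijection for the $b$ whirl cycles, together with a path of Yang--Baxter moves rearranging the middle wiring. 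By the Frenkel--Moore relation (Proposition \ref{prop:fm}) combined with Proposition \ref{prop:Bruhatcell}, the middle part contributes no additional freedom in the weights, so $|\mathcal{F}_M| \leq a!\,b! = |S_a \times S_b|$, which combined with the first inclusion forces equality.

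The main obstacle I anticipate is the second inclusion, specifically the precise justification that rearrangement of the middle reduced wiring diagram contributes no ambiguity: while intuitively clear (the monodromy on reduced wiring diagrams of elements of $\tS_n$ ought to be trivial), a rigorous argument requires combining the Frenkel--Moore hexagon with the injectivity in Proposition \ref{prop:Bruhatcell} to reduce any two paths through the middle part to a common one. A clean way to see this is to note that by Theorem \ref{thm:Phiwhirlcurl}, the middle parameters in the canonical form are already uniquely determined by $M$, so any two paths from canonical form to $N$ differing only in the middle must assign the same weights at corresponding positions, and the only genuine choices are the $a!\,b!$ permutations in (i) and (ii).
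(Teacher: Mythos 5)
Your proof is correct and follows essentially the same route as the paper: both directions rest on reduction to the canonical form, its uniqueness (Theorem \ref{thm:Phiwhirlcurl} / Corollary \ref{cor:Ncanonical}), invariance of boundary measurements under local moves, and the radii of Lemma \ref{lem:whurlradius} to detect the permutation. The paper packages your fiber-counting upper bound as Lemma \ref{lem:SaSb} (the weights of $\N'$ depend only on the pair $(u,v)\in S_a\times S_b$), proved exactly by your fallback argument --- normalize by the permutation, sort the radii, and invoke uniqueness of the canonical form --- rather than via the Frenkel--Moore relation and Proposition \ref{prop:Bruhatcell}.
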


In the following we shall work with positive real weights but this is not crucial.  See for example the proof of \cite[Theorem 4.1]{LP4}.

Suppose that $\N$ is a reduced oriented simple-crossing network as before and that $N = N(\N)$ is a $N^{a,b,w}$-network.  Suppose $\N'$ is obtained from $\N$ by Yang-Baxter moves, whirl-curl moves, or whurl moves in such a way that $N(\N) = N(\N')$.  Thus $\N'$ is obtained from $\N'$ via the monodromy action, but we disallow arbitrary local transformations for now.

We define two permutations $u \in S_a$ and $v \in S_b$ as follows.  Color the curl cycles of $\N$ with colors $1,2,\ldots,a$.  When we go from $\N$ to $\N'$ the colored wire cycles may have been permuted.  Let $u$ denote this permutation, and similarly define $v$ for whirl cycles.  In this case, we say that $\N$ and $\N'$ are related by the pair $(u,v) \in S_a \times S_b$.  

\begin{lemma}\label{lem:SaSb}
Suppose the vertex weights of $\N$ are known.  Then the vertex weights of $\N'$ depend only on $(u,v) \in S_a \times S_b$.  In particular, if $(u,v) = (\id,\id)$ then $\N$ and $\N'$ have identical vertex weights. \end{lemma}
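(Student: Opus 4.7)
The plan is to reduce both $\N$ and $\N'$ to a common canonical form, then realize the monodromy as a well-defined action of $S_a \times S_b$ on the weightings.

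Since every local transformation preserves boundary measurements (Theorem \ref{thm:bm} and Corollary \ref{cor:whurlbm}), $M(\N) = M(\N')$. Reducing both $\N$ and $\N'$ to canonical form via any sequence of local moves, and applying Corollary \ref{cor:Ncanonical} together with Theorem \ref{thm:Phiwhirlcurl}, one obtains a common weighted canonical network $\N_c$. Tracking how the colors of the curls and whirls get permuted through these reductions produces permutations $(\sigma, \tau)$ and $(\sigma', \tau')$ in $S_a \times S_b$ respectively, which are related by $(\sigma', \tau') = (\sigma u^{-1}, \tau v^{-1})$ by the definition of $(u,v)$ as the color permutation $\N \to \N'$.

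Next I would show that the reverse operation is well-defined: starting from $\N_c$ and applying a sequence of local moves that ends on the underlying graph $N(\N)$ with a prescribed color permutation, the resulting vertex weights depend only on the prescribed permutation and not on the choice of sequence. This reduces to checking that the defining Coxeter-type relations preserve weights: whurl and whirl-curl moves are involutions (Theorem \ref{thm:inv}), adjacent whurl and whirl-curl moves satisfy the braid relation (Theorem \ref{thm:braid}), moves acting on disjoint parts of the network commute trivially, and Yang-Baxter cycles that return to the same crossing pattern decompose into Frenkel-Moore relations (Proposition \ref{prop:fm}). Granting this well-definedness, the weights of $\N'$ are a function of $\N_c$ and $(\sigma', \tau')$, and hence of $\N$ and $(u,v)$; specializing to $(u,v) = (\id, \id)$ gives $(\sigma', \tau') = (\sigma, \tau)$ and therefore $\N' = \N$.

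The main obstacle is verifying the well-definedness of the reconstruction from $\N_c$, namely showing that braid, involution, Frenkel-Moore, and trivial commutation together generate all relations among local-move sequences that both preserve the underlying graph and induce the same color permutation. The subtle case is a sequence mixing Yang-Baxter moves on horizontal wires with whurl and whirl-curl moves on adjacent curl and whirl cycles; here one needs careful case analysis to reduce any weight-preserving loop to a composition of the four basic relations. An alternative route is to exploit the bijectivity of $\Phi^{a,b,\i}$ in Theorem \ref{thm:Phiwhirlcurl}, which provides uniqueness at the canonical form and can be propagated back along the reverse reduction without an explicit enumeration of relations.
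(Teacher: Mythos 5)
Your first route has a genuine gap: the ``well-definedness of the reconstruction from $\N_c$'' that you identify as the main obstacle is not a technical detail but is essentially a restatement of the lemma itself (with $\N_c$ as the base point in place of $\N$), so reducing to it makes the argument circular unless the missing ingredient is actually supplied. Your proposed fix --- showing that involutivity, the braid relation, Frenkel--Moore, and disjoint commutation generate \emph{all} relations among weight-preserving sequences of local moves --- is far stronger than anything established in the paper (the structure of the monodromy action is the subject of open conjectures there, and Theorem \ref{thm:SaSb}, which pins down the monodromy group, itself depends on this lemma), and no case analysis is given. The paper avoids this entirely by using the boundary measurement map as a complete invariant on canonical forms: after reducing to the case $(u,v)=(\id,\id)$ by composing with explicit whurl moves realizing $(u^{-1},v^{-1})$, one applies the \emph{same} deterministic, invertible sequence of sorting moves to both $\N$ and $\N'$; Lemma \ref{lem:whurlradius} guarantees that correspondingly colored cycles have equal radii, so the same sorting permutation brings both to canonical form. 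Corollary \ref{cor:Ncanonical} (injectivity of $\Phi^{a,b,\i}$ from Theorem \ref{thm:Phiwhirlcurl} combined with $M(\N)=M(\N')$) forces the two canonical forms to coincide, and invertibility of each individual move then propagates equality back to $\N$ and $\N'$. Your closing sentence gestures at exactly this argument, but as written it remains a one-line alternative rather than the proof; in particular you never invoke the radius invariant of Lemma \ref{lem:whurlradius}, which is what makes the backward propagation compatible with the coloring.
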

\begin{proof}
Since the Yang-Baxter moves and whirl-curl transformations are invertible, we may assume all curl cycles are to the left, and all whirl cycles to the right in $\N$ and $\N'$, as in Theorem \ref{thm:ntol}.  By applying specific whurl transformations to $\N'$ according to $(u^{-1},v^{-1})$, we may assume that $(u,v) = (\id,\id)$.  After applying another set of whurl transformations to both $\N$ and $\N'$, we may assume that the radii of the curls (resp. whirls) in $\N$ decrease from left to right.  By Lemma \ref{lem:whurlradius}, the same holds for $\N'$.  But then both $\N$ and $\N'$ are in canonical form so by Corollary \ref{cor:Ncanonical}, $\N$ and $\N'$ must have the same vertex weights.
\end{proof}

\begin{proof}[Proof of Theorem \ref{thm:SaSb}]
Suppose $\N$ and $\N'$ are related by an arbitrary sequence of local transformations, and $N(\N) = N = N(\N')$.  Then $M(\N) = M(\N')$.  By Corollary \ref{cor:Ncanonical}, the canonical forms of $\N$ and $\N'$ are identical.  It follows that $\N$ and $\N'$ are related by the $S_a \times S_b$ monodromy action in Lemma \ref{lem:SaSb}. To see that the group $S_a \times S_b$ acts faithfully we may pick vertex weights so that the radii of wire cycles are all distinct.  Lemma \ref{lem:whurlradius} then implies that the orbit under the monodromy action is of size $a! b!$.
\end{proof}

We could have avoided Lemma \ref{lem:SaSb} if we defined the pair $(u,v) \in S_a \times S_b$ by looking at the final permutations of the radii of the wire cycles.  However, we found it elegant to keep track of the wires through a sequence of local transformations.

\begin{remark}
The argument of Lemma \ref{lem:SaSb} gives another proof of Theorem \ref{thm:braid} in the case of whurl transformations which involve only whirls, or only curls.
\end{remark}

\section{Loop symmetric functions} \label{sec:lsym}
\subsection{Loop symmetric functions}
We consider a network $N^{m,m'} = N^{m,m',\id}$ (see Section \ref{ssec:whirlcurlnetworks}) on a cylinder that consists of $m$ whirl cycles, $m'$ curl cycles and $n$ horizontal wires. 
There are several different $N^{m,m',\id}$ networks, one for each relative order of whirls and curls.  Let the sources be cyclically indexed $1$ through $n$ in the order opposite to 
the direction of the whirl cycles, and let $q_r$, $r=1,\ldots,n$ be the snake path that starts at the source $r$. Let $\N^{m,m'}$ be the weighted network with $x_i^{(r)}$ the vertex 
weight assigned to the unique vertex that lies on the $r$-th snake path, the $(r+1)$-st snake path and the $i$-th whurl. Here the snake path indices are taken modulo $n$. 
Then $X = \left(x_i^{(r)}\right)_{1 \leq i \leq m, \; r \in \Z/n\Z}$ form a rectangular (or rather, cylindrical) array of variables. We caution that we will be using a different 
indexing of vertex weights in Section \ref{sec:crystals}.

Define the {\it {loop elementary symmetric functions}} in these variables by:
\begin{align*}
e_k^{(r)} &= \sum_{\stackrel{1 \leq i_1 \leq i_2 \leq \cdots \leq i_k\leq m}{ i_j < i_{j+1} \text{if $j$-th whurl is a whirl}}} x_{i_1}^{(r)} x_{i_2}^{(r+1)} \cdots x_{i_k}^{(r+k-1)}
\end{align*}

\begin{proposition}
We have $$M(\N^{m,m'})_{r,r+k} = e_k^{(r)}.$$ 
In other words, the loop elementary symmetric functions are exactly the boundary measurements for $\N^{m,m'}$.
\end{proposition}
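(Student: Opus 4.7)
The approach is to exhibit a weight-preserving bijection between highway paths contributing to $M(\N^{m,m'})_{r,r+k}$ and the monomials in the sum defining $e_k^{(r)}$. By definition of the boundary measurement matrix, $M(\N^{m,m'})_{r,r+k}$ equals $M^{[q(r,j',w)]}$, the generating function of highway paths from source $r$ to sink $j'$ with winding number $w$, where $j \in \{1,\ldots,n\}$ and $w \in \Z$ are determined by $k = wn + (j-r)$. The key local observation is that each interior vertex $x_i^{(s)}$ of $\N^{m,m'}$ lies on exactly two snake paths $q_s, q_{s+1}$, and an analysis of the four-edge configuration at a simple crossing shows that the unique highway-admissible way to jump from $q_s$ to $q_{s+1}$ at such a vertex is the configuration that uses both highway edges, which contributes the vertex weight $x_i^{(s)}$ to $\wt(p)$.

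I would then encode a highway path $p$ from source $r$ by the sequence $(i_1, \ldots, i_\ell)$ of whurls at which successive transitions occur. Starting on $q_r$ and incrementing the snake index by one at each transition, a path arriving at sink $j'$ with the prescribed winding must make exactly $\ell = k$ transitions, passing through snake paths $q_r, q_{r+1}, \ldots, q_{r+k}$ in turn. The $j$-th transition takes place at vertex $x_{i_j}^{(r+j-1)}$, so
\[
\wt(p) \;=\; x_{i_1}^{(r)} x_{i_2}^{(r+1)} \cdots x_{i_k}^{(r+k-1)}.
\]
Between consecutive transitions the path travels along a snake path; by the layout of $\N^{m,m'}$, these snake paths visit the whurls in their natural cyclic order on the cylinder, which forces $i_1 \leq i_2 \leq \cdots \leq i_k$.

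The main obstacle is establishing the strict inequality $i_j < i_{j+1}$ precisely when the $i_j$-th whurl is a whirl. The intuition is that at a curl vertex, the two whurl edges lie on the highway diagonal: after transitioning at $x_{i_j}^{(s)}$ the path exits along the outgoing whurl edge, travels directly on the curl itself, arrives immediately at $x_{i_j}^{(s+1)}$ via that whurl's incoming-highway edge, and is free to transition again, so $i_{j+1} = i_j$ is allowed. At a whirl vertex the whurl edges lie on the underway diagonal, so after transitioning the path exits on a non-whurl edge, is carried along another wire, and can only return to whurl $i_j$ after first crossing a whurl of larger index. Making this rigorous requires an explicit case analysis of the simple-crossing configuration that takes into account the orientation reversal between whirls and curls, together with a check that snake-path trajectories between transitions respect the cyclic ordering of whurls on the cylinder. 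Once this dichotomy is established, the sequences $(i_1,\ldots,i_k)$ realized by highway paths are precisely the weakly increasing sequences with strict inequality at whirls, and summation recovers $e_k^{(r)}$ exactly.
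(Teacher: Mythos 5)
Your argument is correct in substance, but it takes a genuinely different (and more self-contained) route than the paper. The paper's proof is a reduction: it slices the cylinder between consecutive whurls, applies Lemma~\ref{lem:compose} to write $M(\N^{m,m'})$ as a product of single whirl and curl matrices as in the proof of Theorem~\ref{thm:ntol}, and then cites the algebraic computation of the entries of such products from \cite{LP}. Expanding that matrix product over intermediate column indices reproduces exactly your combinatorics --- a whirl factor advances the column index by at most one step (hence strictness), a curl factor by an arbitrary run of consecutive steps (hence weak increase) --- but the paper never analyzes highway paths in the multi-whurl network directly. Your direct enumeration buys a proof that does not depend on the canonical-form machinery or on external lemmas, and it makes visible \emph{why} the strict/weak dichotomy holds: at a whirl the two whurl edges form the underway, so a weight-collecting straight-through ejects the path onto the horizontal wire and past that whurl forever, whereas at a curl the whurl edges form the highway, so the path can ride the curl to the next level and collect again (including winding around it, which is exactly how the terms with $k>n$ and repeated indices arise). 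The paper's route buys brevity and reuses identities already proved for whirl and curl matrices.

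Two points you should nail down to make the sketch airtight. First, your count ``exactly $k$ transitions'' and your monotonicity claim both rest on the assertion that every highway-highway move at a vertex on snake paths $q_s,q_{s+1}$ goes from $q_s$ \emph{to} $q_{s+1}$, never backwards; this is an orientation check (it is the statement that the graph $H_N$ of Section~\ref{ssec:torus} is the directed cycle $q_1\to q_2\to\cdots\to q_n\to q_1$, which the paper verifies separately when proving $M(\N)\in U$), and it is needed once for whirl vertices and once for curl vertices since the two families of wire cycles are oriented oppositely. Second, for the word ``precisely'' at the end you also need surjectivity and uniqueness --- that each admissible sequence $(i_1,\dots,i_k)$ is realized by exactly one highway path --- which follows from observing that between consecutive choice points the path is forced (entering via the underway leaves no choice of exit). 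Neither is a gap in the idea; both are the routine case analysis you already flagged.
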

\begin{proof}
Follows from the proof of Theorem \ref{thm:ntol} and a statement similar to \cite[Lemma 7.3]{LP}, \cite[Lemma 7.5]{LP}.
\end{proof}

Denote by $\LSym$ the ring generated by the $e_k^{(r)}$, called the ring of {\it {loop symmetric functions}}. Note that this version of the ring is a common generalization of 
{\it {whirl loop symmetric functions}} and {\it {curl loop symmetric functions}} of \cite{LP}. The {\it {loop Schur functions}} are the minors of $M(N^{m,m'})$ and have a presentation 
in terms of semistandard tableaux like the usual Schur functions, generalizing \cite[Theorem 7.4]{LP}, \cite[Theorem 7.6]{LP}. 

Namely, let each index $1 \leq j \leq m+m'$ be either  {\it {column strict}} or {\it {row strict}}, depending on whether the $j$-th whurl in $N^{m,m'}$ is a whirl or a curl (respectively).
A square $s = (i,j)$ in the $i$-th row and $j$-th column of a Young diagram has {\it
content} $i - j$ and has {\it residue} $r(s) = \overline{i- j} \in
\Z/n\Z$. A {\it {semistandard Young tableaux}} $T$ with shape
$\lambda$ is a filling of each square $s \in \lambda$ with an
integer $T(s) \in \Z_{> 0}$ so that it is weakly increasing along rows and columns, and two cells with filling $j$ are forbidden to belong to the same column (resp. row) if $j$ is column-strict (resp. row-strict). The weight $x^T$ of a
tableaux $T$ is given by $x^T = \prod_{s \in \lambda}
x_{T(s)}^{(r(s))}$.  We define the loop Schur function by
$$
s_\lambda = \sum_{T} x^T
$$
where the summation is over all semistandard Young tableaux of
(skew) shape $\lambda$.

\begin{theorem} \label{thm:schur}
Let $I = i_1 < i_2 < \ldots < i_k$ and $J = j_1 < j_2 < \ldots < j_k$ be two sequences of
integers such that $i_t \leq j_t$.   Define
$$\lambda = \lambda(I,J) = (j_k, j_{k-1} + 1, \ldots,
j_1+k-1)/(i_k,i_{k-1}+ 1 \ldots, i_1+k-1).$$ Then
$$
\det(e_{j_t - i_s}^{(i_s)})_{s,t=1}^k =
s_{\lambda'}.
$$
\end{theorem}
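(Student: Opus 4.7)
My plan is to combine the cylindrical Lindstr\"om-type determinant formula of Theorem \ref{thm:tpd} with an explicit bijection between non-crossing highway path families and semistandard tableaux.

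First, I would identify the matrix entries as boundary measurements. By the preceding proposition,
$$e_{j_t - i_s}^{(i_s)} = M(\N^{m,m'})_{i_s, j_t},$$
which is a boundary measurement in the universal cover $\tilde{\N}^{m,m'}$ between the lift of source labeled $i_s$ and the lift of sink labeled $j_t$ (the inequality $i_s \leq j_s$ guarantees we land in the upper-triangular part where these are nontrivial polynomials). Thus the matrix under consideration is, up to signs, the matrix $A_{I,J}$ from Section \ref{sec:TP}. In our setup the sources and the sinks lie on opposite boundary components of the cylinder, so when the elements of $I \cup J$ are listed clockwise in the strip $\tilde{\N}^{m,m'}$, every source precedes every sink and the exponents $s(i_s,j_t)$ in the sign convention of $A_{I,J}$ are all even. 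Hence $\det\bigl(e_{j_t-i_s}^{(i_s)}\bigr) = \det A_{I,J}$.

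Next I would apply Theorem \ref{thm:tpd} to obtain
$$\det A_{I,J} = \sum_{P \in F(I,J)} \wt(P),$$
a positive sum over non-crossing highway path families from $I$ to $J$ in $\tilde{\N}^{m,m'}$. Since highway paths cannot cross, any such family matches source $i_s$ with sink $j_s$, and the $s$-th path traverses exactly $j_s - i_s$ whurls.

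The main step is then to construct a weight-preserving bijection between $F(I,J)$ and semistandard Young tableaux of shape $\lambda'$ subject to the row-strict/column-strict conditions dictated by the whirl/curl labeling. The bijection records, for each non-crossing family $P$, the ordered sequence of whurl indices used by its $s$-th highway path as the entries of the $s$-th column of a tableau of shape $\lambda = \lambda(I,J)$; transposing gives a filling of $\lambda'$. The offsets $j_{s}+k-s$ and $i_{s}+k-s$ in the definition of $\lambda$ are exactly what is needed so that the starting heights and ending heights of the paths align into a skew shape, and the residues $r(s) = \overline{i-j}$ match the superscripts $x_i^{(r)}$ along each column by construction. The highway condition at a vertex $x_i^{(r)}$ translates precisely into strict vs.\ weak inequality between consecutive entries of a column of $\lambda'$: weak if the corresponding whurl is a curl, strict if it is a whirl. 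The non-crossing condition between the $s$-th and $(s{+}1)$-st paths translates into weak inequality along the rows of $\lambda'$, with the residue conventions adjusted so that the conjugate strict/weak pattern appears on rows. This is the direct extension of the bijection used in the whirl-only and curl-only Schur function theorems \cite[Thms.~7.4, 7.6]{LP}.

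The main obstacle will be bookkeeping: checking carefully that the skew-shape offsets in $\lambda(I,J)$ produce the claimed correspondence of residues, and that ``strict along columns of $\lambda'$'' corresponds to whirl-whurls while ``weak along columns of $\lambda'$'' corresponds to curl-whurls (with rows of $\lambda'$ dual). Given these verifications, Theorem \ref{thm:schur} follows because the bijection converts $\sum_{P \in F(I,J)} \wt(P)$ into $\sum_T x^T = s_{\lambda'}$.
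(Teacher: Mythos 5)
Your proposal is correct and follows essentially the same route as the paper, whose entire proof is a one-line appeal to "a Gessel--Viennot type argument, straightforwardly generalizing [LP, Theorems 7.4, 7.6]" --- i.e., exactly the Lindstr\"om-type cancellation you invoke via Theorem \ref{thm:tpd} followed by the path-family-to-tableau bijection you sketch. (One small quibble: in the strip the exponents $s(i_s,j_t)$ are not all even --- they equal the number of later sources, so depend on the row --- but this row-dependent sign cancels against the reversal of the clockwise ordering of the sinks, so your conclusion $\det\bigl(e_{j_t-i_s}^{(i_s)}\bigr)=\sum_{P}\wt(P)$ still stands.)
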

\begin{proof}
A Gessel-Viennot type argument, straight forwardly generalizing proofs of \cite[Theorem 7.4]{LP}, \cite[Theorem 7.6]{LP}. 
\end{proof}

The loop symmetric functions are the invariants of the monodromy group action on $N^{m,m'}$ of Theorem \ref{thm:SaSb}.  In this case, 
the monodromy group $S_m \times S_{m'}$ acts on $\C(N^{m,m'}) = \C(x_i^{(r)})$ by birational transformations given by the whurl relations \eqref{eq:whurl}.  In the special case of 
either $m$ or $m'$ equal to $0$ this birational transformation appeared in \cite{Y}, see also \cite{LP3,LP4}.

\begin{theorem} \cite[Theorem 4.1]{LP4} \label{thm:LP4}
If $m'=0$ ($m=0$) the loop elementary symmetric functions generate the field $\mathbb C(X)^{S_m}$ ($\mathbb C(X)^{S_{m'}}$) of invariants of the monodromy $S_m$-action ($S_{m'}$-action).
\end{theorem}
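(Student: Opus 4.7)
Take $m' = 0$; the case $m = 0$ is symmetric. Write $L = \C(x_i^{(r)})$ and $K = \C(e_k^{(r)}) \subseteq L$, and let $S_m$ act on $L$ by the whirl monodromy, implemented by the whurl birational formulas \eqref{eq:whurl}. The goal is $K = L^{S_m}$. For the easy inclusion: the $e_k^{(r)}$ are boundary measurements of $\N^{m,0}$ by the proposition preceding Theorem \ref{thm:schur}, and whurl moves preserve boundary measurements by Corollary \ref{cor:whurlbm}, so $K \subseteq L^{S_m}$. Theorem \ref{thm:SaSb}, applied to an $N^{0,m,\id}$-network, further tells us that the $S_m$-monodromy acts faithfully on $L$, so $L/L^{S_m}$ is Galois of degree $m!$. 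It therefore suffices to prove $[L:K] \leq m!$.

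To bound the degree, view the rational map $\phi\colon \C^{mn} \dashrightarrow U$, $X = (\b_1,\dots,\b_m) \mapsto M(\b_1)M(\b_2)\cdots M(\b_m)$, whose matrix entries are (up to indexing) the generators of $K$. By Theorem \ref{thm:Phiwhirlcurl} specialized to $a=0$, $w=\id$, the restriction $\Phi^{0,m,\id}\colon \Omega^m \to U^{0,m,\id}_{\ge 0}$ is a bijection: a matrix in this stratum has a unique factorization $M(\b_1)\cdots M(\b_m)$ with radii $r_i = \prod_r \b_i^{(r)}$ in decreasing order. By Lemma \ref{lem:whurlradius}, the whurl $S_m$-action permutes these radii while fixing the matrix product, so for a generic $X$ (with distinct radii) the $S_m$-orbit has exactly $m!$ points---one for each ordering of the radii---each meeting the sorted cone $\Omega^m$ in a single element. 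Hence the generic fiber of $\phi$ is exactly the $S_m$-orbit of $X$, of cardinality $m!$, giving $[L:K] \leq m!$ as required.

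The main obstacle is transferring the bijection from Theorem \ref{thm:Phiwhirlcurl}, which is proved over positive reals, into the generic-fiber statement over $\C$ that the field-theoretic argument needs. The cleanest way is to revisit the proof of \cite[Theorem 8.3]{LP}: the inverse to $\Phi^{0,m,\id}$ is constructed by purely algebraic operations---extracting a radius as a rational function of the matrix entries and peeling off the leftmost whirl factor---so it extends to a factorization of a generic complex matrix. Equivalently, the condition ``the fiber of $\phi$ over $y$ has cardinality $\leq m!$'' is a constructible condition that is verified on the Zariski-dense positive-real locus and therefore holds on a dense open subset of the image. Combined with $K \subseteq L^{S_m}$ and $[L:L^{S_m}] = m!$, the bound $[L:K] \leq m!$ forces $K = L^{S_m}$, completing the proof.
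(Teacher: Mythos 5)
The paper does not prove this statement internally; it is imported verbatim from \cite[Theorem 4.1]{LP4}, so there is no in-paper proof to compare against. Your architecture is the natural one: the inclusion $K=\C(e_k^{(r)})\subseteq L^{S_m}$ from the boundary-measurement interpretation and Corollary \ref{cor:whurlbm}, faithfulness of the monodromy action from Theorem \ref{thm:SaSb} giving $[L:L^{S_m}]=m!$ by Artin's theorem, and then the reduction of everything to the degree bound $[L:K]\le m!$. The first two ingredients are correctly assembled from results in the paper.

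The gap is in the degree bound, which is the actual content of the theorem. Theorem \ref{thm:Phiwhirlcurl} (via \cite[Theorem 8.3]{LP}) tells you that the fiber of $\phi$ over a point of $U^{0,m,\id}_{\geq 0}$, \emph{intersected with the positive orthant} $(\R_{>0}^n)^m$, consists of the $m!$ reorderings of one sorted factorization; it says nothing about complex points of the fiber outside that orthant. So your constructibility argument never gets started: the constructible condition ``$|\phi^{-1}(y)|\le m!$'' is not verified at any $y$ by the cited results --- only the weaker condition ``$|\phi^{-1}(y)\cap\R_{>0}^{mn}|=m!$'' is --- and Zariski density of a locus on which the condition has not actually been checked proves nothing. (Compare $x\mapsto x^2$ on $\C$ with $S=\R_{>0}$: each positive fiber meets $\R_{>0}$ in one point, yet the degree is $2$.) It is consistent with everything quoted from the paper that $\deg\phi=d>m!$, with the extra preimages of each positive-real point being non-real or having some negative coordinates. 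Your fallback --- that the inverse of $\Phi^{0,m,\id}$ is assembled from rational operations in the matrix entries and hence extends to generic complex matrices --- would close the gap if established, but you assert it rather than prove it, and it rests on structural claims about the proof of \cite[Theorem 8.3]{LP}, a factorization theorem for totally nonnegative elements whose arguments exploit positivity. Since the upper bound on the generic \emph{complex} fiber is precisely the hard part of the theorem, the proof is incomplete as written.
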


\begin{remark}
We shall show in \cite{LPLSym} that a stronger statement holds.  Namely in the case $m=0$ or $m'=0$, the ring $\LSym$ is exactly the ring of {\it {polynomial}} invariants of the birational action of $S_m$ or $S_{m'}$ via whirl (curl) moves.
\end{remark}

\subsection{Power sum loop symmetric functions and cycle measurements}
We discuss only $N^{0,m}$ here.  The story for $N^{m,0}$ is analogous.  

The network $\N^{0,m}$ has no non-trivial (highway) cycle measurements. However, its underway cycle measurements can be naturally described as follows. Denote by $p_k$ the underway cycle measurement which has homology class of the $k$-fold loop around the cylinder (in the same direction as the whirl cycles).

\begin{proposition}
 We have $$p_k = \frac{1}{k} \sum_{i=1}^m (\prod_{r=1}^n x_i^{(r)})^k.$$
\end{proposition}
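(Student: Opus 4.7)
The plan is to enumerate all cycles in $\N^{0,m}$ of the prescribed non-trivial homology class, compute the weight and multiplicity of each, and sum with multiplicity corrections. First I will examine the vertex structure of $\N^{0,m}$: at each crossing of a curl with a horizontal wire, the curl occupies the highway pair of edges $(e_1,f_1)$ and the horizontal wire occupies the underway pair $(e_2,f_2)$, by the very definition of \emph{curl}. The relevant cycle rule (at each vertex, one of the two straight transitions is forbidden, while the other contributes the vertex weight $x_i^{(r)}$) then determines which directed walks are admissible.

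The core step will be to show that every cycle of the prescribed homology class must remain on a single curl. The key observation: if such a cycle ever exits a curl onto a horizontal wire, the forbidden straight transition prevents it from traveling along that wire, so it is forced to turn immediately onto the next curl encountered. Each such excursion produces a net step of $\pm 1$ in the horizontal direction of the cylinder -- the curl index shifts by one, with a fixed sign. Because the horizontal wires are paths from one boundary component of the cylinder to the other (they do not wrap), this horizontal displacement is monotone in the curl index and cannot be compensated. Hence the cycle can close only if no such excursion ever occurs, which forces it to remain on a single curl throughout.

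Once this reduction is in hand, the remaining calculation is immediate: any admissible cycle is a $k$-fold revolution around some curl $i$, with weight $\left(\prod_{r=1}^n x_i^{(r)}\right)^k$ and multiplicity exactly $k$ (since it is the $k$-th iterate of the single loop around curl $i$). Summing over $i=1,\dots,m$ and applying the multiplicity correction $\frac{1}{k}$ yields the stated formula. The main obstacle is the ``single curl'' reduction; once carefully established from the transition rule at each vertex together with the topological fact that horizontal wires do not close up on the cylinder, the rest is a direct enumeration.
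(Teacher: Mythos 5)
Your central argument --- that any excursion of the cycle off a wire cycle onto a horizontal wire forces an immediate turn onto the adjacent wire cycle (because the straight continuation along the horizontal wire is the forbidden transition), producing a horizontal displacement of fixed sign that can never be cancelled, so that a closed walk of nontrivial homology must stay on a single wire cycle and is therefore a $k$-fold traversal of it, of weight $\bigl(\prod_{r=1}^n x_i^{(r)}\bigr)^k$ and multiplicity exactly $k$ --- is precisely what underlies the paper's one-sentence proof (``the only way to achieve the desired homology of an underway cycle is to circle around one of the whirls $k$ times''), and you supply the monotonicity justification that the paper leaves implicit. That part is right.

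The one genuine problem is that you have the highway/underway roles crossed. The proposition computes the \emph{underway} cycle measurement, and an underway path is by definition one in which the transition $e_1\to f_1$ is prohibited and the weight is collected at $e_2\to f_2$ crossings. A \emph{curl} is by definition the wire occupying the highway pair $(e_1,f_1)$, so with your identification an underway cycle can never go straight through any vertex of a curl: the ``single curl'' reduction as you state it is vacuous, the underway cycle measurements of an all-curl network are zero, and what your argument actually computes is the \emph{highway} cycle measurement of $N^{m,0}$. For the stated formula the $m$ wire cycles must be \emph{whirls}, which occupy the underway pair $(e_2,f_2)$: then the underway cycle travels straight along a whirl collecting $x_i^{(r)}$ at each crossing, while the forbidden transition $e_1\to f_1$ is the straight continuation along a horizontal wire, which is exactly what forces the one-edge excursion and the monotone drift. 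In fairness, the paper is itself inconsistent about whether $N^{0,m}$ consists of whirls or curls, and the two situations are exchanged by the highway/underway mirror duality, so your argument transfers verbatim once the labels are straightened out; but as written the key claim is attached to the wrong measurement and would give $p_k=0$.
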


\begin{proof}
The only way to achieve the desired homology of an underway cycle is to circle around one of the whirls $k$ times.
\end{proof}

We call the $p_k$ the {\it {(rescaled) power sum loop symmetric functions}}. Such terminology is justified by the following proposition, which is established in \cite{LPLSym}.

\begin{proposition} \label{prop:lp}
The $p_k$ lie in $\LSym$.
\end{proposition}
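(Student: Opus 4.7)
My plan is to recover the radii $y_i = \prod_r x_i^{(r)}$ of the curls from the boundary measurements via the determinant of the loop group element attached to $\N^{0,m}$, and then to use a classical Newton-type identity to express $p_k$ polynomially in the loop elementary symmetric functions.

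First I would pass to the loop group presentation. Let $g(t) \in GL_n(\R((t)))$ be the element corresponding to $M(\N^{0,m}) = N(x_1)\cdots N(x_m) \in U$ under the standard identification between infinite periodic matrices and loop group elements, and write $g(t) = g_1(t)\cdots g_m(t)$ where $g_i(t)$ corresponds to $N(x_i)$. Each entry $g_{ij}(t) = \sum_{k'\geq 0} t^{k'}\, M(\N^{0,m})_{i,\,j+k'n}$ is a power series in $t$ whose coefficients are loop elementary symmetric functions $e_l^{(i)}$, and hence lie in $\LSym$.

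Next I would verify that $\det g_i(t) = (1-ty_i)^{-1}$ for each $i$. Unwinding the definition of $g_i(t)$, every entry has the form $(\text{monomial in the }x_i^{(r)})/(1-ty_i)$. Conjugating by the diagonal matrix $D_i = \mathrm{diag}(1,\,x_i^{(1)},\,x_i^{(1)}x_i^{(2)},\ldots,x_i^{(1)}\cdots x_i^{(n-1)})$ reduces the remaining numerator to the constant matrix whose entries on and above the diagonal are $1$ and whose entries strictly below the diagonal are $ty_i$; its determinant is easily seen to equal $(1-ty_i)^{n-1}$, giving the claimed identity. By multiplicativity of the determinant,
\[ \det g(t) \;=\; \prod_{i=1}^m \frac{1}{1 - t y_i} \;=\; \sum_{k\geq 0} h_k(y_1,\ldots,y_m)\, t^k, \]
where $h_k$ is the ordinary complete homogeneous symmetric function. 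On the other hand, expanding $\det g(t)$ via the Leibniz formula shows that each coefficient $h_k(y_1,\ldots,y_m)$ is an integer polynomial in the entries $g_{ij}(t)$, and hence in the $e_l^{(r)}$; therefore $h_k(y)\in \LSym$ for every $k\geq 1$.

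Finally, extracting $\log\det g(t) = \sum_{k\geq 1} p_k\,t^k$ and formally expanding the logarithm gives the classical identity
\[ p_k \;=\; \sum_{j=1}^k \frac{(-1)^{j-1}}{j} \sum_{\substack{k_1+\cdots+k_j = k\\ k_i\geq 1}} h_{k_1}\cdots h_{k_j}, \]
which expresses $p_k$ as a polynomial with rational coefficients in $h_1,\ldots,h_k$. Substituting the inclusions $h_j\in \LSym$ shows $p_k\in \LSym$. The main obstacle is the single-curl determinant evaluation $\det g_i(t) = (1-ty_i)^{-1}$: one must carefully unwind the loop-group/infinite-matrix correspondence for $N(x_i)$, extract the common denominator $1-ty_i$ from every entry of $g_i(t)$, and apply the diagonal conjugation reduction; once this identity is in hand, the remaining steps are formal manipulation with generating functions and classical symmetric function identities.
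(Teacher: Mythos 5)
Your proof cannot be checked against an internal argument, because the paper itself does not prove Proposition \ref{prop:lp}: it is stated as a result ``established in \cite{LPLSym}'' (a paper listed as in preparation), so the proof is deferred. That said, your argument is correct and self-contained, and it fills the gap by exactly the route one would hope for. The key determinant evaluation checks out: writing $M(\N^{0,m})$ as a product of single-cycle factors $g_i(t)$, each entry of $g_i(t)$ is a monomial (times $t$ for the entries strictly below the diagonal) over $1-ty_i$; pulling $ (1-ty_i)^{-1}$ out of each of the $n$ rows and conjugating by your diagonal matrix leaves the matrix with $1$'s weakly above the diagonal and $ty_i$ strictly below, whose determinant is indeed $(1-ty_i)^{n-1}$, so $\det g_i(t) = (1-ty_i)^{n-1}/(1-ty_i)^{n} = (1-ty_i)^{-1}$. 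Multiplicativity gives $\det g(t) = \sum_k h_k(y_1,\ldots,y_m)t^k$, the Leibniz expansion together with the Proposition of Section \ref{sec:lsym} (boundary measurements of $\N^{0,m}$ are the $e_l^{(r)}$) puts every $h_k(y)$ in $\LSym$, and the logarithm/Newton identity finishes, since the paper's $p_k$ is precisely $\frac{1}{k}\sum_i y_i^k$ by the preceding proposition. Three small points to flag. First, whether the cycles of $N^{0,m}$ are curls or whirls is a convention the paper is not consistent about; your computation is the curl case, while for whirl matrices one gets $\det g_i(t) = 1+(-1)^{n-1}ty_i$ instead, but the identical argument (elementary rather than complete homogeneous symmetric functions of the radii, with harmless signs) goes through. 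Second, the entries of $g_i(t)$ below the diagonal carry a factor of $t$ in the numerator, which your prose glosses over but your conjugated matrix ($ty_i$ below the diagonal) correctly accounts for; also the direction of conjugation is immaterial since determinants are conjugation-invariant. Third, your use of the coefficients $1/k$ and $(-1)^{j-1}/j$ requires reading $\LSym$ as an algebra over a ring containing $\Q$; this is forced anyway by the paper's normalization $p_k = \frac{1}{k}\sum_i \bigl(\prod_r x_i^{(r)}\bigr)^k$, so it is a reasonable reading, but it is worth stating explicitly.
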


It is interesting to observe that for the network $\N^{0,m}$ Proposition \ref{prop:lp} has the following meaning: the cycle measurements are (polynomial) functions of the boundary measurements. In general such a statement would not be true: on a torus there are no boundary measurements at all, while definitely there are non-trivial cycle measurements.

The classical {\it {problem of moments}} studies the question of recovering a probability distribution from its moments, and in particular determining what conditions the moments must satisfy for a solution to exist. See \cite{ShT} for a detailed account. For {\it {discrete}} distributions taking nonnegative real values, the latter question is equivalent to determining the allowed range of power sum symmetric functions of a sequence of nonnegative variables. For a finite sequence the condition that the elementary symmetric functions form a totally positive sequence \cite{Edr,Br} gives inequalities which characterize the possible range of power sum symmetric functions.

In our current language, this is the question of describing the possible cycle measurements of $\N^{0,m}$ (where the problem of moments corresponds to $n = 1$), as the vertex weights range over nonnegative real numbers.  

\begin{problem}
What is the range of cycle measurements when we allow an arbitrary network with nonnegative real weights on a cylinder?
\end{problem}
See Theorem \ref{thm:tpd} and Conjecture \ref{conj:tpd} for the case of boundary measurements.

\part{Geometric crystals on cylinders and tori}
\section{Affine geometric crystals}\label{sec:crystals}
\subsection{Geometric crystals}\label{sec:geom}
We shall use \cite{KNO} as our main reference for affine geometric crystals.
In this paper we shall consider affine geometric crystals of type $A$.  Fix $n > 1$.  Let $A= (a_{ij})_{i,j \in \Z/n\Z}$ denote the $A_{n-1}^{(1)}$ Cartan matrix.  Thus if $n > 2$ then $a_{ii} = 2$, $a_{ij} = -1$ for $|i-j|=1$, and $a_{ij}=0$ for $|i-j| > 1$.  For $n = 2$, we have $a_{11} = a_{22}=2$ and $a_{12}=a_{21} = -2$.  

Let $(X,\{e_i\}_{i \in \Z/n\Z},\{\ep_i\}_{i\in\Z/n\Z},\{\gamma_i\}_{i \in \Z/n\Z})$ be an affine geometric crystal of type $A_{n-1}^{(1)}$ (that is, for $\uqsln$).  Thus, $X$ is a complex algebraic variety, $\ep_i: X \to \C$ and $\gamma_i: X \to \C$ are rational functions, and $e_i: \C^{*} \times X \to X$ $((c,x) \mapsto e_i^c(x))$ is a rational $\C^{*}$-action, satisfying:
\begin{enumerate}
\item
The domain of $e_i^1: X \to X$ is dense in $X$ for any $i \in \Z/n\Z$.
\item
$\gamma_j(e_i^c(x)) = c^{a_{ij}} \gamma_j(x)$ for any $i,j \in \Z/n\Z$.
\item
$\ep_i(e_i^c(x)) = c^{-1}\ep_i(x)$.
\item 
for $i \neq j$ such that $a_{ij} = 0$ we have
$e_i^c e_j^{c'} = e_j^{c'} e_i^{c}$.
\item
for $i \neq j$ such that $a_{ij} = -1$ we have
$e_{i}^c e_{j}^{cc'} e_{i}^{c'} = e_j^{c'} e_i^{cc'} e_{j}^c$.
\end{enumerate}
We often abuse notation by just writing $X$ for the geometric crystal.  We define $\ph_i = \gamma_i \ep_i$, and sometimes define a geometric crystal by specifying $\ph_i$ and $\ep_i$, instead of $\gamma_i$.

\subsection{Weyl group action}\label{ssec:Weyl}
One obtains \cite{BK} a rational action of the affine symmetric group on $X$ by setting $s_i = e_i^{\gamma_i^{-1}}$.  Then $\{s_i \mid i \in \Z/n\Z\}$ satisfy the relations of the simple generators of $\tS_n$.

\subsection{Products}
If $X, X'$ are affine geometric crystals, then so is $X \times X'$ \cite{BK, KNO}.  Let $(x,x') \in X \times X'$.  Then
\begin{equation}
\label{E:epphproduct}
\ep_k(x,x')= \frac{\ep_k(x) \ep_k(x')}{\ph_k(x')+\ep_k(x)} \qquad
\ph_k(x,x')=\frac{\ph_k(x) \ph_k(x')}{\ep_k(x)+ \ph_k(x')}
\end{equation}
and
\begin{align*}
e_k^c(x \otimes x') = (e_k^{c^+}x,e_k^{c/c^+}x')
\end{align*}
where
\begin{equation}\label{eq:c+}
c^+ = \frac{c\ph_k(x')+\ep_k(x)}{\ph_k(x') +\ep_k(x)}.
\end{equation}

\begin{remark}
Note that our notations differ from those in \cite{KNO} by swapping left and right in the product, and this agrees with \cite{LP2}. Note also that the birational formulae we use should be tropicalized using $(\min, +)$ (rather than $(\max,+)$) operations to yield the formulae for combinatorial crystals.
\end{remark}

\subsection{The basic geometric crystals}
We now introduce a geometric crystal $X_M$ which we call the {\it basic geometric crystal} of type $A_{n-1}^{(1)}$.  This is a geometric analogue of a limit of perfect crystals.  It is essentially the geometric crystal ${\mathcal B}_L(A_{n-1}^{(1)})$ of \cite[Section 5.2]{KNO}.

We have $X_M = \{x=(x^{(1)},x^{(2)},\ldots,x^{(n)}) \mid x^{(i)} \in \C^{*}\}$ and
$$
\ep_i(x) = x^{(i+1)} \qquad \ph_i(x) = x^{(i)} \qquad \gamma_i(x) = x^{(i)}/x^{(i+1)}
$$ 
and
$$
e_i^c: (x^{(1)},x^{(2)},\ldots,x^{(n)}) \longmapsto (x^{(1)},\ldots,cx^{(i)},c^{-1} x^{(i+1)}\ldots,x^{(n)}) .
$$
That $X_M$ is an affine geometric crystal is shown in \cite{KNO}.

We also introduce the dual basic affine geometric crystal $X_N$.  We have $$X_N = \{x=(x^{(1)},x^{(2)},\ldots,x^{(n)}) \mid x^{(i)} \in \C^{*}\}$$ and
$$
\ep_i(x) = x^{(i)} \qquad \ph_i(x) = x^{(i+1)} \qquad \gamma_i(x) = x^{(i+1)}/x^{(i)}
$$ 
and
$$
e_i^c: (x^{(1)},x^{(2)},\ldots,x^{(n)}) \longmapsto (x^{(1)},\ldots,c^{-1}x^{(i)},c x^{(i+1)}\ldots,x^{(n)}).
$$
One can easily verify that $X_N$ satisfies the axioms of geometric crystals.

\section{Crystals as networks}\label{sec:crystalnetwork}
In this section we assume networks have weights in $\C$.
\subsection{Parallel wires}\label{ssec:parallel}
Consider a vertex-weighted, simple-crossing, oriented network $\N$.  Suppose $W$ and $W'$ are two wires with endpoints on the same boundary components.  We say that $W$ and $W'$ are {\it parallel} with $W$ to the left of $W'$ (and $W'$ to the right of $W$) if the part of the surface to the left of $W'$ and to the right of $W$ is a strip containing no interior vertices or isolated loops, and furthermore, every wire that crosses $W$ from left to right (resp. $W'$ from right to left) then crosses $W'$ from left to right (resp. $W$ from right to left) before intersecting any other wire. 
\begin{figure}[h!]
    \begin{center}
    \input{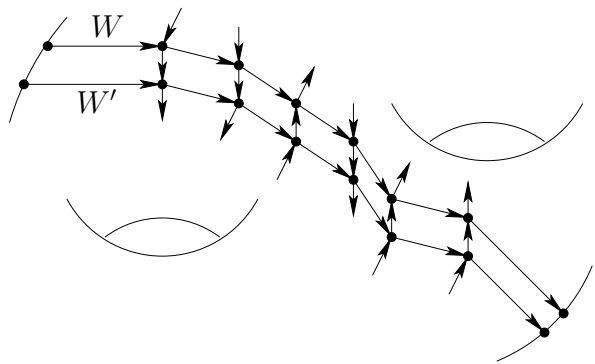}
    \end{center}
    \caption{Two parallel wires on a surface}
    \label{fig:wire30}
\end{figure}

If $W$ and $W'$ are parallel, then they contain the same number of vertices.  Let the interior vertices on $W$ have weights $x^{(1)},\ldots,x^{(m)}$ and the weights on $W'$ have weights $y^{(1)},\ldots,y^{(m)}$.  Thus the $i$-th orthogonal wire crosses $W$ and $W'$ at vertices with weights $x^{(i)}$ and $y^{(i)}$ respectively.
Now define 
$$
z^{(i)} = \begin{cases} 
x^{(i)} & \mbox{if the $i$-th orthogonal wire intersects $W'$ first}\\
y^{(i)}& \mbox{if the $i$-th orthogonal wire intersects $W$ first}
\end{cases}
$$
$$
t^{(i)} = \begin{cases} 
x^{(i)} & \mbox{if the $i$-th orthogonal wire intersects $W$ first}\\
y^{(i)}& \mbox{if the $i$-th orthogonal wire intersects $W'$ first}
\end{cases}
$$
Finally we define functions $\ep(W,W')$ (also denoted $\ep_{\N}(W,W')$ to emphasize $\N$) and $\ph(W,W')$ (also denoted $\ph_{\N}(W,W')$) by
$$\ph(W,W') = \frac{\prod_{i=1}^m z^{(i)}}{\k_1(z, \shift{t})}; \;\;\;\; \ep(W,W') = \frac{\prod_{i=1}^m t^{(i)}}{\k_1(z, \shift{t})}.$$

If the denominator is 0, then $\ep(W,W')$ and $\ph(W,W')$ are undefined.

We can define wire cycles $C, C'$ to be parallel in a similar manner.  To define $\ep(C,C')$ and $\ph(C,C')$, one has to make a choice of where to begin labeling the orthogonal wires.  We shall return to this point in Section \ref{sec:torus}.

\subsection{Wiring crystal action}\label{ssec:wiringcrystal}
Suppose $W$ and $W'$ are parallel wires in $\N$ with $W$ to the left of $W'$.  For $c \in \C^*$, we define a network $e^c(\N,W,W')$ (also denoted $e^c(\N)$ for simplicity), as follows.  Create two new crossings $v$ and $u$ between the $W$ and $W'$, with $v$ before all the interior vertices of $W$ and $W'$, and $u$ after all the interior vertices of $W$ and $W'$.  We assign vertex weights $(c-1)\ph(W,W')$ and $(c^{-1}-1)\ep(W,W')$ to $v$ and $u$ respectively.  Now we use the Yang-Baxter transformation to move the crossing at $v$ past all the orthogonal wires.  If $\ep(W,W')$ or $\ph(W,W')$ are undefined, or the Yang-Baxter moves cannot be performed (because some denominators are $0$), then $e^c(\N,W,W')$ is undefined.  We suppose this is not the case.

\begin{lem}\label{lem:cancel}
After $v$ is moved past all the orthogonal wires, its weight is equal to the negative of the weight of $u$.  Furthermore, $\ep(W,W')$ and $\ph(W,W')$ are the unique rational functions in the vertex weights of $W$ and $W'$ with this property.
\end{lem}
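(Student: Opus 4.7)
The plan is to establish both claims by direct computation together with a M\"obius-function uniqueness argument, in the spirit of Theorem~\ref{thm:whloc} and Lemma~\ref{lem:unp}.

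For the cancellation statement, I would proceed by induction on the number $j$ of orthogonal wires that $v$ has already been pushed past. Each Yang-Baxter move transforms the weight $p_{j-1}$ of the moving crossing into $p_j = t^{(j)} p_{j-1}/(z^{(j)} + p_{j-1})$ (up to the orientation conventions determined locally at the $j$-th orthogonal wire). Taking the initial weight $p_0 = (c-1)\ph(W,W')$ and invoking the explicit formula for $\ph$ from Section~\ref{ssec:parallel}, a telescoping induction directly analogous to the one in Theorem~\ref{thm:whloc} yields
\begin{equation*}
p_j \;=\; \frac{(c-1)\, t^{(1)} \cdots t^{(j)} z^{(j+1)} \cdots z^{(m)}}{\k_1(z,\shift{t}) \;+\; (c-1)\sum_{s=0}^{j-1} t^{(1)}\cdots t^{(s)} z^{(s+2)}\cdots z^{(m)}}.
\end{equation*}
Specialising to $j=m$ and using that the second summand of the denominator then equals $(c-1)\k_1(z,\shift{t})$ collapses $p_m$ to $(1-c^{-1})\ep(W,W')$, which is precisely the negative of the weight $(c^{-1}-1)\ep(W,W')$ assigned to $u$. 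This verifies the cancellation claim.

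For uniqueness, I would mimic the proof of Lemma~\ref{lem:unp}: each Yang-Baxter move acts on the weight of the moving crossing as a linear fractional transformation, so the full sequence sends $p_0$ to $p_m = A p_0/(B p_0 + C)$ for some rational functions $A,B,C$ of the $z^{(i)}, t^{(i)}$ alone. Substituting $p_0 = (c-1)\ph$ and requiring $p_m = (1-c^{-1})\ep$ as a rational identity in $c$, then clearing denominators and comparing the coefficients of $c^1$ and $c^0$, forces $A = \ep B$ and $C = \ph B$. Hence $\ep = A/B$ and $\ph = C/B$ are uniquely determined as rational functions of the vertex weights of $W$ and $W'$.

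The only genuine obstacle is careful bookkeeping in the induction: the various orientations of orthogonal wires dictate whether $z^{(i)}$ equals $x^{(i)}$ or $y^{(i)}$, and the precise form of the Yang-Baxter rule at each crossing depends on whether $v$ is pushed past the orthogonal wire from above or below. Once the conventions of Section~\ref{ssec:parallel} are unpacked at each step, however, every local YB move produces the same recursion $p_j = t^{(j)} p_{j-1}/(z^{(j)} + p_{j-1})$, and the final identification with the definitions of $\ep(W,W')$ and $\ph(W,W')$ is immediate.
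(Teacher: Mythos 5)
Your proof is correct and follows essentially the same route as the paper: the uniqueness argument via the M\"obius form $Ap/(Bp+C)$ and comparison of coefficients of powers of $c$ is exactly the paper's argument. For the cancellation claim the paper merely remarks that it ``can be obtained by a direct calculation'' (otherwise deferring it to the later identification of these networks with products of $X_M$ and $X_N$); your telescoping induction, which reruns the computation of Theorem~\ref{thm:whloc} with initial weight $p_0=(c-1)\ph(W,W')$ and lands on $(1-c^{-1})\ep(W,W')$ at $j=m$, is precisely that direct calculation and checks out.
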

\begin{proof}
The first statement can be obtained by a direct calculation, but it will follow from our constructions in Sections \ref{ssec:whirlcrystal}-\ref{ssec:whirlcurlcrystal} below.  To obtain the second statement, we note that the stated property is equivalent to an equation of the form
$$
\frac{(c-1)A\ph + B}{(c-1)C\ph + D} = (1-c^{-1})\ep
$$
where $A,B,C,D$ are rational functions in the vertex weights along $W$ and $W'$.  This equation can be converted to a quadratic polynomial equation in $c$, the coefficients of which determine $\ph$ and $\ep$ in terms of $A,B,C,D$.
\end{proof}

It follows from Lemma \ref{lem:cancel} that we can apply the moves (XM) and (XR) to remove the extra vertices, leaving a network $e^c(\N,W,W')$ which is the same as $\N$ except for the vertex weights along $W$ and $W'$.  We call this the wiring crystal action on $\N$.  

\begin{remark}
In the definition of $e^c(\N,W,W')$ we could also ask for $u$ to be moved to the front to cancel with $v$, or indeed for both to be moved to any location in the middle. 
\end{remark}

We now state some basic properties of $e^c$, relating the wiring crystal action to geometric crystals.

\begin{thm} \label{thm:wiringcrystal} \
\begin{enumerate} 
\item
We have the equations
$$
\ep(e^c(W),e^c(W')) = c^{-1}\ep(W,W') \qquad \ph(e^c(W),e^c(W')) = c\ph(W,W')
$$
where abusing notation we denote by $e^c(W)$ the image of $W$ in the network $e^c(\N,W,W')$.
\item
The wiring crystal action gives a rational action of $\C^*$ on $\N$.  Thus for generic vertex weights on $W$ and $W'$, the network $e^c(\N,W,W')$ is defined, and we have $e^c(e^{c'}(\N)) = e^{cc'}(\N)$.
\item
Suppose $(W,W')$ and $(Y,Y')$ are pairs of parallel wires with no common vertices.  Then the wiring crystal actions $e^c(\cdot,W,W')$ and $e^c(\cdot,Y,Y')$ commute.
\item
Suppose we have three parallel wires $W,W',W''$ with $W$ to the left of $W'$ and $W'$ to the left of $W''$.  Then the wiring crystal actions $e = e^c(\cdot,W,W')$ and $e'= e^c(\cdot,W',W'')$ satisfy the geometric crystal braid relation:
$e^c e'^{cc'} e^{c'} = e'^{c'} e^{cc'} e'^c$.
\end{enumerate}
\end{thm}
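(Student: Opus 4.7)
My plan is to prove parts (1)--(3) by direct combinatorial and algebraic manipulation, and to reserve the main effort for part (4), which will be the deepest.

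For part (1), I first make explicit the effect of pushing the weight-$(c-1)\ph(W,W')$ crossing $v$ through the orthogonal wires intersecting $W$ and $W'$.  An induction on the number of crossings passed, using the Yang-Baxter rule \eqref{E:chevrelation}, yields a closed-form expression for both the current weight of $v$ and the updated vertex weights $x'^{(i)}, y'^{(i)}$ on $W$ and $W'$.  Lemma \ref{lem:cancel} then identifies the final weight of $v$ as $(1-c^{-1})\ep(W,W')$, enabling the cancellation with $u$ via (XM) and (XR).  It remains to check that $\ep(e^cW, e^cW') = c^{-1}\ep(W,W')$ and $\ph(e^cW, e^cW') = c\ph(W,W')$ by substituting the new weights into the defining formulas; the key identity is that the polynomial $\k_1(z,\shift{t})$ rescales in a controlled way while the monomials $\prod z^{(i)}$ and $\prod t^{(i)}$ pick up factors of $c$ and $c^{-1}$ respectively.

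For part (2), the cleanest argument rephrases $e^c$ as follows: insert a pair of canceling crossings with weights $(c-1)\ph$ and $(c^{-1}-1)\ep$ at the two ends of the strip along $W,W'$; push the front one through via (YB); cancel with (XM) and (XR).  This description is flexible in that the cancellation step can be delayed.  To compose $e^{c'}\circ e^c$, insert \emph{both} pairs at the ends of $W,W'$ before any pushing.  By part (1), the $e^{c'}$ pair has weights $(c'-1)c\ph$ and $(c'^{-1}-1)c^{-1}\ep$.  The two crossings at the start, being adjacent, merge via (XM) into one of weight $(c-1)\ph + (c'-1)c\ph = (cc'-1)\ph$, and symmetrically at the end to $((cc')^{-1}-1)\ep$.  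These are exactly the weights used by $e^{cc'}$ applied to $\N$ directly, so the two rational transformations coincide.  Part (3) is immediate from locality: the moves used in $e^c(\cdot,W,W')$ only alter vertex weights on $W\cup W'$, and $\ep, \ph$ depend only on these weights, so when $(W,W')$ and $(Y,Y')$ share no vertices the two actions are manipulating disjoint sets of coordinates.

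Part (4) will be the main obstacle.  I propose to mimic the strategy of the proof of Theorem \ref{thm:braid}: in the strip along $W, W', W''$, create a triple ripple of three pairs of canceling crossings at the two ends, with weights uniquely determined by the requirement that pushing them through in the order prescribed by the LHS produces the three cancellations realizing $e^c\circ e'^{cc'}\circ e^{c'}$.  Then apply the Frenkel-Moore relation (Proposition \ref{prop:fm}) iteratively to reorder the (YB) moves involved in the triple push-through, transforming the sequence into the one prescribed by $e'^{c'}\circ e^{cc'}\circ e'^c$ without altering the final vertex weights.  The delicate point is the bookkeeping: tracking which inserted crossing cancels with which and verifying that the reordered insertion produces precisely the weights dictated by the RHS.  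As a sanity check I would first verify the relation in the special case where no orthogonal wires cross $W, W', W''$, in which case it reduces to the standard geometric crystal braid relation on a product of three basic geometric crystals, known from \cite{KNO}.
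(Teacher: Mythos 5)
Your route is genuinely different from the paper's. The paper disposes of all four parts in three lines: since the wiring crystal action is local to the strip along the parallel wires, one may assume the ambient surface is a cylinder with $W,W'$ (or $W,W',W''$) as horizontal wires and the orthogonal wires as whirl/curl cycles; Propositions \ref{prop:wcrystal}, \ref{prop:ccrystal} and \ref{prop:compose} then identify this configuration with a product of the basic geometric crystals $X_M$ and $X_N$, for which (1)--(4) are exactly the geometric crystal axioms established in \cite{KNO}. You replace this reduction by direct network manipulation. Parts (1) and (3) go through (the push-through computation of Theorem \ref{thm:whloc} adapts, and (3) is pure locality of the moves). For (2) the weight arithmetic $(c-1)\ph+(c'-1)c\,\ph=(cc'-1)\ph$ is correct, but you silently assume that pushing the merged crossing through all the orthogonal wires gives the same result as pushing the two crossings through one after the other; since different orderings of local moves can in general change vertex weights (this is exactly the monodromy phenomenon the paper is careful about), you must verify the local compatibility of (XM) with the (YB) conjugation, i.e.\ that $u_\alpha(a)u_\alpha(b)=u_\alpha(a+b)$ commutes with the braid move. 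It does, and the check is short, but it is not free.

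The genuine gap is in (4). The Frenkel--Moore relation only says that the \emph{interleaving order} of a fixed collection of (YB) pushes is immaterial; it cannot by itself turn the left side of the Verma relation into the right side, because the two sides insert \emph{different} crossings with \emph{different} weights: the LHS creates two $(W,W')$-crossings and one $(W',W'')$-crossing in the pattern $1,2,1$, the RHS the pattern $2,1,2$. The entire content of the relation is the claim that a single (YB) move applied to the triple ripple of the LHS --- whose weights are $(c'-1)\ph_{\N}(W,W')$, $(cc'-1)\ph_{e^{c'}(\N)}(W',W'')$ and $(c-1)\ph_{(e')^{cc'}e^{c'}(\N)}(W,W')$, each evaluated at a different intermediate network --- produces exactly the three crossings with the weights the RHS prescribes (and likewise at the other end). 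That identity among the $\ph$- and $\ep$-functions is precisely what you defer as ``bookkeeping,'' and without it the argument is a plan rather than a proof. Your proposed sanity check is also miscalibrated: with no orthogonal wires the functions $\ep$ and $\ph$ degenerate (there are no crystal factors at all); the base case you want is a single orthogonal wire, where the configuration is one copy of $X_M$ or $X_N$ and the relation is the axiom verified in \cite{KNO}.
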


The wiring crystal action is completely local to $W$ and $W'$.  So to establish Theorem \ref{thm:wiringcrystal} it suffices to consider two or three parallel wires on a cylinder.  The stated properties in this case will follow from our identification of certain networks on the cylinder with products of the geometric crystals $X_M$ and $X_N$.

\subsection{Weyl group action and the whurl relation}\label{ssec:Weylwhurl}
Let $(W,W')$ be parallel wire cycles in $\N$.  Then there is a whurl relation (Section \ref{ssec:whurl}) which swaps $W$ and $W'$.  On the other hand, cutting $W$ and $W'$ in the same place, we can apply a wiring crystal action $e^c(\cdot,W,W')$ to the pair $(W,W')$.  

\begin{lemma} \label{lem:crR}
Regardless of where we cut $W$ and $W'$, the Weyl group action $s_{W,W'} = e^{\gamma^{-1}}$ (Section \ref{ssec:Weyl}) coincides with the whurl relation.
\end{lemma}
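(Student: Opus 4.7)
The plan is to recognise that the wiring crystal action, when specialised at $c = \gamma^{-1}$, produces the same $\pm p$ ripple that drives the whurl (or whirl-curl) move, and then to invoke the uniqueness of that $p$.

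First I would substitute $c = \gamma^{-1} = \ep(W,W')/\ph(W,W')$ into the wiring crystal construction of Section \ref{ssec:wiringcrystal}. The two introduced ripple crossings $v$ and $u$ acquire weights
\[
(c-1)\ph(W,W') = \ep - \ph \qquad \text{and} \qquad (c^{-1}-1)\ep(W,W') = \ph - \ep,
\]
which are explicit negatives of one another. So the initial data of $e^{\gamma^{-1}}$ is, tautologically, a ripple of the form $p$, $-p$ with $p = \ep - \ph$, which is precisely the shape of the ripple that initiates the whurl realisation of Figure \ref{fig:wire20}.

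Second, I would invoke uniqueness to identify these two ripples. By Lemma \ref{lem:cancel}, after $v$ is Yang-Baxter-pushed past all the orthogonal wires its weight equals the negative of the original $u$ weight, hence $\ep - \ph$ again; in other words, $p = \ep - \ph$ has the distinguished property that it comes out of the strip unchanged. But by Lemma \ref{lem:unp} together with Theorems \ref{thm:whloc} and \ref{thm:wcloc}, there is at most one such value of $p$, and that value is exactly the $p$ that realises the whurl transformation \eqref{eq:whurl} (or the whirl-curl transformation \eqref{eq:whirlcurl}--\eqref{eq:whirlcurl2} in the mixed case). Since the two ripples coincide, so do the resulting birational transformations of the vertex weights along $W$ and $W'$; that is, $s_{W,W'}$ acts as the whurl (resp. whirl-curl) relation.

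Third, for cut-independence, I would note that the denominator $\k_1(z,\shift{t})$ cancels in $\gamma = \ph/\ep = \prod z^{(i)}/\prod t^{(i)}$, so $\gamma^{-1}$ is manifestly invariant under cyclically shifting the starting point used to linearise $W$ and $W'$; hence the $c$ used in $s_{W,W'}$ does not see the cut. On the other hand, the whurl output formulas \eqref{eq:whurl} are equivariant under cyclic relabelling of horizontal wires, so the transformation of vertex weights they prescribe is itself unchanged by the cut. The two maps therefore agree independently of where the cycles were cut. The main obstacle in executing this plan is the clerical reconciliation between the indexing of $z^{(i)}, t^{(i)}$ in Section \ref{ssec:whurl} (where $t^{(i)}$ is offset by one from the $i$-th horizontal wire) and the unshifted convention of Section \ref{ssec:parallel}; once this translation is made carefully, the two explicit rational expressions for $p$ visibly agree and the rest is uniqueness.
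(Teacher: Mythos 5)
Your proposal is correct and takes essentially the same route as the paper: the paper's entire proof is the observation that at $c=\gamma^{-1}$ the ripple weight $(c-1)\ph = \ep-\ph = \bigl(\prod_i t^{(i)} - \prod_i z^{(i)}\bigr)/\k_1(z,\shift{t})$ coincides with the $p$ of Theorem \ref{thm:whloc}, which is exactly your opening computation. Your extra detour through Lemma \ref{lem:cancel} and the uniqueness of $p$ in Lemma \ref{lem:unp} is a legitimate (and slightly more robust) way to close the identification without the index bookkeeping, but it is the same core idea.
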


\begin{proof}
 If $c=\gamma^{-1}$ then $$(c-1)\ph= \ep - \ph = \frac{\prod_{i=1}^m t^{(i)}-\prod_{i=1}^m z^{(i)}}{\k_1(z, \shift{t})}$$ which coincides with the formula for $p$ in Theorem \ref{thm:whloc}. 
\end{proof}

\subsection{The whirl and curl crystals}\label{ssec:whirlcrystal}
Consider an oriented simple-crossing network $\N$ on a cylinder that consists of a single whirl wire cycle (going around the cylinder once) crossed by $n$ horizontal wires that do not cross each other and connect $n$ source boundary vertices $1,2,\ldots,n$ on the left to $n$ sink boundary vertices $1',2',\ldots,n'$ on the right.  That is $N(\N') = N^{0,1}= N^{0,1,\id}$ in the notation of Section \ref{sec:whirlcurl}.

Assign the weight $x^{(j)}$ to the interior vertex of the network that is the intersection obtained of the whirl cycle with the $j$-th horizontal wire $W_j$.  Let $\XX_M = \{\N \mid x^{(j)} \in \C^*\}$ denote the set of such networks as the weights $x^{(j)}$ are allowed to range over $\C^*$.   The set of networks $\XX_M$ can be naturally identified with the basic affine geometric crystal $X_M$.

\begin{prop}\label{prop:wcrystal}
The functions $\ep(W_j,W_{j+1})$ and $\ph(W_j,W_{j+1})$ on $\XX_M$ agree with $\ep_j$ and $\ph_j$ of $X_M$ respectively.  The wiring crystal action $e^c(\cdot,W_j,W_{j+1})$ on $\XX_M$ agrees with the geometric crystal actions $e^c_j$ of $X_M$.
\end{prop}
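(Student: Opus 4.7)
The plan is to fix the parametrization $\XX_M \ni \N \leftrightarrow (x^{(1)},\ldots,x^{(n)}) \in X_M$, where $x^{(i)}$ is the weight at the intersection of the whirl with the horizontal wire $W_i$, and then verify the three asserted equalities ($\ph$, $\ep$, and $e^c$) in turn.

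For the $\ph$--$\ep$ half, I would first check that $W_j$ and $W_{j+1}$ are parallel in the sense of Section~\ref{ssec:parallel}. Since the whirl is the only other wire and it crosses each of $W_j$ and $W_{j+1}$ exactly once, each of those wires has exactly one interior vertex, so $m=1$ in the notation of that section. With $m=1$, the cyclic sum $\k_1(z,\shift{t})$ degenerates to its single $s=0$ summand, namely $1$, and the formulas collapse to $\ph(W_j,W_{j+1}) = z^{(1)}$ and $\ep(W_j,W_{j+1}) = t^{(1)}$. Orienting the whirl consistently with the parallel-wire convention of Section~\ref{ssec:parallel}, the defining cases for $z^{(1)}$ and $t^{(1)}$ read off $\ph(W_j,W_{j+1}) = x^{(j)}$ and $\ep(W_j,W_{j+1}) = x^{(j+1)}$, in agreement with the $X_M$ formulas.

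For the crystal-action half, I would construct $e^c(\N,W_j,W_{j+1})$ following Section~\ref{ssec:wiringcrystal}: insert crossings $v$ (at the source side) and $u$ (at the sink side) of the strip bounded by $W_j$ and $W_{j+1}$, carrying the weights $p_v = (c-1)x^{(j)}$ and $p_u = (c^{-1}-1)x^{(j+1)}$ dictated by the $\ph$ and $\ep$ just computed. The only interior vertices in this strip are the two whirl crossings, so $v$ together with the $W_j$-whirl and $W_{j+1}$-whirl crossings form exactly one triangle of three wires. Thus a single Yang-Baxter move at this triangle moves $v$ to the opposite side of the whirl, adjacent to $u$.

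The remaining step is to execute this Yang-Baxter move using \eqref{eq:L} and read off the resulting weights. The target is to verify that (a) the new weights at the two whirl crossings are $cx^{(j)}$ and $c^{-1}x^{(j+1)}$, yielding precisely $e^c_j$ from $X_M$; and (b) the new weight at $v$ cancels that of $u$, so that $(\mathrm{XM})$ followed by $(\mathrm{XR})$ produces the final network. Assertion (b) is automatic from Lemma~\ref{lem:cancel} once we know $\ep$ and $\ph$ are as computed, so the real content is (a). The main obstacle will be the careful bookkeeping needed to match the three crossings of the triangle to the variables $x,y,z$ of \eqref{eq:L} under the fixed whirl orientation; once this matching is pinned down, the arithmetic is forced by the very specific values of $p_v$ and $p_u$.
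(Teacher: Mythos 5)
Your approach is essentially the paper's: the paper offers no written proof beyond Figure \ref{fig:wire23}, which records exactly the computation you describe --- insert crossings weighted $(c-1)x^{(j)}$ and $(c^{-1}-1)x^{(j+1)}$, apply one Yang--Baxter move \eqref{eq:L} with $(x,y,z)=((c-1)x^{(j)},\,x^{(j+1)},\,x^{(j)})$ to get $cx^{(j)}$, $c^{-1}x^{(j+1)}$ and a residual crossing of weight $(1-c^{-1})x^{(j+1)}$, then cancel via (XM) and (XR) --- and your observation that $m=1$ collapses $\k_1$ to $1$ correctly gives $\ph=x^{(j)}$, $\ep=x^{(j+1)}$. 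One caution: you should not invoke Lemma \ref{lem:cancel} for the cancellation in step (b), since the paper states that the first assertion of that lemma is itself established by the constructions of Sections \ref{ssec:whirlcrystal}--\ref{ssec:whirlcurlcrystal}; but this costs you nothing, because the same Yang--Baxter computation that proves (a) already outputs the third weight $(1-c^{-1})x^{(j+1)}=-(c^{-1}-1)x^{(j+1)}$ and hence proves (b) directly.
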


\begin{figure}[h!]
    \begin{center}
    \input{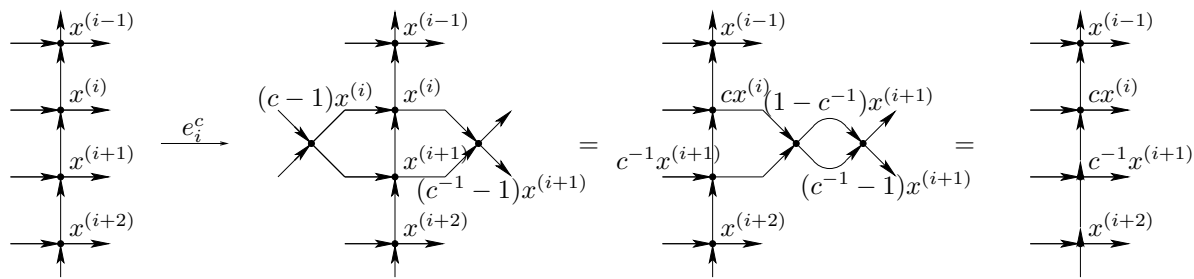}
    \end{center}
    \caption{The action of the crystal operator $e_i$ on $X_M$.}
    \label{fig:wire23}
\end{figure}

%

Instead of using a whirl wire cycle, one can use a curl wire cycle to get the network $N^{1,0} = N^{1,0,w}$.  This gives a set of networks $\XX_N$. 
\begin{prop}\label{prop:ccrystal}
The functions $\ep(W_j,W_{j+1})$ and $\ph(W_j,W_{j+1})$ on $\XX_N$ agree with $\ep_j$ and $\ph_j$ of $X_N$ respectively.  The wiring crystal action $e^c(\cdot,W_j,W_{j+1})$ on $\XX_N$ agrees with the geometric crystal actions $e^c_j$ of $X_N$.
\end{prop}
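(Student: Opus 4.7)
The plan is to mirror the proof of Proposition~\ref{prop:wcrystal}, the only substantive change being geometric: the curl cycle enters each of its vertices via the highway rather than the underway, so it meets the horizontal wires in the opposite cyclic order compared to the whirl case. This one alteration exactly swaps the roles of $\ep$ and $\ph$ and inverts $c\mapsto c^{-1}$, which is precisely how $X_N$ differs from $X_M$.

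First I would compute $\ep(W_j,W_{j+1})$ and $\ph(W_j,W_{j+1})$ directly from the definitions in Section~\ref{ssec:parallel}. Since the only orthogonal wire is the curl, one has $m=1$ and the polynomial $\k_1(z,\shift{t})$ collapses to the empty product $1$, so $\ph=z^{(1)}$ and $\ep=t^{(1)}$. Because the curl crosses $W_j$ before $W_{j+1}$ (opposite to the whirl case), one reads off $z^{(1)}=x^{(j+1)}$ and $t^{(1)}=x^{(j)}$, matching $\ph_j(x)=x^{(j+1)}$ and $\ep_j(x)=x^{(j)}$ from the definition of $X_N$.

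Next I would construct the wiring crystal action $e^c(\N,W_j,W_{j+1})$ exactly as in Section~\ref{ssec:wiringcrystal}: insert a crossing $v$ of weight $(c-1)\ph=(c-1)x^{(j+1)}$ on one side of the curl, a crossing $u$ of weight $(c^{-1}-1)\ep=(c^{-1}-1)x^{(j)}$ on the other, and push $v$ past the curl via a single Yang-Baxter move. A direct application of the Yang-Baxter rule from Figure~\ref{fig:wire8} to the triangle formed by $v$ and the two curl vertices yields new weights $c^{-1}x^{(j)}$ and $cx^{(j+1)}$ on the curl, matching $e_j^c$ on $X_N$; the emergent $v$-crossing has weight equal to the negative of $u$'s, so they cancel via (XM) and (XR). By Lemma~\ref{lem:cancel}, this cancellation is forced once the correct identifications of $\ep$ and $\ph$ are in place.

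The main work is the explicit Yang-Baxter computation, but this is entirely parallel to the whirl computation of Proposition~\ref{prop:wcrystal}, with the reversed traversal order of the curl triangle accounting for the inversion $c\leftrightarrow c^{-1}$. A cleaner alternative would be to deduce Proposition~\ref{prop:ccrystal} from Proposition~\ref{prop:wcrystal} by means of the mirror reflection $\N\mapsto\N^*$ of Section~\ref{sec:local}: mirroring sends whirls to curls, exchanges highway and underway, and thus swaps $\ep$ with $\ph$ and inverts the crystal parameter, so once the compatibility of the wiring crystal action with mirror reflection is checked, the curl case follows immediately from the whirl case.
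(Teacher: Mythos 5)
Your proposal is correct and matches the paper's (implicit) argument: the paper states this proposition without a written proof, treating it as the same direct verification that Figure \ref{fig:wire23} illustrates for the whirl crystal $X_M$, and your computation of $\ep(W_j,W_{j+1})$, $\ph(W_j,W_{j+1})$ (with $m=1$, $\k_1=1$) together with the single Yang--Baxter move carries that verification out for the curl. The reversal of the order in which the curl meets $W_j$ and $W_{j+1}$ is indeed the only change, and it yields exactly the swapped $\ep\leftrightarrow\ph$ and the new weights $c^{-1}x^{(j)}$, $cx^{(j+1)}$ required by $X_N$, with the emergent crossing cancelling against $u$ as Lemma \ref{lem:cancel} predicts.
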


We say that $N$ is a {\it network model} for an affine geometric crystal
$X$ of type $A_{n-1}^{(1)}$, if the set $\XX(N)= \{\N\}$ (called a {\it network crystal}, or {\it wiring diagram crystal}) of weighted networks $\N$ with weights in $\C^*$ and $N = N(\N)$ can be birationally identified with $X$, such that for appropriate labellings of wires,
\begin{enumerate}
\item
The functions $\ep(W_j,W_{j+1})$ and $\ph(W_j,W_{j+1})$ on $\XX(N)$ agree with $\ep_j$ and $\ph_j$ of $X$ respectively.  
\item
The wiring crystal action $e^c(\cdot,W_j,W_{j+1})$ on $\XX(N)$ agrees with the geometric crystal actions $e^c_j$ of $X$.
\end{enumerate}
Thus Propositions \ref{prop:wcrystal} and \ref{prop:ccrystal} state that $N^{0,1}$ and $N^{1,0}$ are network models for $X_M$ and $X_N$ respectively.

\subsection{Products of network crystals}\label{ssec:whirlcurlcrystal}
Suppose $N$ and $N'$ are network models on the cylinder for affine geometric crystals $X$ and $X'$, both of type  $A_{n-1}^{(1)}$.  We define the network composition $N \circ N'$ to be the network on the cylinder obtained by gluing the sinks of $N$ to the sources of $N'$, and then removing those vertices.

\begin{prop}\label{prop:compose}
The composition $N \circ N'$ is a network model for $X \times X'$.
\end{prop}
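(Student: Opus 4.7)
The plan is to verify the three conditions in the definition of a network model for $N \circ N'$ by reducing each to the corresponding property for $N$ and $N'$ together with the product formulas \eqref{E:epphproduct} and \eqref{eq:c+}. First I would identify $\XX(N \circ N')$ with $\XX(N) \times \XX(N')$. Gluing sinks of $N$ to sources of $N'$ leaves the interior vertex sets disjoint, so assignments of weights factor as a direct product; moreover, the horizontal wire $W_j$ of $N$ and the horizontal wire $W'_j$ of $N'$ concatenate to a single horizontal wire of $N \circ N'$, and consecutive such concatenations $(W_j \cup W'_j, W_{j+1} \cup W'_{j+1})$ are parallel in $N \circ N'$ because parallelism was already present in each factor.

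Next I would compute $\ep$ and $\ph$ on the combined parallel pair by tracing a ``ripple'' crossing through the composition. Place a crossing of weight $p$ to the left of $W_j \cup W'_j$ and push it by (YB)-moves. Because $N$ is a network model, this crossing, viewed as entering $N$ with weight $p = (c_1-1)\ph_j(x)$, performs the crystal action $e_j^{c_1}$ on $\N$ and emerges from $N$ with weight $(1-c_1^{-1})\ep_j(x) = p\,\ep_j(x)/(p+\ph_j(x))$. This is then the input weight to the analogous procedure in $N'$, from which the crossing emerges with weight
\[
\frac{p\,\ep_j(x)\,\ep_j(x')}{p(\ep_j(x)+\ph_j(x')) + \ph_j(x)\ph_j(x')}.
\]
By the uniqueness half of Lemma \ref{lem:cancel}, the rational functions $\ep^{\mathrm{comb}}$ and $\ph^{\mathrm{comb}}$ on the combined pair are forced by the requirement that this output equal $(1-c^{-1})\ep^{\mathrm{comb}}$ when $p = (c-1)\ph^{\mathrm{comb}}$. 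Comparing coefficients (equivalently, clearing denominators and solving a single quadratic in $c$) yields exactly
\[
\ep^{\mathrm{comb}} = \frac{\ep_j(x)\,\ep_j(x')}{\ph_j(x')+\ep_j(x)}, \qquad \ph^{\mathrm{comb}} = \frac{\ph_j(x)\,\ph_j(x')}{\ep_j(x)+\ph_j(x')},
\]
which is \eqref{E:epphproduct}.

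Once $\ep^{\mathrm{comb}}$ and $\ph^{\mathrm{comb}}$ are known, the crystal action is a direct algebraic check. With $p = (c-1)\ph^{\mathrm{comb}}$, the crossing enters $N$ with parameter $c_1$ determined by $p = (c_1-1)\ph_j(x)$, and a short computation using the expression for $\ph^{\mathrm{comb}}$ above gives
\[
c_1 = \frac{p+\ph_j(x)}{\ph_j(x)} = \frac{c\,\ph_j(x') + \ep_j(x)}{\ph_j(x') + \ep_j(x)} = c^+,
\]
in agreement with \eqref{eq:c+}. The analogous calculation in $N'$ gives $c_2 = c/c^+$. Since by hypothesis the passage through $N$ realizes $e_j^{c^+}$ on $\N$ and the passage through $N'$ realizes $e_j^{c/c^+}$ on $\N'$, the wiring crystal action on $N \circ N'$ applied with parameter $c$ is exactly the product crystal action $e_j^c$ on $X \times X'$.

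The main obstacle, and the point that deserves care, is the implicit claim that pushing a crossing through $N \circ N'$ decomposes as ``push through $N$, then push through $N'$''. This requires that the Yang-Baxter moves used to define the wiring crystal action in $N \circ N'$ can be sequenced so that all moves within $N$ precede all moves within $N'$, and that the intermediate stage, where the ripple crossing sits between $N$ and $N'$, corresponds to a well-defined weighted network on the glued cylinder. This is purely a locality statement about the (YB) move and the placement of vertices, but one must verify it to invoke the network model properties of $N$ and $N'$ separately, and to use Lemma \ref{lem:cancel} only after the composite push is complete.
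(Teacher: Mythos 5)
Your overall strategy coincides with the paper's: identify $\XX(N\circ N')$ with $\XX(N)\times\XX(N')$, factor the push of the ripple crossing as ``through $N$, then through $N'$,'' observe that the crossing emerging from $N$ cancels against the one that would initiate the action on $N'$, and match the parameters with $c^+$ and $c/c^+$ of \eqref{eq:c+}. Your computation $c_1=c^+$ is exactly the paper's identity $(c^+-1)\ph_{\N}=(c-1)\ph_{\N\circ\N'}$, and your ``analogous calculation in $N'$'' is its identity $((c^+)^{-1}-1)\ep_{\N}=-(c/c^+-1)\ph_{\N'}$. The sequencing worry in your final paragraph is not a real obstacle: the crossing $v$ is created before all interior vertices of the composite wires and is pushed past one orthogonal wire at a time, so every intermediate configuration is an honest weighted network and the factorization into ``all moves in $N$, then all moves in $N'$'' is automatic.

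The one substantive difference is how you establish condition (1) of the definition of a network model, and here there is a gap. That condition is a statement about the functions $\ep(W_j,W_{j+1})$ and $\ph(W_j,W_{j+1})$ given by the explicit $\k$-formula of Section \ref{ssec:parallel}. You instead characterize $\ep^{\mathrm{comb}},\ph^{\mathrm{comb}}$ as the unique rational functions for which the pushed crossing emerges with weight $(1-c^{-1})\ep^{\mathrm{comb}}$, and correctly show these are the product formulas \eqref{E:epphproduct}. But to conclude that they coincide with the $\k$-defined functions of the composite wires, you need the \emph{existence} half of Lemma \ref{lem:cancel} for $N\circ N'$ --- that the $\k$-defined functions themselves have the cancellation property --- and the paper explicitly defers the proof of that existence statement to the constructions of Sections \ref{ssec:whirlcrystal}--\ref{ssec:whirlcurlcrystal}, i.e.\ to this very proposition. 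The paper closes the loop by directly verifying that $\ph_{X\times X'}$, computed from \eqref{E:epphproduct} and the factors' $\k$-formulas, equals the $\k$-formula of the composite (an explicit identity expressing $\k_1$ of the concatenated wires in terms of $\k_1(z,\shift{t})$ and $\k_1(z',\shift{t'})$). You should either carry out that verification, or invoke the independent ``direct calculation'' proof of the first statement of Lemma \ref{lem:cancel}; as written, the appeal to uniqueness alone is circular at this one point. The remainder of your argument is sound.
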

\begin{proof}
It is clear that $N \circ N'$ is birationally isomorphic with $X \times X'$ in the obvious manner.  Pick and fix a pair of parallel wires $W_j$ and $W_{j+1}$, which will henceforth be omitted from the notation.  We first check that the $\ph$ and $\ep$ functions of $\N \circ \N'$ and $X \times X'$ agree.  We use $z,z',t,t',m,m'$ to denote the corresponding quantities for $X$ and $X'$ as in Section \ref{ssec:parallel}.  By \eqref{E:epphproduct},
$$
\ph_{X \times X'} =\frac{ \frac{\prod_{i=1}^m z^{(i)}}{\k_1(z,\shift{t})}\frac{\prod_{i=1}^{m'} z'^{(i)}}{\k_1(z',\shift{t'})}}{\frac{\prod_{i=1}^m t^{(i)}}{\k_1(z,\shift{t})}+\frac{\prod_{i=1}^{m'} z'^{(i)}}{\k_1(z',\shift{t'})}}
= \frac{\prod_{i=1}^m z^{(i)}\prod_{i=1}^{m'}z'^{(i)}}{\prod_{i=1}^m t^{(i)}\k_1(z',\shift{t'}) + \k_1(z,\shift{t})\prod_{i=1}^{m'} z'^{(i)} } = \ph_{\N\circ \N'}
$$

Let us compare the wiring crystal action $e^c(\N \circ \N')$ with the composition $e^{c^+}(\N) \circ e^{c/c^+}(\N')$.  The former is computed by introducing two new crossings with weights $(c-1)\ph_{\N\circ \N'}$ and $(c^{-1}-1)\ep_{\N \circ \N'}$.  The latter is computed by introducing four new crossings (one in the beginning, two in the middle, and one at the end) with weights $(c^+-1)\ph_\N$, $((c^+)^{-1}-1)\ep_\N)$, $(c/c^+-1)\ph_{\N'}$, and $((c/c^+)^{-1}-1)\ep_{\N'})$.  A quick calculation shows that $((c^+)^{-1}-1)\ep_\N)=-(c/c^+-1)\ph_{\N'}$ so that the middle two crossings can be canceled using (XM) and (XR).  But the remaining first and last crossings must still cancel each other out when moved next to each other.  Since $(c^+-1)\ph_\N= (c-1)\ph_{\N \circ \N'}$, it follows that $e^{c^+}(\N) \circ e^{c/c^+}(\N')$ computes the wiring crystal action $e^c(\N \circ \N')$.
\end{proof}

\subsection{Products of basic and dual basic affine geometric crystals}
Consider the product affine geometric crystal $X^{\tau} = X_{\tau_1} \times X_{\tau_2} \times \dotsc \times X_{\tau_m}$, where each $X_{\tau_k}$ is either $X_M$ or $X_N$.
It follows from Proposition \ref{prop:compose} that a network model for $X^{\tau}$ is obtained as follows.
Take the network $N^\tau$ on a cylinder that consists of whirl and curl wire cycles crossed by $n$ horizontal wires that do not cross each other and connect $n$ source boundary vertices on one side of the cylinder to $n$ sink boundary vertices on the other side.  Each wire cycle is a whirl or a curl depending on whether corresponding $X_{\tau_k}$ factor is $X_M$ or $X_N$, respectively.  Thus $N^\tau$ is a $N^{a,b,\id}$-network, where $X_{\tau_k}$ has $a$ factors  equal to $X_N$ and $b$ factors equal to $X_M$.

\begin{thm}\label{thm:whirlcurlcrystal}
The network $N^\tau$ is a network model for the geometric crystal $X^\tau$.
\end{thm}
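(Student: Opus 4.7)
The plan is to prove the theorem by induction on $m$, the number of tensor factors, with Proposition \ref{prop:compose} doing essentially all of the work at the inductive step.

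For the base case $m=1$, the network $N^\tau$ is either $N^{0,1}$ or $N^{1,0}$ depending on whether $\tau_1 = M$ or $\tau_1 = N$. The identification $\XX(N^{0,1}) \simeq X_M$ (respectively $\XX(N^{1,0}) \simeq X_N$) sending a weighted network to the tuple of its vertex weights along the horizontal wires is a bijection onto $(\C^*)^n$ and is a birational isomorphism. The verification that $\ep, \ph$ and the wiring crystal action agree with those of $X_M$ and $X_N$ is exactly Proposition \ref{prop:wcrystal} and Proposition \ref{prop:ccrystal}.

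For the inductive step, write $\tau = (\tau_1,\ldots,\tau_m)$ and $\tau' = (\tau_1,\ldots,\tau_{m-1})$. The construction of $N^\tau$ as a cylinder carrying $m$ wire cycles of types $\tau_1,\ldots,\tau_m$, all crossed by the same $n$ horizontal wires, makes it clear (after a suitable cut of the cylinder between the $(m-1)$-st and $m$-th wire cycles) that $N^\tau$ coincides with the composite network $N^{\tau'} \circ N^{\tau_m}$ in the sense of Proposition \ref{prop:compose}; the boundary sinks of $N^{\tau'}$ are identified with the boundary sources of $N^{\tau_m}$ wire-by-wire along the common horizontal wires, and removing those boundary vertices recovers the interior part of $N^\tau$. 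By the inductive hypothesis, $N^{\tau'}$ is a network model for $X^{\tau'} = X_{\tau_1}\times\cdots\times X_{\tau_{m-1}}$, and by the base case $N^{\tau_m}$ is a network model for $X_{\tau_m}$. Proposition \ref{prop:compose} then asserts that $N^{\tau'} \circ N^{\tau_m}$ is a network model for $X^{\tau'} \times X_{\tau_m} = X^\tau$, completing the induction.

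The main point to be careful about is the consistent labelling of the parallel wires $W_j, W_{j+1}$ across the two factors in the composition, so that the identifications of $\ep_j, \ph_j$ and of the wiring crystal action $e^c(\cdot, W_j, W_{j+1})$ in the product carry through from those on each factor. This labelling is forced: the horizontal wires of $N^\tau$ are exactly the concatenations of the horizontal wires of $N^{\tau'}$ with those of $N^{\tau_m}$. The actual crystal-theoretic content, namely that the product formulas \eqref{E:epphproduct} and \eqref{eq:c+} match what the wiring construction produces when one introduces ripples on the joined wire, is precisely the calculation already carried out in the proof of Proposition \ref{prop:compose}, so no additional work is needed. The only place where a subtlety could conceivably arise is in checking that no $\gamma_i$ data is lost in the product identification, but since $\gamma_i = \ph_i/\ep_i$ is determined by $\ph_i$ and $\ep_i$, this is automatic.
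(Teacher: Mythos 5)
Your proof is correct and follows essentially the same route as the paper: the paper derives Theorem \ref{thm:whirlcurlcrystal} directly from Proposition \ref{prop:compose} applied iteratively to the base cases established in Propositions \ref{prop:wcrystal} and \ref{prop:ccrystal}. Your explicit induction on the number of factors just spells out what the paper leaves implicit.
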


The general description of $\ep$ and $\ph$ we give here generalizes \cite[Theorem 4.2]{LP4} for $(X_M)^m$.


%

\begin{example}
 Consider the fragment of a crystal shown in Figure \ref{fig:wire24}.  
\begin{figure}[h!]
    \begin{center}
    \input{wire24.pstex_t}
    \end{center}
    \caption{}
    \label{fig:wire24}
\end{figure}
In this case $X^{\tau} = X_M \times X_N \times X_M$ and the functions $\ph_i$ and $\ep_i$ are as follows: $$\ph_i = \frac{x_1^{(i)}x_2^{(i+1)}x_3^{(i)}}{x_2^{(i+1)}x_3^{(i)}+x_1^{(i+1)}x_3^{(i)}+x_1^{(i+1)}x_2^{(i)}}; \;\;\;\;\; \ep_i = \frac{x_1^{(i+1)}x_2^{(i)}x_3^{(i+1)}}{x_2^{(i+1)}x_3^{(i)}+x_1^{(i+1)}x_3^{(i)}+x_1^{(i+1)}x_2^{(i)}}.$$
\end{example}

\subsection{Whurl, whirl-curl relations and the $R$ matrix}
Recall from \cite[Section 9]{KNO} that the {\it $R$-matrix} of a product $X \times Y$ of (affine) geometric crystals is a birational map $R: X \times Y \to Y \times X$ which commutes with all the crystal structures, and which in addition satisfies the braid relation $$(R_{12} \times 1) \circ (1 \times R_{23}) \circ (R_{12} \times 1) = (1 \times R_{23}) \circ (R_{12} \times 1) \circ (1 \times R_{23})$$
when applied to $X \times Y \times Z$.

Let $\XX^\tau=\XX(N^\tau)$ be the network crystal for $X^\tau$.  

\begin{prop}\label{prop:Rmatrix}
The whurl move, or whirl-curl move, applied to the wire cycles in $N \in \XX^\tau$ is exactly the $R$-matrix for products of $X_M$ and $X_N$.
\end{prop}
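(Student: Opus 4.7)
The strategy is to verify that the whurl (resp.\ whirl-curl) swap $\sigma$, viewed via the identification of Theorem \ref{thm:whirlcurlcrystal} as a birational map $X_{\tau_k}\times X_{\tau_{k+1}}\to X_{\tau_{k+1}}\times X_{\tau_k}$, has the properties characterizing the $R$-matrix of \cite{KNO}: it is a birational isomorphism, it preserves $\ep_i$, $\ph_i$ and $\gamma_i$, it commutes with every crystal operator $e_i^c$, and it satisfies the braid relation on threefold products.  By locality of the local moves one may reduce to the case of two adjacent wire cycles $C_k,C_{k+1}$, where $\sigma$ is given explicitly by \eqref{eq:whurl} or \eqref{eq:whirlcurl}--\eqref{eq:whirlcurl2}.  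Birationality then follows from Theorem \ref{thm:inv}, and the braid relation is precisely Theorem \ref{thm:braid}.

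For the invariance of $\ep_i$ and $\ph_i$, I would combine the product rule \eqref{E:epphproduct} with Propositions \ref{prop:wcrystal}--\ref{prop:compose} to express $\ep_i(\N)$ and $\ph_i(\N)$ as rational functions built from highway-path sums in the strip between the parallel wires $W_i,W_{i+1}$, i.e., from local boundary measurements across this pair of parallel wires.  Since Theorem \ref{thm:bm} and Corollary \ref{cor:whurlbm} assert that boundary measurements are preserved by the whurl and whirl-curl moves, one obtains $\ep_i\circ\sigma=\ep_i$ and $\ph_i\circ\sigma=\ph_i$, and hence $\gamma_i\circ\sigma=\gamma_i$.

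The main obstacle will be the commutation $\sigma\circ e_i^c=e_i^c\circ\sigma$.  Both $\sigma$ and $e_i^c$ are implemented by inserting a \emph{ripple} (a pair of crossings whose leading weight is pinned down by Lemmas \ref{lem:unp} and \ref{lem:cancel} so that after sliding the front crossing through via Yang--Baxter moves it cancels with the back one).  For $\sigma$ the ripple lives between $C_k,C_{k+1}$ and travels horizontally across $W_1,\ldots,W_n$ (Theorems \ref{thm:whloc}--\ref{thm:wcloc}); for $e_i^c$ it lives between $W_i,W_{i+1}$ and travels vertically across $C_1,\ldots,C_m$ (Section \ref{ssec:wiringcrystal}).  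I would insert both ripples into $\N$ simultaneously and then push them through in either order.  The supports of the two ripples are disjoint and their Yang--Baxter slides commute outside a small neighbourhood of the four vertices where $\{W_i,W_{i+1}\}$ meets $\{C_k,C_{k+1}\}$; at each of those vertices the two possible orders of the relevant Yang--Baxter moves are precisely the two paths around the hexagon appearing in the Frenkel--Moore relation, so Proposition \ref{prop:fm} guarantees that the two orders produce identical weights.  Once both ripples have been pushed through, the two resulting pairs of cancelling crossings are removed by (XM) and (XR), yielding the same final network either way.

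Combining these steps with Theorem \ref{thm:braid} and the characterization of the $R$-matrix in \cite{KNO} identifies $\sigma$ with $R$.  The delicate point is the Frenkel--Moore argument in the previous paragraph: one has to verify carefully that every local interaction between the two ripples is covered by an instance of the eight-cycle of (YB) moves, and that the ripple endpoints remain cancelling pairs after both ripples have been slid through the interaction region.
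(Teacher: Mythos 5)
Your proposal follows essentially the same route as the paper: the braid relation is delegated to Theorem \ref{thm:braid}, the problem is reduced to the two-factor cases $X_M^2$, $X_M\times X_N$, $X_N^2$, and the commutation with $e_i^c$ is handled by inserting both ripples and invoking the Frenkel--Moore relation (Proposition \ref{prop:fm}) -- which is exactly the content of the paper's Theorems \ref{thm:parallelcommute} and \ref{thm:antiparallelcommute}, to which its proof of this proposition defers.

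The one step where your justification is shaky is the invariance of $\ep_i$ and $\ph_i$ under $\sigma$. The paper disposes of this ``by inspection,'' i.e.\ by directly composing \eqref{E:epphproduct} with the explicit formulas \eqref{eq:whurl} and \eqref{eq:whirlcurl}--\eqref{eq:whirlcurl2}; for two factors $\ep_i$ and $\ph_i$ are explicit rational functions of four vertex weights, so this is a finite computation. Your proposed detour through boundary measurements is problematic on two counts. First, the statement that $\ep_i,\ph_i$ are rational functions of the boundary measurements of the full network is Proposition \ref{P:epphrat}, whose proof in the paper \emph{uses} Proposition \ref{prop:Rmatrix}, so invoking it here would be circular. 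Second, the ``local boundary measurements across the strip between $W_i$ and $W_{i+1}$'' are not obviously preserved by $\sigma$: the whurl move is not a local transformation \emph{of that strip sub-network} (the ripple is created and cancelled outside the strip, and the new weights of the four vertices inside the strip depend on weights outside it), so Theorem \ref{thm:bm} does not apply to the strip directly. Replace this step by the direct check and the argument closes.
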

\begin{proof}
The braid relation is already known to hold (Theorem \ref{thm:braid}).  So we need to check the claim only for network versions of $X_M^2$, $X_M \times X_N$ and $X_N^2$.  By inspection, the functions $\ep$ and $\ph$ are invariant when a whurl move or whirl-curl move is applied.  We shall see in Theorems \ref{thm:parallelcommute} and \ref{thm:antiparallelcommute} that the wiring crystal action $e^c$ commutes with the whurl move and the whirl-curl move.  
\end{proof}

\subsection{$\ep$,$\ph$ and boundary measurements}

Consider the networks $N^{0,m}$ and $N^{m,0}$ on a cylinder as in Section \ref{sec:lsym}.  They are network models for the product affine geometric crystals $X_M^m$ and $X_N^m$.  We caution that there is a shift between the labeling of the variables $x_i^{(r)}$ in Section \ref{sec:lsym} and in the current section.  The following result is established in \cite{LP4} in the case of $N^{0,m}$, and the case of $N^{m,0}$ is analogous.

\begin{prop} \label{prop:cylep} \cite[Theorem 4.2]{LP4}
 The formulae for $\ep_i$ and $\ph_i$ of $\XX(N^{0,m})$ are given by $$\phi_i = \frac{e_m^{(i)}}{e_{m-1}^{(i+1)}}; \;\;\;\; \ep_i = \frac{e_m^{(i+1)}}{e_{m-1}^{(i+1)}}$$
where $e_k^{(s)}$ denotes the loop elementary symmetric functions for $N^{0,m}$.
  The formulae for $\ep_i$ and $\ph_i$ in $\XX(N^{m,0})$ are given by $$\phi_i = \frac{ e_m^{(i+1)}}{ e_{m-1}^{(i+1)}}; \;\;\;\; \ep_i = \frac{ e_m^{(i)}}{e_{m-1}^{(i+1)}}$$
where $e_k^{(s)}$ denotes the loop elementary symmetric functions for $N^{m,0}$.
\end{prop}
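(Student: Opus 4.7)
The plan is to prove Proposition \ref{prop:cylep} by induction on $m$, realizing $\XX(N^{0,m})$ (respectively $\XX(N^{m,0})$) as an $m$-fold product of the basic whirl crystal $X_M$ (respectively the dual basic curl crystal $X_N$) via Proposition \ref{prop:compose}, and iteratively applying the product formula \eqref{E:epphproduct}.

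The base case $m=1$ is immediate: $N^{0,1}$ is the single-whirl network of Proposition \ref{prop:wcrystal}, and setting $e_1^{(i)} = x_1^{(i)}$ and $e_0^{(i+1)} = 1$ recovers $\ph_i = x_1^{(i)}$ and $\ep_i = x_1^{(i+1)}$, matching $\ph_i$ and $\ep_i$ of $X_M$ under the standard identification. The analogous identification $N^{1,0} \simeq X_N$ of Proposition \ref{prop:ccrystal} handles the curl base case, with the labeling of $x_1^{(r)}$ shifted as cautioned in the text preceding the statement.

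For the inductive step in the whirl case, decompose $N^{0,m+1}$ as $N^{0,m} \circ N^{0,1}$ and write $\tilde{e}_k^{(r)}$ for the loop elementary symmetric functions in the first $m$ whirl variables. The strict-chain condition for whirls yields the clean recurrences
\begin{align*}
e_{m+1}^{(r)} &= \tilde{e}_m^{(r)} \cdot x_{m+1}^{(r+m)},\\
e_m^{(r)} &= \tilde{e}_m^{(r)} + \tilde{e}_{m-1}^{(r)} \cdot x_{m+1}^{(r+m-1)}.
\end{align*}
Plugging the inductive hypothesis for the left factor $X$ and the base-case formulas for the right factor $X'$ (using the appropriate index shift for the appended whirl's weight on $W_i$) into \eqref{E:epphproduct}, and clearing denominators using the displayed recurrences, identifies the result with $e_{m+1}^{(i)}/e_m^{(i+1)}$ and $e_{m+1}^{(i+1)}/e_m^{(i+1)}$, completing the step.

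For $\XX(N^{m,0})$ the same strategy applies with $X_N$ replacing $X_M$ and with the roles of $\ep$ and $\ph$ swapped; the curl loop elementary symmetric functions are indexed by weak chains, so the analogue of the displayed recurrences must split off both the largest index and its possible repetitions. The principal bookkeeping throughout is the indexing shift between Section \ref{sec:lsym}'s snake-path labeling of $x_i^{(r)}$ and the vertex-intersection labeling used in Propositions \ref{prop:wcrystal} and \ref{prop:ccrystal}. Once this conversion is made explicit, the inductive computation is a routine manipulation of rational functions, and I do not foresee any genuine obstacle beyond keeping the indexing straight.
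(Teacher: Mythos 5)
Your argument for the whirl half is correct and, unlike the paper (which simply cites \cite[Theorem 4.2]{LP4} and declares the curl case ``analogous''), it is self-contained: the base case matches Proposition \ref{prop:wcrystal}, the two strict-chain recurrences are right, and substituting the inductive hypothesis into \eqref{E:epphproduct} with $\ph_i(X')=x_{m+1}^{(i+m)}$ and $\ep_i(X')=x_{m+1}^{(i+m+1)}$ (the shift forced by the snake-path labeling) does close the induction. The structural reason it closes deserves emphasis: for an all-whirl network the strict chain of length $m$ in $m$ letters is unique, so $e_m^{(r)}$ is a single monomial, matching the fact that iterating \eqref{E:epphproduct} always yields $\ph_i$ and $\ep_i$ with monomial numerators $\prod_j \ph_i(x_j)$ and $\prod_j \ep_i(x_j)$ over a common $m$-term denominator.

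That same observation is what breaks your curl half, and this is a genuine gap rather than bookkeeping. For $N^{m,0}$ the chains are weak, so $e_m^{(i+1)}$ has $\binom{2m-1}{m}$ terms, including the pure power $x_1^{(i+1)}x_1^{(i+2)}\cdots x_1^{(i+m)}$ supported on a single curl; no monomial relabeling of vertex weights can convert this into the monomial numerator produced by the product formula. Already for $m=2$ one checks that $e_2^{(i+1)}/e_1^{(i+1)}$ is a three-term numerator over a two-term denominator in lowest terms, whereas $\ph_i(X_N\times X_N)=\ph_i(x)\ph_i(x')/(\ep_i(x)+\ph_i(x'))$ has a monomial numerator, so the two cannot agree. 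Concretely, your ``split off the largest index and its repetitions'' recurrence contributes terms $\tilde e_{m+1-\ell}^{(r)}\prod_{j}x_{m+1}^{(r+j)}$ for every $\ell\ge 2$ that have no counterpart in the two-term denominator $\ep_i(X)+\ph_i(X')$, so the inductive step does not close. What your computation actually shows is that in the curl case the correct numerators and denominator are again the monomials $\prod_j\ph_i(x_j)$, $\prod_j\ep_i(x_j)$ and the $m$-term $\k$-sum of Section \ref{ssec:parallel}; that is, the $e_k^{(s)}$ appearing in the second display must be the strict-chain (monomial-topped) analogues adapted to the downward snake paths of a curl network, not the weak-chain boundary measurements of $N^{m,0}$. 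You need to either identify and verify that corrected definition or restrict your proof to the whirl half.
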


Now let $N^{a,b,\id}$ be a cylindric network that has $n$ horizontal wires, $b$ whirls and $a$ curls in some order.  For $\XX(N^{a,b,\id})$, the functions $\ep$ and $\ph$ are 
no longer simple ratios of boundary measurements. However, we do have the following proposition.

\begin{prop}\label{P:epphrat}
 The $\ep$ and $\ph$ functions for $\XX(N^{a,b,\id})$ are rational functions in the boundary measurements of $N^{a,b,\id}$.
\end{prop}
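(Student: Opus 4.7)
The plan is to exploit the $R$-matrix invariance from Proposition \ref{prop:Rmatrix} to reduce to a convenient canonical arrangement of the whirls and curls, and then piece together $\ep_k$ and $\ph_k$ from the product formula \eqref{E:epphproduct} together with Proposition \ref{prop:cylep}. Concretely, I will first apply a sequence of whirl-curl moves to push every curl to the left of every whirl, obtaining an underlying network $N^{\mathrm{can}} = N^{a,0} \circ N^{0,b}$ presented as a gluing of a pure-curl network with a pure-whirl network along a boundary component. By Proposition \ref{prop:Rmatrix} the $R$-matrix commutes with the crystal action, so the values of $\ep_k$ and $\ph_k$ are unchanged under this reduction; by Theorem \ref{thm:bm} the boundary measurements are also unchanged. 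Hence it suffices to establish the claim for $N^{\mathrm{can}}$.

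On the canonical form, Proposition \ref{prop:compose} identifies $\XX(N^{\mathrm{can}})$ with the product geometric crystal $X_N^a \times X_M^b$. The product formula \eqref{E:epphproduct} then expresses the global $\ep_k$ and $\ph_k$ as explicit rational functions of the factorwise $\ep_k, \ph_k$ on $\XX(N^{a,0})$ and $\XX(N^{0,b})$. Each of these factorwise functions is, by Proposition \ref{prop:cylep}, a ratio of loop elementary symmetric functions, which in turn are precisely the boundary measurements of the pieces $N^{a,0}$ and $N^{0,b}$ respectively. So all that remains is to express the boundary measurements of each individual piece in terms of the boundary measurements of the composite $N^{\mathrm{can}}$.

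For that last step I will use Lemma \ref{lem:compose}, which gives $M(N^{\mathrm{can}}) = M(N^{a,0}) \cdot M(N^{0,b})$ in the unipotent loop group, together with the rationality of the Birkhoff-type factorization of an element of $U^{a,b,\id}_{\geq 0}$ into its curl and whirl parts. The whirl factor $M(N^{0,b})$ has polynomial matrix entries of bounded degree in the loop parameter, while the curl factor $M(N^{a,0})$ has entries that are rational with a prescribed denominator pattern coming from geometric series; the uniqueness assertion of Theorem \ref{thm:Phiwhirlcurl}, sharpened to the ASW factorization of \cite[Theorem~8.3]{LP}, should exhibit the decomposition as the solution of a finite linear system whose coefficients are entries of $M(N^{\mathrm{can}})$. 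The hard part will be verifying that this separation really is rational --- that is, that no root-taking or other non-rational algebraic operation is needed to isolate the polynomial and geometric parts of $M(N^{\mathrm{can}})$. Granted that, the argument assembles cleanly from two applications of $R$-matrix invariance followed by the explicit rational formulas above.
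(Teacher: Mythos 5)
Your reduction is exactly the paper's: use Proposition \ref{prop:Rmatrix} and Theorem \ref{thm:bm} to sort the wire cycles into a curl block and a whirl block without changing $\ep$, $\ph$, or the measurements; split the cylinder; invoke the product formula \eqref{E:epphproduct} and Proposition \ref{prop:cylep} to reduce everything to recovering the boundary measurements of each block from those of the composite. But the step you yourself flag as ``the hard part'' is precisely where the content of the proposition lives, and you have not supplied it. Uniqueness of the factorization (Theorem \ref{thm:Phiwhirlcurl}) only tells you the curl and whirl factors are determined set-theoretically; it does not tell you they depend \emph{rationally} on the entries of the product. The ASW factorization of \cite[Theorem 8.3]{LP} that you appeal to extracts the curl parameters via limits of ratios of minors, so rationality is genuinely not automatic from that source, and your ``finite linear system'' is asserted rather than exhibited. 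As written, the argument is circular at the decisive point: you assume the separation is rational in order to conclude the functions are rational.

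The paper closes this gap with a concrete combinatorial identity rather than an abstract factorization argument. Using Theorem \ref{thm:schur}, the minors of the full boundary measurement matrix are loop Schur functions $s_\lambda$ in which the indices corresponding to whirls are column-strict and those corresponding to curls are row-strict. Taking $\lambda$ the $b\times(a+1)$ rectangle and $\mu$ the same rectangle with $k$ extra cells appended to the first column, one checks that the corner cell in row $b$ is forced to contain the entry $b$ (it cannot be smaller because of column-strictness above it, nor larger because of row-strictness to its right), so semistandard fillings of $\mu$ biject with pairs consisting of a filling of $\lambda$ and a length-$k$ column filled from the curl alphabet $\{b+1,\dots,b+a\}$. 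Hence $s_\mu/s_\lambda$ equals the loop elementary symmetric function $e_k^{(s)}$ of the curl subnetwork, exhibiting it explicitly as a ratio of two polynomials in the boundary measurements of $N^{a,b,\id}$; the whirl block is then recovered from Lemma \ref{lem:compose} by division in the loop group. If you want to salvage your linear-system approach instead, you would need to actually prove a Kronecker/Pad\'e-type statement that the order-$a$ linear recurrence satisfied by the far-off-diagonal entries of $M(\N)$ has coefficients that are rational in finitely many entries --- which is doable but is a separate argument you have not given.
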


\begin{proof}
 From Proposition \ref{prop:Rmatrix} we know that applying whirl-curl relations does not change $\ep$ and $\ph$, while from Theorem \ref{thm:bm} we know it does not change the measurements. 
Thus we may assume all $b$ whirls are on the left  of the cylinder, and all $a$ curls on the right. Split our network (and the cylinder) into two subnetworks
(and two subcylinders) so that all whirls belong to one, all curls to the other. If we can determine the $\ep$ and $\ph$ functions of each of the two subnetworks from the boundary 
measurements of the whole network, we can find $\ep$ and $\ph$ for the whole network, as in the formula for the product of geometric crystals.  By Proposition \ref{prop:cylep} it suffices to be 
able to determine the boundary measurements for each of the two subnetworks from the boundary measurements of the whole network.

Now we use Theorem \ref{thm:schur}. According to the chosen order of whirls and curls, the semistandard fillings have indices $1$ through $b$ 
column-strict and indices $b+1$ through $b+a$ row-strict. Let $\lambda = (a+1, \ldots, a+1)$ be the rectangular shape $b \times (a+1)$. Let $\mu = (a+1, \ldots, a+1, 1, \ldots, 1)$
be $\lambda$ with $k$ extra cells in the first column. We claim that there is a bijection between semistandard fillings of $\mu$ and pairs of semistandard fillings of $\lambda$ and of a column 
shape of length $k$, where the latter is in the alphabet $b+1, \ldots, b+a$. Indeed, in both $\lambda$ and $\mu$ the leftmost cell of the $b$-th row is forced to be filled with $b$. It cannot be filled with 
anything smaller since $1, \ldots, b$ are column strict, and there are $b-1$ cells above it.  It also cannot be filled with anything larger since $b+1, \ldots, b+a$ are row strict, and there 
are $a$ cells to the right from it. This forces the $(b+1)$-st cell in the first column of $\mu$ to be filled with at least $b+1$, but imposes no other restrictions. The bijection is then obtained
just by cutting off the last $k$ cells of the first column.

This implies that each $e_k^{(s)}$ (and thus each boundary measurement) of the curl subnetwork can be obtained as a ratio ${s_{\mu}}/{s_{\lambda}}$, and is thus a rational function of the boundary measurements of $N^{a,b,\id}$. 
The boundary measurements of the whirl subnetwork are then recovered according to Proposition \ref{prop:compose}, and are also rational functions of the boundary measurements of $N^{a,b,\id}$. The proposition follows.
\end{proof}

The energy function of an affine crystal is an important invariant which connects crytals to the theory of lattice models.  Affine geometric crystals $X$ have rational energy functions $D_X$.  We do not discuss them in detail here.

\begin{proposition}\label{P:energy}
The energy function of $\XX(N^{a,b,\id})$ are rational functions in the boundary measurements of $N^{a,b,\id}$.
\end{proposition}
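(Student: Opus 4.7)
My plan parallels the proof of Proposition~\ref{P:epphrat}, substituting the energy function $D$ for $\ep_i,\ph_i$ throughout. The underlying principle is that, by construction of the energy function for products of affine geometric crystals (see \cite{KNO}), $D$ is invariant under the $R$-matrix; by Proposition~\ref{prop:Rmatrix} this says $D$ is unchanged by whurl and whirl-curl moves. Combined with Theorem~\ref{thm:bm} and Corollary~\ref{cor:whurlbm}, both $D$ and all boundary measurements of $N^{a,b,\id}$ are preserved under these moves. I would therefore first bring $N^{a,b,\id}$ into its canonical form, with all curls on the left and all whirls on the right, so that the network factors as $N^{a,0}\circ N^{0,b}$; the value of $D$ and the values of all boundary measurements are unaffected by this reduction.

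Under the induced product structure $\XX(N^{a,b,\id})\simeq\XX(N^{a,0})\times\XX(N^{0,b})$, the energy decomposes as $D=D_{\mathrm{left}}+D_{\mathrm{right}}+H$, where the cross term $H$ is the two-factor local energy between the pure-curl and pure-whirl halves. By the standard formula for the energy of a product, $H$ is a rational function of the $\ep_i,\ph_i$ of the two sub-crystals (one uses the $c^+$ splitting of \eqref{eq:c+} in the iterated application of the $R$-matrix). Proposition~\ref{P:epphrat} then expresses $H$ as a rational function of the boundary measurements of $N^{a,b,\id}$. The proof of Proposition~\ref{P:epphrat} also shows, via the loop Schur identity of Theorem~\ref{thm:schur} applied to the rectangle $\lambda$ and its extension $\mu$, that each boundary measurement of the individual halves $N^{a,0}$ and $N^{0,b}$ is itself a rational function of the boundary measurements of $N^{a,b,\id}$; so it suffices to show that $D_{\mathrm{left}}$ and $D_{\mathrm{right}}$ are rational in the boundary measurements of their own sub-networks.

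For a pure-curl (or pure-whirl) sub-network $N^{a,0}\simeq(X_N)^a$, the energy depends only on the multiset of radii $r_k=\prod_j x_k^{(j)}$ of the individual wire cycles: it is $R$-matrix invariant, hence monodromy-$S_a$ invariant by Theorem~\ref{thm:SaSb}, hence a symmetric function of $r_1,\dots,r_a$. The elementary symmetric polynomials in the $r_k$ are, up to the torus action of Section~\ref{ssec:torus}, recovered from the top loop elementary symmetric functions $e_{na}^{(s)}$ of $N^{a,0}$ (these are the boundary measurements corresponding to paths winding $a$ times around the cylinder), so the pure-factor energies are rational in boundary measurements of $N^{a,0}$, and hence of $N^{a,b,\id}$. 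The main obstacle is pinning down the precise normalization of $D$ --- different sources adopt different additive and multiplicative conventions, and verifying that the pure-factor energy really is symmetric in the radii and that $H$ really is rational in $\ep,\ph$ requires unwinding one of these conventions explicitly; once a normalization is fixed, the reduction above finishes the proof.
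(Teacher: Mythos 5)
There is a genuine gap, and it sits in your treatment of the pure factors. You write that the energy of $N^{a,0}\simeq (X_N)^a$ ``is $R$-matrix invariant, hence monodromy-$S_a$ invariant by Theorem~\ref{thm:SaSb}, hence a symmetric function of $r_1,\dots,r_a$.'' The last implication is false. The monodromy $S_a$-action is a birational action on the full field $\C(x_k^{(j)})$ of $na$ variables; since $S_a$ is finite, the invariant field still has transcendence degree $na$, and by Theorem~\ref{thm:LP4} it is generated by the loop elementary symmetric functions $e_k^{(r)}$ --- a vastly larger field than the symmetric functions of the $a$ radii. Invariance therefore does not collapse the energy to a function of the radii, and indeed it is not one: the paper's own remark records that the energy of $X_M^m$ is the loop Schur function $s_\lambda$ for a stretched staircase shape, which involves the individual $x_i^{(j)}$ and is not expressible through the products $r_i=\prod_j x_i^{(j)}$. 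So your step recovering the elementary symmetric polynomials in the $r_k$ from the top loop elementary symmetric functions proves the wrong statement. The repair is exactly the correct use of Theorem~\ref{thm:LP4}: $S_a$-invariance implies the pure-factor energy is a rational function of the $e_k^{(r)}$ of that sub-network, which are its boundary measurements, and the loop Schur argument in the proof of Proposition~\ref{P:epphrat} expresses those in terms of the boundary measurements of $N^{a,b,\id}$. This is the paper's entire proof; no radius computation is needed.

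A secondary weak point is the decomposition $D=D_{\mathrm{left}}+D_{\mathrm{right}}+H$ with the claim that the cross term $H$ is rational in the $\ep_i,\ph_i$ of the two sub-crystals. For a product of two crystals each of which is itself a tensor product of many factors, the cross term is a sum of local energies obtained by shuttling individual factors past each other with $R$-matrices, and there is no standard formula expressing it through the $2n$ functions $\ep_i,\ph_i$ of each half alone; those functions do not determine the halves. This step would need a genuine argument, but it becomes unnecessary once you argue as above: invariance of the whole energy under the full $S_a\times S_b$ monodromy action, together with Theorem~\ref{thm:LP4} applied to each half, already places $D$ in the field generated by the boundary measurements of the two halves, hence (by Proposition~\ref{P:epphrat}'s proof) of $N^{a,b,\id}$.
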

\begin{proof}
The energy function is an invariant of the birational $R$-matrix.  It follows from Theorem \ref{thm:LP4} and the proof of Proposition \ref{P:epphrat} that it is a rational function in the boundary measurements of $N^{a,b,\id}$.
\end{proof}

\begin{remark}
In \cite{LP3}, an explicit formula for the rational energy function $D_X$ of $X=X_M^m$ was given: the energy is the loop Schur function $s_{\lambda}$ for a stretched staircase shape $\lambda$. 
In the more general case of $\XX^\tau$, it appears that the same is true.  Namely, there is a shape $\lambda^\tau$ such that the energy function is the loop Schur function $s_{\lambda^\tau}$, as defined 
in Section \ref{sec:lsym}.  The shape $\lambda^\tau$ depends on the number of whirl and curl factors in $\XX^\tau$, but not on their order.
\end{remark}

\subsection{Combinatorial analogue of whirl-curl affine crystals}\label{ssec:comb}
We now briefly explain the connection with combinatorial affine crystals, assuming the reader is familiar with the latter.  We shall use \cite{Sh} as our main reference for affine crystals.  A comprehensive study of the relation between geometric crystals and combinatorial crystals is given in \cite{BK2}.

We consider two kinds of affine crystals for $\uqsln$: the crystals $B_M$ which are crystals for the symmetric powers of the standard representation of $\uqsln$, and their duals, denoted $B_N$.  The affine crystals $B_M$ can be identified with the set of semistandard tableaux filled with $1,2,\ldots,n$ with a fixed row-shape.  (Note that there is one geometric crystal $X_M$, but there are many possible crystals $B_M$ depending on the length of the row.)  Denote by $x^{(j)}$ the number of $j$-s in $b \in B_M$.  Similarly, each $b \in B_N$ can be identified with a semistandard tableau with $n-1$ rows. Denote by $x^{(j)}$ the number of the columns in $b$ that do not contain the number $j$. Then the $\ep$ and $\ph$ functions of $B_M$ and $B_N$ agree with those of $X_M$ and $X_N$.  

The geometric crystal $X^{\tau} = X_{\tau_1} \times X_{\tau_2} \times \dotsc \times X_{\tau_m}$ is the geometric analogue of the tensor product combinatorial affine crystals $B^\tau = B_1 \otimes \cdots \otimes B_m$, where $B_i$ is a $B_M$ (resp. $B_N$) as to $X_{\tau_i}$ is $X_M$ (resp. $X_N$).

\begin{proposition} \label{prop:trop}
The action of the crystal operators $\tilde e_i$ on the combinatorial crystal $B^\tau$, and the operators $e_i^c$ on a geometric crystal $X^\tau$ are related by {\it tropicalization}: write $e_i^c$ as a rational function (in the $x$-s) then make the changes: $\times \to +$, $\backslash \to -$, and $+ \to \min$.  Finally change every occurrence of $c$ to a $1$.  This gives a piecewise linear formula $\trop(e_i^c)$ which agrees with $\tilde e_i$, except when the latter sends a crystal element to 0.  The combinatorial crystal operator $\tilde e_i$ sends a crystal element to 0 exactly when $\ep = 0$ on that element.
\end{proposition}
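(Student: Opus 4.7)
The plan is to reduce to the single-factor cases $X_M$ and $X_N$ and then tropicalize the product formula \eqref{eq:c+}, with the claim about $\tilde e_i b = 0$ being a standard general fact about Kashiwara crystals.

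For a single factor $X_M$, the action $e_i^c$ sends $(x^{(1)},\ldots,x^{(n)})$ to $(x^{(1)},\ldots,cx^{(i)},c^{-1}x^{(i+1)},\ldots,x^{(n)})$. Applying $\trop$ with $c$ replaced by $1$ gives the piecewise linear map $x^{(i)} \mapsto x^{(i)} + 1$ and $x^{(i+1)} \mapsto x^{(i+1)} - 1$. On the combinatorial side, $B_M$ is identified with row tableaux where $x^{(j)}$ is the number of $j$'s, and $\tilde e_i$ changes one letter $i+1$ to $i$ whenever possible, yielding the same effect. The operator is undefined precisely when there is no letter $i+1$, i.e.\ when $x^{(i+1)} = \ep_i = 0$, which is also the exact case in which $\trop(e_i^1)$ produces the invalid value $x^{(i+1)} - 1 < 0$. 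The argument for $X_N$ is entirely analogous, using the identification in Section \ref{ssec:comb}.

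For a product $X \times X'$, $e_i^c$ acts as $(e_i^{c^+} x,\, e_i^{c/c^+} x')$ with $c^+$ given by \eqref{eq:c+}. Setting $\trop(c) = 1$, a direct tropicalization gives
\[
\trop(c^+) = \min(1 + \ph_i(x'),\, \ep_i(x)) - \min(\ph_i(x'),\, \ep_i(x)),
\]
which equals $1$ when $\ph_i(x') < \ep_i(x)$ and $0$ when $\ph_i(x') \geq \ep_i(x)$; correspondingly $\trop(c/c^+)$ takes the complementary values. Thus the tropicalized action increments exactly one factor by one, according to whether $\ph_i(x') < \ep_i(x)$ or not. With the convention of Section \ref{sec:geom} (swapping left and right relative to \cite{KNO}), this matches Kashiwara's signature rule for $\tilde e_i$ on $B \otimes B'$. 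Moreover, tropicalizing \eqref{E:epphproduct} yields the combinatorial tensor product formulas for $\ep_i$ and $\ph_i$, so induction on the number of tensor factors carries through cleanly: for $X^\tau = X_{\tau_1} \times X'$ with $X'$ the product of the remaining factors, the tropicalized action on $X^\tau$ is the pair of actions on $X_{\tau_1}$ and $X'$ distributed by the signature rule, matching the iterated tensor product combinatorially.

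Finally, the statement that $\tilde e_i b = 0$ exactly when $\ep_i(b) = 0$ is a standard property of Kashiwara crystals, preserved under tensor products: indeed, by the signature rule, $\ep_i(b \otimes b') = \max(\ep_i(b),\, \ep_i(b') + \ep_i(b) - \ph_i(b'))$ vanishes iff both summands used in the active factor do, which is equivalent to the tropicalized formula producing an expression with a $-1$ in a zero coordinate. The main obstacle will be the bookkeeping of conventions — making sure the left/right swap of Section \ref{sec:geom}, the orientation of crystal operators in $B_M$ versus $B_N$, and Kashiwara's tensor product rule are all aligned so that the tropicalized dichotomy $\ph_i(x') < \ep_i(x)$ versus $\ph_i(x') \geq \ep_i(x)$ picks out the same factor as $\tilde e_i$ does combinatorially. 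Once this alignment is fixed, the proof is a finite verification on the basic crystals plus one tropicalization of \eqref{eq:c+}.
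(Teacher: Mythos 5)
Your proposal is correct and follows essentially the same route as the paper: verify the single-factor case for $X_M$ (and $X_N$) directly, then handle products by tropicalizing \eqref{eq:c+} and \eqref{E:epphproduct} and matching the resulting dichotomy $\ph_i(x')<\ep_i(x)$ versus $\ph_i(x')\geq\ep_i(x)$ with Kashiwara's tensor product rule, with the vanishing statement cited as a standard crystal fact. The only blemish is a typo in your final paragraph, where $\max(\ep_i(b),\ldots)$ should read $\max(\ep_i(b'),\ldots)$; this does not affect the argument.
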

\begin{proof}
It is clear that the claim is true for $B_M$ and $X_M$ (or $B_N$ and $X_N$). For example, the tropical version of $$e_i^c: (x^{(1)},x^{(2)},\ldots,x^{(n)}) \longmapsto (x^{(1)},\ldots,cx^{(i)},c^{-1} x^{(i+1)}\ldots,x^{(n)}) $$ is that the crystal operator $\tilde e_i$ increases the number of $i$-s by one and decreases the number of $(i+1)$-s by one. An element of a single row crystal is sent to zero exactly when a crystal operator attempts to decrease the number of $i$-s or $(i+1)$-s below zero.

Now we claim that if the proposition is true for two combinatorial crystals $B$ and $B'$, it is also true for their tensor product $B \otimes B'$. Indeed, the combinatorial analogues of the formulae \eqref{E:epphproduct} are
\begin{equation}\label{eq:trep}
\ep_i(b \otimes b') = \ep_i(b') + \max(0, \ep_i(b)-\phi_i(b')); \;\;\; \phi_i(b \otimes b') = \phi_i(b') + \max(0, \phi_i(b')-\ep_i(b)),
\end{equation}
consistent with Kashiwara's formulas for combinatorial crystals, see \cite{Sh}. Also, the tropical analogue of formula \eqref{eq:c+} is
\begin{equation*}
c^+=\min(\phi_i(b')+c, \ep_i(b))-\min(\phi_i(b'), \ep_i(b)),
\end{equation*}
which for $c = 1$ is equivalent to the Kashiwara's rules
\begin{equation} \label{eq:kashe}
\tilde e_i(b \otimes b') =
\begin{cases}
\tilde e_i(b) \otimes b' & \text{if $\phi_i(b') < \ep_i(b)$,}\\
b \otimes \tilde e_i(b') & \text{otherwise;}
\end{cases}
\end{equation}
Thus as long as an element is not mapped to zero, the the proposition holds. The last statement is well-known and holds for all crystals. 
\end{proof}

Given two consecutive factors in $B^{\tau}$ one can apply to them the {\it {combinatorial $R$-matrix}}, which is known to be a crystal automorphism.  The action of the combinatorial $R$-matrix can be realized using Sch\"{u}tzenberger's jeu de taquin as follows \cite{Sh}.  Given a pair of rectangular shaped tableaux $b_1$ and $b_2$, form a single skew semistandard tableaux by attaching them one to the other along the corner. It turns out that there is a unique pair $b_1'$ and $b_2'$ such that the shapes are swapped and the two skew tableaux jeu de taquin to the same non-skew tableaux. One defines $R(b_1 \otimes b_2) = b_1' \otimes b_2'$.
\begin{example} \label{ex:R}
Let $n=3$.
$$
\tableau[sY]{\bl&\bl&\bl&{1}&{2}&{2} \\ \bl&\bl&\bl&{2}&{3}&{3} \\ {1}&{2}&{2}}\;\; \stackrel{R}{\longrightarrow} \;\; \tableau[sY]{ \bl&\bl&\bl&{2}&{2}&{2}\\ {1}&{1}&{2} \\ {2}&{3}&{3}} \;\; \text{since both jeu de taquin to} \;\; \tableau[sY]{{1}&{1}&{2}&{2}&{2}&{2} \\ {2}&{3}&{3}}
$$
\end{example}

\begin{proposition}\label{prop:BX}
 Written in the variables $x^{(j)}$ as above, the combinatorial $R$-matrix action on $B^\tau$ is the tropicalization of the geometric $R$-matrix (Proposition \ref{prop:Rmatrix}) on $X^\tau$.
\end{proposition}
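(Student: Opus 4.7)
The plan is to exploit the uniqueness of the combinatorial $R$-matrix as a crystal isomorphism and to transport the properties of the geometric $R$-matrix through tropicalization. The key structural inputs are already in place: Proposition \ref{prop:trop} identifies the tropicalizations of $\ep_i$, $\ph_i$ and $e_i^c|_{c=1}$ with the combinatorial $\ep_i$, $\ph_i$ and $\tilde e_i$, while Proposition \ref{prop:Rmatrix} says that the whurl and whirl-curl moves (i.e.\ the geometric $R$-matrix on $\XX^\tau$) preserve $\ep_i,\ph_i$ and commute with the wiring crystal action $e_i^c$.

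First I would reduce to the case of two adjacent factors. The combinatorial $R$-matrix on $B^\tau$ between adjacent factors satisfies the braid relation (an instance of the Yang-Baxter equation for KR crystals), and by Theorem \ref{thm:braid} the geometric $R$-matrix does as well. Since tropicalization commutes with composition of subtraction-free rational maps, any identity of this form tropicalizes to an identity of piecewise linear maps. Thus it suffices to prove the proposition on $X_{\tau_i} \times X_{\tau_{i+1}}$ for each of the four possible pairs (MM, MN, NM, NN), i.e.\ to verify the claim for a local swap of two parallel wire cycles crossed by $n$ horizontal wires on a sub-cylinder.

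Next, in the two-factor case, I would apply tropicalization to the identities of Proposition \ref{prop:Rmatrix}. Letting $R^{\mathrm{geom}}$ denote the whurl/whirl-curl map, tropicalization of $\ep_i \circ R^{\mathrm{geom}} = \ep_i$, $\ph_i \circ R^{\mathrm{geom}} = \ph_i$ and $R^{\mathrm{geom}} \circ e_i^c = e_i^c \circ R^{\mathrm{geom}}$ (combined with Proposition \ref{prop:trop}) yields that $\trop(R^{\mathrm{geom}})$ preserves the combinatorial $\ep_i$ and $\ph_i$, and commutes with every $\tilde e_i$ (on the locus where no map sends an element to $0$; away from this locus both sides are undefined and no contradiction arises). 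One has to check here that the whurl and whirl-curl formulas \eqref{eq:whurl} and \eqref{eq:whirlcurl} are subtraction-free, which is manifest: all coefficients $\kappa_r(z,t)$ are sums of monomials and the transformations are ratios of such. Hence $\trop(R^{\mathrm{geom}})$ is an honest map of combinatorial crystals $B \otimes B' \to B' \otimes B$.

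Finally, I would invoke the uniqueness of the combinatorial $R$-matrix. For a tensor product of two KR crystals of type $A_{n-1}^{(1)}$, the combinatorial $R$-matrix is the unique $\uqsln$-crystal isomorphism $B \otimes B' \to B' \otimes B$ (classical fact, e.g.\ in \cite{Sh}): since the $\uqsln$-characters of $B \otimes B'$ are multiplicity-free and the tensor product is connected as a $\uqsln$-crystal, a crystal morphism is determined by its value on a single highest weight element, and compatibility with the affine crystal operator $\tilde e_0$ picks it out uniquely. The map $\trop(R^{\mathrm{geom}})$ is such a crystal morphism, so it must agree with the combinatorial $R$-matrix once we check that it is a bijection and matches on one distinguished element. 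Bijectivity follows from Theorem \ref{thm:inv}: the whurl and whirl-curl moves are involutions (in the parallel case) or inverses of each other (in the anti-parallel case), hence so are their tropicalizations; combined with the shape-swapping property visible from the formulas, $\trop(R^{\mathrm{geom}})$ is a bijection between the appropriate crystals. The main obstacle, and the step that deserves the most care, is the matching on a chosen element: I would verify it on the classically lowest (or highest) weight vector by a direct computation, using the explicit formula for the weight of a crossing as it is pushed around the cylinder in Theorems \ref{thm:whloc} and \ref{thm:wcloc}, and comparing to the jeu de taquin computation illustrated in Example \ref{ex:R}.
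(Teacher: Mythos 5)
Your proposal is correct and follows essentially the same route as the paper: reduce to two adjacent factors, use Propositions \ref{prop:trop} and \ref{prop:Rmatrix} (together with subtraction-freeness) to see that the tropicalized geometric $R$-matrix preserves $\ep_i,\ph_i$ and commutes with the $\tilde e_i$, hence is an affine crystal isomorphism, and then invoke the uniqueness of the combinatorial $R$-matrix for a product of two Kirillov--Reshetikhin crystals from \cite{Sh}. The only divergence is your final step of matching on a distinguished element, which is superfluous: the cited uniqueness already says there is only one affine crystal isomorphism $B\otimes B'\to B'\otimes B$, so no base-point comparison is needed.
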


\begin{example}
In Example \ref{ex:R} we have $(x^{(1)}, x^{(2)}, x^{(3)}) = (1,2,0)$, $(y^{(1)}, y^{(2)}, y^{(3)}) = (2,0,1)$. Then tropicalizing \eqref{eq:whirlcurl} one has $$x'^{(1)} = y^{(1)} + \min(x^{(3)}, y^{(3)}) - \min(x^{(1)},y^{(1)}) = 2 + 0 - 1 = 1,$$ which agrees with the number of columns missing $1$ in $b_1'$.
\end{example}

\begin{proof}
It suffices to prove this for $X_M^2$, $X_M \times X_N$ and $X_N^2$.   (In the case of two whirl factors $X_M^2$ the statement is known, see for example \cite{HHIKTT} and \cite[Example 2]{LP2}).  One checks case by case that the functions $\ep$ and $\ph$ are preserved by the tropicalization of the geometric $R$-matrix.

Thus one may deduce from Proposition \ref{prop:Rmatrix} and \ref{prop:trop} that the  tropicalization of the geometric $R$-matrix commutes with the combinatorial crystal structure $\tilde e_i$, thus providing an automorphism of $B_M^{\otimes 2}$, $B_N^{\otimes 2}$ or $B_M \otimes B_N$.  Since the combinatorial $R$-matrix is the {\it {unique}} automorphism of a product of two Kirillov-Reshetikhin crystals \cite{Sh}, we conclude that the two must coincide.
%
\end{proof}
Proposition \ref{prop:BX} can also be deduced from Proposition \ref{prop:trop} and the uniqueness of $R$-matrix as a crystal automorphism for two Kirillov-Reshetikhin factors.

\section{Double affine geometric crystals and commutativity}\label{sec:torus}

\subsection{Orthogonal crystal and Weyl group actions}

Let $(W,W')$ and $(V,V')$ be two pairs of parallel wires in a network $\N$.  We say that $(W,W')$ and $(V,V')$ are {\it orthogonal} if they intersect in exactly four vertices $W \cap V$, $W \cap V'$, $W' \cap V$, and $W' \cap V'$.  The same definition holds when one or both of $(W,W')$ and $(V,V')$ is a pair of parallel wire cycles.

We shall say that $(W,W')$ is a pair of {\it antiparallel} wires, if they become parallel after reversing the orientation of either $W$ or $W'$.  The definition of orthogonal extends to the case where one of the pairs is antiparallel.  We shall exclude the case where both pairs are antiparallel.

Let $(W,W')$ and $(V,V')$ be two pairs of parallel wires or wire cycles.  We denote by $e_W$ and $e_V$ the respective wiring crystal actions (Section \ref{ssec:wiringcrystal}).  We also denote by $s_W$ and $s_V$ the corresponding Weyl group actions (Section \ref{ssec:Weylwhurl}), or equivalently by Lemma \ref{lem:crR} the corresponding whurl relation.  In the case that $(W,W')$ (or $(V,V')$) are wire cycles, to define the wiring crystal action we suppose we have chosen a cut.  That is, we need to pick successive intersections of $(W,W')$ with two other wires, and imagine $W$ and $W'$ is cut in between.  We say that $(W,W')$ is {\it cut by} $(V,V')$ if $(W,W')$ is a wire cycle and the chosen cut happens exactly at the intersections of $(W,W')$ with $(V,V')$.

\begin{thm}\label{thm:parallelcommute}
Suppose $(W,W')$ and $(V,V')$ are orthogonal, and both pairs are parallel.  Then
\begin{enumerate}
\item
If $(W,W')$ is not cut by $(V,V')$ and $(V,V')$ is not cut by $(W,W')$, then the wiring crystal actions $e_W$ and $e_V$ commute.
\item
If $(W,W')$ is not cut by $(V,V')$, then the wiring crystal action $e_W$ and the Weyl group action $s_V$ commute.
\item
The Weyl group actions $s_W$ and $s_V$ commute.
\end{enumerate}
\end{thm}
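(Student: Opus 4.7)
I would prove all three parts together, with the overarching strategy of realizing each operation as a sequence of Yang--Baxter moves together with ripple creation via (XR) and (XM) (following Sections \ref{ssec:wiringcrystal} and \ref{sec:R}) and then using the Frenkel--Moore relation (Proposition \ref{prop:fm}) to commute their orderings. Under the orthogonality hypothesis, the four wires meet in a $2 \times 2$ pattern with four intersection vertices $P_{WV}, P_{WV'}, P_{W'V}, P_{W'V'}$. Under the non-cutting hypotheses in (1) and (2), the ripples for the two actions are inserted in disjoint local regions of the surface, and the Yang--Baxter moves that propagate them share crossings only at these four intersection points. At each such point, the two relevant Yang--Baxter moves (one from each action) pass through a common vertex, and Frenkel--Moore asserts that their order has no effect on the resulting weights.

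For part (1), the plan is to set up a single \emph{combined} procedure in which both ripples are inserted simultaneously and the four moving crossings are pushed through the network in any consistent order; the Frenkel--Moore relation then implies that the resulting vertex weights coincide with those obtained by first performing all moves for one action and then all moves for the other. The main obstacle, which I expect to be the hardest step, is that the ripple weight for $e_W^c$ is $(c-1)\ph(W,W')$, and $\ph(W,W')$ depends on the vertex weights along $(W,W')$, four of which are altered by $e_V^{c'}$. One must therefore verify that the ``coupled'' ripple weights required to make all four ripple crossings cancel after being pushed through are unambiguously determined and agree with the individual-order ripple weights. I would overcome this by invoking the uniqueness in Lemma \ref{lem:cancel} together with the preservation of boundary measurements under local transformations (Theorem \ref{thm:bm}): if the pushed ripple from one action cancels, then the relevant $\ph$ and $\ep$ of the other pair can be read off from invariant boundary measurements, which forces both orderings to use the same effective ripple weights. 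Alternatively, for local configurations embeddable into a network model of a product of geometric crystals (Theorem \ref{thm:whirlcurlcrystal}), commutativity becomes the statement that the geometric $R$-matrix commutes with the crystal operators, which is Proposition \ref{prop:Rmatrix}.

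For parts (2) and (3), I would specialize to the Weyl group case using Lemma \ref{lem:crR}: the action $s_W = e_W^{\gamma_W^{-1}}$ coincides with the whurl move and is independent of the chosen cut of $(W,W')$, and similarly for $s_V$. Thus part (2) is the specialization of part (1) at $c' = \gamma_V^{-1}$, with the hypothesis that $(V,V')$ not be cut by $(W,W')$ rendered vacuous by the cut-independence of $s_V$; this is why the statement of (2) requires only the asymmetric condition on $(W,W')$. Likewise, part (3) follows from part (1) by the double specialization $c = \gamma_W^{-1}$, $c' = \gamma_V^{-1}$, with both cut conditions becoming vacuous.
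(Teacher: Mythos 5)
Your proposal matches the paper's proof: part (1) is established by inserting both ripples and invoking the Frenkel--Moore relation to interchange the order in which the crossings are pushed through, with the key difficulty you identify---that $\ph(V,V')$ and $\ep(V,V')$ are unchanged by $e_W$---settled exactly as you suggest, by localizing to a cylinder, expressing these functions as rational functions of boundary measurements (Proposition \ref{P:epphrat}) and using invariance of those measurements under local moves (Theorem \ref{thm:bm}); parts (2) and (3) are then deduced from (1) via the cut-independence of the Weyl group action, as you do. One caveat: the alternative route you offer through Proposition \ref{prop:Rmatrix} would be circular, since the proof of that proposition cites Theorem \ref{thm:parallelcommute}.
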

\begin{proof}
(2) and (3) follow from (1), since the Weyl group action is a special case of the wiring crystal action but we know from Lemma \ref{lem:crR} and Section \ref{ssec:whurl} that it does not depend on where the cut is made.

Now we prove (1).  Since neither $(W,W')$ or $(V,V')$ cuts the other, we may picture the two wiring crystal actions $e_W^c$ and $e_V^d$ as in Figure \ref{fig:wire25}.  We create four crossings, using as weights $(c-1)\ph(W,W')$ and $(d-1) \ph(V,V')$ and respectively $(c^{-1}-1)\ep(W,W')$ and $(d^{-1}-1) \ep(V,V')$ on the other side. Using the Frenkel-Moore relation (Proposition \ref{prop:fm}), we conclude that the effect on the network of pushing both crossings through does not depend on which is being pushed first. Thus, in both pairs of crossings the cancellation happens regardless of which crossing is pushed through first. 

It remains to argue that after one acts by the crystal operator $e_W$ the value of the functions $\ph(V,V')$ and $\ep(V,V')$ remains the same, so that weights $(d-1) \ph(V,V')$ and $(d^{-1}-1) \ep(V,V')$ still realize the action of the crystal operator $e_V^d$.  This is a local calculation, so we may assume that we are on the cylinder, with $(V,V')$ as horizontal wires connecting the boundaries and $(W,W')$ orthogonal wire cycles going around the cylinder.  By Proposition \ref{P:epphrat}, $\ph(V,V')$ is given by a rational function of the boundary measurements, and further by definition it is unaffected by the creation of the two crossings (weighted $(c-1)\ph(W,W')$ and $(c^{-1}-1)\ep(W,W')$) on the $(W,W')$ wires.  By Theorem \ref{thm:bm}, pushing the crossings through do not affect these boundary measurements.  It follows that $e_W$ preserves $\ph(V,V')$ and $\ep(V,V')$, and similarly $e_V$ preserves $\ph(W,W')$ and $\ep(W,W')$.
\begin{figure}[h!]
    \begin{center}
    \input{wire25.pstex_t}
    \end{center}
    \caption{}
    \label{fig:wire25}
\end{figure}
\end{proof}

Now suppose that $(V,V')$ is parallel, but $(W,W')$ is antiparallel.  Denote by $s_W$ the whirl-curl relation applied to $(W,W')$ after possibly gluing the ends of $W$ (resp. $W'$) together in the case that $(W,W')$ are not wire cycles.

\begin{thm}\label{thm:antiparallelcommute}
Suppose $(W,W')$ and $(V,V')$ are orthogonal, $(W,W')$ is antiparallel and $(V,V')$ is parallel.  Then
\begin{enumerate}
\item
If $(V,V')$ is not cut by $(W,W')$, then the wiring crystal action $e_V$ and the whirl-curl relation $s_W$ commute.
\item
The Weyl group action $s_V$ and the whirl-curl relation $s_W$ commute.
\end{enumerate}
\end{thm}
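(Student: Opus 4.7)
My plan is to adapt the Frenkel-Moore strategy used in the proof of Theorem \ref{thm:parallelcommute}(1), but with the whurl-style local realization of $s_W$ from Section \ref{sec:R} replacing one of the wiring crystal actions. Part (2) will be deduced from part (1) by choosing an appropriate cut.

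For part (1), I will realize both operations as sequences of Yang-Baxter moves. By Theorem \ref{thm:wcloc}, the whirl-curl transformation $s_W$ is performed by creating a ripple on $(W,W')$ with weights $p$ and $-p$ (where $p=x^{(n)}+y^{(n)}$ is the unique value guaranteed by Lemma \ref{lem:unp}) and pushing the $p$-crossing around the cylinder via successive (YB) moves. By Section \ref{ssec:wiringcrystal}, the wiring crystal action $e_V^c$ is performed by creating two crossings on $(V,V')$ with weights $(c-1)\ph(V,V')$ and $(c^{-1}-1)\ep(V,V')$ and pushing one through via (YB). The hypothesis that $(V,V')$ is not cut by $(W,W')$ lets me place the two $e_V$-crossings in a segment of $(V,V')$ disjoint from the four $V \cap W$, $V \cap W'$, $V' \cap W$, $V' \cap W'$ vertices, and symmetrically place the ripple crossings on $(W,W')$ away from the same four vertices. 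Since each creation is local to its own wire pair, the initial weights $p, -p, (c-1)\ph(V,V'), (c^{-1}-1)\ep(V,V')$ are correct regardless of the order in which the two operations are begun.

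I will then apply the Frenkel-Moore relation (Proposition \ref{prop:fm}) iteratively, exactly as in the proof of Theorem \ref{thm:braid}, to show that performing all the (YB) moves for the ripple and all the (YB) moves for $e_V^c$ yields the same final network independent of the interleaving order. Performing $e_V^c$ first and then $s_W$ corresponds to one interleaving; performing $s_W$ first and then $e_V^c$ corresponds to the opposite interleaving. I then verify the cancellations: after the ripple-crossing has been pushed around $(W,W')$, it has weight $p$ by Theorem \ref{thm:wcloc} and cancels its partner via (XM)+(XR); after $e_V^c$'s moving crossing has been pushed through all orthogonal wires, it has weight $-(c^{-1}-1)\ep(V,V')$ by Lemma \ref{lem:cancel} and cancels its partner. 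Both cancellation conditions depend only on the originally-chosen initial weights, which by the disjoint-placement argument above are valid for both orders.

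For part (2), by Lemma \ref{lem:crR} the Weyl group action $s_V = e_V^{\gamma^{-1}}$ agrees with the whurl relation on $(V,V')$, and by the discussion in Section \ref{ssec:Weylwhurl} the whurl relation does not depend on the choice of cut. I therefore choose a cut of $(V,V')$ that avoids every intersection with $(W,W')$, which makes $(V,V')$ not cut by $(W,W')$, and apply part (1). The main obstacle I anticipate is the iterated Frenkel-Moore bookkeeping: the ripple for $s_W$ has to traverse the entire cylinder, passing through the four $V,V' \cap W,W'$ vertices in a definite order, and one must be careful that at each local three-wire configuration the prerequisites of Proposition \ref{prop:fm} hold (i.e.\ we are really in the eight-cycle of (YB) moves). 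As in Theorem \ref{thm:braid}, this is handled by induction on the number of transpositions needed to reorder the sequence of (YB) moves, each transposition being a single application of Proposition \ref{prop:fm}.
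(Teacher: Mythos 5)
Your overall architecture matches the paper's: both operations are realized by pushing crossings through with (YB) moves, the Frenkel--Moore relation gives order-independence of the interleaved move sequences, and part (2) reduces to part (1) because the whurl/Weyl action is independent of the cut. The direction ``$e_V^c$ first, then the ripple'' is also essentially sound as you set it up: by Frenkel--Moore the ripple crossing still cancels its partner after $e_V^c$ has modified the network, and Lemma \ref{lem:unp} says there is \emph{at most one} ripple weight with this property, which by Theorem \ref{thm:wcloc} performs the whirl-curl move; so that composite really is $s_W\circ e_V^c$.

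The gap is in the opposite direction. After $s_W$ has been performed, the vertex weights at the four intersections of $(W,W')$ with $(V,V')$ have changed and the roles of $W$ and $W'$ have been swapped, so the functions $\ph(V,V')$ and $\ep(V,V')$ must be re-evaluated on the new network. Frenkel--Moore guarantees only that your pre-placed crystal crossings, with weights $(c-1)\ph(V,V')$ and $(c^{-1}-1)\ep(V,V')$ computed from the \emph{original} network, still cancel; by the uniqueness statement of Lemma \ref{lem:cancel} this means they realize $e_V^{c'}$ for \emph{some} $c'$, since (unlike the single admissible ripple weight of Lemma \ref{lem:unp}) there is a whole one-parameter family of weight pairs on $(V,V')$ that cancel. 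Your sentence ``both cancellation conditions depend only on the originally-chosen initial weights'' is therefore exactly the assertion $c'=c$, and it is not proved. The paper closes this by checking from \eqref{eq:whirlcurl} that $s_W$ inverts the ratio $x^{(i)}y^{(i+1)}/(y^{(i)}x^{(i+1)})$ while simultaneously exchanging which of $W,W'$ each orthogonal wire meets first, so that $\gamma(V,V')=\ph(V,V')/\ep(V,V')$ is unchanged; since $c'$ is recovered from the ratio of the two crossing weights together with $\gamma(V,V')$, invariance of $\gamma$ forces $c'=c$. You would need to supply this (or an equivalent) computation. A minor additional slip: in the antiparallel case the ripple cannot be created or removed by (XM)+(XR), since those moves require the two wires to be co-oriented; see the discussion in Section \ref{sec:R}.
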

\begin{proof}
(2) follows from (1), since the Weyl group action is a special case of the wiring crystal action, but it does not depend on where the cut is made.

By Theorem \ref{thm:wcloc}, the whirl-curl relation $s_W$ can be realized by pushing a crossing through to eliminate another crossing. We apply the Frenkel-Moore relation (Proposition \ref{prop:fm}) to the situation in Figure \ref{fig:wire26} to see that it does not matter if the wiring crystal action crossing or the whirl-curl relation crossing is pushed through first. In particular, after we push through the crystal crossing, the whirl-curl relation crossings still cancel each other out. According to Lemma \ref{lem:unp} this means that one is still performing the whirl-curl transformation. Thus canceling the crossings in Figure \ref{fig:wire26} does indeed compute $s_W \circ e_V^d$.  

To see that Figure \ref{fig:wire26} also computes $e_V^d \circ s_W$, we note that the whirl-curl transformation $s_W$ (\eqref{eq:whirlcurl}) maps the ratio $x^{(i)}y^{(i+1)}/y^{(i)}x^{(i+1)}$ into its inverse, which implies that the function $$\gamma(V,V') = \frac{\prod_{i=1}^m z^{(i)}}{\prod_{i=1}^m t^{(i)}}$$ remains invariant (the orientations of $W$ and $W'$ are swapped and this cancels out the inverse).  Since the two crystal crossings cancel each other out even after the action of $s_W$, we conclude that they realize the action of $e_V^{d'}$ for some ${d'}$. The ratio of the weights of the two crystal crossings must thus be $-d'{\gamma(W,W')}^{-1}$, and since $\gamma(W,W')$ did not change, we have $d=d'$. 
\begin{figure}[h!]
    \begin{center}
    \input{wire26.pstex_t}
    \end{center}
    \caption{}
    \label{fig:wire26}
\end{figure}
\end{proof}

\subsection{The double affine geometric crystal on the torus}

Let $\N$ consist of two families of wire cycles on a torus $S$ with homology classes corresponding to a basis of $H_1(S)$, which we refer to as the {\it {vertical family}} and {\it {horizontal family}}.  Each wire cycle family intersects each wire cycle of the other family exactly once.  Assume we have $n$ horizontal wire cycles and $m$ vertical wire cycles, and that the weights of the vertices are $x_j^{(i)}$, $i \in [n]$, $j \in [m]$.  We shall suppose that all cycles in each family are oriented in the same direction, so that we have $(n+m)$ pairs of parallel wires.  Let $\XX_{n,m}$ denote the set of networks obtained by varying the weights.

There are $n$ different ways to cut the torus horizontally to form a cylinder, giving $\XX_{n,m}$ $n$ different $\uqslm$-crystal structures, and $m$ different ways to cut the torus vertically, giving $\XX_{n,m}$ $m$ different $\uqsln$-crystal structures.  We call this set of crystal structures on $\XX_{n,m}$ a {\it $\uqsln \times \uqslm$ double affine crystal}, or {\it toroidal crystal}.  We believe that this crystal is related to quantum toroidal algebras.

Note that once we have chosen $\uqslm$ and $\uqsln$ crystal structures on $\XX_{n,m}$, we have also distinguished which pair of wires correspond to the affine (0 node) crystal wiring action.  The following theorem follows from Theorem \ref{thm:parallelcommute} and Proposition \ref{prop:Rmatrix}.  

\begin{thm}\label{thm:doubleaffine}
Pick $\uqslm$ and $\uqsln$ crystal structures on $\XX_{n,m}$.  Then
\begin{enumerate}
\item
The finite ($\fsln$ and $\fslm$) crystal actions commute.
\item
The $\uqslm$ (resp. $\uqsln$) crystal action commutes with the Weyl group action for $\fsln$ (resp. $\fslm$) which acts as the R-matrix. 
\item
The (affine) Weyl group actions of the $\uqslm$ and $\uqsln$ crystals commute.
\end{enumerate}
\end{thm}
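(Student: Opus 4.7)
My plan is to translate each of the four operators in the statement into a local operation on an element $\N\in\XX_{n,m}$ on the torus, and then read off the three commutations from Theorem~\ref{thm:parallelcommute}, using Proposition~\ref{prop:Rmatrix} for the $R$-matrix identification. The key structural fact is that every adjacent pair $(H_i,H_{i+1})$ from the horizontal family and every adjacent pair $(V_j,V_{j+1})$ from the vertical family form a pair of orthogonal parallel wire cycles in the sense of Section~\ref{sec:torus}, since each $H_i$ meets each $V_j$ in exactly one point.

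First I will fix the chosen $\uqsln$ and $\uqslm$ crystal structures as a vertical torus cut (say between $V_m$ and $V_1$) and a horizontal torus cut (between $H_n$ and $H_1$), respectively. Via the identification of products of basic affine geometric crystals with wiring diagram crystals on a cylinder from Section~\ref{ssec:whirlcurlcrystal}, the Kashiwara operator $e_i^{\uqsln,c}$ becomes the wiring crystal action of Section~\ref{ssec:wiringcrystal} on $(H_i,H_{i+1})$, with cut inherited from the vertical cut of the torus; symmetrically, $e_j^{\uqslm,d}$ acts on $(V_j,V_{j+1})$ with cut at the horizontal cut. By Lemma~\ref{lem:crR}, the Weyl group generators $s_i^{\uqsln}$ and $s_j^{\uqslm}$ coincide with the whurl relations on the same pairs, and in particular are independent of the choice of cut.

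Next I will record the cut incidences: with these conventions, $(V_j,V_{j+1})$ is cut by $(H_i,H_{i+1})$ precisely when $i=n$, and $(H_i,H_{i+1})$ is cut by $(V_j,V_{j+1})$ precisely when $j=m$; we take this as the convention aligning the 0 nodes of $\uqsln$ and $\uqslm$. The three parts of the theorem then reduce to direct appeals to the three parts of Theorem~\ref{thm:parallelcommute}. For (1), the hypothesis ``neither pair is cut by the other'' holds exactly for the finite generators ($i\neq n$ and $j\neq m$), yielding commutation. For (2), only $(V_j,V_{j+1})$ not cut by $(H_i,H_{i+1})$ is required, which holds for every $j$ once $i\neq n$, so the finite Weyl generators $s_i^{\uqsln}$ commute with the entire $\uqslm$ crystal action; the identification of $s_i^{\uqsln}$ with the $\uqslm$ $R$-matrix on the adjacent factors $H_i,H_{i+1}$ is then Proposition~\ref{prop:Rmatrix}. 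For (3), no cut hypothesis is required at all, so the two full affine Weyl group actions commute.

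The main obstacle I anticipate is the bookkeeping in the middle step: matching the algebraic finite-versus-affine distinction on each Weyl group with the combinatorial ``not cut by'' hypothesis of Theorem~\ref{thm:parallelcommute}, and more generally justifying that the cut of the torus for one crystal structure is exactly the ``chosen cut'' needed to define the wiring crystal actions of the other. Once the cut locations and 0-node labels are aligned consistently, all three assertions follow as single applications of the appropriate part of Theorem~\ref{thm:parallelcommute}, with the $R$-matrix identification supplied by Proposition~\ref{prop:Rmatrix}.
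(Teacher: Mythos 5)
Your proposal is correct and follows exactly the paper's route: the paper derives this theorem directly from Theorem~\ref{thm:parallelcommute} and Proposition~\ref{prop:Rmatrix}, and your cut-bookkeeping (the affine node of each crystal structure corresponding precisely to the pair of wires at which the other family is cut) is the right way to match the ``not cut by'' hypotheses to the finite-versus-affine distinction. The only difference is that the paper leaves this bookkeeping implicit, while you have spelled it out.
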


Part (3) of Theorem \ref{thm:doubleaffine} is a theorem of Kajiwara, Noumi and Yamada \cite[Theorem 2.1]{KNY} who discovered it in the context of discrete Painlev\'{e} dynamical systems.  Part (1) generalizes an observation of Berenstein and Kazhdan \cite[Example 1.4]{BK3}. 

\begin{remark}
Because of the orientation of the two families of wires, the crystal structure of $\XX_{n,m}$ as a $\uqsln$-crystal is as a product of basic affine geometric crystals, and as a $\uqslm$-crystal it is a product of dual basic affine geometric crystals (or vice versa).
\end{remark}

%

\subsection{Double affine combinatorial crystals}
Lascoux \cite{Las} has considered commuting finite crystal structures for combinatorial $U_q(\mathfrak{sl}_n)$ crystals.  In the setting of biwords, he defines two (commuting) crystal structures that act on the insertion and recording tableaux respectively. We claim that this double (finite) crystal structure is a tropicalization of the two commuting geometric crystal structures as in Theorem \ref{thm:doubleaffine}(1).

Continue the notation of Section \ref{ssec:comb}.
Let $b = b_1 \otimes b_2 \otimes \dotsc \otimes b_m \in (B^{M})^{\otimes m}$ be an element of a tensor product of $\sln$ crystals. Let $x_i^{(j)}$ be the number of $j$-s in $b_i$. Swapping the roles of $i$ and $j$ in $x_i^{(j)}$, $c = c_1 \otimes c_{n-1} \otimes \dotsc \otimes c_n \in (B^M)^{\otimes n}$ of single row crystals for $\uqslm$.  To see that the two crystal structures (on the same underlying set) commute, we first apply the duality map mentioned in the proof of Proposition \ref{prop:BX}.  This sends $c = c_1 \otimes c_{n-1} \otimes \dotsc \otimes c_n \in (B^M)^{\otimes n}$ to $c' = c'_n \otimes \cdots \otimes c'_1 \in (B_N)^{\otimes n}$, where $c'_j$ is determined by $c_j$.  Comparing with the labeling of wires on a torus and using Proposition \ref{prop:trop}, one sees that the crystal actions on $b$ and $c'$ is the combinatorial analogue of the double affine geometric crystal of Theorem \ref{thm:doubleaffine}.  The commuting finite combinatorial crystal structures then follows from Theorem \ref{thm:doubleaffine}(1).


\begin{thebibliography}{HHIKKT}

\bibitem[AS]{AS} {\sc L.V.~Ahlfors and L.~Sario:} Riemann Surfaces. Princeton Mathematical Series, No. 26 Princeton University Press, Princeton, N.J. 1960 xi+382 pp. 

\bibitem[BFZ]{BFZ} {\sc A.~Berenstein, S.~Fomin, and A.~Zelevinsky:}
Parametrizations of canonical bases and totally positive matrices, Adv. Math.,  122  (1996),  no. 1, 49--149.


\bibitem[BK00]{BK} {\sc A.~Berenstein and D.~Kazhdan:} Geometric and unipotent crystals.
Geom. Funct. Anal. 2000, Special Volume, Part I, 188--236. 

\bibitem[BK07a]{BK2} {\sc A.~Berenstein and D.~Kazhdan:}
Geometric and unipotent crystals. II. From unipotent bicrystals to crystal bases.  Quantum groups, 13--88, Contemp. Math., 433, Amer. Math. Soc., Providence, RI, 2007.

\bibitem[BK07b]{BK3} {\sc A.~Berenstein and D.~Kazhdan:} Lecture notes on geometric crystals and their combinatorial analogues.  Combinatorial aspect of integrable systems,  1--9, MSJ Mem., 17, Math. Soc. Japan, Tokyo, 2007. 

\bibitem[Br89]{Br} {\sc F.~Brenti:}
Unimodal, log-concave and Polya frequency sequences in
combinatorics, Mem. Amer. Math. Soc., 81 (1989), no. 413.

\bibitem[Br95]{Br2} {\sc F.~Brenti:}
Combinatorics and total positivity, J. Combin. Theory Ser.
A, 71 (1995), no. 2, 175--218.

 \bibitem[Edr]{Edr} {\sc A.~Edrei:}
On the generating functions of totally positive sequences. II., J. Analyse Math., 2 (1952), 104--109.

\bibitem[FZ]{FZ} {\sc S.~Fomin and A.~Zelevinsky:}
Double Bruhat cells and total positivity, J. Amer. Math.
Soc., 12 (1999), no. 2, 335--380.


\bibitem[HHIKTT]{HHIKTT} {\sc G.~Hatayama, K.~Hikami, R.~Inoue, A.~Kuniba, T.~Takagi, and T.~Tokihiro:}
The $A^{(1)}_M$ automata related to crystals of symmetric tensors. (English summary)
J. Math. Phys. 42 (2001), no. 1, 274--308. 

\bibitem[Kas]{Kas} {\sc M.~Kashiwara:} On crystal bases of the $Q$-analogue of universal enveloping algebras.  Duke Math. J.  63  (1991),  no. 2, 465--516.


\bibitem[KNO]{KNO} {\sc M.~Kashiwara,  T.~Nakashima, and M.~Okado:}
Tropical $R$ maps and affine geometric crystals, preprint, 2008; {\tt arXiv:0808.2411}.

\bibitem[KNY]{KNY} {\sc K.~Kajiwara,  M.~Noumi, and Y.~Yamada:}
Discrete Dynamical Systems with $W(A^{(1)}_{m−1} × A^{(1)}_{n−1})$ Symmetry.

\bibitem[LP08]{LP} {\sc T.~Lam and P.Pylyavskyy:}
Total positivity in loop groups I: whirls and curls, preprint, 2008; {\tt arXiv:0812.0840}.

\bibitem[LP09]{LP3} {\sc T.~Lam and P.~Pylyavskyy:} 
Total positivity for loop groups II: Chevalley generators, preprint, 2009; {\tt arxiv:0906.0610}.

\bibitem[LP10a]{LP2} {\sc T.~Lam and P.Pylyavskyy:}
Intrinsic energy is a loop Schur function, preprint, 2010; {\tt arXiv:1003.3948}.

\bibitem[LP10b]{LP4} {\sc T.~Lam and P.~Pylyavskyy:} Affine geometric crystals in unipotent loop groups, 2010; {\tt arXiv:1004.2233}.


\bibitem[LP+]{LPLSym} {\sc T.~Lam and P.~Pylyavskyy:} 
Loop symmetric functions, in preparation.

\bibitem[Las]{Las} {\sc A.~Lascoux:} 
Double crystal graphs.  Studies in memory of Issai Schur (Chevaleret/Rehovot, 2000),  95--114, Progr. Math., 210, Birkh\"{a}user Boston, Boston, MA, 2003.

\bibitem[Lin]{Li} {\sc B.~Lindstr\"om:}
On the vector representations of induced matroids, Bull. London
Math. Soc., 5 (1973), 85--90.

\bibitem[Lus]{Lus} {\sc G.~Lusztig:}
Total positivity in reductive groups, Lie theory and
geometry, 531--568, Progr. Math., 123, Birkhauser Boston, Boston,
MA, 1994.

\bibitem[Pos]{Pos} {\sc A.~Postnikov:}
Total positivity, Grassmanians, and networks, preprint, 2006; {\tt arXiv:math/0609764}.

\bibitem[Shi]{Sh} {\sc M.~Shimozono:}
Crystals for dummies, available at\\
 {\tt http://www.aimath.org/WWN/kostka/crysdumb.pdf}.

\bibitem[ST]{ShT} {\sc J.A.~Shohat, J.D.~Tamarkin:}
The Problem of Moments.
American Mathematical Society Mathematical surveys, vol. II.  American Mathematical Society, New York, 1943. xiv+140 pp.

\bibitem[Tal08a]{Tal} {\sc K.~Talaska:}
A formula for Pl\"ucker coordinates associated with a planar network, Int. Math. Res. Not. (2008), ID rnn 081.

\bibitem[Tal08b]{Tal2} {\sc K.~Talaska:}
Combinatorial formulas for Le-coordinates on a totally nonnegative Grassmannian, to appear in J. Combin. Theory Ser. A.

\bibitem[Tho]{Th} {\sc E.~Thoma:}
Die unzerlegbaren, positiv-definiten Klassenfunktionen der
abz\"{a}hlbar unendlichen, symmetrischen Gruppe, Math. Z., 85 (1964), 40--61.

\bibitem[Yam]{Y} {\sc Y.~Yamada:} A birational representation of Weyl group, combinatorial $R$-matrix and discrete Toda equation. Physics and combinatorics, 2000 (Nagoya), 305--319, World Sci. Publ., River Edge, NJ, 2001. 

\end{thebibliography}
\end{document}